\newtheorem{theorem}{Theorem}[section]
\newtheorem{corollary}[theorem]{Corollary}
\newtheorem{lemma}[theorem]{Lemma}
\newtheorem{proposition}[theorem]{Proposition}
\newtheorem{problem}[theorem]{Problem}
\theoremstyle{definition}
\newtheorem{note}[theorem]{Note}
\newcommand{\be}{\begin{equation}}
\newcommand{\ee}{\end{equation}}
\newcommand{\ol}{\overline}
\newcommand{\goto}{\rightarrow}
\newcommand{\R}{\mathbf{R}}
\newcommand{\e}{\varepsilon}
\newcommand{\C}{\mathcal{C}}
\renewcommand{\epsilon}{\varepsilon}
\renewcommand{\S}{\mathbf{S}}
\renewcommand{\tilde}{\widetilde}
\DeclareMathOperator{\cone}{cone}
\DeclareMathOperator{\conv}{conv}
\DeclareMathOperator{\cl}{cl}
\DeclareMathOperator{\vol}{vol}
\DeclareFontFamily{U}{tipa}{}
\DeclareFontShape{U}{tipa}{m}{n}{<->tipa10}{}
\newcommand{\arc@char}{{\usefont{U}{tipa}{m}{n}\symbol{62}}}%
\newcommand{\arc}[1]{\mathpalette\arc@arc{#1}}
\newcommand{\arc@arc}[2]{%
  \sbox0{$\m@th#1#2$}%
  \vbox{
    \hbox{\resizebox{\wd0}{\height}{\arc@char}}
    \nointerlineskip
    \box0
  }%
}
\def\@tocline#1#2#3#4#5#6#7{\relax
  \ifnum #1>\c@tocdepth % then omit
  \else
    \par \addpenalty\@secpenalty\addvspace{#2}%
    \begingroup \hyphenpenalty\@M
    \@ifempty{#4}{%
      \@tempdima\csname r@tocindent\number#1\endcsname\relax
    }{%
      \@tempdima#4\relax
    }%
    \parindent\z@ \leftskip#3\relax \advance\leftskip\@tempdima\relax
    \rightskip\@pnumwidth plus4em \parfillskip-\@pnumwidth
    #5\leavevmode\hskip-\@tempdima
      \ifcase #1
       \or\or \hskip 1.3em \or \hskip 2em \else \hskip 5em \fi%
      #6\nobreak\relax
    \hfill\hbox to\@pnumwidth{\@tocpagenum{#7}}\par% <---- \dotfill -> \hfill
    \nobreak
    \endgroup
  \fi}
\newcommand{\nocontentsline}[3]{}
\newcommand{\tocless}[2]{\bgroup\let\addcontentsline=\nocontentsline#1{#2}\egroup}
\begin{document}
\setlength{\baselineskip}{1.2\baselineskip}

\title[Total curvature and the isoperimetric inequality] 
{Total curvature and the isoperimetric inequality \\in Cartan-Hadamard manifolds}

\author{Mohammad Ghomi}
\address{School of Mathematics, Georgia Institute of Technology,
Atlanta, GA 30332}
\email{ghomi@math.gatech.edu}
\urladdr{www.math.gatech.edu/~ghomi}

\author{Joel Spruck}
\address{Department of Mathematics, Johns Hopkins University,
 Baltimore, MD 21218}
\email{js@math.jhu.edu}
\urladdr{www.math.jhu.edu/~js}

\vspace*{-0.75in}
\begin{abstract} 
We obtain an explicit  formula for comparing total curvature of level sets of functions on Riemannian manifolds, and develop some applications of this result to the isoperimetric  problem  in spaces of nonpositive curvature.
\end{abstract}

\date{\today \,(Last Typeset)}
\subjclass[2010]{Primary: 53C20, 58J05; Secondary: 52A38, 49Q15.}
\keywords{Signed Distance Function, Cut Locus,  Medial Axis, Positive Reach,  Inf-Convolution,    Quermassintegrals, Semiconcave Functions,  Convex Hull,    Isoperimetric Profile.}
\thanks{The research of M.G. was supported in part by NSF grant DMS-1711400 and a Simons fellowship.}

\maketitle
\vspace*{-0.4in}
\tableofcontents
%\newpage

%%%%%%%%%%%%%%%%%%%%%%%%%%%%%%%%%%%%%%%%
\section{Introduction}\label{sec1}
%%%%%%%%%%%%%%%%%%%%%%%%%%%%%%%%%%%%%%%%

A \emph{Cartan-Hadamard} manifold $M$ is a complete simply connected Riemannian space with nonpositive (sectional) curvature.  
 In this paper we are concerned with generalizing some fundamental properties of Euclidean space $\R^n$ to Cartan-Hadamard manifolds.
The first problem involves \emph{convex} subsets of $M$, i.e., those which contain the geodesic connecting every pair of their points.  A \emph{(closed) convex hypersurface} $\Gamma\subset M$ is the boundary of a compact convex set with interior points. If $\Gamma$ is of regularity class $\C^{1,1}$, then its \emph{Gauss-Kronecker curvature}, or determinant of second fundamental form, $GK$ is well-defined almost everywhere. So the \emph{total curvature} of $\Gamma$ may be defined as $\mathcal{G}(\Gamma):=\int_\Gamma |GK| d\sigma$, where $d\sigma$ is the volume form of $\Gamma$. When $M=\R^n$, $\mathcal{G}(\Gamma)$ is the  volume of the Gauss map, or a unit normal vector field on $\Gamma$. Thus it is easy to see that
\be\label{eq:GK}
\mathcal{G}(\Gamma)\geq \textup{vol}(\S^{n-1}),
\ee 
where 
$\S^{n-1}$ denotes the unit sphere in $\R^n$ and \emph{vol} stands for volume. 

\begin{problem}\label{prob:1}
Does the total curvature inequality \eqref{eq:GK} hold for convex $\C^{1,1}$ hypersurfaces in Cartan-Hadamard manifolds $M^n$?
\end{problem}

When $n=2$, $3$ the answer is yes due to Gauss-Bonnet theorem and Gauss' equation, but for $n\geq 4$ the problem is open. Work on this question goes back at least to 1966, when Willmore and Saleemi \cite{willmore-saleemi} claimed incorrectly to have found a simple proof. 
Several authors  have studied the problem since then, e.g., see \cite{dekster1981, dekster1982, schroeder-strake, borbely2002, borbely2003, cao-escobar}, and it has been explicitly posed by Gromov \cite[p. 66]{ballmann-gromov-schroeder}.
 A prime motivation for studying Problem \ref{prob:1} is its connection to the classical isoperimetric inequality \cites{osserman1978,  bandle2017, chavel2001, hhm1999, blasjo2005}, which states that for any bounded set $\Omega\subset\R^n$,
\be\label{eq:II}
 \textup{per}(\Omega)^n  \geq 
\frac{\textup{per}(\textbf{B}^{n})^n }{\;\;\;\;\textup{vol}(\mathbf{B}^n)^{n-1}}\,\textup{vol}(\Omega)^{n-1},
\ee
 where $\emph{per}$ stands for perimeter, and $\textbf{B}^{n}$ is the unit ball in $\R^n$ (so $\textup{per}(\textbf{B}^{n})=\textup{vol}(\textbf{S}^{n-1})$). Furthermore,  equality holds only if $\Omega$ is a ball in $\R^n$. 
  
 \begin{problem}\label{prob:2}
Does the isoperimetric inequality \eqref{eq:II} hold for bounded sets in Cartan-Hadamard manifolds $M^n$?
\end{problem}

The assertion that the answer is yes has become known as the \emph{Cartan-Hadamard conjecture} \cites{kloeckner-kuperberg2017, aubin-druet-hebey1998, druet-hebey2002, hebey-robert2008, kristaly2019}.
 Weil \cite{weil1926} \cite[p. 347]{berger2003} established the conjecture for $n=2$ in 1926, and Beckenbach-Rado \cite{beckenbach-rado1933} rediscovered the same result in 1933. 
 In 1975 Aubin \cite{aubin1975}  stated the conjecture for $n\geq 3$,
 as did Gromov \cites{gromov1981,gromov:metric}, and Burago-Zalgaller \cites{burago-zalgaller1980}\cite[Sec. 36.5.10]{burago-zalgaller1988} a few years later. Subsequently the cases $n=3$ and $4$ of the conjecture were established,  by Kleiner \cite{kleiner1992} in 1992, and Croke \cite{croke1984} in 1984 respectively, using different methods.  See   
 \cite[Sec. 3.3.2]{ritore2010} and \cite{schulze2008} for alternative proofs for $n=3$, and \cite{kloeckner-kuperberg2017} for another proof for $n=4$. Other related studies and references may be found in
 \cites{druet2010, morgan-johnson2000,  druet2002, hass2016,  schulze2018,nardulli-acevedo2018, cao-escobar}.

This paper is motivated by the work of Kleiner \cite{kleiner1992}, who showed that when $n=3$ the total curvature inequality \eqref{eq:GK} implies the isoperimetric inequality \eqref{eq:II}, and stated that this implication should hold in all dimensions.  Here we show that this is indeed the case (Theorem \ref{thm:GK-CH}). So an affirmative answer to Problem \ref{prob:1} yields an affirmative answer to Problem \ref{prob:2}. The rest of this paper is 
devoted to developing a number of tools and techniques for solving Problem \ref{prob:1}. Foremost among these is a comparison formula for total curvature of level sets of functions on Riemannian manifolds (Theorems \ref{thm:comparison} and \ref{thm:comparison2}). We also establish some results on the structure of the convex hulls, and regularity properties and singularities of the distance function which may be of independent interest.

We start in Section \ref{sec:distance} by recording a number of basic facts concerning regularity properties of distance functions on Riemannian manifolds which will be useful throughout the paper. In Section \ref{sec:d-convex} we discuss various notions of convexity in a Cartan-Hadamard manifold, and show that it is enough to establish the total curvature inequality \eqref{eq:GK} for hypersurfaces with convex distance function. In Section \ref{sec:comparison} we establish the comparison formula, and discuss its applications in Section \ref{sec:applications} for total curvature of nested hypersurfaces in the hyperbolic space, and parallel hypersurfaces in any Riemannian manifold. In Section \ref{sec:convexhull} we will show that the total positive curvature of the convex hull of a hypersurface cannot exceed that of the hypersurface itself. This result will be used in Section \ref{sec:CHC} to establish the connection between problems \ref{prob:1} and \ref{prob:2}. 

A basic approach to solving Problem \ref{prob:1} would be to  shrink the convex hypersurface $\Gamma$, without increasing $\mathcal{G}(\Gamma)$, until it collapses to a point. Since all Riemannian manifolds are locally Euclidean up to first order, we would then obtain \eqref{eq:GK} in the limit. Our comparison formula shows that $\mathcal{G}(\Gamma)$ does not increase when $\Gamma$ is moved parallel to itself inward until we reach the first singularity of the distance function or cut locus of $\Gamma$ (Corollary \ref{cor:parallel}). It might be possible to extend this result further via an appropriate smoothing of the distance function. To this end we gather some relevant results in Appendices \ref{sec:inf-convolution} and \ref{sec:projection}. There is a more conventional geometric flow, by harmonic mean curvature, which also shrinks convex hypersurfaces  to a point in Cartan-Hadamard manifolds \cite{xu2010}; however, it is not known how that  flow effects total curvature; see also \cite{andrews-hu-li}.

The isoperimetric inequality has several well-known applications  \cites{bandle1980, polya-szego1951, payne1967} due to its relations with many other important inequalities \cites{treibergs2002,chavel2001}. For instance a positive resolution of Cartan-Hadamard conjecture would extend the classical Sobolev inequality from the Euclidean space to Cartan-Hadamard manifolds  \cite{osserman1978,federer-fleming1960,druet2010}, \cite[App.1]{milman1986}. Indeed it was in this context where  the Cartan-Hadamard conjecture was first proposed \cite{aubin1975}.
See \cite{druet-hebey2002,kristaly2015, kristaly2019} for a host of other Sobolev type inequalities on Cartan-Hadamard manifolds which would follow from the conjecture, and \cites{yau1975, hoffman-spruck1974, chung-grigoryan-yau2000} for related studies. The isoperimetric inequality also has deep connections to spectral analysis. A fundamental result in this area is the Faber-Krahn inequality \cites{chavel1984,henrot2006,berger2003} which was established in 1920s \cite{faber1923, krahn1925, krahn1926} in Euclidean space, as had been conjectured by Rayleigh in 1877 \cite{rayleigh1877}. This inequality may also be generalized to Cartan-Hadamard manifolds \cite{chavel1984}, if Cartan-Hadamard conjecture holds. 

We should mention that the Cartan-Hadamard conjecture has a stronger form \cites{aubin1975,gromov:metric,burago-zalgaller1988}, sometimes called the \emph{generalized Cartan-Hadamard conjecture} \cite{kloeckner-kuperberg2017}, which states that if the sectional curvatures of $M$ are bounded above by $k\leq 0$, then the perimeter of $\Omega$ cannot be smaller than that of a ball of the same volume in the hyperbolic space of constant curvature $k$.  The generalized conjecture has been proven only for $n=2$ by Bol \cite{bol1941}, and $n=3$ by Kleiner \cite{kleiner1992}; see also Kloeckner-Kuperberg \cite{kloeckner-kuperberg2017} for partial results for $n=4$, and Johnson-Morgan \cite{morgan-johnson2000} for a result on small volumes.

%%%%%%%%%%%%%%%%%%%%%%%%%%%%%%%%%%%%%%%%%%%%%%%%%%
\section{Regularity and Singular Points of the Distance Function}\label{sec:distance}
%%%%%%%%%%%%%%%%%%%%%%%%%%%%%%%%%%%%%%%%%%%%%%%%%%
Throughout this paper, $M$ denotes a complete connected Riemannian manifold of dimension $n\geq 2$ with metric $\langle\cdot,\cdot\rangle$ and corresponding distance function $d\colon M\times M\to \R$.  For any pairs of sets $X$, $Y\subset M$, we define 
$$
d(X,Y):=\inf \{d(x,y)\mid x\in X, \;y\in Y\}.
$$
Furthermore, for any set $X\subset M$, we define $d_X\colon M\to \R$, by
$$
d_X(\,\cdot\,):=d(X,\,\cdot\,).
$$
The \emph{tubular neighborhood} of $X$ with radius $r$ is then given by $U_r(X):=d_X^{-1}([0,r))$. Furthermore, for any $t>0$, the level set $d_X^{-1}(t)$ will be called a \emph{parallel hypersurface} of $X$ at distance $t$.
A function $u\colon M\to\R$ is  Lipschitz with constant $L$, or \emph{$L$-{Lipschitz}}, if for all pairs of points $x$, $y\in M$, $|u(x)-u(y)|\leq L\,d(x,y)$.
The triangle inequality and Rademacher's theorem quickly yield \cite[p.185]{dugundji1966}:
\begin{lemma}[\cite{dugundji1966}]\label{lem:lipschitz}
For any set $X\subset M$, $d_X$ is $1$-Lipschitz. In particular  $d_X$ is differentiable almost everywhere.
\end{lemma}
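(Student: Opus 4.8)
The plan is to obtain the Lipschitz bound directly from the triangle inequality for $d$, and then deduce almost-everywhere differentiability by transporting Rademacher's theorem to $M$ through coordinate charts.

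First I would fix points $x$, $y\in M$ and an arbitrary point $z\in X$ (assuming $X\neq\emptyset$, so that $d_X$ is a finite nonnegative function; otherwise the statement is vacuous). The triangle inequality gives $d_X(x)\le d(x,z)\le d(x,y)+d(y,z)$. Taking the infimum of the right-hand side over all $z\in X$ yields $d_X(x)\le d(x,y)+d_X(y)$, that is, $d_X(x)-d_X(y)\le d(x,y)$. Interchanging the roles of $x$ and $y$ gives the reverse inequality, and hence $|d_X(x)-d_X(y)|\le d(x,y)$, so $d_X$ is $1$-Lipschitz.

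For the differentiability assertion I would work locally. Around each point of $M$ pick a coordinate chart $\varphi\colon U\to\R^n$ which is bi-Lipschitz onto its image (for instance normal coordinates on a sufficiently small geodesic ball, where the Riemannian metric is uniformly comparable to the pulled-back Euclidean one). Then $d_X\circ\varphi^{-1}$ is Lipschitz on the open set $\varphi(U)\subset\R^n$, so by the classical Rademacher theorem it is differentiable at Lebesgue-a.e.\ point of $\varphi(U)$; pulling back through $\varphi$, the function $d_X$ is differentiable off a set which has measure zero for the Riemannian volume, since that measure and the $\varphi$-pushforward of Lebesgue measure are mutually absolutely continuous on $U$. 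Because $M$ is second countable it is covered by countably many such charts, and a countable union of null sets is null, so $d_X$ is differentiable almost everywhere on $M$.

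There is no genuine obstacle in this argument; the only points deserving a word of care are the finiteness of $d_X$ (which needs $X\neq\emptyset$) and the fact that "measure zero" is a chart-independent notion on $M$ — precisely the feature that lets the local applications of Rademacher's theorem be patched into a global statement.
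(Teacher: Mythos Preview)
Your proof is correct and follows exactly the approach the paper indicates: the triangle inequality for the Lipschitz bound and Rademacher's theorem (transported through charts) for almost-everywhere differentiability. The paper does not spell out these details, so your write-up simply fills in what the authors leave implicit.
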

For any point $p\in M$ and $X\subset M$, we say that $p^{\circ}\in X$ is a  \emph{footprint} of $p$ on $X$ provided that 
$$
d(p,p^\circ)=d_X(p),
$$
 and the distance minimizing geodesic connecting $p$ and $p^\circ$ is unique. In particular note that every point of $X$ is its own footprint. The following observation is well-known when $M=\R^n$. It follows, for instance, from studying super gradients of semiconcave functions \cite[Prop. 3.3.4 \& 4.4.1]{cannarsa-sinestrari2004}.
These arguments extend well to Riemannian manifolds \cite[Prop. 2.9]{mantegazza-mennucci2003}, since local charts preserve both semiconcavity and generalized derivatives.  For any function $u\colon M\to \R$, we let $\nabla u$ denote its gradient.

\begin{lemma}[\cites{cannarsa-sinestrari2004,mantegazza-mennucci2003}] \label{lem:CS}
Let $X\subset M$ be a closed set, and $p\in M\setminus X$. Then
\begin{enumerate}[(i)]
\item{$d_X$ is differentiable at $p$ if and only if $p$ has a unique footprint on $X$.}
\item{If $d_X$ is differentiable at $p$, then $\nabla d_X(p)$ is tangent to the distance minimizing geodesic connecting $p$ to its footprint on $X$, and $|\nabla d_X(p)|=1$.}
\item{$d_X$ is $\C^1$ on any open set in $M\setminus X$ where $d_X$ is pointwise differentiable.}
\end{enumerate}
\end{lemma}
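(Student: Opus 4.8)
The plan is to derive all three parts from two elementary ingredients. Fix $p\in M\setminus X$ and put $r:=d_X(p)>0$; since $X$ is closed and $M$ is complete there is at least one nearest point $x\in X$, with $d(p,x)=r$, joined to $p$ by a unit-speed minimizing geodesic $\gamma$. The first ingredient is a one-sided first-variation bound: for any such $x$ and $\gamma$,
\[
d_X(\exp_p v)\;\le\;r-\langle\dot\gamma(0),v\rangle+C|v|^2
\]
for all small $v\in T_pM$, with $C$ depending only on $r$ and a curvature bound along $\gamma$. This follows from $d_X(\exp_p v)\le d(\exp_p v,x)$ together with the explicit comparison curve $t\mapsto\exp_{\gamma(t)}(\tfrac{r-t}{r}P_t v)$, where $P_t$ is parallel transport along $\gamma$; this curve joins $\exp_p v$ to $x$, has first variation $-\langle\dot\gamma(0),v\rangle$ at $v=0$, and has uniformly bounded second variation, and---crucially---it uses no Jacobi field, so conjugate points along $\gamma$ are harmless. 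The second ingredient is a compactness fact: if $p_i\to p$ and $\sigma_i$ is a unit-speed minimizing geodesic from $p_i$ to a nearest point of $X$, then some subsequence of the $\sigma_i$ converges to a minimizing geodesic from $p$ to a nearest point of $X$; this is immediate from $\ell(\sigma_i)\to r$, closedness of $X$, and continuity of $\exp$.

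I would establish (ii) and the ``only if'' half of (i) together. Suppose $d_X$ is differentiable at $p$ and let $\gamma$ be a minimizing geodesic from $p$ to any nearest point $x$. Comparing $d_X(\exp_p(se))=r+s\langle\nabla d_X(p),e\rangle+o(s)$ with the first ingredient, dividing by $s>0$, and letting $s\to0^+$ gives $\langle\nabla d_X(p)+\dot\gamma(0),e\rangle\le0$ for every unit vector $e$, hence $\nabla d_X(p)=-\dot\gamma(0)$. Since the right-hand side does not depend on the choice of $x$ or $\gamma$, every minimizing geodesic from $p$ to $X$ has the same initial velocity $w:=-\nabla d_X(p)$, hence coincides with $t\mapsto\exp_p(tw)$ and ends at the single point $p^\circ:=\exp_p(rw)\in X$: the footprint is unique. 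Moreover $|\nabla d_X(p)|=|\dot\gamma(0)|=1$ and $\nabla d_X(p)=-\dot\gamma(0)$ is tangent to $\gamma$, which is exactly (ii).

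For the converse implication in (i), suppose $p$ has a unique footprint $p^\circ$; equivalently, there is a single minimizing geodesic $\gamma$ from $p$ to $X$, necessarily ending at $p^\circ$, and I set $w:=\dot\gamma(0)$. The first ingredient already gives $d_X(\exp_p v)\le r-\langle w,v\rangle+C|v|^2$, so it remains to prove the matching lower bound $d_X(\exp_p v)\ge r-\langle w,v\rangle-o(|v|)$. If this failed there would be $v_i\to0$ with $v_i/|v_i|\to z$ and $d_X(\exp_p v_i)<r-\langle w,v_i\rangle-\epsilon|v_i|$ for some fixed $\epsilon>0$. Put $q_i:=\exp_p v_i$, choose a nearest point $x_i\in X$ to $q_i$ with minimizing geodesic $\sigma_i$, and use the second ingredient to pass to a subsequence along which $\sigma_i$ converges to a minimizing geodesic from $p$ to $X$; by uniqueness this limit is $\gamma$, so $\dot\sigma_i(0)\to w$ and $x_i\to p^\circ$. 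Running the comparison-curve construction in reverse along $\sigma_i$---sliding the endpoint $q_i$ back to $p$ along the short geodesic with initial velocity $\exp_{q_i}^{-1}(p)\in T_{q_i}M$---yields a curve from $p$ to $x_i$ of length at most $d_X(q_i)-\langle\dot\sigma_i(0),\exp_{q_i}^{-1}(p)\rangle+O(|v_i|^2)$, so that $r\le d(p,x_i)\le d_X(q_i)-\langle\dot\sigma_i(0),\exp_{q_i}^{-1}(p)\rangle+O(|v_i|^2)$. Since $\dot\sigma_i(0)\to w$ and $\exp_{q_i}^{-1}(p)=-v_i+o(|v_i|)$ under the natural identification of nearby tangent spaces, the inner product equals $-\langle w,v_i\rangle+o(|v_i|)$, whence $d_X(q_i)\ge r-\langle w,v_i\rangle-o(|v_i|)$, contradicting the choice of the $v_i$. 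Hence $d_X$ is differentiable at $p$ with $\nabla d_X(p)=-w$.

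Finally (iii): on an open set $U\subset M\setminus X$ on which $d_X$ is everywhere differentiable it suffices, by definition of $\C^1$, to check that $\nabla d_X$ is continuous on $U$. For $p_i\to p$ in $U$, write $\nabla d_X(p_i)=-\dot\gamma_i(0)$ by (ii), with $\gamma_i$ the minimizing geodesic from $p_i$ to $X$; these are unit vectors, and by the second ingredient every subsequential limit of $-\dot\gamma_i(0)$ has the form $-\dot\gamma(0)$ for some minimizing geodesic $\gamma$ from $p$ to $X$, which by parts (i)--(ii) already proved (applied at $p\in U$) equals $\nabla d_X(p)$; since all subsequential limits agree, $\nabla d_X(p_i)\to\nabla d_X(p)$. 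The main obstacle is the lower bound in the converse half of (i): it is essentially the local semiconcavity of $d_X$ on $M\setminus X$ in disguise, which in the Riemannian (as opposed to Euclidean) setting is delicate because each $d(\cdot,x)$ is smooth only off the cut locus of $x$. The route above sidesteps this by using explicit comparison curves and honest minimizers $x_i$, so that smoothness of any individual $d(\cdot,x)$ near $q_i$ is never invoked; the price is that one must make the estimate $\exp_{q_i}^{-1}(p)=-v_i+o(|v_i|)$ precise, which is routine in a normal chart at $p$ but is the one place where the tangent-space bookkeeping must be handled carefully. Alternatively, one may invoke the local semiconcavity of $d_X$ directly and read off (i)--(iii) from the superdifferential calculus for semiconcave functions, as in \cites{cannarsa-sinestrari2004,mantegazza-mennucci2003}, once $D^+d_X(p)$ is identified with the convex hull of the vectors $-\dot\gamma(0)$ over minimizing geodesics $\gamma$ from $p$ to nearest points of $X$.
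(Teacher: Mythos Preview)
Your argument is correct. The paper itself does not give a proof of this lemma: it simply remarks that the result follows from the superdifferential calculus for semiconcave functions \cite[Prop.~3.3.4 \& 4.4.1]{cannarsa-sinestrari2004}, and that these arguments transfer to Riemannian manifolds via local charts \cite[Prop.~2.9]{mantegazza-mennucci2003}. You take a genuinely different route: rather than invoking the semiconcavity of $d_X$ on $M\setminus X$ and identifying $D^+d_X(p)$ with the convex hull of the vectors $-\dot\gamma(0)$, you work directly with explicit comparison curves $t\mapsto\exp_{\gamma(t)}\big(\tfrac{r-t}{r}P_t v\big)$ and a compactness argument for minimizing geodesics. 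The payoff is a self-contained proof that never appeals to the semiconcave machinery and, as you note, is insensitive to conjugate points along the minimizers; the price is the tangent-space bookkeeping in the estimate $\exp_{q_i}^{-1}(p)=-v_i+o(|v_i|)$ and in the inner product $\langle\dot\sigma_i(0),\exp_{q_i}^{-1}(p)\rangle$, which you handle correctly via parallel transport along the short geodesic from $p$ to $q_i$. Your closing paragraph already identifies the semiconcavity route as the alternative---that is exactly what the paper cites.
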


Throughout this paper, $\Gamma$ will denote a closed embedded topological hypersurface in $M$. Furthermore we assume that $\Gamma$ bounds a designated \emph{domain}  $\Omega$,  i.e., a connected open set with compact closure $\cl(\Omega)$ and boundary
$$
\partial\Omega=\Gamma.
$$
The  \emph{(signed) distance function} $\widehat{d}_\Gamma\colon M\to\R$ of $\Gamma$ (with respect to $\Omega$) is then given by
$$
\widehat{d}_\Gamma(\,\cdot\,):=d_\Omega(\,\cdot\,)-d_{M\setminus\Omega}(\,\cdot\,).
$$
In other words, $\widehat{d}_\Gamma(p)=-d_\Gamma(p)$ if $p\in \Omega$, and $\widehat{d}_\Gamma(p)=d_\Gamma(p)$ otherwise. The level sets $\widehat{d}_\Gamma^{-1}(t)$ will be called \emph{outer parallel hypersurfaces} of $\Gamma$ if $t>0$, and \emph{inner parallel hypersurfaces} if $t<0$.
 Let $\textup{reg}(\widehat{d}_\Gamma)$ be the union of all open sets in $M$ where each point has a unique footprint on $\Gamma$. 
Then the  \emph{cut locus} of $\Gamma$  is defined as 
$$
\textup{cut}(\Gamma):=M\setminus \textup{reg}\big(\widehat{d}_\Gamma\big).
$$
For instance when $\Gamma$ is an ellipse in $\R^2$, $\textup{cut}(\Gamma)$ is the line segment in $\Omega$ connecting the focal points of the inward normals (or the cusps of the evolute) of $\Gamma$, see Figure \ref{fig:ellipse}.
\begin{figure}[h]
\centering
\begin{overpic}[height=1in]{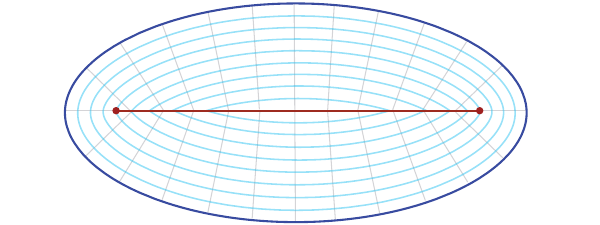}
\put( 14,4 ){\Small $\Gamma$}
\put( 62,15 ){\Small $\text{cut}(\Gamma)$}
\put( 47.5,5 ){\Small $\Omega$}
\end{overpic}
\caption{}\label{fig:ellipse}
\end{figure}
Note that the singularities of parallel hypersurfaces of $\Gamma$ all lie on $\text{cut}(\Gamma)$. Since
  $d_\Gamma$ may not be differentiable at any point of $\Gamma$,  we find it more convenient to work with $\widehat{d}_\Gamma$ instead. Part (iii) of Lemma \ref{lem:CS} may be extended as follows:
\begin{lemma}\label{lem:widehatdC1}
If $\Gamma$ is $\C^1$, then $\widehat{d}_\Gamma$ is $\C^1$ on $M\setminus\textup{cut}(\Gamma)$ with $|\nabla\widehat{d}_\Gamma|=1$.
\end{lemma}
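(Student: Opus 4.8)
The plan is to work locally: for each $p\in M\setminus\textup{cut}(\Gamma)=\textup{reg}(\widehat d_\Gamma)$ I would fix an open set $V\ni p$ on which every point has a unique footprint on $\Gamma$, and prove $\widehat d_\Gamma\in\C^1(V)$ with $|\nabla\widehat d_\Gamma|=1$; since such sets $V$ cover $M\setminus\textup{cut}(\Gamma)$, this finishes the proof. As a preliminary, note that any geodesic joining a point of $\Omega$ to a point of $M\setminus\Omega$ must cross $\Gamma=\partial\Omega$, which gives $\widehat d_\Gamma=d_\Gamma$ on $M\setminus\cl(\Omega)$, $\widehat d_\Gamma=-d_\Gamma$ on $\Omega$, and $\widehat d_\Gamma\equiv 0$ on $\Gamma$; in particular $\widehat d_\Gamma$ is continuous. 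On the open set $V\setminus\Gamma$ every point has a unique footprint, so Lemma \ref{lem:CS} gives $d_\Gamma\in\C^1(V\setminus\Gamma)$ with $|\nabla d_\Gamma|=1$, hence $\widehat d_\Gamma\in\C^1(V\setminus\Gamma)$ with $|\nabla\widehat d_\Gamma|=1$, its sign being locally constant there.

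All the real content lies at points of $\Gamma$, where $d_\Gamma$ is generally nondifferentiable and Lemma \ref{lem:CS} is silent. First I would show that $\widehat d_\Gamma$ is differentiable at every $x_0\in\Gamma\cap V$, with gradient the outward unit normal $\nu(x_0)$ of $\Gamma$ at $x_0$ (well defined and continuous along $\Gamma$ since $\Gamma\in\C^1$, with sign fixed by $\Omega$). To do this I would pass to normal coordinates centered at $x_0$ with $T_{x_0}\Gamma=\{x_n=0\}$ and $\partial_n=\nu(x_0)$, so that near $x_0$ one has $\Gamma=\{x_n=\phi(x')\}$ with $\phi$ of class $\C^1$, $\phi(0)=0$, $D\phi(0)=0$, and $\Omega$ locally equal to $\{x_n<\phi(x')\}$. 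Using the metric expansion $g=\delta+O(|x|^2)$ to compare lengths of Riemannian geodesics and Euclidean segments confined to a small ball, I would bound $d_\Gamma(x)$ above by testing against the nearby graph point $(x',\phi(x'))$ and below by testing against all of $\Gamma$ — points of $\Gamma$ at coordinate-distance more than $2|x|$ from $x_0$ lie at distance more than $|x|$ from $x$ and so are irrelevant, while the remaining points lie in the graph patch, where $\phi=o(|x|)$. Both estimates give $\widehat d_\Gamma(x)=x_n+o(|x|)$ in these coordinates, which is exactly differentiability at $x_0$ with $\nabla\widehat d_\Gamma(x_0)=\nu(x_0)$; in particular $|\nabla\widehat d_\Gamma(x_0)|=1$.

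It then remains to show that the gradient of $\widehat d_\Gamma$ — namely $\nabla\widehat d_\Gamma$ on $V\setminus\Gamma$ and $\nu$ on $\Gamma\cap V$ — is continuous on $V$; granting this, $\widehat d_\Gamma$ is differentiable throughout $V$ with continuous gradient, i.e.\ $\widehat d_\Gamma\in\C^1(V)$. Continuity off $\Gamma$ is clear and continuity within $\Gamma\cap V$ comes from continuity of $\nu$, so the point is to handle $x_0\in\Gamma\cap V$ approached by $q_k\in V\setminus\Gamma$ with (unique) footprints $q_k^\circ$. Here the key input is the first variation of arc length: since $\Gamma$ is $\C^1$, the minimizing geodesic from $q_k^\circ$ to $q_k$ meets $\Gamma$ orthogonally at $q_k^\circ$, and unwinding the inner/outer cases gives $q_k=\exp_{q_k^\circ}\!\big(\widehat d_\Gamma(q_k)\,\nu(q_k^\circ)\big)$ and $\nabla\widehat d_\Gamma(q_k)=\Phi_{\widehat d_\Gamma(q_k)}\big(\nu(q_k^\circ)\big)$, where $\Phi_s$ denotes the geodesic flow on $TM$. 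Since $|\widehat d_\Gamma(q_k)|=d_\Gamma(q_k)\le d(q_k,x_0)\to 0$, we get $q_k^\circ\to x_0$ and $\widehat d_\Gamma(q_k)\to 0$, so continuity of $\nu$ and of $(v,s)\mapsto\Phi_s(v)$ forces $\nabla\widehat d_\Gamma(q_k)\to\Phi_0(\nu(x_0))=\nu(x_0)$, as required.

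I expect this last gluing step to be the main obstacle. Because $\Gamma$ is only $\C^1$, there are no supporting spheres, no second fundamental form, and no uniform tubular neighborhood to lean on, and $d_\Gamma$ genuinely fails to be differentiable along $\Gamma$, so the value of $\nabla\widehat d_\Gamma$ on $\Gamma$ must be produced by hand (the chart estimate) and then reconciled with the off-$\Gamma$ formula coming from Lemma \ref{lem:CS}; orthogonality of the normal geodesics together with continuity of the geodesic flow is precisely what makes these two descriptions agree in the limit. By contrast, the chart estimate is routine once the comparison balls are chosen, and the orthogonality is the standard first-variation argument, valid already for $\C^1$ hypersurfaces.
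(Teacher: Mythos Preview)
Your approach is correct but takes a different route from the paper's. You attack the crux (points of $\Gamma$) head-on: a normal-chart estimate produces the derivative $\nu(x_0)$ at each $x_0\in\Gamma$, and then first-variation orthogonality together with continuity of the geodesic flow glues this to the off-$\Gamma$ gradient supplied by Lemma~\ref{lem:CS}. The paper instead sidesteps this direct analysis with a shift trick: for $p\in\Gamma\setminus\textup{cut}(\Gamma)$ it passes to the nearby parallel level set $\Gamma_\e:=(\widehat d_\Gamma)^{-1}(\e)$, notes that $\Gamma_\e$ is $\C^1$ near $p$ (by Lemma~\ref{lem:CS} applied on $U\setminus\Gamma$), checks via the normal fibration of $U$ that $U\cap\textup{cut}(\Gamma_\e)=\emptyset$, and then uses the identity $\widehat d_\Gamma=\widehat d_{\Gamma_\e}-\e$ together with Lemma~\ref{lem:CS} for $\Gamma_\e$ (now at a point $p\notin\Gamma_\e$) to conclude. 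The paper's argument is shorter and avoids any chart computation; your argument is more self-contained, makes no appeal to an auxiliary hypersurface, and explicitly identifies $\nabla\widehat d_\Gamma$ on $\Gamma$ as the outward unit normal, which is a pleasant byproduct.
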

\begin{proof}
By Lemma \ref{lem:CS}, $\widehat{d}_\Gamma$ is $\C^1$ on 
$(M\setminus\Gamma)\setminus\textup{cut}(\Gamma)$. Thus it remains to consider the regularity 
of $\widehat{d}_\Gamma$ on $\Gamma\setminus\textup{cut}(\Gamma)$. To this end let $p\in \Gamma\setminus\textup{cut}(\Gamma)$, and $U$ be a convex open neighborhood of $p$ in $M$ which is disjoint 
from $\textup{cut}(\Gamma)$. Then each point of $U$ has a unique footprint on $\Gamma\cap U$, and thus $U$ is fibrated by geodesic segments orthogonal to $\Gamma\cap U$. For convenience,  we may assume that all these segments have the same length. 
Now let $\Gamma_\e:=(\widehat{d}_\Gamma)^{-1}(\e)$ where $\e>0$ is so small that $\Gamma_\e$ intersects $U$. Then each point of $\Gamma_\e\cap U$ has a unique foot print on $\Gamma\cap U$. Furthermore, by  Lemma \ref{lem:CS}, $\Gamma_\e\cap U$ is a $\C^1$ hypersurface, since $\widehat{d}_\Gamma$ is $\C^1$ on $U\setminus\Gamma$ and has nonvanishing gradient there. So $\Gamma_\e\cap U$ is orthogonal to the geodesic segments fibrating $U$. Since these segments do not intersect each other,  $U$ is disjoint from $\textup{cut}(\Gamma_\e)$. So
$\widehat{d}_{\Gamma_\e}$ is $\C^1$ on $U\setminus\Gamma_\e$ by Lemma \ref{lem:CS}. Finally note that 
$\widehat{d}_\Gamma=\widehat{d}_{\Gamma_\e}-\e$ on $U$, which completes the proof.
\end{proof}
The \emph{medial axis} of $\Gamma$, $\textup{medial}(\Gamma)$, is  the set of points in $M$ with multiple footprints on $\Gamma$. Note that
\be\label{eq:medial}
\textup{cut}(\Gamma)=\cl\big(\text{medial}(\Gamma)\big).
\ee
For instance, when $\Gamma$ is an ellipse in $\R^2$, $\textup{medial}(\Gamma)$ is the relative interior of the segment connecting its foci.
Let $\textup{sing}(\widehat{d}_\Gamma)$ denote the set of \emph{singularities} of $\widehat{d}_\Gamma$ or points of $M$ where $\widehat{d}_\Gamma$ is not differentiable. Then 
$$
\textup{medial}(\Gamma)=\textup{sing}\big(\widehat{d}_\Gamma\big),\quad\quad\text{and}\quad\quad\textup{cut}(\Gamma)=\cl\big(\textup{sing}(\widehat{d}_\Gamma)\big).
$$
By a (geodesic) \emph{sphere} $S\subset M$ we mean the boundary of a \emph{geodesic ball}, i.e., the image under the exponential map $\exp_p\colon T_p M\to M$ of a ball centered at $p$. 
We say that a sphere $S$ lies in a set $A\subset M$ if the ball which it bounds lies in $A$. Furthermore, $S$ is \emph{maximal} in $A$ if it is not contained in (the geodesic ball bounded by) a sphere of larger radius in $A$. The set of centers of maximal spheres contained in $\cl(\Omega)$ is called the \emph{skeleton} of $\Omega$. 

\begin{lemma}\label{lem:skeleton}
Suppose that every pair of points of $\Omega$ is connected by a unique geodesic in $M$. Then
$$
\textup{medial}(\Gamma)\cap\Omega\;\;\subset\;\; \textup{skeleton}(\Omega)\;\;\subset\; \;\cl\big(\textup{medial}(\Gamma)\cap\Omega\big).
$$
\end{lemma}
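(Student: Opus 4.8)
The plan is to prove the two set‑theoretic inclusions separately, the common ingredient being the elementary fact that for $p\in\Omega$ the geodesic ball $B(p,d_\Gamma(p))$ is the largest geodesic ball centered at $p$ whose bounding sphere lies in $\cl(\Omega)$: any such ball is disjoint from $\Gamma$, and a connected set disjoint from $\Gamma=\partial\Omega$ that meets $\Omega$ must lie entirely in $\Omega$.

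For $\textup{medial}(\Gamma)\cap\Omega\subset\textup{skeleton}(\Omega)$, take $p\in\textup{medial}(\Gamma)\cap\Omega$ with distinct footprints $a,b\in\Gamma$ and set $r:=d_\Gamma(p)>0$. By the remark above, $B:=B(p,r)\subset\Omega$, so $S:=S(p,r)\subset\cl(\Omega)$, and it remains to show that $S$ is maximal. Suppose instead $\cl(B)\subset\cl(B')$ for a larger geodesic ball $B'=B(p',r')\subset\Omega$ with $r'>r$. Since $a,b\in\cl(B)\subset\cl(B')$ while $a,b\in\Gamma$ lie outside the open set $B'\subset\Omega$, we get $d(p',a)=d(p',b)=r'=d_\Gamma(p')$, so each of $a,b$ realizes the distance to $\Gamma$ from both $p$ and $p'$. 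In a Cartan--Hadamard manifold, extending the $p'$--$p$ geodesic past $p$ shows $d(p',p)\le r'-r$, while the triangle inequality gives $d(p',p)\ge d(p',a)-d(p,a)=r'-r$; hence $d(p',p)+d(p,a)=d(p',a)$, so $p$ lies on the (unique) minimizing geodesic from $p'$ to $a$, and likewise on the one from $p'$ to $b$. The initial segments from $p'$ to $p$ of these two geodesics are each the unique minimizing geodesic from $p'$ to $p$, hence coincide; being geodesics, the two geodesics then agree everywhere, forcing $a=b$ --- a contradiction. Thus $S$ is maximal and $p\in\textup{skeleton}(\Omega)$.

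For $\textup{skeleton}(\Omega)\subset\cl\big(\textup{medial}(\Gamma)\cap\Omega\big)$, let $c\in\textup{skeleton}(\Omega)$ with maximal sphere $S(c,r)$; then $B(c,r)\subset\Omega$, so $c\in\Omega$, and $r=d_\Gamma(c)$, since otherwise $S\big(c,d_\Gamma(c)\big)$ would be a strictly larger sphere in $\cl(\Omega)$ whose closed ball contains $\cl(B(c,r))$. I claim $c\in\textup{cut}(\Gamma)$. If not, then by definition of $\textup{cut}(\Gamma)$ the point $c$ has a unique footprint $c^{\circ}$; the minimizing geodesic $\gamma$ from $c^{\circ}$ to $c$ meets $\Gamma$ normally and $c$ precedes the cut point of $\Gamma$ along $\gamma$, so the points $c_t:=\gamma(r+t)$ satisfy $d_\Gamma(c_t)=r+t$ and $c_t\in\Omega$ for all small $t>0$. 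By the triangle inequality $d(c_t,x)\le d(c_t,c)+d(c,x)\le t+r$ for every $x\in\cl(B(c,r))$, so $\cl(B(c,r))\subset\cl(B(c_t,r+t))$; moreover $B(c_t,r+t)$ is disjoint from $\Gamma$ and contains $c_t\in\Omega$, hence lies in $\Omega$ and $S(c_t,r+t)\subset\cl(\Omega)$. This exhibits a strictly larger sphere in $\cl(\Omega)$ containing $S(c,r)$, contradicting maximality. Therefore $c\in\textup{cut}(\Gamma)\cap\Omega$, and since $\Omega$ is open, \eqref{eq:medial} yields $c\in\cl\big(\textup{medial}(\Gamma)\big)\cap\Omega\subset\cl\big(\textup{medial}(\Gamma)\cap\Omega\big)$, as desired.

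The step I expect to be the main obstacle is the maximality verification in the first inclusion: it relies on uniqueness of minimizing geodesics --- which is where the Cartan--Hadamard hypothesis (or at least the absence of the relevant conjugate points) enters --- and on the normality to $\Gamma$ of distance‑minimizing segments, which must be handled carefully in view of the low regularity assumed on $\Gamma$. A minor additional point is justifying, for a general ambient $M$, the passage from ``$S\subset\cl(\Omega)$'' to ``the open ball bounded by $S$ lies in $\Omega$'' when $M\setminus\cl(\Omega)$ is disconnected, which uses compactness of $\cl(\Omega)$ together with connectedness of geodesic balls.
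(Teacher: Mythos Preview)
Your proof of the second inclusion is essentially the paper's: assuming $c\notin\textup{cut}(\Gamma)$, both arguments use the $C^1$ regularity of $d_\Gamma$ near $c$ (Lemma~\ref{lem:CS}) to extend the geodesic from the unique footprint $c^\circ$ slightly past $c$, producing a strictly larger inscribed ball and contradicting maximality. You spell out the containment $\cl(B(c,r))\subset\cl(B(c_t,r+t))$ and the passage $\textup{cut}(\Gamma)\cap\Omega\subset\cl\big(\textup{medial}(\Gamma)\cap\Omega\big)$ more explicitly, but the idea is identical.

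For the first inclusion the paper writes only ``The first inclusion is immediate.'' Your detailed argument is correct but, as you yourself flag, genuinely uses the Cartan--Hadamard hypothesis in two places: to obtain $d(p',p)\le r'-r$ by extending the $p'$--$p$ geodesic past $p$ (you need the extension to remain minimizing so that the endpoint, which lies in $S(p,r)\subset\cl(B')$, is at distance exactly $d(p',p)+r$ from $p'$), and then to conclude that the minimizing geodesics $p'a$ and $p'b$ share their initial $p'p$ segment. Since the lemma is stated in Section~\ref{sec:distance} for a general complete Riemannian $M$, your proof technically establishes less than the paper claims---but the paper offers no argument either, and in any case the first inclusion is never invoked downstream: the only later uses of this lemma (in Appendix~\ref{sec:projection}) call on $\textup{skeleton}(\Omega)\subset\textup{cut}(\Gamma)$, i.e.\ the second inclusion. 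So this is a discrepancy in stated generality, not a gap that affects anything in the paper.
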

\begin{proof}
The see the first inclusion, let $x\in \textup{medial}(\Gamma)\cap\Omega$. Then there exists a sphere $S\subset \cl(\Omega)$, centered at $x$, which touches $\Gamma$ in (at least) two distinct points, say $y$ and $y'$.  Suppose there exists a sphere $S'\subset \cl(\Omega)$ which contains $S$. Then $y$, $y'$ lie on $S'$. Consider the geodesic rays $\gamma$, $\gamma'$ in $\cl(\Omega)$ which start at $y$, $y'$ and are orthogonal to $S$. These geodesics meet for the first time at $x$, due to the uniqueness property of geodesics in $\Omega$. But $\gamma$, $\gamma'$ are also orthogonal to $S'$. Thus $x$ is the center of $S'$ as well. Hence $S'=S$, which means that $S$ is maximal. So $x\in \textup{skeleton}(\Omega)$.

To see the second inclusion, let $x\in \textup{skeleton}(\Omega)$. Then there exists a maximal sphere $S$ in $\cl(\Omega)$ centered at $x$. By \eqref{eq:medial}, it suffices to show that $x\in\textup{cut}(\Gamma)$. Suppose that $x\not\in\textup{cut}(\Gamma)$. Then, by Lemma \ref{lem:CS}, $d_\Gamma$ is $\C^1$ in a neighborhood $U$ of $x$. Furthermore $\nabla d_\Gamma$ does not vanish on $U$, and its integral curves are distance minimizing geodesics connecting points of $U$ to their unique footprints on $\Gamma$. It follows then that the geodesic connecting $x$ to its footprint in $\Gamma$, may be extended at $x$ to a longer distance minimizing geodesic. This contradicts the maximality of $S$ and completes the proof.
\end{proof}

Note that the hypothesis of Lemma \ref{lem:skeleton} is used only to establish the first inclusion.
The inclusion relations in this lemma are in general strict, even when $M=\R^n$ \cite{chazal-soufflet2004}.
There is a vast literature on the singularities of the distance function, due to its applications in a number of fields, including computer vision, and connections to Hamilton-Jacobi equations; see \cites{mantegazza-mennucci2003,li-nirenberg2005,ardoy2014,damon2006, mather1983} for more references and background. Lemma \ref{lem:widehatdC1} may be extended as follows:

\begin{lemma}[\cites{foote1984,mantegazza-mennucci2003}]\label{lem:MM}
For $k\geq 2$, if $\Gamma$ is $\C^k$, then $\widehat{d}_\Gamma$ is $\C^k$ on $M\setminus \textup{cut}(\Gamma)$.
\end{lemma}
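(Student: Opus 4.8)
The plan is to follow Foote's strategy \cite{foote1984}: reduce the lemma to the statement that the \emph{squared} distance $d_\Gamma^2$ is $\C^k$, and then recover $\widehat{d}_\Gamma$ from it. The delicate point is that the unit normal of a $\C^k$ hypersurface is only $\C^{k-1}$, so the normal exponential map and the naive implicit function theorem yield only $\C^{k-1}$ regularity; the missing derivative is recovered by an envelope identity, and this is the crux. It suffices to prove the following local claim: \emph{if $\Sigma$ is a $\C^k$ hypersurface and $q$ is so close to $\Sigma$ that $q$ and its footprint $q^\circ$ on $\Sigma$ are joined by a short minimizing geodesic without conjugate points, then $d_\Sigma^2$ is $\C^k$ near $q$.}

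To prove the claim, I would parametrize $\Sigma$ near $q^\circ$ by a $\C^k$ embedding $\psi$ of an open set in $\R^{n-1}$, with $\psi(x_0)=q^\circ$, and set $g(p,x):=\tfrac12\,d\big(p,\psi(x)\big)^2$; since $q$ and $q^\circ$ are close and non-conjugate, $g$ is defined near $(q,x_0)$ and is $\C^\infty$ in $p$ and $\C^k$ in $x$. The footprint of $p$ corresponds to the critical point $x$ of $g(p,\cdot)$ near $x_0$, and the $x$-Hessian of $g$ at $(q,x_0)$ is nondegenerate because $q$ lies before the first focal point of $\Sigma$---this is where the hypothesis $q\notin\textup{cut}(\Sigma)$ enters, via standard properties of the cut locus \cite{mantegazza-mennucci2003}. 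Hence, by the implicit function theorem, the footprint map $p\mapsto x(p)$ is $\C^{k-1}$ near $q$. Now $\tfrac12\,d_\Sigma^2(p)=g\big(p,x(p)\big)=\min_x g(p,x)$, so the envelope identity
\[
\nabla_p\!\left(\tfrac12\,d_\Sigma^2\right)(p)=(\nabla_p g)\big(p,x(p)\big)
\]
holds, the derivative of $x(\cdot)$ dropping out because $\partial_x g$ vanishes along the critical locus. The right-hand side is the composition of the map $(p,x)\mapsto(\nabla_p g)(p,x)$, which is $\C^\infty$ in $p$ and $\C^k$ in $x$, with the $\C^{k-1}$ map $x(\cdot)$, hence is $\C^{k-1}$; therefore $d_\Sigma^2\in\C^k$ near $q$, proving the claim.

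Granting the claim, the lemma follows in three steps, each modeled on the proof of Lemma \ref{lem:widehatdC1}. First, near any point close to $\Gamma$ and off $\Gamma\cup\textup{cut}(\Gamma)$, the claim gives $d_\Gamma^2\in\C^k$ with $d_\Gamma^2>0$, so $\widehat{d}_\Gamma=\pm\sqrt{d_\Gamma^2}\in\C^k$. Next, near a point $p^\circ\in\Gamma\setminus\textup{cut}(\Gamma)$, choose a small convex neighborhood $U\ni p^\circ$ disjoint from $\textup{cut}(\Gamma)$ and a parallel hypersurface $\Gamma_\e=(\widehat{d}_\Gamma)^{-1}(\e)$, $\e\neq 0$ small, meeting $U$: by the first step $\Gamma_\e\cap U$ is a $\C^k$ hypersurface, $U$ is disjoint from $\textup{cut}(\Gamma_\e)$ by the fibration argument of Lemma \ref{lem:widehatdC1}, and applying the claim to $\Gamma_\e$ (with $q=p^\circ$, which lies close to $\Gamma_\e$) gives $\widehat{d}_{\Gamma_\e}\in\C^k$ near $p^\circ$; since $\widehat{d}_\Gamma-\widehat{d}_{\Gamma_\e}$ is constant on $U$, also $\widehat{d}_\Gamma\in\C^k$ near $p^\circ$. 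Finally, for a general $p\in M\setminus\textup{cut}(\Gamma)$ with $d_\Gamma(p)>0$, the minimizing geodesic from its footprint to $p$ lies in $M\setminus\textup{cut}(\Gamma)$, and covering it by finitely many small balls and propagating $\C^k$ regularity outward one ball at a time---replacing $\Gamma$ at each step by a nearby parallel hypersurface, which is $\C^k$ by what precedes, and reapplying the claim---yields $\widehat{d}_\Gamma\in\C^k$ near $p$. The main obstacle is the gain of one derivative in the claim: the envelope identity together with the nondegeneracy of the $x$-Hessian of $g$ off $\textup{cut}(\Sigma)$; the rest is the routine bookkeeping of localizing near a hypersurface and sliding parallel hypersurfaces, as in Lemma \ref{lem:widehatdC1}.
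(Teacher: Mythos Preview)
The paper does not supply its own proof of this lemma: immediately after the statement it simply remarks that the result is well-known for $\R^n$ via the inverse function theorem applied to the normal exponential map, citing \cite{foote1984}, and that the Riemannian case is established in \cite[Prop.~4.3]{mantegazza-mennucci2003} by essentially the same argument. Your proposal is a correct and carefully detailed execution of precisely that Foote-style envelope argument, so it is entirely consistent with (indeed more explicit than) what the paper offers.

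One small remark on your write-up: in the final propagation step you should make explicit why, for a point $q\in M\setminus\textup{cut}(\Gamma)$ far from $\Gamma$, the parallel hypersurface $\Gamma_t$ with $t$ just below $\widehat{d}_\Gamma(q)$ satisfies $q\notin\textup{cut}(\Gamma_t)$ and is close enough to $q$ for your local claim to apply; this follows from the fact that the normal geodesics to $\Gamma$ fibrate $M\setminus\textup{cut}(\Gamma)$ (as in Lemma~\ref{lem:widehatdC1}), but it deserves a sentence. Otherwise the argument is sound.
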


\noindent This fact has been well-known for $M=\R^n$ and $k\geq 2$, as it follows from the basic properties of the normal bundle of $M$,  and applying the  inverse function theorem to the exponential map, e.g.  see \cite{foote1984} or \cite[Sec. 2.4]{ghomi2001}.  For Riemannian manifolds, the lemma has been  established in 
\cite[Prop. 4.3]{mantegazza-mennucci2003}, via essentially the same exponential mapping argument in \cite{foote1984}.

For the purposes of this work, we still need to gather finer information about Lipschitz regularity of derivatives of $\widehat{d}_\Gamma$. To this end we invoke Federer's notion of \emph{reach}  \cite{federer1959, thale2008} which may be defined as
$$
\textup{reach}(\Gamma):=d\big(\Gamma, \textup{cut}(\Gamma)\big).
$$
In particular note that $\textup{reach}(\Gamma)\geq r$ if and only if there exists  a geodesic ball of radius $r$ \emph{rolling freely} on each side of $\Gamma$ in $M$, i.e., through each point $p$ of $\Gamma$ there passes the boundaries of geodesic balls $B$, $B'$ of radius $r$ such that $B\subset\cl(\Omega)$, and $B'\subset M\setminus\Omega$. We say that $\Gamma$ is $\C^{1,1}$, if it is $\C^{1,1}$ in local charts, i.e., for each point $p\in M$ there exists a neighborhood $U$ of $p$ in $M$, and a $\C^\infty$ diffeomorphism $\phi\colon U\to\R^n$ such that $\phi(\Gamma)$ is $\C^{1,1}$ in $\R^n$. A function $u\colon M\to\R$ is called \emph{locally $\C^{1,1}$} on some region $X$, if it is $\C^{1,1}$ in local charts covering $X$. If $X$ is compact, then we simply say that $u$ is $\C^{1,1}$ near $X$.
\begin{lemma}\label{lem:GH}
The following conditions are equivalent:
\begin{enumerate}[(i)]
\item{$\textup{reach}(\Gamma)>0$.}
\item{$\Gamma$ is  $\C^{1,1}$.} 
\item{$\widehat{d}_\Gamma$ is  $\C^{1,1}$ near $\Gamma$.}
\end{enumerate}
\end{lemma}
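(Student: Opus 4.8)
The plan is to establish the cycle of implications $(iii)\Rightarrow(i)\Rightarrow(ii)\Rightarrow(iii)$, leaning on Lemma~\ref{lem:widehatdC1}, the rolling‑ball characterization of reach recorded above, and the identity $\textup{cut}(\Gamma)=\cl\big(\textup{sing}(\widehat{d}_\Gamma)\big)$. For $(iii)\Rightarrow(i)$ I would argue as follows: if $\widehat{d}_\Gamma$ is $\C^{1,1}$ near $\Gamma$ it is in particular differentiable on some open neighborhood $V$ of $\Gamma$, so $V$ is disjoint from $\textup{sing}(\widehat{d}_\Gamma)=\textup{medial}(\Gamma)$; since $V$ is open and $\textup{cut}(\Gamma)$ is the \emph{closure} of $\textup{medial}(\Gamma)$, this forces $V\cap\textup{cut}(\Gamma)=\emptyset$, i.e.\ $\textup{reach}(\Gamma)=d(\Gamma,\textup{cut}(\Gamma))>0$.

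For $(i)\Rightarrow(ii)$, fix $r<\textup{reach}(\Gamma)$. At every $p\in\Gamma$ there pass the boundaries of geodesic balls $B\subset\cl(\Omega)$ and $B'\subset M\setminus\Omega$ of radius $r$ meeting $\Gamma$ only at $p$; since their interiors are disjoint while their centers lie at distance $r$ from $p$, the triangle inequality forces these centers to be at distance exactly $2r$ and joined by a minimizing geodesic through $p$, which is therefore smooth at $p$. Thus $\partial B$ and $\partial B'$ are smooth hypersurfaces internally tangent at $p$, locally trapping $\Gamma$ between them, so $\Gamma$ is differentiable at $p$ with tangent plane normal to that geodesic; as $p$ varies the resulting unit normal field is Lipschitz — with constant controlled by $r$ and the geometry of the compact tube $\cl(U_r(\Gamma))$ — so $\Gamma$ is $\C^{1,1}$. (This is essentially Federer's theorem on sets of positive reach; alternatively one works in local charts, where a $\C^{1,1}$ graph admits Euclidean balls of a uniform radius rolling freely on both sides, and transfers these to geodesic balls on $M$ using lower bounds for the injectivity radius and two‑sided curvature bounds on $\cl(U_r(\Gamma))$.)

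For $(ii)\Rightarrow(iii)$: since $\Gamma$ is $\C^{1,1}$ it is $\C^1$, and by the equivalence with $(i)$ just shown there is $r>0$ with $U_r(\Gamma)\cap\textup{cut}(\Gamma)=\emptyset$, so Lemma~\ref{lem:widehatdC1} gives $\widehat{d}_\Gamma\in\C^1(U_r(\Gamma))$ with $|\nabla\widehat{d}_\Gamma|\equiv 1$. To upgrade to $\C^{1,1}$, note that $U_r(\Gamma)$ is foliated by the unit‑speed geodesics orthogonal to $\Gamma$ — the integral curves of $\nabla\widehat{d}_\Gamma$ — and along each such geodesic $\gamma$ the restriction of $\nabla^2\widehat{d}_\Gamma$ to the parallel hypersurface $\Gamma_t$ through $\gamma(t)$ is its shape operator $S_t$, which satisfies (in the a.e.\ sense) the matrix Riccati equation $S_t'+S_t^2+R(\,\cdot\,,\gamma')\gamma'=0$. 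The curvature term is bounded on $\cl(U_r(\Gamma))$; the initial datum $S_0$, the second fundamental form of $\Gamma$ (defined a.e.\ since $\Gamma$ is $\C^{1,1}$), has operator norm $\le 1/r$ by the rolling‑ball hypothesis; and there are no focal points of $\Gamma$ within distance $r$, precisely because $\textup{reach}(\Gamma)>r$. Hence Riccati comparison against a constant‑curvature model bounds $\|S_t\|$ uniformly on $U_{r'}(\Gamma)$ for every $r'<r$, while the remaining components of $\nabla^2\widehat{d}_\Gamma$ (those involving $\nabla\widehat{d}_\Gamma$ itself) vanish. So $\widehat{d}_\Gamma$ has a.e.\ bounded second derivatives near $\Gamma$, depending absolutely continuously along the fibers and Lipschitz‑continuously across them, whence $\nabla\widehat{d}_\Gamma$ is Lipschitz near $\Gamma$ and $(iii)$ follows. (This also closes the loop more directly: when $\widehat{d}_\Gamma$ is $\C^{1,1}$ near $\Gamma$, on $V\setminus\Gamma$ one has $\widehat{d}_\Gamma=\pm d_\Gamma$ with $|\nabla d_\Gamma|=1$ by Lemma~\ref{lem:CS}(ii), so $\Gamma=\widehat{d}_\Gamma^{-1}(0)$ is a level set of a $\C^{1,1}$ function with non‑vanishing gradient, and the $\C^{1,1}$ implicit function theorem returns $(ii)$.)

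The main obstacle will be the estimate in $(ii)\Rightarrow(iii)$: converting the geometric statement "no focal points within the reach" into a genuine Lipschitz bound on $\nabla\widehat{d}_\Gamma$ up to and including $\Gamma$ requires combining the Riccati comparison along the normal fibration with the merely a.e.\ second fundamental form of $\Gamma$ and its $\C^{1,1}$ dependence transverse to the fibers, and then invoking the characterization of locally $\C^{1,1}$ functions as those whose gradient is locally Lipschitz (equivalently, with locally bounded a.e.\ second derivatives in the appropriate sense). By contrast, the equivalence of $(i)$ and $(ii)$ is classical, the only mild nuisance being the passage between local charts and $M$.
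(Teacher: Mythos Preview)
Your argument is correct in outline, and the logical organization into a cycle $(iii)\Rightarrow(i)\Rightarrow(ii)\Rightarrow(iii)$ is clean. The step $(iii)\Rightarrow(i)$ via $\textup{cut}(\Gamma)=\cl(\textup{sing}(\widehat d_\Gamma))$ is especially tidy.

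The paper proceeds differently. Rather than a cycle, it first establishes $(i)\Leftrightarrow(ii)$ by reducing to $\R^n$ through local charts and invoking the known Euclidean result \cite{ghomi-howard2014}; the main labor there is checking that positive reach survives passage through a chart (via uniform curvature bounds on the images of rolling spheres, or Blaschke's rolling theorem in normal coordinates). It then shows $(iii)\Rightarrow(ii)$ by the implicit-function argument you mention parenthetically at the end, and $(ii)\Rightarrow(iii)$ by again transferring the $\C^1$ foliation by parallel level sets (each $\C^{1,1}$ by the already-proved $(i)\Leftrightarrow(ii)$) into $\R^n$ and reading off Lipschitz continuity of the transferred gradient from its orthogonality to that foliation.

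Your route is more intrinsically Riemannian: for $(i)\Rightarrow(ii)$ you argue directly with rolling geodesic balls, and for $(ii)\Rightarrow(iii)$ you bound $\nabla^2\widehat d_\Gamma$ via the Riccati equation for the shape operators of the parallel hypersurfaces. This is a legitimate alternative, and in fact the Riccati argument is exactly what the paper deploys later in Proposition~\ref{prop:C11-2} to obtain the quantitative Hessian bound $|\nabla^2\widehat d_\Gamma|\le\sqrt{C}\coth(\sqrt{C}\delta)$. What the paper's chart-based approach buys is the ability to cite the hard step (Federer's positive-reach theorem in $\R^n$) rather than reprove it; what your approach buys is a single self-contained Riemannian narrative and an immediate explicit Hessian estimate. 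One small caution: your claim ``operator norm $\le 1/r$'' for the initial shape operator $S_0$ is the Euclidean bound; in curvature $\ge -C$ the correct bound is $\sqrt{C}\coth(\sqrt{C}r)$, which is what you implicitly need for the Riccati comparison. Your wording ``controlled by $r$ and the geometry of the compact tube'' covers this, but the explicit constant matters if you want the argument to be complete.
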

\begin{proof}
For $M=\R^n$, the equivalence (i)$\Leftrightarrow$(ii)
is due to \cite[Thm. 1.2]{ghomi-howard2014}, since $\Gamma$ is a topological hypersurface by assumption, and the positiveness of reach, or more specifically existence of local support balls on each side of $\Gamma$, ensures that the tangent cones of $\Gamma$ are all flat. The general case then may be reduced to the Euclidean one via local charts. Indeed local charts of $M$ preserve the $\C^{1,1}$ regularity of $\Gamma$ by definition. Furthermore, the positiveness of reach is also preserved, as shown in the next paragraph; see also \cite{bangert1982}. 

Let $(U, \phi)$ be a local chart of $M$ around a point $p$ of $\Gamma$. We may assume that $\phi(U)$ is a ball $B$ in $\R^n$. Furthermore, since $\Gamma$ is a topological hypersurface, we may assume that $\Gamma$ divides $U$ into a pair of components by the  Jordan Brouwer separation theorem. Consequently $\phi(\Gamma\cap U)$ divides $B$ into a pair of components as well, which we call the sides of $\phi(\Gamma)$. The image under $\phi$ of the boundary of the balls of some constant radius which roll freely on each side of $\Gamma$ in $M$ generate closed $\C^2$ surfaces $S_x$, $S'_x$ on each side of every point $x$ of $\phi(\Gamma)$. Let $B'\subset B$ be a smaller ball centered at $\phi(p)$, and 
$X$ be the connected component of $\phi(\Gamma\cap U)$  in $B'$ which contains $\phi(p)$. Furthermore, let
$\kappa$ be the supremum of the principal curvatures of 
$S_x$, $S'_x$, for all $x\in X$. Then $\kappa<\infty$, since $X$ has compact closure in $B$ and the principal curvatures of $S_x$, $S'_x$ vary continuously, owing to the fact that $\phi$ is $\C^2$. It is not difficult then to show that the reach of $S_x$, $S'_x$ is uniformly bounded below, which will complete the proof. Alternatively, we may let $(U, \phi)$ be a normal coordinate chart generated by the exponential map. Then for $U$ sufficiently small, $S_x$ and $S'_x$ will have positive principal curvatures. So, by Blaschke's rolling theorem \cite{blaschke:Kreis,brooks-strantzen1989}, a ball  rolls freely inside $S_x$, $S_x'$ and consequently on each 
side of $\phi(\Gamma\cap U)$ near $\phi(p)$. Hence $\phi(\Gamma\cap U)$ has positive reach near $\phi(p)$, as desired.

It remains then to establish the equivalence of (iii) with (i) or (ii). First suppose that (iii) holds. Let $p\in\Gamma$ and $U$ be neighborhood of $p$ in $M$ such that $u:=\widehat{d}_\Gamma$ is $\C^{1,1}$ on $U$. 
By Lemma \ref{lem:CS}, $|\nabla u|\equiv1$ on $U\setminus\Gamma$. Also note that each point of $\Gamma$ is a limit of points of $U\setminus\Gamma$, since by assumption $\Gamma$ is a topological hypersurface.
Thus, since $u$ is $\C^1$ on $U$, it follows that $|\nabla u|\neq 0$ on $U$. In particular,  
$\Gamma\cap U$ is a regular level set of $u$ on $U$, and  is $\C^1$ by the inverse function theorem. Let $\phi\colon U \to\R^n$ be a diffeomorphism. Then $\phi(\Gamma\cap U)$ is a regular level set of the locally $\C^{1,1}$ function $u\circ\phi^{-1}\colon\R^n\to\R$. In particular the unit normal vectors of $\phi(\Gamma\cap U)$ are locally Lipschitz continuous, since they are given by $\nabla(u\circ\phi^{-1})/|\nabla(u\circ\phi^{-1})|$. So $\phi(\Gamma\cap U)$ is locally $\C^{1,1}$. Hence $\Gamma$ is locally $\C^{1,1}$, and so we have established that (iii)$\Rightarrow$(ii). Conversely, suppose that (ii) and therefore (i) hold. Then any point $p\in\Gamma$ has an open neighborhood $U$ in $M$ where each point has a unique footprint on $M$. Thus, by Lemma \ref{lem:widehatdC1}, $u$ is $\C^1$ on $U$ and its gradient vector field is tangent to geodesics orthogonal to $\Gamma$. So, for $\epsilon$ small, each  level set $u^{-1}(\epsilon)\cap U$ has positive reach and is therefore $\C^{1,1}$ by (ii). Via local charts we may transfer this configuration to $\R^n$, to generate a fibration of $\R^n$ by $\C^{1,1}$ hypersurfaces which form the level sets of $u\circ\phi^{-1}$. Since $\nabla(u\circ\phi^{-1})/|\nabla(u\circ\phi^{-1})|$ is orthogonal to these level sets, it follows then that $\nabla(u\circ\phi^{-1})$ is locally Lipschitz. Thus $u\circ\phi^{-1}$ is locally $\C^{1,1}$ which establishes (iii) and completes the proof.
\end{proof}

The following proposition for $M=\R^n$ is originally due to Federer \cite[Sec. 4.20]{federer1959}; see also \cite[p. 365]{delfour-zolesio2011}, \cite[Sec. 3.6]{cannarsa-sinestrari2004}, and \cite{clarke-stern-wolenski1995}. In \cite[Rem. 4.4]{mantegazza-mennucci2003}, it is mentioned that Federer's result should hold in all Riemannian manifolds. Indeed it follows quickly from Lemma \ref{lem:GH}:

\begin{proposition}\label{prop:C11}
$\widehat{d}_\Gamma$ is  locally $\C^{1,1}$ on $M\setminus\textup{cut}(\Gamma)$. In particular if $\Gamma$ is $\C^{1,1}$, then $\widehat{d}_\Gamma$ is  locally $\C^{1,1}$ on $U_r(\Gamma)$  for $r:=\textup{reach}(\Gamma)$. 
\end{proposition}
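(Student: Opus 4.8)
The plan is to deduce Proposition~\ref{prop:C11} directly from Lemma~\ref{lem:GH}, using the fact that on $M\setminus\textup{cut}(\Gamma)$ the signed distance function of $\Gamma$ agrees locally, up to an additive constant, with the signed distance function of a nearby parallel hypersurface which itself has positive reach. Concretely, fix a point $p\in M\setminus\textup{cut}(\Gamma)$. If $p\in\Gamma$, then $\textup{reach}(\Gamma)>0$ since $\Gamma$ is compact and disjoint from $\textup{cut}(\Gamma)$, so part (iii) of Lemma~\ref{lem:GH} applies verbatim to give that $\widehat{d}_\Gamma$ is $\C^{1,1}$ near $\Gamma$, hence $\C^{1,1}$ near $p$. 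This already handles all points of $\Gamma$ and gives the ``in particular'' clause, once one observes that $U_r(\Gamma)$ with $r=\textup{reach}(\Gamma)$ is contained in $M\setminus\textup{cut}(\Gamma)$ by the definition $\textup{reach}(\Gamma)=d(\Gamma,\textup{cut}(\Gamma))$.

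The remaining case is $p\in(M\setminus\Gamma)\setminus\textup{cut}(\Gamma)$, say $p\in\Omega$ with $t:=\widehat{d}_\Gamma(p)<0$ (the case $p$ outside $\Omega$ is symmetric). I would choose a convex open neighborhood $U$ of $p$ disjoint from $\textup{cut}(\Gamma)$, and consider the inner parallel hypersurface $\Gamma_t:=\widehat{d}_\Gamma^{-1}(t)$. Exactly as in the proof of Lemma~\ref{lem:widehatdC1}, the neighborhood $U$ is fibrated by distance-minimizing geodesic segments orthogonal to $\Gamma$, and since these segments are pairwise disjoint, $\Gamma_t\cap U$ is disjoint from $\textup{cut}(\Gamma_t\cap U)$; moreover $\widehat{d}_{\Gamma_t}=\widehat{d}_\Gamma-t$ on $U$. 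Thus it suffices to show $\widehat{d}_{\Gamma_t}$ is $\C^{1,1}$ near $p\in\Gamma_t$. But $\Gamma_t\cap U$ is a $\C^1$ hypersurface with positive reach in $U$ (again because a neighborhood of $p$ is fibrated by orthogonal geodesics that do not meet), so by Lemma~\ref{lem:GH}, (i)$\Rightarrow$(iii), $\widehat{d}_{\Gamma_t}$ is $\C^{1,1}$ near $\Gamma_t$, in particular near $p$. Shifting back by the constant $t$ gives that $\widehat{d}_\Gamma$ is $\C^{1,1}$ near $p$, as desired.

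The main technical point to be careful about is the passage to the parallel hypersurface: one must check that $\Gamma_t\cap U$ genuinely has positive reach as a hypersurface of a (small enough) ambient neighborhood, i.e., that the orthogonal geodesic fibration furnishes support balls on both sides locally and uniformly. This is the same argument already used in Lemma~\ref{lem:widehatdC1} to conclude $U$ is disjoint from $\textup{cut}(\Gamma_\e)$, combined with the equivalence (i)$\Leftrightarrow$(ii) of Lemma~\ref{lem:GH} applied to $\Gamma_t$; shrinking $U$ if necessary so that $\Gamma_t\cap U$ is a compact piece with positive reach causes no difficulty. One should also note that $\Gamma_t$ need not be compact or embedded globally, but the statement is local, so replacing $\Gamma_t$ by a small compact neighborhood-closure of $p$ within it — or simply invoking the ``locally $\C^{1,1}$'' formulation — suffices. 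With these observations the proposition follows with essentially no new computation beyond what is in Lemmas~\ref{lem:widehatdC1} and~\ref{lem:GH}.
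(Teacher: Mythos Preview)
Your overall approach is the paper's: show that the level set of $\widehat{d}_\Gamma$ through $p$ has positive reach near $p$ (because a neighborhood of $p$ is fibrated by geodesics orthogonal to that level set), and then invoke Lemma~\ref{lem:GH} (i)$\Rightarrow$(iii). The paper carries this out in one stroke for every $p\in M\setminus\textup{cut}(\Gamma)$, without the case split.

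There is, however, a genuine gap in your handling of the case $p\in\Gamma$. You assert that ``$\textup{reach}(\Gamma)>0$ since $\Gamma$ is compact and disjoint from $\textup{cut}(\Gamma)$,'' but the hypothesis is only that the \emph{particular} point $p$ lies outside $\textup{cut}(\Gamma)$, not that all of $\Gamma$ does. In general $\Gamma\cap\textup{cut}(\Gamma)$ can be nonempty (think of a reentrant corner of a nonconvex polygon in $\R^2$), so $\textup{reach}(\Gamma)$ may be zero even when some points of $\Gamma$ lie in $M\setminus\textup{cut}(\Gamma)$. Consequently you cannot apply Lemma~\ref{lem:GH}(iii) ``verbatim'' to $\Gamma$ globally. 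The fix is immediate: run your second-case argument with $t=0$. Since $p\notin\textup{cut}(\Gamma)$, a neighborhood $U$ of $p$ is fibrated by the orthogonal geodesics, so $\Gamma\cap U$ has positive reach \emph{locally}, and the (local) proof of Lemma~\ref{lem:GH} then gives that $\widehat{d}_\Gamma$ is $\C^{1,1}$ near $p$. This is precisely what the paper does, treating $p\in\Gamma$ and $p\notin\Gamma$ uniformly via the level set through $p$.
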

\begin{proof}
For each point $p\in M\setminus\textup{cut}(\Gamma)$, let $\alpha_p$ be the (unit speed) geodesic in $M$ which passes through $p$ and is tangent to $\nabla \widehat{d}_\Gamma(p)$. By Lemma \ref{lem:CS}, $\alpha_p$ is a trajectory of the gradient field $\nabla \widehat{d}_\Gamma$ near $p$. It follows that  these geodesics fibrate $M\setminus\textup{cut}(\Gamma)$. Consequently the level set 
$\{\widehat{d}_\Gamma=\widehat{d}_\Gamma(p)\}$ has positive reach near $p$, since it is orthogonal to the gradient field.
 So, by Lemma \ref{lem:GH}, $\widehat{d}_\Gamma$ is $\C^{1,1}$ near $p$, which completes the proof. 
 \end{proof}
 
 We will also need the following refinement of Proposition \ref{prop:C11}, which gives an estimate for the $\C^{1,1}$ norm of $\widehat{d}_\Gamma$ near $\Gamma$, depending only on $\text{reach}(\Gamma)$ and the sectional curvature $K_M$ of $M$; see also Lemma \ref{lem:convex-concave} below. Here $\nabla^2$ denotes the Hessian.
 
 \begin{proposition}\label{prop:C11-2}
Suppose that $r:=\textup{reach}(\Gamma)>0$, and $K_M \geq -C$, for $C\geq 0,$ on $U_r(\Gamma)$.
 Then, for $\delta:=r/2$, 
 $$
\big |\nabla^2 \widehat{d}_\Gamma\big| \leq \sqrt{C}\coth{(\sqrt{C} \delta)} $$
 almost everywhere on $U_{\delta}(\Gamma)$.
 \end{proposition}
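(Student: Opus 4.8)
The plan is to bound the Hessian of $u := \widehat{d}_\Gamma$ along each geodesic trajectory of $\nabla u$ by comparing with a model (constant curvature $-C$) space, using the Riccati equation satisfied by the shape operator of the parallel hypersurfaces. The starting point: by Proposition \ref{prop:C11}, $u$ is locally $\C^{1,1}$ on $U_r(\Gamma)$, so $\nabla^2 u$ exists almost everywhere, and where it exists $|\nabla u| \equiv 1$ forces $\nabla^2 u (\nabla u, \cdot) = 0$; hence $\nabla^2 u$ is (a.e.) the second fundamental form / shape operator $S_t$ of the parallel hypersurface through the point, acting on the $(n-1)$-dimensional tangent space orthogonal to $\nabla u$. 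So it suffices to estimate $\|S_t\|$ for the parallel hypersurface $\Gamma_t = \widehat d_\Gamma^{-1}(t)$ at a point lying at signed distance $t$ from $\Gamma$, with $|t| \le \delta = r/2$.

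The key step is the Riccati comparison. Fix a point $p \in U_\delta(\Gamma)$ not in the cut locus, let $\gamma$ be the unit-speed geodesic through $p$ tangent to $\nabla u$, and let $p^\circ \in \Gamma$ be its footprint, at signed distance $t_0$, $|t_0|\le\delta$. Along $\gamma$ the shape operator $S(t)$ of $\Gamma_t$ satisfies $S' + S^2 + R_\gamma = 0$, where $R_\gamma(\cdot) = R(\cdot,\dot\gamma)\dot\gamma$ has eigenvalues bounded below by $-C$ (the curvature hypothesis $K_M \ge -C$ on $U_r(\Gamma)$). Because a ball of radius $r$ rolls freely on each side of $\Gamma$ (Lemma \ref{lem:GH}), one gets two-sided control on $S$ at $p^\circ$: roughly $\|S(t_0)\| \le \sqrt{C}\coth(\sqrt{C}\,r)$ from the rolling ball of radius $r$ on the concave side. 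Then flow the Riccati equation a distance at most $\delta$ toward $p$. The scalar ODE $\sigma' = -\sigma^2 + C$ (respectively $\sigma' = -\sigma^2 - C$, but here we want an upper bound on $|\sigma|$, so compare against the equation that maximizes growth of $|\sigma|$) dominates the eigenvalues of $S$; solving it gives that after running for time $\le \delta$ starting from initial data at the "worst" admissible value, the eigenvalues of $S$ stay in $[-\sqrt C \coth(\sqrt C\,\delta),\, \sqrt C \coth(\sqrt C\,\delta)]$. Indeed $\sqrt{C}\coth(\sqrt{C}\,s)$ is precisely the principal curvature of a geodesic sphere of radius $s$ in the space form of curvature $-C$, and it is the solution of $\sigma'=-\sigma^2-C$ blowing up backward at $s=0$; starting from any finite value at $|t_0|\le\delta$ and integrating over an interval of length $\le\delta$, one cannot exceed this barrier because the initial distance to either bounding sphere ($\Gamma$ itself, radius $r$, or the focal barrier) is at least $\delta$. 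Combining, $|\nabla^2 u|=\|S(t)\|\le \sqrt{C}\coth(\sqrt{C}\,\delta)$ a.e. on $U_\delta(\Gamma)$.

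When $C = 0$ the bound reads $|\nabla^2 u| \le 1/\delta = 2/r$, which is the familiar Euclidean estimate and serves as a sanity check (the function $\sqrt{C}\coth(\sqrt C\,\delta)$ is continuous at $C=0$ with this value). For the two-sided nature of the estimate one should note that the eigenvalues of the shape operator of $\Gamma_t$ for $t<0$ (inner parallel, curving the other way) are bounded \emph{above} by the free-rolling-ball argument and bounded below by the curvature comparison; symmetry of the two sides of $\Gamma$ gives the matching bound for $t>0$.

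I expect the main obstacle to be the low regularity bookkeeping: $u$ is only $\C^{1,1}$, so $S(t)=\nabla^2 u$ exists merely a.e., and the Riccati equation holds only in a weak/a.e. sense along most geodesics. The clean way around this is to approximate from outside: for each small $\e>0$ apply the argument to $\Gamma_\e$ (which is $\C^{1,1}$ with $\mathrm{reach}(\Gamma_\e)\ge r-\e$, by the fibration in the proof of Lemma \ref{lem:widehatdC1} and Proposition \ref{prop:C11}) where things can be smoothed, obtain the estimate on $U_{\delta-\e}(\Gamma_\e)$ with constant $\sqrt C\coth(\sqrt C(\delta-\e))$, and let $\e\to 0$; continuity of $\coth$ and the fact that a.e.-defined Hessians pass to such limits (e.g. via the $\C^{1,1}$ bound being equivalent to a uniform Lipschitz bound on $\nabla u$) yields the claim. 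Alternatively one invokes the viscosity/semiconcavity machinery of \cite{cannarsa-sinestrari2004, mantegazza-mennucci2003} to make the Riccati comparison rigorous directly for the distance function. Either route is standard; the geometric content is entirely in the scalar Riccati comparison of the previous paragraph.
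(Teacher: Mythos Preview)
Your approach is correct in outline but takes a longer route than the paper. You set up a Riccati evolution: bound the shape operator at the footprint $p^\circ\in\Gamma$ using the rolling ball of radius $r$, then integrate the Riccati equation along the normal geodesic for time $|t_0|\le\delta$ and argue that the eigenvalues stay within the barrier $\sqrt{C}\coth(\sqrt{C}\,\delta)$. This works, but it forces you to confront the regularity issue you flag at the end (the Riccati ODE needs the shape operator to exist and evolve along the whole segment, whereas $\Gamma$ is only $\C^{1,1}$), and it requires some care with the scalar-vs-matrix Riccati comparison (eigenvalues of $S$ do not individually satisfy the scalar equation unless $R_\gamma$ and $S$ commute; one needs the standard differential-inequality argument for the extremal eigenvalue).

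The paper bypasses both complications by working \emph{statically} at the point $p$ itself. Since $\textup{reach}(\Gamma)=r$ and $|\widehat d_\Gamma(p)|<\delta=r/2$, a ball of radius $\delta$ rolls freely on each side of the level set $\Gamma_p:=\{\widehat d_\Gamma=\widehat d_\Gamma(p)\}$ (immediate from the fibration of $U_r(\Gamma)$ by normal geodesics). Hence at the twice-differentiable point $p$ one compares the second fundamental form of $\Gamma_p$ directly with that of a tangent geodesic sphere of radius $\delta$, and invokes the standard Hessian comparison (Karcher) under $K_M\ge -C$ to bound the sphere's principal curvatures by $\sqrt{C}\coth(\sqrt{C}\,\delta)$. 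No ODE integration, no approximation, no flow from $\Gamma$: the rolling-ball property transfers from $\Gamma$ to $\Gamma_p$ with radius $\delta$, and the comparison is a single pointwise inequality. Your argument ultimately encodes the same geometry (the Riccati barrier \emph{is} the sphere comparison), but unpacks it along the normal segment rather than applying it once at $p$.
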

 \begin{proof}
 By Proposition \ref{prop:C11} and Rademacher's theorem, $\widehat{d}_\Gamma$ is twice differentiable at almost every point of $U_{\delta}(\Gamma)$. Let  $p\in U_{\delta}(\Gamma)$ be such a point. Then  the eigenvalues of $\nabla^2 \widehat{d}_\Gamma(p)$, except for the one in the direction of $\nabla \widehat{d}_\Gamma(p)$ which vanishes,  are the principal curvatures of the level set $\Gamma_p:=\{\widehat{d}_\Gamma=\widehat{d}_\Gamma(p)\}$. Since by assumption a ball of radius $r$ rolls freely on each side of $\Gamma$, it follows that a ball of radius $\delta$ rolls freely on each side of $\Gamma_p$. Thus the principal curvatures of $\Gamma_p$ at $p$ are bounded above by those of spheres of radius $\delta$ in $U_r(\Gamma)$, which are in turn bounded above by $\sqrt{C}\coth{(\sqrt{C} \delta)}$ due to basic Riemannian comparison theory \cite[p. 184]{karcher1989}.
\end{proof}

%%%%%%%%%%%%%%%%%%%%%%%%%%%%%%%%%%%%%%%%%%%%%%%%%%
\section{Notions of Convexity in Cartan-Hadamard Manifolds}\label{sec:d-convex}
%%%%%%%%%%%%%%%%%%%%%%%%%%%%%%%%%%%%%%%%%%%%%%%%%%
A set $X\subset M$ is  \emph{(geodesically) convex} provided that every pair of its points may be joined by a unique geodesic in $M$, and that geodesic is contained in $X$. Furthermore, $X$ is \emph{strictly convex} if $\partial X$ contains no geodesic segments. In this work, a \emph{convex hypersurface}  is the boundary of a compact convex subset of $M$ with nonempty interior.  In particular $\Gamma$ is  convex if $\Omega$ is convex.
A function $u\colon M\to\R$ is \emph{convex}  provided that its composition with parameterized geodesics in $M$ is convex, i.e., for every  geodesic $\alpha\colon[t_0,t_1]\to M$,
$$
u\circ\alpha\big((1-\lambda) t_0+\lambda t_1\big)\leq (1-\lambda)\, u\circ\alpha(t_0)+ \lambda \,u\circ\alpha(t_1),
$$
for all $\lambda \in [0,1]$. We assume that all parameterized geodesics in this work have unit speed. We say that $u$ is \emph{strictly convex} if the above inequality is always strict. Furthermore, $u$ is called \emph{concave} if $-u$ is convex. When $u$ is $\C^2$, then it is convex if and only if $(u\circ\alpha)''\geq 0$, or equivalently the Hessian of $u$ is positive semidefinite.
We may also say that $u$ is convex on a set $X\subset M$ provided that $u$ is convex on all geodesic segments of $M$ contained in $X$. For basic facts and background on convex sets and functions in general Riemannian manifolds see \cite{udriste1994}, for convex analysis in Cartan-Hadamard manifolds see \cites{bishop-oniel1969,shiga1984,ballmann-gromov-schroeder}, and more generally for Hadamard or CAT(0) spaces (i.e., metric spaces of nonpositive curvature), see \cites{lurie:notes, bacak2014,bridson-haefliger1999, ballman1995}. In particular it is well-known  that if $M$ is a Cartan-Hadamard manifold, then $d\colon M\times M\to\R$ is convex \cite[Prop. 2.2]{bridson-haefliger1999}, which in turn yields \cite[Cor. 2.5]{bridson-haefliger1999}:

\begin{lemma}[\cite{bridson-haefliger1999}]\label{lem:u-convex}
If $M$ is a Cartan-Hadamard manifold, and $X\subset M$ is a convex set, then $d_X$ is convex.
\end{lemma}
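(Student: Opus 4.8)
The plan is to reduce the statement to the convexity of the distance function on the product manifold $M\times M$ recalled just above. Fix a geodesic $\alpha\colon[t_0,t_1]\to M$ and $\lambda\in[0,1]$, and write $t_\lambda:=(1-\lambda)t_0+\lambda t_1$; I must show that $d_X(\alpha(t_\lambda))\le(1-\lambda)d_X(\alpha(t_0))+\lambda d_X(\alpha(t_1))$. Given $\e>0$, first choose $x_0,x_1\in X$ with $d(\alpha(t_i),x_i)\le d_X(\alpha(t_i))+\e$ for $i=0,1$ (the infimum defining $d_X$ need not be attained, hence the approximation; alternatively one could replace $X$ by its closure, which is again convex). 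Since $M$ is Cartan-Hadamard, $x_0$ and $x_1$ are joined by a unique geodesic segment, which I reparametrize affinely as $\beta\colon[t_0,t_1]\to M$ with $\beta(t_i)=x_i$ (taking $\beta$ constant if $x_0=x_1$). Convexity of $X$ gives $\beta(t)\in X$ for all $t\in[t_0,t_1]$, so in particular $d_X(\alpha(t_\lambda))\le d(\alpha(t_\lambda),\beta(t_\lambda))$.

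Next I consider $\gamma\colon[t_0,t_1]\to M\times M$ defined by $\gamma(t):=(\alpha(t),\beta(t))$. Each component is a geodesic of $M$, so $\gamma$ is a geodesic of the product manifold $M\times M$ (possibly not of unit speed, which is immaterial since convexity of a function along a curve is unaffected by affine reparametrization). Because $d\colon M\times M\to\R$ is convex \cite[Prop. 2.2]{bridson-haefliger1999}, the function $t\mapsto d(\alpha(t),\beta(t))$ is convex on $[t_0,t_1]$, and therefore
$$
d_X(\alpha(t_\lambda))\le d\big(\alpha(t_\lambda),\beta(t_\lambda)\big)\le(1-\lambda)\,d(\alpha(t_0),x_0)+\lambda\,d(\alpha(t_1),x_1)\le(1-\lambda)\,d_X(\alpha(t_0))+\lambda\,d_X(\alpha(t_1))+\e.
$$
Letting $\e\to 0$ completes the argument.

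There is no genuinely hard step here: the proof is essentially a bookkeeping exercise resting on the cited convexity of $d$ on $M\times M$. The only points that warrant attention are the passage to the product manifold — so that $(\alpha,\beta)$ becomes a single geodesic along which $d$ may be tested — and the fact that the infimum in the definition of $d_X$ may fail to be achieved, which is why one works with $\e$-minimizers rather than with exact footprints.
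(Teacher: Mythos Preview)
Your proof is correct and follows exactly the route the paper indicates: the paper does not supply its own argument but simply cites \cite[Cor.~2.5]{bridson-haefliger1999} as a consequence of the convexity of $d\colon M\times M\to\R$ \cite[Prop.~2.2]{bridson-haefliger1999}, and your write-up is precisely the standard deduction of the former from the latter. The care you take with $\e$-minimizers and with the non-unit-speed product geodesic is appropriate and the argument is complete.
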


\noindent So it follows that geodesic spheres are convex in a Cartan-Hadamard manifold as they are level sets of the distance function from one point. Let $X\subset M$ be a bounded convex set with interior points.
If $M=\R^n$, then it is well-known that $\widehat{d}_{\partial X}$ is convex on $X$ and therefore on all of $M$ \cite[Lemma 10.1, Ch. 7]{delfour-zolesio2011}. More generally $\widehat{d}_{\partial X}$ will be convex on $X$ as long as the curvature of $M$ on $X$ is nonnegative \cite[Lem. 3.3]{sakai1996}. However, if the curvature of $M$ is strictly negative on $X$, then $\widehat{d}_{\partial X}$ may no longer be convex. This is the case, for instance, when $X$ is the region bounded in between a pair of non-intersecting geodesics in the hyperbolic plane. See \cite[p. 44]{gromov1991} for a general discussion of the relation between convexity of parallel hypersurfaces and the sign of curvature of $M$. Therefore we are led to make the following definition. We say that a hypersurface $\Gamma$ in $M$ is distance-convex or \emph{d-convex} provided that $\widehat{d}_\Gamma$ is convex on $\Omega$.

As far as we know, $d$-convex hypersurfaces  have not been specifically studied before; however, as we show below, they are generalizations of the well-known h-convex or horo-convex hypersurfaces \cite{izumiya2009,borisenko-miquel1999,heintze-ImHof1977,currier1989,FMPV2011}, which are defined as follows. A \emph{horosphere}, in a Cartan-Hadamard manifold, is the limit of a family of geodesic spheres whose radii goes to infinity, and a \emph{horoball} is the limit of the corresponding family of balls (thus horospheres are generalizations of hyperplanes in $\R^n$). The distance function of a horosphere, which is known as a \emph{Busemann function}, has been extensively studied. In particular it is well-known that it is convex and $\C^2$ \cite[Prop. 3.1 \& 3.2]{ballman1995}.
A hypersurface $\Gamma$ is called \emph{h-convex} provided that through each of its points there passes a horosphere which contains $\Gamma$, i.e., $\Gamma$ lies in the corresponding horoball. The convexity of the Busemann function yields:

\begin{lemma}
In a Cartan-Hadamard manifold, every $\C^{1,1}$ h-convex hypersurface $\Gamma$ is $d$-convex.
\end{lemma}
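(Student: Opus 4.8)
The plan is to represent the signed distance function $\widehat{d}_\Gamma$, restricted to $\Omega$, as a supremum of Busemann functions, and then to invoke the fact that Busemann functions are convex. Recall that if $\Gamma$ is h-convex, then through each $q\in\Gamma$ there passes a horosphere $S_q$, bounding a horoball $H_q$, with $\Gamma\subset H_q$. Let $b_q$ be the Busemann function of $S_q$, normalized so that $\{b_q=0\}=S_q$ and $b_q<0$ on $\inte(H_q)$. The properties of $b_q$ that I would use are classical: $b_q$ is convex and $\C^2$ with $|\nabla b_q|\equiv 1$, so its gradient trajectories are unit-speed geodesics along which $b_q$ increases at unit rate; consequently $b_q(p)=-d(p,S_q)$ for all $p\in H_q$, and $b_q$ is $1$-Lipschitz.

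First I would check that $\cl(\Omega)\subset H_q$, hence (as $\Omega$ is open) $\Omega\subset\inte(H_q)$. Since $b_q$ is convex, $H_q=\{b_q\le 0\}$ is a closed convex set containing $\Gamma=\partial\Omega$; and in a Cartan-Hadamard manifold a closed convex set containing $\partial\Omega$ contains $\cl(\Omega)$: for $p\in\Omega$, the complete unit-speed geodesic through $p$ is minimizing, so it leaves the bounded set $\cl(\Omega)$ on both sides and therefore meets $\Gamma$ at two points $y_{\pm}$, with $p$ interior to the minimizing segment $[y_-,y_+]$; convexity then gives $p\in H_q$.

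The crux will be the identity
\[
\widehat{d}_\Gamma(p)=\sup_{q\in\Gamma}b_q(p)\qquad\text{for every }p\in\Omega.
\]
For ``$\ge$'': since $\Gamma$ is compact, $p$ has a nearest point $p^{\circ}\in\Gamma$ with $d(p,p^{\circ})=d_\Gamma(p)$, and since $b_{p^{\circ}}$ is $1$-Lipschitz with $b_{p^{\circ}}(p^{\circ})=0$, we get $b_{p^{\circ}}(p)\ge -d(p,p^{\circ})=\widehat{d}_\Gamma(p)$. For ``$\le$'': fix $q\in\Gamma$ and choose $x\in S_q$ with $d(p,x)=d(p,S_q)$ (such $x$ exists because $M$ is complete). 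Since $p\in\inte(H_q)$ and $x\in S_q$, we have $x\neq p$ and $x\notin\Omega$, so the minimizing geodesic from $p$ to $x$ passes from $\Omega$ into $M\setminus\Omega$ and hence crosses $\Gamma$ at some point $y$; therefore $b_q(p)=-d(p,S_q)=-d(p,x)\le -d(p,y)\le -d_\Gamma(p)=\widehat{d}_\Gamma(p)$.

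Once this identity is established, $\widehat{d}_\Gamma$ is a supremum of the convex functions $b_q$ over $q\in\Gamma$, hence convex on $\Omega$, so $\Gamma$ is $d$-convex. I expect the main obstacle to be the inequality $b_q\le\widehat{d}_\Gamma$ on $\Omega$, which rests on two geometric inputs: the containment $\cl(\Omega)\subset H_q$ (using that horoballs are convex and that every geodesic in a Cartan-Hadamard manifold is minimizing), and the crossing argument that the minimizing geodesic from $p$ to $S_q$ meets $\Gamma$. Everything else is formal; in fact the argument uses only that $\Gamma$ is a compact topological hypersurface bounding $\Omega$, so the full $\C^{1,1}$ hypothesis is not strictly needed here.
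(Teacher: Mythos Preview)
Your proposal is correct and follows essentially the same approach as the paper: represent $\widehat{d}_\Gamma$ on $\Omega$ as the supremum of the Busemann functions $b_q=\widehat{d}_{S_q}$ over $q\in\Gamma$, and conclude convexity from the convexity of each $b_q$. Your treatment is in fact more careful than the paper's, supplying explicit justifications (the containment $\Omega\subset\inte(H_q)$ and the crossing argument for $b_q\le\widehat{d}_\Gamma$) that the paper leaves implicit, and your ``$\ge$'' direction via the $1$-Lipschitz bound avoids the tangency-at-the-footprint step the paper uses.
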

\begin{proof}
For points $q\in\Gamma$, let $S_{q}$ be the horosphere which passes through $q$ and contains $\Gamma$.
For points $p\in\Omega$, let $p^\circ$ be the footprint of $p$ on $\Gamma$, and let $S_{p^\circ}$ be the horosphere which passes through $p^\circ$ and contains $\Gamma$. Then
$$
\widehat{d}_\Gamma(p)=-d(p,\Gamma)=-d(p,p^\circ)=-d(p, S_{p^\circ})=\widehat{d}_{S_{p^\circ}}(p).
$$
On the other hand, since  $\Gamma$ lies inside $S_q$, for any point $p\in\Omega$, we have $d(p,\Gamma)\leq d(p,S_q)$. Thus
$$
\widehat{d}_{\Gamma}(p)=  -d(p,\Gamma)\geq     -d(p, S_q)=      \widehat{d}_{S_{q}}(p).
$$
So we have shown that
$$
\widehat{d}_\Gamma=\sup_{q\in\Gamma} \widehat{d}_{S_q},
$$
on $\Omega$. Since $\widehat{d}_{S_q}$ (being a Busemann function) is convex, it follows then that $\widehat{d}_\Gamma$ is convex on $\Omega$, which completes the proof. 
\end{proof}

The converse of the above lemma, however, is not true. For instance, for any geodesic segment in the hyperbolic plane, there exists $r>0$, such that the tubular hypersurface of radius $r$ about that segment  (which  is $d$-convex by Lemma \ref{lem:u-convex}) is not h-convex. So in summary we may record that, in a Cartan-Hadamard manifold,
$$
\big\{\text{$h$-convex hypersurfaces}\big\}\;\varsubsetneq\; \big\{\text{$d$-convex hypersurfaces}\big\}\;\varsubsetneq\;  \big\{\text{convex hypersurfaces}\big\}.
$$

The main aim of this section is to relate the total curvature of a convex hypersurface in an $n$-dimensional Cartan-Hadamard manifold to that of a $d$-convex hypersurface in an $(n+1)$-dimensional Cartan-Hadamard manifold. First note that if $M$ is a Cartan-Hadamard manifold, then $M\times \R$ is also a Cartan-Hadamard manifold, which contains $M$ as a totally geodesic hypersurface. For any  convex hypersurface $\Gamma\subset M$, bounding a convex domain $\Omega$, and $\e>0$, let $\tilde\Gamma_\e$ be the parallel hypersurface of $\Omega$ in $M\times\R$ of distance $\e$. Then $\tilde\Gamma_\e$ is a $d$-convex hypersurface in $M\times\R$ by Lemma \ref{lem:u-convex}. Note also that $\tilde\Gamma_\e$ is $\C^{1,1}$ by Lemma \ref{lem:GH}, so its total curvature is well-defined. 
In the next proposition we will apply some facts concerning evolution of the second fundamental form of parallel hypersurfaces and tubes, which is governed by Riccati's equation. A standard reference here is Gray \cite[Chap. 3]{gray2004}; see also \cite{karcher1989,ballmann2016}. We will use some computations from \cite{ge-tang2014} on Taylor expansion of the second fundamental form. For more extensive computations see \cite{mahmoudi-mazzeo-pacard2006}.

First let us fix our basic notation and sign conventions with regard to computation of curvature. Let $\Gamma$ be a $\C^{1,1}$ closed embedded hypersurface in $M$, bounding a designated domain $\Omega$ of $M$ as we discussed in Section \ref{sec:distance}.
Then the \emph{outward normal} $\nu$ of $\Gamma$ is a unit normal vector field along $\Gamma$ which points away from $\Omega$.
Let $p$ be a twice differentiable point of $\Gamma$, and $T_p\Gamma$ denote the tangent space of $\Gamma$ at $p$. Then the \emph{shape operator} $\mathcal{S}_p\colon T_p\Gamma\to T_p \Gamma$ of $\Gamma$ at $p$ with respect to $\nu$ is defined as 
\be\label{eq:shape}
\mathcal{S}_p(V):=\nabla_{V}\nu,
\ee
for $V\in T_p \Gamma$. Note that in a number of sources, including \cites{gray2004,ge-tang2014} which we refer to for some computations, the shape operator is defined as $-\nabla_V\nu$. Thus our principal curvatures will have opposite signs compared to those in \cites{gray2004,ge-tang2014}, which will effect the appearance of Riccati's equation below.
 The eigenvalues and eigenvectors of $\mathcal{S}_p$ then define  the \emph{principal curvatures} $\kappa_i(p)$ and \emph{principal directions} $E_i(p)$ of $\Gamma$ at $p$ respectively. So we have
 $$
\kappa_i(p)= \big\langle \mathcal{S}_p(E_i(p)), E_i(p)\big\rangle=  \big\langle\nabla_i\nu, E_i(p)\big\rangle.
 $$
  The \emph{Gauss-Kronecker} curvature of $\Gamma$ at $p$  is  given by 
 \be\label{eq:detSp}
 GK(p):=\det(\mathcal{S}_p)=\prod_{i=1}^{n-1} \kappa_i(p).
\ee
Finally,  \emph{total Gauss-Kronecker} curvature of $\Gamma$ is defined as
$$
\mathcal{G}(\Gamma):=\int_\Gamma GK d\sigma.
$$
We will always assume that the shape operator of $\Gamma$ is computed with respect to the outward normal. Thus when $\Gamma$ is convex,  its principal curvatures will be nonnegative. The main result of this section is as follows. For convenience we assume that $\Gamma$ is $\C^2$, which will be sufficient for our purposes; however, the proof can be extended to the $\C^{1,1}$ case with the aid of Lemma \ref{lem:riccati} which will be established later.

\begin{proposition}\label{prop:d-convex}
Let $\Gamma$ be a $\C^{2}$ convex hypersurface in a Cartan-Hadamard manifold $M^n$, bounding a convex domain $\Omega$, and $\tilde\Gamma_\e$ be the  parallel hypersurface of $\Omega$ at distance $\e$ in $M\times\R$. Then,
as $\e\to 0$,
$$
\frac{\mathcal{G}(\tilde\Gamma_\e)}{\textup{vol}(\S^n)}\;\;\longrightarrow\; \;\frac{\mathcal{G}(\Gamma)}{ \textup{vol}(\S^{n-1})}.
$$
In particular, if $\mathcal{G}(\tilde\Gamma_\e)\geq \textup{vol}(\S^n)$, then $\mathcal{G}(\Gamma)\geq \textup{vol}(\S^{n-1})$.
\end{proposition}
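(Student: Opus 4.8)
The plan is to decompose $\tilde\Gamma_\e$ into two flat ``caps'' and a ``half-tube'' over $\Gamma$, discard the caps, and evaluate the curvature integral over the half-tube in closed form. Recall that $M\times\R$ is Cartan--Hadamard and contains each slice $M\times\{t\}$ as a totally geodesic hypersurface; since $\Omega\subset M\times\{0\}$ is then a bounded convex subset of $M\times\R$ with interior points, Lemma~\ref{lem:u-convex} shows $d_\Omega$ is convex, so $U_\e(\Omega)$ is convex and $\tilde\Gamma_\e=\partial U_\e(\Omega)$ is a convex $\C^{1,1}$ hypersurface by Lemma~\ref{lem:GH}; in particular $GK_{\tilde\Gamma_\e}\geq0$ and $\mathcal{G}(\tilde\Gamma_\e)$ is well defined. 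As $\Gamma$ is $\C^2$, $r:=\textup{reach}(\Gamma)>0$, and I take $\e<r$ throughout. Writing out distances in the product metric, $\tilde\Gamma_\e$ is the union of the caps $\cl(\Omega)\times\{\pm\e\}$ and the half-tube $T_\e:=\{F(q,\theta):q\in\Gamma,\ \theta\in[-\pi/2,\pi/2]\}$, where $F(q,\theta):=\bigl(\exp_q(\e\cos\theta\,\nu(q)),\,\e\sin\theta\bigr)$ and $\nu$ is the outward unit normal of $\Gamma$ in $M$; the two pieces meet only along $\Gamma\times\{\pm\e\}$, which has measure zero. The caps lie in the totally geodesic hypersurfaces $M\times\{\pm\e\}$, so their second fundamental form vanishes and they contribute nothing; hence $\mathcal{G}(\tilde\Gamma_\e)=\int_{T_\e}GK\,d\sigma$.

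Next I would compute the shape operator of $T_\e$ at $F(q,\theta)$ with respect to the outward normal. Set $\gamma_q(u):=\exp_q(u\nu(q))$ and $x_{q,s}:=\exp_q(s\nu(q))$, and let $\Gamma_s:=\widehat{d}_\Gamma^{-1}(s)$, $s\in[0,r)$ (so $\Gamma_0=\Gamma$), be the outer parallel hypersurface of $\Gamma$ in $M$, with outward normal $\eta$ (so $\eta=\dot\gamma_q$ along $\gamma_q$). Then $F(q,\theta)$ lies in the slice $M\times\{\e\sin\theta\}$, its projection to $M$ lies on $\Gamma_{\e\cos\theta}$, and the outward normal of $T_\e$ at $F(q,\theta)$ is $\tilde\nu=(\cos\theta\cdot\eta,\sin\theta)$. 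The tangent space of $T_\e$ at $F(q,\theta)$ splits orthogonally as $\bigl(T\Gamma_{\e\cos\theta}\times\{0\}\bigr)\oplus\R\,\partial_\theta F$ with $|\partial_\theta F|=\e$, and in this splitting the shape operator is block diagonal: (i) the arc $\theta\mapsto F(q,\theta)$ and the field $\tilde\nu$ both lie in the flat, totally geodesic surface $P_q:=\gamma_q\times\R$, in which the arc is a circle of radius $\e$ traversed so that $\tilde\nu$ is its outward normal, whence the $\partial_\theta F$-block equals $1/\e$; (ii) along $\Gamma_{\e\cos\theta}\times\{0\}$ (with $\theta$ fixed) the field $\tilde\nu=(\cos\theta\cdot\eta,\sin\theta)$ has constant coefficients and the Levi--Civita connection of a product splits, so $\nabla_{(v,0)}\tilde\nu=(\cos\theta\,\nabla^M_v\eta,0)=(\cos\theta\,\mathcal{S}^{\Gamma_{\e\cos\theta}}(v),0)$ for $v\in T\Gamma_{\e\cos\theta}$, i.e., the other block is $\cos\theta\cdot\mathcal{S}^{\Gamma_{\e\cos\theta}}$. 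Therefore
\[
GK_{T_\e}\bigl(F(q,\theta)\bigr)=\frac{\cos^{n-1}\theta}{\e}\,GK_{\Gamma_{\e\cos\theta}}\bigl(x_{q,\e\cos\theta}\bigr),
\]
and, by the orthogonal splitting and $|\partial_\theta F|=\e$, $d\sigma_{T_\e}=\e\,J(q,\e\cos\theta)\,d\theta\,d\sigma_\Gamma(q)$, where $J(q,s)>0$ is the Jacobian of the diffeomorphism $\Gamma\to\Gamma_s$, $q\mapsto x_{q,s}$ (defined since $s<r$). Multiplying, the factors $\e$ cancel, and the change of variables $\int_\Gamma GK_{\Gamma_s}(x_{q,s})\,J(q,s)\,d\sigma_\Gamma(q)=\mathcal{G}(\Gamma_s)$ together with Fubini yields the exact identity
\[
\mathcal{G}(\tilde\Gamma_\e)=\int_{-\pi/2}^{\pi/2}\cos^{n-1}\theta\;\mathcal{G}\bigl(\Gamma_{\e\cos\theta}\bigr)\,d\theta .
\]

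To conclude I would let $\e\to0$ in the double integral $\mathcal{G}(\tilde\Gamma_\e)=\int_\Gamma\!\int_{-\pi/2}^{\pi/2}\cos^{n-1}\theta\,GK_{\Gamma_{\e\cos\theta}}(x_{q,\e\cos\theta})\,J(q,\e\cos\theta)\,d\theta\,d\sigma_\Gamma(q)$. By Proposition~\ref{prop:C11-2}, for $\e\leq r/2$ the principal curvatures of each $\Gamma_{\e\cos\theta}$, hence $GK_{\Gamma_{\e\cos\theta}}$, are bounded by a constant depending only on $r$ and a lower bound for $K_M$ on $U_r(\Gamma)$, and $J$ is bounded by compactness of $\Gamma\times[0,r/2]$; moreover, since the shape operator of $\Gamma_s$ solves a Riccati equation with initial value $\mathcal{S}^\Gamma$ and $J(q,0)=1$, the integrand converges pointwise to $\cos^{n-1}\theta\,GK_\Gamma(q)$. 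Dominated convergence gives $\mathcal{G}(\tilde\Gamma_\e)\to\mathcal{G}(\Gamma)\int_{-\pi/2}^{\pi/2}\cos^{n-1}\theta\,d\theta$; and since the decomposition $\S^n=\{(\cos\theta\,\omega,\sin\theta):\omega\in\S^{n-1},\ \theta\in[-\pi/2,\pi/2]\}$ gives $\int_{-\pi/2}^{\pi/2}\cos^{n-1}\theta\,d\theta=\vol(\S^n)/\vol(\S^{n-1})$, this is exactly the asserted convergence. The final assertion follows at once: if $\mathcal{G}(\tilde\Gamma_\e)\geq\vol(\S^n)$ for all small $\e$, divide by $\vol(\S^n)$ and let $\e\to0$ to obtain $\mathcal{G}(\Gamma)\geq\vol(\S^{n-1})$.

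I expect the main obstacle to be the second step: fixing the parametrization $F$ and verifying that the second fundamental form of $T_\e$ is block diagonal with exactly these two blocks --- in particular pinning down the ``tilt'' factor $\cos\theta$ on the $\Gamma$-block and the sign $+1/\e$ of the arc curvature consistently with the convention in \eqref{eq:shape}, and noting that when $\Gamma$ is merely $\C^2$ its parallels, and hence $T_\e$, are only $\C^1$, so these identities hold just almost everywhere (which suffices). This is precisely where one invokes the Riccati evolution of the second fundamental form of tubes together with the Taylor expansions in \cite{ge-tang2014}; the remaining steps are routine.
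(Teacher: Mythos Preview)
Your argument is correct and follows the same overall strategy as the paper---decompose $\tilde\Gamma_\e$ into the two flat caps and the half-tube, discard the caps, and compute the curvature integral over the half-tube via the natural parametrization---but your execution of the middle step is genuinely different and in fact sharper.

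The paper treats the half-tube as a generic tube of radius $\e$ around the submanifold $\Gamma\subset M\times\R$ and invokes the asymptotic expansion of the tube's shape operator from \cite{ge-tang2014}, obtaining only
\[
\mathcal{S}^\e_{p,\theta}=
\begin{pmatrix}
\mathcal{S}_{p,\theta}+\mathcal{O}(\e) & \mathcal{O}(\e)\\
\mathcal{O}(\e) & 1/\e+\mathcal{O}(\e)
\end{pmatrix},
\qquad
\text{Jac}(f^\e)=\e+\mathcal{O}(\e^2),
\]
and then passing to the limit. You instead exploit the product structure of $M\times\R$: since geodesics split, your parametrization $F(q,\theta)=(\exp_q(\e\cos\theta\,\nu(q)),\e\sin\theta)$ coincides with the paper's $f^\e$, but now the normal $\tilde\nu=(\cos\theta\,\eta,\sin\theta)$ and the tangent splitting $(T\Gamma_{\e\cos\theta}\times\{0\})\oplus\R\,\partial_\theta F$ let you read off the shape operator \emph{exactly} as the block-diagonal matrix $\cos\theta\cdot\mathcal{S}^{\Gamma_{\e\cos\theta}}\oplus(1/\e)$. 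This yields the clean identity
\[
\mathcal{G}(\tilde\Gamma_\e)=\int_{-\pi/2}^{\pi/2}\cos^{n-1}\theta\;\mathcal{G}(\Gamma_{\e\cos\theta})\,d\theta,
\]
valid for every $\e<\textup{reach}(\Gamma)$, from which the limit follows by dominated convergence using Proposition~\ref{prop:C11-2}. The paper's approach has the advantage of applying to tubes in arbitrary ambient manifolds; yours is more elementary (no reference to \cite{ge-tang2014} is needed) and gives more information.

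Two minor remarks. First, your parenthetical that the parallels $\Gamma_s$ and $T_\e$ are ``only $\C^1$'' is overly cautious: by Lemma~\ref{lem:MM}, since $\Gamma$ is $\C^2$, $\widehat d_\Gamma$ is $\C^2$ away from the cut locus, so $\Gamma_s$ (for $0\le s<r$) is $\C^2$ and the identities hold pointwise. Second, in (ii) the curve along which you differentiate lies in $\Gamma_{\e\cos\theta}\times\{\e\sin\theta\}$ rather than $\Gamma_{\e\cos\theta}\times\{0\}$; this is a typo and does not affect the computation since the height is constant.
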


Recall that in $\R^n$ the total curvature of any convex hypersurface is equal to the volume of its Gauss map. So Proposition \ref{prop:d-convex} holds immediately when $M=\R^n$. The proof in the general case follows from tube formulas and properties of the gamma and beta functions as we describe below (see also Note \ref{note:alpha_n} which eliminates the use of special functions). Note that
$$
\textup{vol}(\S^{n-1})=n\omega_n, \quad\quad\text{where}\quad\quad \omega_n:=\textup{vol}(\mathbf{B}^n)=\frac{\pi^{n/2}}{(n/2)!}.
$$

\begin{proof}[Proof of Proposition \ref{prop:d-convex}]
For every point $q\in\tilde\Gamma_\e$ let $p$ be its (unique) footprint on $\cl(\Omega)=\Omega\cup\Gamma$. If $p\in\Omega$, then there exists an open neighborhood $U$ of $p$ in $\tilde\Gamma_\e$ which lies on $M\times\{\e\}$ or $M\times\{-\e\}$. 
So $GK^\e(q)=0$, since each hypersurface $M\times\{t\}\subset M\times \R$ is totally geodesic. Thus the only contribution to $\mathcal{G}(\tilde\Gamma_\e)$ comes from points $q\in \tilde\Gamma_\e$ whose footprint $p\in\Gamma$. This portion of $\tilde\Gamma_\e$ is the outer half of the tube of radius $\e$ around $\Gamma$, which we denote by $\textup{tube}_\e^+(\Gamma)$, see Figure \ref{fig:tube},
\begin{figure}[h]
\centering
\begin{overpic}[height=1.4in]{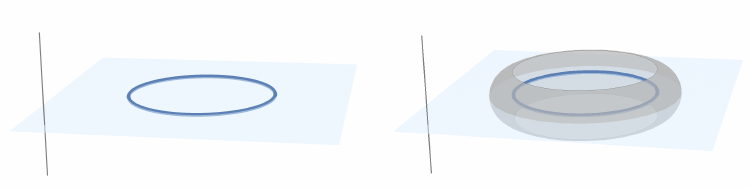}
\put(17,8.5){\Small $\Gamma$}
\put(26,11.5){\Small $\Omega$}
\put(42,7){\Small $M$}
\put(6,18){\Small $\R$}
\put(87,18){\Small $\textup{tube}_\e^+(\Gamma)$}
\put(92,6.5){\Small $M$}
\put(57,18){\Small $\R$}
\end{overpic}
\caption{}\label{fig:tube}
\end{figure}
and will describe precisely below. So we have
$$
\mathcal{G}(\tilde\Gamma_\e)=\mathcal{G}(\textup{tube}_\e^+(\Gamma)).
$$
Furthermore recall that $\omega_n=\pi^{n/2}/G(n/2+1)$, where $G$ is the gamma function. In particular, $G(1/2)=\sqrt\pi$, $G(x+1)=xG(x)$, and $G(n)=(n-1)!$, which yields
\be \label{eq:alpha_n}
\alpha_n:=\frac{\vol(\S^n)}{\vol(\S^{n-1})}=\frac{(n+1)\omega_{n+1}}{n\omega_n}=\frac{G(\frac12)G(\frac{n}{2})}{G(\frac12+\frac{n}2)}=B\left(\frac12,\frac{n}2\right)=\int_{-\pi/2}^{\pi/2}\cos^{n-1}(\theta)\,d\theta,
\ee
where $B$ is the beta function (see \cite[Sec. 1.1]{AAR1999} for the basic properties of gamma and beta functions).
Thus it suffices to  show that, as $\e\to 0$,
$$
\mathcal{G}\big(\textup{tube}_\e^+(\Gamma)\big)\;\to \alpha_n\,\mathcal{G}(\Gamma).
$$
To this end let $\nu$ denote the outward unit normal of $\Gamma$ with respect to $\Omega$ in $M$, $\nu^\perp$ be a unit normal vector orthogonal to $M$ in $M\times\R$, and  define $f^\e\colon \Gamma\times \R\to M\times\R$ by
$$
f^\e(p,\theta):=\exp_p\big(\e\nu_p(\theta)\big),\quad\quad \nu_p(\theta):=\cos(\theta)\nu(p)+\sin(\theta)\nu^\perp(p),
$$
where $\exp$ is the exponential map of $M\times\R$.
Then we set
$$
\textup{tube}_\e^+(\Gamma):=f^\e\big(\Gamma\times [-\pi/2,\pi/2]\big).
$$
Note that $\textup{tube}_\e^+(\Gamma)\subset d_\Gamma^{-1}(\e)$, where $d_\Gamma$ denotes the distance function of $\Gamma$ in $M\times\R$. Thus, since $M$ is $\C^2$, $d_\Gamma$ is $\C^2$ \cite[Thm. 1]{foote1984} which yields that  $\textup{tube}_\e^+(\Gamma)$ is $\C^2$. So the shape operator of $\textup{tube}_\e^+(\Gamma)$ is well-defined.
 By \cite[Cor. 2.2]{ge-tang2014}, this shape operator, at the point $f^\e(p,\theta)$, is given by
\be\label{eq:Spr}
\mathcal{S}^\e_{p,\theta}=
\left(
\begin{array}{cc}
\mathcal{S}_{p, \theta} +\mathcal{O}(\e)\;& \mathcal{O}(\e)\\
&\\
\mathcal{O}(\e) & 1/\e +\mathcal{O}(\e)
\end{array}
\right),
\ee
where $\mathcal{O}(\e)\to 0$ as $\e\to 0$, and $\mathcal{S}_{p, \theta}$ denotes the shape operator of $\Gamma$ at $p$ in the direction $\nu_p(\theta)$ (note that the shape operators in this work, as defined by \eqref{eq:shape}, have the opposite sign compared to those in \cite{ge-tang2014}). The eigenvalues of $\mathcal{S}_{p, \theta}$ are $\kappa_i(p)\cos(\theta)$ where $\kappa_i(p)$ are the principal curvatures of $\Gamma$ at $p$. Thus it follows that the Gauss-Kronecker curvature of $\textup{tube}_\e^+(\Gamma)$ at the point $f^\e(p,\theta)$ is given by
\begin{equation}\label{eq:GKrp}
GK^\e(p,\theta)=\det\big(\mathcal{S}^\e_{p,\theta}\big)=\frac{1}{\e}\det\big(\mathcal{S}_{p, \theta}\big)+\mathcal{O}(1)=\frac{1}{\e}GK(p)\cos^{n-1}(\theta)+\mathcal{O}(1),
\end{equation}
where $\mathcal{O}(1)$ converges to a constant as $\e\to 0$.
Furthermore, we claim that 
\be\label{eq:jac}
\text{Jac}(f^\e)_{(p,\theta)}=\e+\mathcal{O}(\e^2).
\ee
 Then it follows that, as $\e\to 0$,
$$
\mathcal{G}\big(\textup{tube}_\e^+(\Gamma)\big)
=\int_{\textup{tube}_\e^+(\Gamma)} GK^\e d\mu_\e
\to\int_{-\pi/2}^{\pi/2} \int_{p\in\Gamma}GK(p)\cos^{n-1}(\theta)\,d\mu d\theta
=\alpha_n\,\mathcal{G}(\Gamma),
$$
as desired. So it remains to establish \eqref{eq:jac}. To this end we will apply the fact that, due to Riccati's equation
\cite[Thm. 3.11 \& Lem. 3.12]{gray2004}, 
$$
\text{Jac}(f^\e)_{(p,\theta)}=\e\,\Theta(\e),
$$
where $\Theta$ is given by
\be\label{eq:gray}
\frac{\Theta'(\e)}{\Theta(\e)}=-\frac{1}{\e}+\text{trace}(\mathcal{S}^\e_{p,\theta}),\quad\quad \Theta(0)=1,
\ee
(again note that our shape operator has the opposite sign to that in \cite{gray2004}).  Next observe that by \eqref{eq:Spr}
$$
\text{trace}(\mathcal{S}^\e_{p,\theta})=\text{trace}(\mathcal{S}_{p,\theta})+\frac{1}{\e}+\mathcal{O}(\e). 
$$
So we may rewrite \eqref{eq:gray} as
$$
\frac{\Theta'(\e)}{\Theta(\e)}=\text{trace}(\mathcal{S}_{p,\theta})+\mathcal{O}(\e)=\mathcal{O}(1).
$$
 Hence,  we obtain 
$$
\Theta(\e)=\Theta(0)e^{\int_0^\e \mathcal{O}(1)dt} =e^{\mathcal{O}(\e)}=1+\mathcal{O}(\e),
$$
which in turn yields
$$
\text{Jac}(f^\e)_{(p,\theta)}=\e\big(1+\mathcal{O}(\e)\big)=\e+\mathcal{O}(\e^2),
$$
as desired.
\end{proof}

\begin{corollary}
If the total curvature inequality \eqref{eq:GK} holds for d-convex hyeprsurfaces, then it holds for all convex hypersurfaces.
\end{corollary}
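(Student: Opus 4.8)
The plan is to deduce this immediately from Proposition \ref{prop:d-convex}. Let $\Gamma$ be a convex hypersurface in a Cartan-Hadamard manifold $M^n$, bounding a convex domain $\Omega$. First I would pass to the product $M\times\R$, which is again a Cartan-Hadamard manifold and contains $M=M\times\{0\}$ as a totally geodesic hypersurface, and for small $\e>0$ consider the parallel hypersurfaces $\tilde\Gamma_\e:=(d_\Omega)^{-1}(\e)$, where now $d_\Omega$ is the distance function of $\Omega$ \emph{in $M\times\R$}. By Lemma \ref{lem:u-convex}, $d_\Omega$ is convex on $M\times\R$, so $\tilde\Gamma_\e$ bounds a convex domain whose signed distance function is convex; hence $\tilde\Gamma_\e$ is $d$-convex. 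Moreover $\textup{reach}(\tilde\Gamma_\e)>0$, so by Lemma \ref{lem:GH} each $\tilde\Gamma_\e$ is $\C^{1,1}$ and its total curvature $\mathcal{G}(\tilde\Gamma_\e)$ is well-defined.

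Next I would apply the hypothesis of the corollary: since \eqref{eq:GK} holds for $d$-convex hypersurfaces, we have $\mathcal{G}(\tilde\Gamma_\e)\geq\vol(\S^n)$ for every small $\e>0$. Dividing by $\vol(\S^n)$ and letting $\e\to 0$, Proposition \ref{prop:d-convex} gives
$$
\frac{\mathcal{G}(\Gamma)}{\vol(\S^{n-1})}=\lim_{\e\to 0}\frac{\mathcal{G}(\tilde\Gamma_\e)}{\vol(\S^n)}\geq 1,
$$
i.e. $\mathcal{G}(\Gamma)\geq\vol(\S^{n-1})$, which is \eqref{eq:GK} for $\Gamma$. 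This is the entire argument; it is essentially a one-line reduction once Proposition \ref{prop:d-convex} is in hand.

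The only point needing care, and hence the main (and minor) obstacle, is a regularity mismatch: $\mathcal{G}(\Gamma)$ is defined, and \eqref{eq:GK} is asked, for $\C^{1,1}$ hypersurfaces, whereas Proposition \ref{prop:d-convex} is stated for $\C^2$ ones. So one must either run the tube computation of that proposition in the $\C^{1,1}$ category — replacing the $\C^2$ Riccati analysis by its $\C^{1,1}$ counterpart (Lemma \ref{lem:riccati}), so that the shape-operator asymptotics \eqref{eq:Spr}--\eqref{eq:GKrp} and the Jacobian estimate \eqref{eq:jac} hold almost everywhere, which is all that is needed to evaluate $\mathcal{G}(\textup{tube}_\e^+(\Gamma))$ by integration — or else approximate $\Gamma$ from the outside by smooth convex hypersurfaces. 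The direct $\C^{1,1}$ route is cleaner, since controlling total curvature under approximation in a curved ambient manifold is less transparent; with that understood, the corollary follows at once.
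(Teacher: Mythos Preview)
Your proposal is correct and follows exactly the approach the paper intends: the paragraph preceding Proposition~\ref{prop:d-convex} already sets up $\tilde\Gamma_\e$ as a $d$-convex $\C^{1,1}$ hypersurface in $M\times\R$, and the ``In particular'' clause of that proposition is precisely the implication you invoke. Your observation about the $\C^{1,1}$/$\C^2$ regularity mismatch, and its resolution via Lemma~\ref{lem:riccati}, matches the paper's own remark just before the statement of Proposition~\ref{prop:d-convex}.
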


\begin{corollary}
If the total curvature inequality \eqref{eq:GK} holds in dimension $n$, then it holds in all dimensions less than $n$.
\end{corollary}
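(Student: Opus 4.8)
The plan is to deduce the second corollary from the first by iterating down one dimension at a time, so the real content is a single step: given that \eqref{eq:GK} holds in dimension $n$, show it holds in dimension $n-1$. First I would take an arbitrary convex $\C^{1,1}$ hypersurface $\Gamma$ in an $(n-1)$-dimensional Cartan-Hadamard manifold $M^{n-1}$, bounding a convex domain $\Omega$. The obstacle to applying Proposition \ref{prop:d-convex} directly is that it is stated for $\C^2$ hypersurfaces, so I would first reduce to that case: a $\C^{1,1}$ convex hypersurface can be approximated in the relevant sense (e.g.\ by taking outer parallel hypersurfaces $\widehat d_\Gamma^{-1}(\e)$, which by Proposition \ref{prop:C11} are $\C^{1,1}$, or by a smoothing that preserves convexity), but in fact the cleanest route is simply to note that the corollary we are proving only needs the $\C^{1,1}$ case of \eqref{eq:GK}, and the remark preceding Proposition \ref{prop:d-convex} indicates the argument there extends to $\C^{1,1}$; alternatively one applies the first corollary above. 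I would simply invoke the first corollary together with Proposition \ref{prop:d-convex}.

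The core of the argument: form the product manifold $M^{n-1}\times\R$, which is again Cartan-Hadamard of dimension $n$, and let $\tilde\Gamma_\e$ be the parallel hypersurface at distance $\e$ of the convex domain $\Omega\subset M^{n-1}\times\R$. By Lemma \ref{lem:u-convex} and Lemma \ref{lem:GH}, $\tilde\Gamma_\e$ is a $d$-convex (hence convex) $\C^{1,1}$ hypersurface in the $n$-dimensional Cartan-Hadamard manifold $M^{n-1}\times\R$. By the hypothesis that \eqref{eq:GK} holds in dimension $n$, we get $\mathcal{G}(\tilde\Gamma_\e)\geq \vol(\S^{n-1})$ — wait, more precisely, since $\tilde\Gamma_\e$ is an $(n-1)$-dimensional hypersurface in an $n$-manifold, the inequality in dimension $n$ reads $\mathcal{G}(\tilde\Gamma_\e)\geq\vol(\S^{n-1})$; here one must be careful with the indexing, matching the notation of Proposition \ref{prop:d-convex}, where the relevant bound is $\mathcal{G}(\tilde\Gamma_\e)\geq\vol(\S^n)$ when $\Gamma\subset M^n$ — so with $\Gamma\subset M^{n-1}$ we need \eqref{eq:GK} for hypersurfaces in $n$-dimensional ambient manifolds, i.e.\ the bound $\mathcal{G}(\tilde\Gamma_\e)\geq\vol(\S^{n-1})$. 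Then the last sentence of Proposition \ref{prop:d-convex} (with $n$ there replaced by $n-1$) yields $\mathcal{G}(\Gamma)\geq\vol(\S^{n-2})$, which is exactly \eqref{eq:GK} in dimension $n-1$.

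Then I would finish by induction: if \eqref{eq:GK} holds in dimension $n$, the above gives it in dimension $n-1$, hence in dimension $n-2$, and so on down to dimension $2$ (where it is known unconditionally anyway). Thus \eqref{eq:GK} holds in all dimensions $\leq n$.

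The main obstacle is purely bookkeeping rather than substance: keeping the sphere-dimension indices straight between the statement of Proposition \ref{prop:d-convex} (which relates an $n$-hypersurface bound $\vol(\S^n)$ to an $(n-1)$-hypersurface bound $\vol(\S^{n-1})$) and the statement of \eqref{eq:GK}, and making sure the regularity hypothesis ($\C^2$ in Proposition \ref{prop:d-convex} versus $\C^{1,1}$ in \eqref{eq:GK}) is handled — which is why invoking the preceding corollary, which already packages the $d$-convex case, is the tidiest approach. No genuinely new estimate is needed.

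\begin{proof}
Suppose \eqref{eq:GK} holds in dimension $n$; it suffices to show it holds in dimension $n-1$, and then iterate. Let $\Gamma$ be a convex $\C^{1,1}$ hypersurface in a Cartan-Hadamard manifold $M^{n-1}$, bounding a convex domain $\Omega$. Then $M^{n-1}\times\R$ is a Cartan-Hadamard manifold of dimension $n$, and for $\e>0$ the parallel hypersurface $\tilde\Gamma_\e$ of $\Omega$ at distance $\e$ in $M^{n-1}\times\R$ is a convex $\C^{1,1}$ hypersurface there, by Lemmas \ref{lem:u-convex} and \ref{lem:GH}. By the assumption that \eqref{eq:GK} holds in dimension $n$, $\mathcal{G}(\tilde\Gamma_\e)\geq\vol(\S^{n-1})$. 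Applying Proposition \ref{prop:d-convex} (with $n$ there replaced by $n-1$) then gives $\mathcal{G}(\Gamma)\geq\vol(\S^{n-2})$, which is \eqref{eq:GK} in dimension $n-1$. Iterating this down to dimension $2$ completes the proof.
\end{proof}
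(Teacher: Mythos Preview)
Your proposal is correct and follows exactly the approach the paper intends: the corollary is stated without proof immediately after Proposition~\ref{prop:d-convex} because it is meant to follow directly from that proposition by passing from $M^{n-1}$ to $M^{n-1}\times\R$ and iterating, just as you do. Your handling of the $\C^{1,1}$ versus $\C^2$ regularity is also consistent with the paper, which explicitly notes before Proposition~\ref{prop:d-convex} that the proof extends to the $\C^{1,1}$ case via Lemma~\ref{lem:riccati}.
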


\begin{note} \label{note:alpha_n}
Let $M=\R^n,\,\Gamma=\S^{n-1}$, and $\tilde\Gamma_\e$  be as in the statement of Proposition \ref{prop:d-convex}. Then the map $f^\e(p,\theta)$ in the  proof of Proposition \ref{prop:d-convex} 
simplifies to $p+\e\nu_p(\theta)$, and we quickly obtain
$$
\mathcal{G}(\tilde\Gamma_\e)= \left(\int_{-\pi/2}^{\pi/2}\cos^{n-1}(\theta)\,d\theta \right) \mathcal{G}(\Gamma).
$$
Since  $\mathcal{G}(\tilde \Gamma_{\e})=\vol(\S^{n})$ and 
$\mathcal{G}(\Gamma)=\vol(\S^{n-1})$, it follows that
$$
\frac{\vol(\S^{n})}{\vol(\S^{n-1})}=\int_{-\pi/2}^{\pi/2}\cos^{n-1}(\theta)\,d\theta,
$$
which establishes \eqref{eq:alpha_n} without the need to use the gamma and beta functions.
\end{note}

\begin{note}
An anonymous referee has brought to our attention that continuity properties for total curvature, as considered in Proposition \ref{prop:d-convex} above and some other questions discussed later in Section \ref{sec:CH}, may be treated via techniques in integral geometry; specifically, by using the fact that total curvature is given by a valuation, as stated for instance in \cite[Prop. 3.8]{bering-brocker}.
\end{note}

%%%%%%%%%%%%%%%%%%%%%%%%%%%%%%%%%%%%%%%%%%%%%%%%
\section{The Comparison Formula}\label{sec:comparison}
%%%%%%%%%%%%%%%%%%%%%%%%%%%%%%%%%%%%%%%%%%%%%%%%
In this section we establish an integral formula for comparing the total curvature of regular level sets of $\C^{1,1}$ functions on Riemannian manifolds. 
Here we assume that $u$ is a $\C^{1,1}$ function on a Riemannian manifold $M$, so that it is twice differentiable  almost everywhere, and
derive a basic identity for the cofactor operator, denoted by $\mathcal{T}^{\,u}$, associated to the Hessian of $u$. 
This operator is a special case of a more general device, the Newton operator, which appears in the well-known works or Reilly \cite{reilly1973,reilly1977}. To start,
let $\nabla$ be the \emph{covariant derivative} on $M$. The \emph{gradient} of $u$, $\nabla u$, is then given by
$$
\langle \nabla u(p), V\rangle :=\nabla_V u,
$$
for tangent vectors $V\in T_p M$.
Next (at a twice differentiable point $p$) we define the \emph{Hessian operator} $\nabla^2 u$ of $u$ as the self-adjoint linear map on $T_p M$ given by
$$
\nabla^2 u(V):=\nabla_V(\nabla u).
$$
The \emph{Hessian} of $u$ in turn will be the corresponding symmetric bilinear form on $T_p M$, 
$$
\textup{Hess}_u(V,W):=\left\langle \nabla^2 u(V),W \right\rangle=\left\langle \nabla_V(\nabla u),W \right\rangle.
$$
 Let $E_i$ denote a smooth orthonormal frame field in a neighborhood $U$ of $p$, and set $\nabla_i:=\nabla_{E_i}$.   Then $\nabla u=u_i E_i$ on $U$, and  $\nabla^2u(V)=u_{ij} V^j E_i$ at $p$, where 
$$
u_i:= \nabla_i u=\langle \nabla u, E_i\rangle,\quad\quad\text{and}\quad\quad u_{ij}:=\textup{Hess}_u(E_i, E_j).
$$
In general $u_{ij} = \nabla_j u_i - \langle \nabla_j E_i, E_k \rangle u_k $. We may assume, however, that
$(\nabla_j E_i)_p:=\nabla_{E_j(p)} E_i=0$, i.e., $E_i$ is a local \emph{geodesic frame} based at $p$.
Then
\begin{equation}\label{eq:nablaiej}
u_{ij}(p)= (\nabla_j u_i)_p.
\end{equation}
The  \emph{cofactor} of a square matrix $(a_{ij})$ is the matrix $(\ol a_{ij})$ where $\ol a_{ij}$ is  the $(i,j)$-\emph{signed minor} of $(a_{ij})$, i.e., $(-1)^{i+j}$ times the determinant of the matrix obtained by removing the $i^{th}$ row and $j^{th}$ column of $(a_{ij})$. We define the self-adjoint operator $\mathcal{T}^{\,u}\colon T_p M\to T_pM$ by setting
$$
(\mathcal{T}^{\,u}_{ij}):=\textup{cofactor}(u_{ij})=(\ol u_{ij}). 
$$
Note that, when $\nabla^2 u$ is nondegenerate, $(\nabla^2 u)^{-1}(V)=u^{ij} V^j E_i$, where 
$(u^{ij}):=(u_{ij})^{-1}$.
In that case,
$$
\mathcal{T}^{\,u}(V)= \det(\nabla^2 u)
(\nabla^2 u)^{-1}(V)=\mathcal{T}^{\,u}_{ij} V^j E_i,
$$  
and $(\mathcal{T}^{\,u}_{ij})=\det(\nabla^2u) (u^{ij})$.    We are interested in $\mathcal{T}^{\,u}$ since
it can be used to compute the curvature of the level sets of $u$, as discussed below.

We say that $\Gamma:=\{u=u(p)\}$ is a \emph{regular level set} of $u$ near $p$, if $u$ is $\C^1$ on a neighborhood of $p$ and $\nabla u(p)\neq 0$. Then $\nabla u/|\nabla u|$ generates a normal vector field on $\Gamma$ near $p$. If  we let $E_\ell$ be the principal directions of $\Gamma$ at $p$, then the corresponding principal curvatures of $\Gamma$ with respect to $\nabla u/|\nabla u|$ are given by

\begin{equation}\label{eq:kalpha}
\kappa_{\ell}=\left\langle \nabla_\ell \left(\frac{\nabla u}{|\nabla u|}\right), E_\ell\right\rangle= \frac{\langle \nabla_\ell (\nabla u), E_\ell\rangle}{|\nabla u|} =\frac{\textup{Hess}_u(E_\ell,E_\ell)}{|\nabla u|}= \frac{u_{\ell\ell}}{|\nabla u|}.
\end{equation}
Using this formula, we can show:

\begin{lemma}\label{lem:GK}
Let $\Gamma:=\{u=u(p)\}$ be a  level set of $u$ which is regular near $p$, and suppose that $\Gamma$ is twice differentiable at $p$. Then the Gauss Kronecker curvature of $\Gamma$ at $p$ with respect to $\nabla u/|\nabla u|$ is given by
$$
GK=\frac{\left\langle \mathcal{T}^{\,u}(\nabla u),\nabla u\right\rangle}{|\nabla u|^{n+1}}.
$$
\end{lemma}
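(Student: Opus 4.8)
The plan is to carry out the computation in a single orthonormal frame adapted to $\Gamma$ and $\nabla u$ at $p$, reducing the identity to the product rule $GK=\prod_{\ell}\kappa_\ell$ of \eqref{eq:detSp} together with the formula \eqref{eq:kalpha} for the principal curvatures. Throughout I may assume $u$ is twice differentiable at $p$, so that $u_{ij}(p)$, and hence $\mathcal{T}^{\,u}$ at $p$, are defined.

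First I would fix, at $p$, an orthonormal basis $E_1,\dots,E_n$ of $T_pM$ with $E_n:=\nabla u(p)/|\nabla u(p)|$ and with $E_1,\dots,E_{n-1}$ principal directions of $\Gamma$ at $p$; this is legitimate since $\Gamma$ is twice differentiable at $p$, so its second fundamental form is a symmetric bilinear form on $T_p\Gamma=E_n^{\perp}$ and thus has an orthonormal eigenbasis. I then extend $E_i$ to a geodesic frame based at $p$, as in the text, so that $u_{ij}(p)$ is given by \eqref{eq:nablaiej}. The key local observation is that in this frame the $(n-1)\times(n-1)$ block $\big(u_{ij}\big)_{1\le i,j\le n-1}$ is diagonal with $u_{\ell\ell}(p)=|\nabla u(p)|\,\kappa_\ell$. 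Indeed, for $V,W\in T_p\Gamma$ the term obtained from differentiating the scalar $1/|\nabla u|$ is killed upon pairing with $W$, because $\nabla u(p)\perp T_p\Gamma$; hence $\langle\nabla_V(\nabla u/|\nabla u|),W\rangle=\textup{Hess}_u(V,W)/|\nabla u|$, i.e.\ the shape operator of $\Gamma$ at $p$ is $\textup{Hess}_u|_{T_p\Gamma}/|\nabla u|$, and $E_1,\dots,E_{n-1}$ diagonalize it with eigenvalues $\kappa_\ell=u_{\ell\ell}/|\nabla u|$ exactly as in \eqref{eq:kalpha}.

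Next I would evaluate the right-hand side in the same frame. Since $\nabla u(p)=|\nabla u(p)|\,E_n$, we have $(\nabla u)^{i}=|\nabla u|\,\delta_{in}$, so
$$
\big\langle \mathcal{T}^{\,u}(\nabla u),\nabla u\big\rangle=\mathcal{T}^{\,u}_{ij}\,(\nabla u)^{i}(\nabla u)^{j}=|\nabla u|^{2}\,\mathcal{T}^{\,u}_{nn}.
$$
By definition $\mathcal{T}^{\,u}_{nn}=\ol u_{nn}$ is the $(n,n)$ signed minor of $(u_{ij})$, namely $\det\big((u_{ij})_{1\le i,j\le n-1}\big)$; note the entries $u_{in}$ with $i<n$ play no role here, since this minor deletes precisely row and column $n$. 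By the previous step this determinant equals $\prod_{\ell=1}^{n-1}u_{\ell\ell}=|\nabla u|^{\,n-1}\prod_{\ell=1}^{n-1}\kappa_\ell=|\nabla u|^{\,n-1}\,GK$, using \eqref{eq:detSp}. Combining the two computations gives $\big\langle \mathcal{T}^{\,u}(\nabla u),\nabla u\big\rangle=|\nabla u|^{\,n+1}\,GK$, which is the assertion. I would close by remarking that the identity is frame-independent, since the cofactor operation is equivariant under orthogonal conjugation ($\textup{cofactor}(QAQ^{\top})=Q\,\textup{cofactor}(A)\,Q^{\top}$ for orthogonal $Q$), so working in the adapted frame entails no loss of generality.

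I do not anticipate a real obstacle: the only two points needing a line of justification are the vanishing of the $1/|\nabla u|$-differentiation term when the shape operator is restricted to $T_p\Gamma$ (so that the relevant Hessian block is the second fundamental form up to the factor $|\nabla u|$), and the purely algebraic fact that $\mathcal{T}^{\,u}_{nn}$ is exactly the determinant of the $(n-1)\times(n-1)$ Hessian block transverse to $\nabla u$.
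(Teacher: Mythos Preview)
Your proof is correct and follows essentially the same approach as the paper: choose an orthonormal frame at $p$ with $E_n$ parallel to $\nabla u$ and $E_1,\dots,E_{n-1}$ principal directions of $\Gamma$, observe that the leading $(n-1)\times(n-1)$ block of $(u_{ij})$ is diagonal with entries $|\nabla u|\kappa_\ell$, and then compute $\langle\mathcal{T}^{\,u}(\nabla u),\nabla u\rangle=|\nabla u|^2\,\mathcal{T}^{\,u}_{nn}=|\nabla u|^2\prod_\ell u_{\ell\ell}=|\nabla u|^{n+1}GK$. Your additional remarks on why the tangential Hessian block agrees with the second fundamental form and on frame independence are fine but not strictly needed.
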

\begin{proof}
Let  $E_i$ be an orthonormal frame for $T_pM$ such that $E_\ell$, $\ell=1,\dots,n-1$ are principal directions of  $\Gamma$ at $p$. 
Then the $(n-1)\times (n-1)$ leading principal submatrix of $(u_{ij})$ will be diagonal.
Thus, 
$$
\mathcal{T}^{\,u}_{nn}=\ol{u}_{nn}= \prod_{\ell=1}^{n-1}u_{\ell\ell}.
$$
Furthermore, since $E_n$ is orthogonal to $\Gamma$, and $\Gamma$ is a level set of $u$, $\nabla u$ is parallel to $\pm E_n$. So
$u_n=\langle\nabla u, E_n\rangle=\pm|\nabla u|$. Now, using \eqref{eq:kalpha}, we have
$$
\frac{\left\langle \mathcal{T}^{\,u}(\nabla u),\nabla u\right\rangle}{|\nabla u|^{n+1}}
=\frac{\mathcal{T}^{\,u}_{ij}u_j u_i}{|\nabla u|^{n+1}} 
=\frac{\mathcal{T}^{\,u}_{nn}u_n u_n}{|\nabla u|^{n+1}} 
=\frac{\mathcal{T}^{\,u}_{nn}}{|\nabla u|^{n-1}}
=\prod_{\ell=1}^{n-1}\frac{u_{\ell\ell}}{|\nabla u|}=GK.
$$ 
\end{proof}

Let $V$ be a vector field on $U$. Since $(\nabla_jE_i)_p=0$, the divergence of the vector field $\mathcal{T}^{\,u}(V)$ at $p$ is given by
\begin{equation}\label{eq:divAV}
\mbox{div}_p\big(\mathcal{T}^{\,u}(V)\big)=\big(\nabla_i(\mathcal{T}^{\,u}_{ij}V^j)\big)_p.
\end{equation} 
The \emph{divergence} $\textup{div}(\mathcal{T}^{\,u})$ of $\mathcal{T}^{\,u}$ is defined as follows. If $\mathcal{T}^{\,u}$ is viewed as a bilinear form or $(0,2)$ tensor, then $\textup{div}(\mathcal{T}^{\,u})$ generates a one-form or $(0,1)$ tensor given by $\langle \textup{div}(\mathcal{T}^{\,u}), \,\cdot\,\rangle$, where
\begin{equation}\label{eq:divA}
\textup{div}_p(\mathcal{T}^{\,u}):=(\nabla_i \mathcal{T}^{\,u}_{ij})_p\,E_j(p).
\end{equation}
In other words, with respect to our frame $E_i$,  $\textup{div}_p(\mathcal{T}^{\,u})$ is a vector whose $i^{th}$ coordinate is the divergence of the $i^{th}$ column of $\mathcal{T}^{\,u}$ at $p$.

\begin{lemma}\label{lem:divTu}
If  $u$ is three times differentiable at $p$, $\nabla u(p)\neq 0$, and $\nabla^2u(p)$ is nondegenerate, then
\be\label{eq:DI}
\textup{div}\left(\mathcal{T}^{\,u}\left(\frac{\nabla u}{|\nabla u|^n}\right)\right)
=\left\langle\textup{div}(\mathcal{T}^{\,u}),\frac{\nabla u}{|\nabla u|^n}\right\rangle.
\ee
\end{lemma}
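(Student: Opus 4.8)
The plan is to expand the divergence on the left of \eqref{eq:DI} by the Leibniz rule in a geodesic frame based at $p$ and then show that the extra term produced vanishes identically, the vanishing being a consequence of the classical adjugate identity for the symmetric matrix $(u_{ij})$.

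Fix an orthonormal geodesic frame $E_i$ based at $p$, so that $(\nabla_j E_i)_p = 0$ and \eqref{eq:nablaiej} holds; by symmetry of the Hessian, $(\nabla_i u_j)_p = u_{ij}(p) = (\nabla_j u_i)_p$. Write $W := \nabla u/|\nabla u|^n$, with components $W^j = u_j |\nabla u|^{-n}$. (The hypothesis that $u$ is three times differentiable at $p$ is exactly what makes the third-order quantities $(\nabla_i \mathcal{T}^{\,u}_{ij})_p$, hence $\textup{div}_p(\mathcal{T}^{\,u})$, meaningful.) Using the definition of divergence together with $(\nabla_j E_i)_p=0$, at $p$ one has
\begin{align*}
\textup{div}_p\big(\mathcal{T}^{\,u}(W)\big)
&= \big(\nabla_i(\mathcal{T}^{\,u}_{ij}W^j)\big)_p \\
&= \big(\nabla_i \mathcal{T}^{\,u}_{ij}\big)_p\, W^j(p) + \mathcal{T}^{\,u}_{ij}(p)\,(\nabla_i W^j)_p \\
&= \Big\langle \textup{div}_p(\mathcal{T}^{\,u}), \tfrac{\nabla u}{|\nabla u|^n}(p)\Big\rangle + \mathcal{T}^{\,u}_{ij}(p)\,(\nabla_i W^j)_p .
\end{align*}
So \eqref{eq:DI} is equivalent to the assertion that $\mathcal{T}^{\,u}_{ij}(\nabla_i W^j)_p = 0$.

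To verify this, I would differentiate $W^j = u_j|\nabla u|^{-n}$ by the product rule: at $p$ this gives $(\nabla_i W^j)_p = u_{ij}|\nabla u|^{-n} + u_j\,\nabla_i(|\nabla u|^{-n})$, and from $|\nabla u|^2 = \sum_k u_k^2$ together with $(\nabla_i u_k)_p = u_{ik}$ one gets $\nabla_i(|\nabla u|^{-n})_p = -n|\nabla u|^{-n-2}\sum_k u_k u_{ik}$. Hence
$$
\mathcal{T}^{\,u}_{ij}(\nabla_i W^j)_p = \frac{1}{|\nabla u|^{n}}\sum_{i,j}\mathcal{T}^{\,u}_{ij}u_{ij} \;-\; \frac{n}{|\nabla u|^{n+2}}\sum_{i,j,k}\mathcal{T}^{\,u}_{ij}u_{ik}u_j u_k .
$$
Now invoke the identity $(\mathcal{T}^{\,u}_{ij}) = \det(\nabla^2u)\,(u^{ij})$ with $u^{ij}u_{jk}=\delta_{ik}$, valid since $\nabla^2u(p)$ is nondegenerate, which (using symmetry of $(u_{ij})$) yields $\sum_i \mathcal{T}^{\,u}_{ij}u_{ik} = \det(\nabla^2u)\,\delta_{jk}$. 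Consequently $\sum_{i,j}\mathcal{T}^{\,u}_{ij}u_{ij} = n\det(\nabla^2u)$ and $\sum_{i,j,k}\mathcal{T}^{\,u}_{ij}u_{ik}u_ju_k = \det(\nabla^2u)\sum_j u_j^2 = \det(\nabla^2u)\,|\nabla u|^2$. Substituting, both terms on the right equal $n\,|\nabla u|^{-n}\det(\nabla^2u)$ and cancel, so $\mathcal{T}^{\,u}_{ij}(\nabla_i W^j)_p = 0$, which proves \eqref{eq:DI}.

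The only genuine content is this final cancellation; the step most prone to slips is the index bookkeeping, and in particular one must use that $\mathcal{T}^{\,u}$ (like $\nabla^2 u$) is symmetric in order to apply the adjugate relation with the summation index in the slot we need. The geodesic-frame reduction and the fact that everything is evaluated pointwise at $p$ are routine, so I do not anticipate a real obstacle: the lemma is essentially a computational identity.
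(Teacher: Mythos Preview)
Your proof is correct and follows essentially the same approach as the paper: expand the left side of \eqref{eq:DI} by the Leibniz rule in a geodesic frame at $p$, then show the extra term $\mathcal{T}^{\,u}_{ij}\nabla_i(u_j/|\nabla u|^n)$ vanishes via the adjugate identity $\mathcal{T}^{\,u}_{ij}u_{ik}=\det(\nabla^2u)\delta_{jk}$. The paper's write-up is terser but the computation and the key cancellation are identical.
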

\begin{proof} 
By \eqref{eq:divAV} and \eqref{eq:divA}, it suffices to check that, at the point $p$,
$$
\nabla_i\Big(\mathcal{T}^{\,u}_{ij}\frac{u_j}{|\nabla u|^n}\Big)=(\nabla_i \mathcal{T}^{\,u}_{ij})\frac{u_j}{|\nabla u|^n}.
$$
This follows  from \eqref{eq:nablaiej} via Leibniz rule, since
\[\mathcal{T}^{\,u}_{ij}\nabla_i\left(\frac{u_j}{|\nabla u|^n}\right)=\mathcal{T}^{\,u}_{ij}\left(\frac{u_{ji}}{|\nabla
u|^n}-n\frac{u_ju_ku_{ki}}{|\nabla u|^{n+2}}\right)=n\frac{\det(u_{ij})}{|\nabla
u|^n}-n\frac{u_ju_k\delta_{kj}\det(u_{ij})}{|\nabla u|^{n+2}}=0.
\]
\end{proof}

Next we apply the divergence identity \eqref{eq:DI} developed above  to obtain the comparison formula via Stokes' theorem. Let $\Gamma$ be a closed embedded $\C^{1,1}$ hypersurface in a Riemannian manifold $M$ bounding a domain $\Omega$. Recall that the outward normal  of $\Gamma$ is the unit normal vector field $\nu$ along $\Gamma$ which points away from $\Omega$, and if $p$ is a twice differentiable point of $\Gamma$, we will assume that  the Gauss-Kronecker curvature $GK(p)$ of $\Gamma$ is computed with respect to $\nu$ according to  \eqref{eq:detSp}. 
We say that $p$ is a \emph{regular point} of a function $u$ on $M$ provided that $u$ is $\C^1$ on an open neighborhood of $p$ and $\nabla u(p)\neq 0$. Furthermore, $x$ is a \emph{regular value} of $u$ provided that every $p\in u^{-1}(x)$ is a regular point of $u$. Then $u^{-1}(x)$ will be called a \emph{regular level set} of $u$.
In this section we assume that $\Gamma$ is a regular level set of $u$, and $\gamma$ is another regular level set  bounding a domain $D\subset\Omega$. We assume that $u$ is $\C^{2,1}$ on $\cl(\Omega)\setminus D$ and $\nabla u$ points outward along $\Gamma$ and $\gamma$ with respect to their corresponding domains. Furthermore we assume that $|\nabla u|\neq 0$ and $\nabla^2 u$ is nondegenerate at almost every point $p$ in $\cl(\Omega)\setminus D$.  Below we will assume that local calculations always take place at such a point $p$ with respect to a geodesic frame based at $p$, as defined above, and often omit the explicit reference to $p$. Throughout the paper $d\mu$ denotes the $n$-dimensional Riemannian volume measure on $M$, and $d\sigma$ is the $(n-1)$-dimensional volume or hypersurface area measure.

\begin{lemma}\label{lem2.2} 
$$
\mathcal{G}(\Gamma)-\mathcal{G}(\gamma)=\int_{\Omega\setminus D}
\left\langle\textup{div}(\mathcal{T}^{\,u}),\frac{\nabla u}{|\nabla u|^n}\right\rangle d\mu.
$$
\end{lemma}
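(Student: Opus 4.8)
The plan is to apply the divergence theorem (Stokes' theorem) to the vector field $W:=\mathcal{T}^{\,u}(\nabla u/|\nabla u|^n)$ on the region $\Omega\setminus D$, and then identify the resulting boundary integrals with $\mathcal{G}(\Gamma)$ and $-\mathcal{G}(\gamma)$ using Lemma \ref{lem:GK}. First I would note that $\Omega\setminus D$ has boundary $\Gamma\cup\gamma$, with $\Gamma$ carrying the outward normal $\nu=\nabla u/|\nabla u|$ of $\Omega$ (since $\nabla u$ points outward along $\Gamma$), and $\gamma$ carrying the normal $-\nabla u/|\nabla u|$ relative to $\Omega\setminus D$ (since $\nabla u$ points outward along $\gamma$ with respect to $D$, hence \emph{into} $\Omega\setminus D$). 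Thus the divergence theorem gives
$$
\int_{\Omega\setminus D}\textup{div}(W)\,d\mu=\int_\Gamma\Big\langle W,\frac{\nabla u}{|\nabla u|}\Big\rangle\,d\sigma-\int_\gamma\Big\langle W,\frac{\nabla u}{|\nabla u|}\Big\rangle\,d\sigma.
$$
On $\Gamma$, the integrand is $\langle\mathcal{T}^{\,u}(\nabla u),\nabla u\rangle/|\nabla u|^{n+1}$, which by Lemma \ref{lem:GK} equals the Gauss-Kronecker curvature of $\Gamma$ computed with respect to $\nabla u/|\nabla u|$; since $\nabla u$ points outward, this is the $GK$ of our sign convention, so $\int_\Gamma\langle W,\nabla u/|\nabla u|\rangle\,d\sigma=\mathcal{G}(\Gamma)$, and similarly the $\gamma$ term equals $\mathcal{G}(\gamma)$. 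For the interior integral, Lemma \ref{lem:divTu} rewrites $\textup{div}(W)=\langle\textup{div}(\mathcal{T}^{\,u}),\nabla u/|\nabla u|^n\rangle$ at almost every point (wherever $u$ is thrice differentiable, $\nabla u\neq 0$, and $\nabla^2 u$ is nondegenerate), which by hypothesis is a full-measure set in $\cl(\Omega)\setminus D$. Combining these three identifications yields the claimed formula.

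The main obstacle is the \emph{regularity}: $u$ is only assumed $\C^{2,1}$ on $\cl(\Omega)\setminus D$, so $\textup{div}(\mathcal{T}^{\,u})$ and $\textup{div}(W)$ exist only almost everywhere (via Rademacher's theorem applied to the $\C^{1,1}$ entries of $\mathcal{T}^{\,u}$), and one must justify a divergence theorem for a merely Lipschitz vector field $W=\mathcal{T}^{\,u}(\nabla u/|\nabla u|^n)$ (note $\mathcal{T}^{\,u}$ has Lipschitz, hence bounded and a.e.-differentiable, components, and $\nabla u/|\nabla u|^n$ is Lipschitz near $\Gamma$ and $\gamma$ by Proposition \ref{prop:C11} since $|\nabla u|\neq 0$ there). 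The standard resolution is that Stokes' theorem holds for Lipschitz vector fields on domains with Lipschitz (here $\C^{1,1}$) boundary, with the divergence interpreted in the a.e. classical sense — this is a routine consequence of the Gauss-Green theorem for $W^{1,1}\cap L^\infty$ fields, or can be obtained by mollifying $u$ and passing to the limit, using the uniform $\C^{1,1}$ bounds near the boundary from Proposition \ref{prop:C11-2} to control the boundary terms. I would also remark that the nondegeneracy hypothesis on $\nabla^2 u$ is used only to invoke Lemma \ref{lem:divTu} pointwise, and that at the a.e.-bad points both sides are handled by the null-set convention built into the statement's hypotheses. The remaining steps — checking orientations of the two boundary components and matching signs in Lemma \ref{lem:GK} — are bookkeeping.
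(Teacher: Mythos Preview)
Your proposal is correct and follows essentially the same route as the paper: apply the divergence theorem to $W=\mathcal{T}^{\,u}(\nabla u/|\nabla u|^n)$ on $\Omega\setminus D$, invoke Lemma~\ref{lem:divTu} to convert $\textup{div}(W)$ into $\langle\textup{div}(\mathcal{T}^{\,u}),\nabla u/|\nabla u|^n\rangle$, and identify the boundary terms via Lemma~\ref{lem:GK} together with the orientation bookkeeping $\nu=\pm\nabla u/|\nabla u|$ on $\Gamma,\gamma$. The paper's proof is terser and does not spell out the regularity justification for applying the divergence theorem to a Lipschitz field; your discussion of this point (Gauss--Green for $W^{1,1}$ fields, or mollification) is a welcome addition rather than a deviation.
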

\begin{proof} 
By Lemma \ref{lem:divTu} and the divergence theorem, 
\begin{eqnarray*}
\int_{\Omega\setminus D}
\left\langle\textup{div}(\mathcal{T}^{\,u}),\frac{\nabla u}{|\nabla u|^n}\right\rangle d\mu
&=&
\int_{\Omega\setminus D}
\textup{div}\left(\mathcal{T}^{\,u}\left(\frac{\nabla u}{|\nabla u|^n}\right)\right)d\mu\\
&=&\int_{\Gamma\cup\gamma}\left\langle \mathcal{T}^{\,u}\left(\frac{\nabla u}{|\nabla u|^n}\right),\nu\right\rangle d\sigma,
\end{eqnarray*}
 where $\nu$ is the outward normal to $\partial(\Omega\setminus D)=\Gamma\cup\gamma$. 
 Now Lemma \ref{lem:GK} completes the proof since by assumption $\nu=\nabla u/|\nabla u|$ on $\Gamma$ and $\nu=-\nabla u/|\nabla u|$ on $\gamma$.
\end{proof}

In the next computation we will need the formula
\begin{equation}\label{eq:nablaidet}
\nabla_i \det(\nabla^2u)=\mathcal{T}^{\,u}_{r\ell}u_{r\ell i}=\det(\nabla^2 u)u^{r\ell}u_{r\ell i},
\end{equation}
where $u_{rki}:=\nabla_iu_{rk}$.
Further note that by the definition of the Riemann tensor $R$ in local coordinates:
\begin{equation}\label{eq:urik}
u_{rik}-u_{rki}=\nabla_k\nabla_i u_r-\nabla_i\nabla_k u_r=R_{kir\ell}u_\ell,
\end{equation}
where we have used the fact that 
$R^\ell_{kir}=R_{kirm}g^{m\ell}=R_{kir\ell}$, since $g^{m\ell}:=\langle E_m, E_\ell\rangle=\delta_{m\ell}$.
Note that in formulas below we use the \emph{Einstein summation convention}, i.e., we assume that any term with repeated indices is summed over that index with values ranging from $1$ to $n$, unless indicated otherwise. The next observation relates the divergence of the Hessian cofactor to a trace or contraction of the Riemann tensor:
\begin{lemma}\label{lem2.3}
For any orthonormal frame $E_i$ at a point $p\in\Omega$,
$$
\big\langle \textup{div}(\mathcal{T}^{\,u}),\nabla u\big\rangle=\frac{R\big(\mathcal{T}^{\,u}(\nabla u),\mathcal{T}^{\,u}(E_i), E_i, \nabla u\big)}{\det(\nabla^2 u)}
=\frac{R\big(\mathcal{T}^{\,u}(\nabla u),E_i, \mathcal{T}^{\,u}(E_i), \nabla u\big)}{\det(\nabla^2 u)}.
$$
\end{lemma}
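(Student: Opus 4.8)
The plan is to compute $\textup{div}(\mathcal{T}^{\,u})$ explicitly in the geodesic frame $E_i$ based at $p$, using the expression $\mathcal{T}^{\,u}_{ij} = \det(\nabla^2 u)\, u^{ij}$ valid where $\nabla^2 u$ is nondegenerate, and then commute covariant derivatives to bring in the Riemann tensor via \eqref{eq:urik}. First I would write, using the product rule and \eqref{eq:nablaidet},
\[
\nabla_i \mathcal{T}^{\,u}_{ij} = \nabla_i\big(\det(\nabla^2u)\, u^{ij}\big) = \big(\det(\nabla^2u)\, u^{r\ell} u_{r\ell i}\big) u^{ij} + \det(\nabla^2u)\, \nabla_i u^{ij}.
\]
To handle $\nabla_i u^{ij}$, differentiate the identity $u^{ik} u_{kj} = \delta^i_j$ to get $\nabla_i u^{ij} = -u^{ir} u^{j\ell}\, u_{r\ell i}$ (up to index bookkeeping). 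Substituting, and pairing with $\nabla u = u_j E_j$, gives
\[
\big\langle \textup{div}(\mathcal{T}^{\,u}), \nabla u\big\rangle = \det(\nabla^2u)\Big( u^{r\ell} u^{ij} u_{r\ell i} u_j - u^{ir} u^{j\ell} u_{r\ell i} u_j\Big).
\]
The key manoeuvre is then to recognize that in the first term the third-order derivative appears as $u_{r\ell i}$ (symmetric in $r,\ell$) contracted against the symmetric pair $u^{r\ell}$, while in the second it appears as $u_{r\ell i}$ contracted against $u^{ir}$ and $u^{j\ell}$; the antisymmetrized combination of third derivatives $u_{r\ell i} - u_{r i \ell}$ (or an appropriate version) is what survives, and by \eqref{eq:urik} this equals $R_{\cdot\,\cdot\, r m} u_m$ contracted with the inverse Hessian factors.

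Concretely, the step I expect to be the main obstacle is the careful index juggling to show that the two third-derivative terms combine into a single curvature term: one must symmetrize/antisymmetrize correctly, use $u_{r\ell i} = u_{\ell r i}$ and the commutation formula \eqref{eq:urik} in the form $u_{r i k} - u_{r k i} = R_{kir\ell} u_\ell$, and keep track of which indices are summed against $u^{ij}$, which against $u_j$. After the dust settles, the surviving expression should be $\det(\nabla^2u) \cdot u^{a p} u^{b q}\, R_{\cdots}\, u_{\cdot}\, u_{\cdot}$ type terms, which one recognizes as $R$ evaluated on the vectors $\mathcal{T}^{\,u}(\nabla u) = \det(\nabla^2u)(\nabla^2u)^{-1}(\nabla u)$ and $\mathcal{T}^{\,u}(E_i)$, divided by $\det(\nabla^2u)$ (one factor of the determinant cancels against the two inverse-Hessian factors hidden in the two $\mathcal{T}^{\,u}$'s, leaving $\det(\nabla^2u)^2/\det(\nabla^2u) = \det(\nabla^2u)$ — so actually I should double-check the power of $\det$ and adjust, writing $\mathcal{T}^{\,u}(\nabla u)\,\mathcal{T}^{\,u}(E_i)$ as $\det(\nabla^2u)^2$ times inverse-Hessian contractions so that dividing by $\det(\nabla^2u)$ restores the single power above).

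Finally, the second equality in the statement — interchanging the middle two slots $\mathcal{T}^{\,u}(E_i)$ and $E_i$ — follows from the symmetry of the self-adjoint operator $\mathcal{T}^{\,u}$ together with the antisymmetry of $R$ in its last two arguments and the pair-exchange symmetry $R(X,Y,Z,W) = R(Z,W,X,Y)$: writing $\mathcal{T}^{\,u}(E_i) = \mathcal{T}^{\,u}_{ki} E_k$ and summing over $i$, the expression $R(\mathcal{T}^{\,u}(\nabla u), \mathcal{T}^{\,u}_{ki} E_k, E_i, \nabla u)$ is symmetric in how the matrix $\mathcal{T}^{\,u}_{ki}$ is contracted because $\mathcal{T}^{\,u}_{ki} = \mathcal{T}^{\,u}_{ik}$, so relabeling $i \leftrightarrow k$ moves the $\mathcal{T}^{\,u}$ from the second slot to the third. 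I would present this as a one-line remark at the end. Throughout I would work at a single twice/thrice-differentiable point $p$ with a geodesic frame, so that \eqref{eq:nablaiej} holds and all the algebra is genuinely pointwise linear algebra plus the single commutation identity.
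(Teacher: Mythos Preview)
Your proposal is correct and follows essentially the same route as the paper: expand $\nabla_i(\det(\nabla^2u)\,u^{ij})$ via the product rule, compute $\nabla_i u^{ij}$ by differentiating the inverse identity, reindex so that the two third-order terms combine into $u_{rik}-u_{rki}=R_{kir\ell}u_\ell$ from \eqref{eq:urik}, and then recognize the resulting contraction as the stated curvature expression divided by $\det(\nabla^2 u)$. The paper handles the second equality simply by noting that one may slide the factor $u^{ir}$ to the other slot in the curvature tensor, which is exactly your symmetry-of-$\mathcal{T}^{\,u}$ relabeling argument.
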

\begin{proof}
Differentiating both sides of $u^{ir}u_{rk}u^{kj}=u^{ij}$,  we obtain
$
\nabla_i u^{ij}=-u^{ir}u^{kj}u_{rki} \label{eq2.40}.
$
This together with \eqref{eq:nablaidet} and \eqref{eq:urik} yields that
\begin{eqnarray*}
\nabla_i \mathcal{T}^{\,u}_{ij}&=&\nabla_i(u^{ij}\det(\nabla^2u))\\
&=&-u^{ir}u^{kj}u_{rki}\det(\nabla^2 u)+u^{ij}\det(\nabla^2 u)u^{r\ell}u_{r\ell i}\\
&=&\det(\nabla^2 u)u^{kj}u^{ir}R_{kir\ell}u_\ell,
\end{eqnarray*}
where passing from the second line to the third proceeds via reindexing  $i\goto k$, $\ell\goto i$,  in the second term of the second line. Thus by \eqref{eq:divA}
$$
\langle \textup{div}(\mathcal{T}^{\,u}),\nabla u\rangle=\nabla_i \mathcal{T}^{\,u}_{ij}u_j=\det(\nabla^2 u)u^{kj}u^{ir}R_{kir\ell}u_\ell u_j.
$$
It remains then to work on the right hand side of the last expression. To this end recall that $u^{ij} E_j=\mathcal{T}^{\,u}_{ij}E_j/\det(\nabla^2 u)=\mathcal{T}^{\,u}(E_i)/\det(\nabla^2 u)$. Thus
\begin{eqnarray}\label{eq:Rkirl}
\det(\nabla^2 u)u^{kj} u^{ir} R_{kir\ell} u_\ell u_j
&=&\det(\nabla^2 u)R(u^{kj}E_ku_j,E_i,  u^{ir}E_r, u_\ell E_\ell )  \\ \notag
 &=& \frac{R(\mathcal{T}^{\,u}(\nabla u),E_i, \mathcal{T}^{\,u}(E_i), \nabla u)}{\det(\nabla^2 u)}.  \notag
\end{eqnarray}
Note that we may move $u_{ir}$, on the right hand side of the first inequality in the last expression, next to $E_i$, which will have the effect of moving $\mathcal{T}^{\,u}$ over to the second slot of $R$ in the last line of the expression. 
\end{proof}

Combining Lemmas \ref{lem2.2} and \ref{lem2.3} we obtain the basic form of the comparison formula:

\begin{corollary}\label{cor2.4}
Let $E_i$ be any choice of an orthonormal frame at each point $p\in\Omega\setminus D$. Then
$$
\mathcal{G}(\Gamma)-\mathcal{G}(\gamma)=\int_{\Omega\setminus D}
\frac{R\big(\mathcal{T}^{\,u}(\nabla u),\mathcal{T}^{\,u}(E_i), E_i, \nabla u\big)}{|\nabla u|^n\det(\nabla^2 u)} \,d\mu.
$$
\end{corollary}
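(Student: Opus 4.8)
The plan is to simply combine the two preceding lemmas. By Lemma~\ref{lem2.2}, the total curvature difference $\mathcal{G}(\Gamma)-\mathcal{G}(\gamma)$ equals the integral over $\Omega\setminus D$ of $\langle\textup{div}(\mathcal{T}^{\,u}),\nabla u/|\nabla u|^n\rangle\,d\mu$, which we may rewrite as $\langle\textup{div}(\mathcal{T}^{\,u}),\nabla u\rangle/|\nabla u|^n\,d\mu$ by pulling the scalar factor $|\nabla u|^{-n}$ out of the inner product. Then, at almost every point $p$ — precisely those where $u$ is twice differentiable, $|\nabla u|\neq 0$, and $\nabla^2 u$ is nondegenerate, which by hypothesis is a full-measure set in $\cl(\Omega)\setminus D$ — Lemma~\ref{lem2.3} identifies $\langle\textup{div}(\mathcal{T}^{\,u}),\nabla u\rangle$ with $R\big(\mathcal{T}^{\,u}(\nabla u),\mathcal{T}^{\,u}(E_i),E_i,\nabla u\big)/\det(\nabla^2 u)$ for any orthonormal frame $E_i$ at $p$. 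Substituting this into the integrand yields exactly the asserted formula.

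First I would note that the hypotheses needed to invoke Lemma~\ref{lem2.3} — three-times differentiability is what that lemma's proof uses, but the standing assumptions of the section already grant $u\in\C^{2,1}$ on $\cl(\Omega)\setminus D$, hence $\nabla^2 u$ is locally Lipschitz and so differentiable almost everywhere by Rademacher's theorem — hold at almost every $p\in\Omega\setminus D$, together with $\nabla u(p)\neq 0$ and $\nabla^2 u(p)$ nondegenerate. I would also remark that the frame may be chosen arbitrarily (indeed pointwise, measurably) since the integrand is frame-independent: both $\mathcal{T}^{\,u}$ and $R$ are tensorial, so the expression $R\big(\mathcal{T}^{\,u}(\nabla u),\mathcal{T}^{\,u}(E_i),E_i,\nabla u\big)$ does not depend on which orthonormal basis $\{E_i\}$ of $T_pM$ one picks — it is a genuine contraction. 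This justifies writing the result for "any choice" of frame.

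Honestly, there is no real obstacle here; this corollary is purely a bookkeeping step splicing together Lemmas~\ref{lem2.2} and~\ref{lem2.3}. The only point deserving a line of care is the measure-theoretic one: verifying that the set of "bad" points (where $u$ fails to be twice differentiable, or $\nabla u$ or $\det\nabla^2 u$ vanishes) has zero $n$-dimensional measure, so that the pointwise identity of Lemma~\ref{lem2.3} can be integrated. But this is exactly what the standing assumptions of the section stipulate, so the proof reduces to one substitution and a one-sentence remark about frame-independence. The proof can therefore be written in two or three lines: apply Lemma~\ref{lem2.2}, rewrite the integrand, apply Lemma~\ref{lem2.3} pointwise a.e., done.
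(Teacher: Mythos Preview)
Your proposal is correct and matches the paper's approach exactly: the paper simply states that Corollary~\ref{cor2.4} follows from combining Lemmas~\ref{lem2.2} and~\ref{lem2.3}, with no further argument given. Your additional remarks about the measure-zero exceptional set and frame-independence are accurate elaborations of what the paper leaves implicit.
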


Next we will express the integral  in Corollary \ref{cor2.4} with respect to a suitable local frame. To this end we need to gather some basic facts from matrix algebra: 

\begin{lemma}\label{lem2.4} 
Let $A$ be an $n\times n$ symmetric matrix, with diagonal 
$(n-1)\times (n-1)$ leading principal submatrix, given by
\begin{equation*}
\left(\;
\begin{array}{cccc}
b_1 &{} &{\makebox(-5,0){\textup{\huge0}}}  &a_1\\
{} &{\ddots} & {} & \vdots \\
{\makebox(0,15){\textup{\huge0}}}&{}  & b_{n-1}& a_{n-1}\\
a_1   & \cdots & a_{n-1} & a
\end{array}
\right),
\end{equation*}
and let $\ol A=(\ol{a}_{ij})$ denote the cofactor matrix of $A$. Then
\begin{enumerate}[(i)]
\item  $\ol{a}_{in}= -a_i \Pi_{\ell\neq i} b_\ell$ for $i<n$.
\item $\ol{a}_{ij}=a_i a_j \Pi_{\ell\neq i,j}b_\ell$ for $i,j<n~,~i\neq j$.
\item $\ol{a}_{ii}=a\Pi_{l\neq i}b_\ell-\sum_{k\neq  i}a_k^2~\Pi_{\ell\neq k,i}b_\ell$ for
$i<n$.
\item $\det(A)= a\Pi_\ell b_\ell -\sum a_k^2~\Pi_{\ell\neq k}b_\ell$
\item For fixed $b_1,\ldots, b_{n-1}, |a| $ tending to infinity, and $|a_i|
<C$ (independent of $a$), the eigenvalues of $A$ satisfy
$\lambda_{\alpha}=b_{\alpha}+o(1)$ for $\alpha<n$ and 
$\lambda_n=a+\mathcal{O}(1)$,
where the $o(1)$ and $\mathcal{O}(1)$ are uniform depending only on $~b_1, \ldots , b_{n-1}$
and $C$. In particular,
$$
\det(A)=a \prod_{i} b_i +\mathcal{O}(1).
$$
\end{enumerate}
\end{lemma}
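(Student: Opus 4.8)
The plan is to put $A$ in block form $A=\left(\begin{smallmatrix}D&\mathbf a\\ \mathbf a^{\top}&a\end{smallmatrix}\right)$ with $D=\operatorname{diag}(b_1,\dots,b_{n-1})$ and $\mathbf a=(a_1,\dots,a_{n-1})^{\top}$, and to read off (i)--(iv) from the classical Schur-complement inversion formula. Since $A$ is symmetric its minors satisfy $M_{ij}=M_{ji}$, so the cofactor matrix $\ol A=(\ol a_{ij})$ equals the adjugate, and hence $\ol A=\det(A)\,A^{-1}$ wherever $A$ is invertible. Each of the identities (i)--(iv) is a polynomial identity in the entries $b_\ell,a_k,a$, so it is enough to check it on the dense open set where $D$ and $A$ are both invertible.

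On that set let $s:=a-\mathbf a^{\top}D^{-1}\mathbf a=a-\sum_{k}a_k^2/b_k$ be the Schur complement of $D$. Then $\det A=(\det D)\,s=\big(\prod_\ell b_\ell\big)s$, which is (iv) after expanding $s$. The block-inversion formula gives $(A^{-1})_{nn}=s^{-1}$, off-diagonal block $-s^{-1}D^{-1}\mathbf a$, and upper-left block $D^{-1}+s^{-1}D^{-1}\mathbf a\mathbf a^{\top}D^{-1}$; multiplying each entry by $\det A=\big(\prod_\ell b_\ell\big)s$ yields $\ol a_{in}=\ol a_{ni}=-a_i\prod_{\ell\neq i}b_\ell$, which is (i); for $i\neq j<n$, $\ol a_{ij}=a_ia_j\prod_{\ell\neq i,j}b_\ell$, which is (ii); and for $i<n$, $\ol a_{ii}=s\prod_{\ell\neq i}b_\ell+a_i^2\,\tfrac1{b_i}\prod_{\ell\neq i}b_\ell$, in which the two occurrences of $a_i^2\tfrac1{b_i}\prod_{\ell\neq i}b_\ell$ cancel after substituting $s=a-\sum_k a_k^2/b_k$, leaving (iii). (One could instead prove (i)--(iii) by Laplace expansion of the relevant $(n-1)\times(n-1)$ minors, each of which has a column with a single nonzero entry and so reduces to a diagonal determinant; the Schur route just avoids the sign bookkeeping.)

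For (v), the estimate $\det A=a\prod_\ell b_\ell+\mathcal O(1)$ is immediate from (iv), since $\big|\sum_k a_k^2\prod_{\ell\neq k}b_\ell\big|\le C^2\sum_k\prod_{\ell\neq k}|b_\ell|$ is bounded in terms of $C$ and the $b_\ell$ alone. For the eigenvalues, note that $A-\lambda I$ has the same special shape as $A$ with $b_\ell\mapsto b_\ell-\lambda$ and $a\mapsto a-\lambda$, so (iv) gives the factored characteristic polynomial
$$
\det(A-\lambda I)=(a-\lambda)\prod_{\ell=1}^{n-1}(b_\ell-\lambda)-\sum_{k=1}^{n-1}a_k^2\prod_{\ell\neq k}(b_\ell-\lambda).
$$
Writing $q(\lambda):=\prod_{\ell=1}^{n-1}(b_\ell-\lambda)$, we have $\det(A-\lambda I)=a\,q(\lambda)-\big(\lambda q(\lambda)+\sum_k a_k^2\prod_{\ell\neq k}(b_\ell-\lambda)\big)$, and on any fixed bounded set of $\lambda$ the parenthesized term is bounded uniformly in $a$. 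Hence $\tfrac1a\det(A-\lambda I)\to q(\lambda)$ uniformly on compacta as $|a|\to\infty$, so by continuous dependence of the roots of a polynomial on its coefficients, $n-1$ of the (real) eigenvalues of $A$ converge to $b_1,\dots,b_{n-1}$, with a rate depending only on $C$ and the $b_\ell$; this is $\lambda_\alpha=b_\alpha+o(1)$. The last eigenvalue then satisfies $\lambda_n=\operatorname{trace}(A)-\sum_{\alpha<n}\lambda_\alpha=\big(a+\sum_\ell b_\ell\big)-\big(\sum_\ell b_\ell+o(1)\big)=a+\mathcal O(1)$.

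The only step with any real content is (v): one must ensure the $o(1)$ and $\mathcal O(1)$ are uniform in the stated data, which is exactly why I would route it through the explicit factored characteristic polynomial above — the relevant perturbation has coefficients visibly controlled by $C$ and $b_1,\dots,b_{n-1}$ — rather than through abstract perturbation theory. Parts (i)--(iv) are pure bookkeeping once the Schur-complement identity is set up.
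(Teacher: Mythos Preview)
Your proof is correct and genuinely different from the paper's. The paper dispatches (i)--(iii) by induction, gets (iv) from (i) via cofactor expansion along the last column, and simply cites \cite{CNS1985} for (v). You instead exploit the block structure systematically: the Schur-complement inversion handles (i)--(iv) in one stroke (with the polynomial-identity remark cleanly justifying the passage through the invertible locus), and for (v) you give a self-contained argument via the explicit factored characteristic polynomial and the trace. Your route is more unified and, crucially, does not outsource the eigenvalue asymptotics to an external reference; the paper's proof of (v) is literally a citation. One small point of phrasing: when you invoke ``continuous dependence of the roots of a polynomial on its coefficients,'' the leading coefficient of $\tfrac{1}{a}\det(A-\lambda I)$ tends to zero, so the degree drops in the limit; what you are really using is Hurwitz's theorem (uniform convergence on compacta forces $n-1$ zeros to cluster near the $b_\ell$), which your uniform bound on the perturbation justifies. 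With that clarification the uniformity claim in (v) is fully supported.
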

\begin{proof} 
Parts (i), (ii), and (iii) follow easily by induction, and part (iv) follows
from part (i) by the cofactor expansion of $\det(A)$ using the last column. Finally, part (v) is provided by \cite[Lem. 1.2]{CNS1985}.
\end{proof}

Let $p$ be a regular point of a function $u$ on $M$. 
We say that $E_1,\dots, E_n\in T_p M$ is a \emph{principal frame}  of $u$ at  $p$ provided that 
$$
E_n=-\frac{\nabla u(p)}{|\nabla u(p)|},
$$ 
and $E_1,\dots, E_{n-1}$ are principal directions of the level set $\{u=u(p)\}$ at $p$ with respect to $-E_n$. Then the corresponding principal curvatures  and the Gauss-Kronecker curvature of $\{u=u(p)\}$ will be denoted by $\kappa_i(p)$ and $GK(p)$ respectively.
By a principal frame for $u$ over some domain we mean a choice of principal frame at each point of the domain.

\begin{theorem}[Comparison Formula, First Version]\label{thm:comparison}
Let $u$ be a  function on a Riemannian manifold $M$, and $\Gamma$, $\gamma$ be a pair of its regular level sets bounding domains $\Omega$, and $D$ respectively, with $ D\subset\Omega$. Suppose that $\nabla u$ points outward along $\Gamma$ and $\gamma$ with respect to their corresponding domains. Further suppose that 
$u$ is $\C^{2,1}$ on $\cl(\Omega)\setminus D$, and almost everywhere on $\cl(\Omega)\setminus D$,
$\nabla u\neq 0$, and $\nabla^2 e^u$ is nondegenerate. Then, 
\begin{eqnarray*}\label{eq2.90}
\mathcal{G}(\Gamma)-\mathcal{G}(\gamma)
=
-\int_{\Omega\setminus D}
R_{rnrn}\frac{GK}{\kappa_r}\,d\mu
+
\int_{\Omega\setminus D}
R_{rkrn}\frac{GK}{\kappa_r\kappa_k}\frac{u_{nk}}{|\nabla u|} d\mu,
\end{eqnarray*}
where all quantities are computed with respect to a principal frame of $u$,
and $k\leq n-1$.
\end{theorem}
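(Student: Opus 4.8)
The plan is to start from the ``basic form'' of the comparison formula already proved in Corollary \ref{cor2.4}, namely
$$
\mathcal{G}(\Gamma)-\mathcal{G}(\gamma)=\int_{\Omega\setminus D}
\frac{R\big(\mathcal{T}^{\,u}(\nabla u),\mathcal{T}^{\,u}(E_i), E_i, \nabla u\big)}{|\nabla u|^n\det(\nabla^2 u)} \,d\mu,
$$
and simply to expand the integrand with respect to a \emph{principal frame} of $u$ at the given twice-differentiable point $p$. Here I would take $E_n=-\nabla u/|\nabla u|$ and $E_1,\dots,E_{n-1}$ the principal directions of the level set $\{u=u(p)\}$; then the $(n-1)\times(n-1)$ leading principal block of $(u_{ij})$ is diagonal with entries $u_{\ell\ell}=\kappa_\ell|\nabla u|$ (by \eqref{eq:kalpha}), while the last row/column carries the off-diagonal terms $u_{nk}=u_{kn}$ and $u_{nn}$. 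This is exactly the matrix $A$ analyzed in Lemma \ref{lem2.4}, with $b_\ell=u_{\ell\ell}$, $a_k=u_{nk}$, $a=u_{nn}$, so I can read off the entries of $\mathcal{T}^{\,u}=\ol A$ directly from parts (i)--(iv) of that lemma.

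The computational heart is to evaluate $\mathcal{T}^{\,u}(\nabla u)$ and $\mathcal{T}^{\,u}(E_i)$ in this frame. Since $\nabla u=-|\nabla u|E_n$, we get $\mathcal{T}^{\,u}(\nabla u)=-|\nabla u|\,\mathcal{T}^{\,u}_{jn}E_j$; by Lemma \ref{lem2.4}(i)+(iii) the $n$-th column of $\ol A$ has entries $\ol a_{kn}=-u_{nk}\prod_{\ell\neq k}u_{\ell\ell}$ for $k<n$ and $\ol a_{nn}=\prod_\ell u_{\ell\ell}=GK\,|\nabla u|^{n-1}$ (using $u_{\ell\ell}=\kappa_\ell|\nabla u|$ and \eqref{eq:detSp}). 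For the other columns, $\mathcal{T}^{\,u}(E_i)=\mathcal{T}^{\,u}_{ji}E_j$, and for $i=r\le n-1$ the dominant behavior is governed by parts (i)--(iii): $\ol a_{rr}$, $\ol a_{nr}=-u_{nr}\prod_{\ell\neq r}u_{\ell\ell}$, and the small cross terms $\ol a_{kr}$ with $k\neq r,n$. Plugging these into $R\big(\mathcal{T}^{\,u}(\nabla u),\mathcal{T}^{\,u}(E_i),E_i,\nabla u\big)$, using antisymmetry of $R$ in its first two and last two arguments (so only the components of the two $\mathcal{T}^{\,u}$-vectors that are \emph{not} proportional to $E_n$ can survive paired against $\nabla u=-|\nabla u|E_n$ in the last slot), and summing over $i$, the expression collapses: the ``diagonal'' piece produces $R(E_n,E_r,E_r,E_n)\cdot$(coefficient) summed over $r\le n-1$, and the ``off-diagonal'' piece produces $R(E_n,E_k,E_r,E_n)$-type terms. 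Carefully tracking the products of $u_{\ell\ell}=\kappa_\ell|\nabla u|$, dividing by $|\nabla u|^n\det(\nabla^2 u)=|\nabla u|^n\cdot GK\,|\nabla u|^{n-1}$ up to lower-order corrections, and using $R_{rnrn}:=R(E_r,E_n,E_r,E_n)$, $R_{rkrn}:=R(E_r,E_k,E_r,E_n)$, yields precisely
$$
-R_{rnrn}\frac{GK}{\kappa_r}+R_{rkrn}\frac{GK}{\kappa_r\kappa_k}\frac{u_{nk}}{|\nabla u|}
$$
as the integrand, summed over $r$ (and $k\le n-1$).

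Two technical points need care. First, Corollary \ref{cor2.4} as stated requires $\nabla^2 u$ nondegenerate a.e., whereas the theorem hypothesizes that $\nabla^2 e^u$ is nondegenerate a.e.; I would reconcile this by noting $\nabla^2 e^u=e^u(\nabla^2 u+\nabla u\otimes\nabla u)$, so nondegeneracy of $\nabla^2 e^u$ is what is genuinely available, and I would re-derive the divergence identities (Lemmas \ref{lem:divTu}, \ref{lem2.3}) and Corollary \ref{cor2.4} in a form valid under this weaker hypothesis — e.g.\ by applying them to the function $e^u$ in place of $u$ and checking that $\mathcal{G}$ of a level set is unchanged under the monotone reparametrization $u\mapsto e^u$ while the curvature integrand transforms correctly, or by observing that the final formula only involves quantities (principal curvatures, $u_{nk}$, $|\nabla u|$, $GK$) that are well-defined wherever $\nabla u\neq 0$ and $\Gamma$ is twice differentiable, so degeneracy of $\nabla^2 u$ on a null set is harmless after the algebra is arranged to avoid dividing by $\det(\nabla^2 u)$. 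Second, one must justify that the principal frame can be chosen measurably and that all the pointwise identities hold a.e.\ on $\cl(\Omega)\setminus D$ (via Proposition \ref{prop:C11} and Rademacher, since $u$ is $\C^{2,1}$ hence $\nabla^2 u$ is Lipschitz and differentiable a.e.). The main obstacle I anticipate is purely bookkeeping: correctly pairing the cofactor entries from Lemma \ref{lem2.4} against the curvature tensor's symmetries so that the lower-order $\mathcal{O}(1)$ terms in Lemma \ref{lem2.4}(v) genuinely drop out and the surviving terms assemble into the stated two sums with the correct signs — the sign conventions on $\mathcal{S}_p=\nabla_V\nu$ and on $R$ must be tracked scrupulously throughout.
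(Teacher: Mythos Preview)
Your plan correctly identifies the ingredients (Corollary~\ref{cor2.4}, Lemma~\ref{lem2.4}, the principal frame), but there is a genuine gap: applying Corollary~\ref{cor2.4} \emph{directly to $u$} does not produce the stated integrand. In your setup the matrix $A=(u_{ij})$ has bottom-right entry $a=u_{nn}$, a fixed finite number. Consequently $\det(\nabla^2 u)$ and the diagonal cofactors $\ol u_{rr}$ both carry $u_{nn}$ via Lemma~\ref{lem2.4}(iii),(iv), and after dividing by $\det(\nabla^2 u)$ the result is an expression that still involves $u_{nn}$---a quantity absent from the theorem's formula. Your appeal to Lemma~\ref{lem2.4}(v) is precisely where the argument breaks: part~(v) is an \emph{asymptotic} statement requiring $|a|\to\infty$, and nothing in the direct approach sends $u_{nn}$ to infinity. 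The phrase ``$\det(\nabla^2 u)=GK\,|\nabla u|^{n-1}$ up to lower-order corrections'' is not true for the Hessian of $u$ itself; there are no ``lower-order'' terms when nothing is being sent to a limit.

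The paper supplies the missing mechanism: replace $u$ by $w=\phi(u)=(e^{hu}-1)/h$, so that $(w_{ij})=\phi'(u)(u_{ij}+hu_iu_j)$. In the principal frame this is the same matrix as yours except the bottom-right entry is now $u_{nn}+h|\nabla u|^2$, which \emph{does} tend to infinity as $h\to\infty$. Lemma~\ref{lem2.4}(v) then applies legitimately, the off-diagonal cofactor contributions become $\mathcal O(1/h)$, and the surviving terms are exactly $-R_{rnrn}\,GK/\kappa_r+R_{rkrn}\,(GK/\kappa_r\kappa_k)\,u_{nk}/|\nabla u|$. Your instinct to pass to $e^u$ is pointing in the right direction---indeed this is why the hypothesis reads ``$\nabla^2 e^u$ nondegenerate'' rather than ``$\nabla^2 u$ nondegenerate''---but one needs the full one-parameter family and the limit $h\to\infty$, not just the single reparametrization $u\mapsto e^u$, to force the simplification.
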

\begin{proof}
Let $w:=\phi(u):=(e^{h u}-1)/h$ for $h>0$. Then $\gamma$, $\Gamma$ will be level sets of $w$ and $\nabla^2w$ will be nondegenerate almost everywhere. So we may apply Corollary \ref{cor2.4} to  $w$ to obtain
\begin{equation}
\mathcal{G}(\Gamma)-\mathcal{G}(\gamma)
=\int_{\Omega\setminus D}
\frac{R\big(\mathcal{T}^w(\nabla w),E_i, \mathcal{T}^w(E_i), \nabla w\big)}{\det(\nabla^2 w)|\nabla w|^n}d\mu.
\label{eq2.100}
\end{equation}
Let $p$ be a point of the level set $\{w=\phi(t)\}$, and   $E_\alpha$, $\alpha=1,\dots, n-1$ be principal directions of $\{w=\phi(t)\}$ at $p$. Since $w$ is constant on $\{w=\phi(t)\}$, $w_i(p)=0$  for $i<n$ and $|w_n|=|\nabla w|$. Consequently,  the integrand in the right hand side of (\ref{eq2.100}) at $p$ is given by
\begin{equation}\label{eq2.110}
\frac{\det(\nabla^2 w)w^{kj} w^{ir} R_{kir\ell} w_\ell w_j}{|\nabla w|^n}
= \frac{\det(\nabla^2 w)w^{kn} w^{ir} R_{kirn}}{|\nabla w|^{n-2}}.
\end{equation}
Next note that $(w_{ij})=\phi'(u)(a_{ij})$ where, $a_{ij}={u_{ij}+h u_i u_j}$. Again we have $u_i(p)=0$ for $i<n$. 
Also recall that, by \eqref{eq:kalpha},
$
u_{kk}=|\nabla u|\kappa_k.
$ 
Furthermore note that $\nabla u=-u_n$.
 Thus  it follows that
\begin{equation*}
(a_{ij})=\left(\begin{array}{cccc}
|\nabla u|\kappa_1&{}  & {\makebox(0,0){\text{\huge0}}} &u_{1n}\\
 {} &{} & {} & {} \\
{} &{\ddots} & {} & \vdots \\
{} &{} & {} & {} \\
{\makebox(0,10){\text{\huge0}}} & {} &|\nabla u|\kappa_{n-1}& u_{(n-1)n}\\
{} &{} & {} & {} \\
u_{n1}&  \cdots &u_{(n-1)n}& u_{nn}+h|\nabla u|^2
\end{array}\right).
\end{equation*}
Let $(\ol{a}_{ij})$  be the cofactor matrix of $(a_{ij})$. Since $(w_{ij})=\phi'(u)(a_{ij})$, it follows that the  cofactor matrix of $(w_{ij})$ is given by $\det(\nabla^2 w)w^{ij}=\phi'(u)^{n-1}\ol{a}_{ij}$. Then, the right hand side of
\eqref{eq2.110} becomes:
\begin{equation}\label{eq2.120}
\frac{\det(\nabla^2 w)w^{kn} w^{ir} R_{kirn}}{|\nabla w|^{n-2}}
=\frac{\phi'(u)^{2n-2}\ol{a}_{kn}\ol{a}_{ir} R_{kirn}}{\det(\nabla^2 w)|\nabla w|^{n-2}}
=\frac{\ol{a}_{kn}\ol{a}_{ir}R_{kirn}}{\det(a_{ij})|\nabla  u|^{n-2}},
\end{equation}
where in deriving the second equality we have used the facts that $|\nabla w|=\phi'(u)|\nabla u|$, and $\det(\nabla^2 w)=\phi'(u)^n\det(a_{ij})$.
 By Lemma \ref{lem2.4} (as $h\goto \infty$),
\begin{eqnarray*}
\ol{a}_{ij}=
\begin{cases}
-u_{in}\frac{GK}{\kappa_i}|\nabla u|^{n-2}, & \mbox{for}~i<n \,\;\text{and}\;\;j=n;\\

u_{in}u_{nj}\frac{GK}{\kappa_i \kappa_j}|\nabla u|^{n-3}, & \mbox{for}~
i\neq j\;\;\text{and}\;\; i,j<n;\\

\left(u_{nn}+h|\nabla u|^2\right)\frac{GK}{\kappa_i}|\nabla u|^{n-2}+\mathcal{O}(1), & \mbox{for}~i=j\;\; \text{and}\;\; i,j<n;\\

GK|\nabla u|^{n-1}, & \text{for}\,\, i=j=n.\\
\end{cases}
\end{eqnarray*}
Observe that $\ol{a}_{ij}$ for $i\neq j$ or $i=j=n$ 
are independent of $h$. On the other hand, again by Lemma \ref{lem2.4},
$$
\det(a_{ij})=\big(u_{nn}+h|\nabla u|^2\big)GK|\nabla u|^{n-1}+\mathcal{O}(1).
$$
 Therefore, 
the last term in \eqref{eq2.120} takes the form
$$
\frac{\ol{a}_{kn}\ol{a}_{rr}R_{krrn}}{\det(a_{ij})|\nabla  u|^{n-2}}+\mathcal{O}\left(\frac1{h}\right) \label{eq2.170}\\
\;=\; -R_{rnrn}\frac{GK}{\kappa_r}+R_{rkrn}\frac{GK}{\kappa_r  \kappa_k}
\frac{u_{nk}}{|\nabla u|}+\mathcal{O}\left(\frac1{h}\right). \label{eq2.180}\\ 
$$
where $k\leq n-1$.
So,  by the coarea formula, the right hand side of \eqref{eq2.100} becomes
\begin{gather*}
\int_{\phi(t_0)}^{\phi(t_1)} 
\int_{\{w=s\}}
\left(-R_{rnrn}\frac{GK}{\kappa_r}+R_{rkrn}\frac{GK}{\kappa_r \kappa_k}
\frac{u_{nk}}
{|\nabla u|}+\mathcal{O}\left(\frac1{h}\right)\right)\frac{d\sigma}{|\nabla w|}ds\\
=\int_{t_0}^{t_1} \int_{\{u=t\}}
\left(-R_{rnrn}\frac{GK}{\kappa_r}+R_{rkrn}\frac{GK}{\kappa_r \kappa_k}
\frac{u_{nk}}{|\nabla u|}\right)\frac{d\sigma}{|\nabla u|}dt
\end{gather*}
 after the change of variable $s=\phi(t)$ and letting $h\goto
\infty$. 
\end{proof}

Next we develop a more general version of Theorem \ref{thm:comparison}, via integration by parts and a smoothing procedure, which may be applied to $\C^{1,1}$ functions, to functions with singularities, or to a sequence of functions whose derivatives might blow up over some region. 
First we describe the smoothing procedure.  Let $\rho(x):=d(x,x_0)$, for some $x_0\in\Omega$, and set
$$
\ol u^{\e}(x):=u(x)+\frac{\e}2 \rho^2(x).
$$
If $u$ is convex, then $\ol u^{\e}$ will be strictly convex in the sense of Greene and Wu \cite{greene-wu1972},  and thus their method of smoothing by convolution will preserve convexity of $u$. This convolution is a generalization of the standard Euclidean version via the exponential map, and is defined as follows. Let $\phi: \R \to \R$ be a nonnegative $\C^{\infty}$ function supported in $[-1,1]$ which is constant in a neighborhood of the origin, and satisfies $\int_{\R^n}\phi(|x|)dx=1$. Then for any function $f\colon M\to\R$, we set
\be \label{eq4.10}  
f\circ_\lambda\phi(p):=\frac{1}{\lambda^{n}}\int_{v\in T_pM} \phi\left(\frac{|v|}{\lambda}\right)f\big(\text{exp}_p(v)\big)\,d\mu_p,
\ee
where $d\mu_p$ is the measure on $T_pM\simeq\R^n$ induced by the Riemannian measure $d\mu$ of $M$. We set
$$
\widehat u^{\,\e}_\lambda:=\ol u^{\,\epsilon}\circ_\lambda \phi.
$$
The following result is established in \cite[Thm. 2 \& Lem. 3(3)]{greene-wu1976}, with reference to earlier work in \cites{greene-wu1972, greene-wu1974}. In particular see \cite[p. 280]{greene-wu1974} for how differentiation under the integral sign in \eqref{eq4.10} may be carried out via parallel translation.

\begin{proposition} \label{prop4.1}(Greene-Wu \cite{greene-wu1976})
 For any continuous function $u\colon M\to\R$,  $\e>0$, and compact set $X\subset M$, there exists $\lambda>0$ such that $\widehat u_{\lambda}^{\,\e}$ is $\C^\infty$ on an open
  neighborhood $U$ of $X$, and  $\widehat u_{\lambda}^{\,\e}\to \ol u^{\,\e}$
  uniformly on $U$, as $\lambda\to 0$. Furthermore, if $u$ is $\C^k$ on an open neighborhood of $X$, then
$\widehat u_{\lambda}^{\,\e}\to \ol u^{\,\epsilon}$ on $U$  with respect to the $\C^k$ topology. Finally, if $u$ is convex, then $\widehat u_{\lambda}^{\,\e}$ will be strictly convex with positive definite Hessian everywhere.
\end{proposition}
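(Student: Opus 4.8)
The plan is to establish Proposition \ref{prop4.1} by reducing everything to the Euclidean theory of mollification via a careful choice of normal coordinate charts, and then invoking the cited Greene--Wu results rather than reproving them. First I would observe that the statements are all local in the following sense: since $X$ is compact, it suffices to prove each assertion on a finite cover of $X$ by small coordinate balls, because $\C^\infty$-ness, uniform convergence, $\C^k$-convergence, and positive-definiteness of the Hessian are all properties that can be checked locally and patched. So fix a point $q\in X$ and work in a normal coordinate chart $(U,\phi)$ centered at $q$ with $\phi(U)$ a ball in $\R^n$; the Riemannian measure $d\mu$ in these coordinates is $\sqrt{\det g}\,dx$ with $\sqrt{\det g}$ a smooth positive function that is $1+O(|x|^2)$ near the origin.

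Next I would unwind the definition \eqref{eq4.10}: for $p$ near $q$ and $\lambda$ small, $\widehat u^{\,\e}_\lambda(p)$ is an average of $\ol u^{\,\e}$ over the geodesic ball of radius $\lambda$ about $p$, with the averaging weight $\phi(|v|/\lambda)$ taken against $d\mu_p$ on $T_pM$. The key point, already noted in the excerpt and attributed to \cite{greene-wu1974}, is that one may differentiate under the integral sign by using parallel translation to identify nearby tangent spaces; doing so shows that derivatives of $\widehat u^{\,\e}_\lambda$ in $p$ are themselves integrals of $\ol u^{\,\e}$ against smooth kernels, whence $\widehat u^{\,\e}_\lambda\in\C^\infty$ on a neighborhood $U$ of $X$ once $\lambda$ is smaller than the injectivity radius over $X$. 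The uniform convergence $\widehat u^{\,\e}_\lambda\to\ol u^{\,\e}$ as $\lambda\to0$ is then the standard approximate-identity argument: $\ol u^{\,\e}$ is continuous hence uniformly continuous on a compact neighborhood, and the mollifying kernels concentrate at $p$. For the $\C^k$ statement, when $u$ (hence $\ol u^{\,\e}$) is $\C^k$ near $X$, differentiating under the integral and again using parallel translation expresses $\nabla^j\widehat u^{\,\e}_\lambda$ as a mollification-type average of $\nabla^j\ol u^{\,\e}$ plus lower-order curvature correction terms that vanish as $\lambda\to0$; this gives $\C^k$ convergence. All of this is precisely the content of \cite[Thm. 2 \& Lem. 3(3)]{greene-wu1976}, so in the write-up I would simply cite it with a sentence of explanation.

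The substantive remaining point is the last sentence: if $u$ is convex, then $\widehat u^{\,\e}_\lambda$ has \emph{positive definite} Hessian everywhere (not merely positive semidefinite). Here is where the $\e\rho^2/2$ term does its work. First, $\ol u^{\,\e}=u+\tfrac\e2\rho^2$ is strictly convex in the Greene--Wu sense because $u$ is convex and $\rho^2$ has Hessian bounded below by a positive multiple of the identity along every unit-speed geodesic in a Cartan--Hadamard manifold (indeed $(\rho^2\circ\alpha)''\ge 2$ since $d$ is convex and $|\nabla\rho|=1$ where $\rho$ is smooth; the second-variation/comparison estimate handles the behavior through the cut locus of $x_0$, which is empty here). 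Greene--Wu's theorem then asserts that mollifying a strictly convex function by this exponential-map convolution yields a smooth function whose Hessian is positive definite; this is \cite[Lem. 3(3)]{greene-wu1976}, and the mechanism is that convexity of $\ol u^{\,\e}$ along geodesics is inherited by the average in \eqref{eq4.10} with a strict gain coming from the uniform lower bound on $(\ol u^{\,\e}\circ\alpha)''$. I expect the main obstacle, were one to reprove this from scratch, to be exactly this last step: controlling how the convolution over tangent spaces interacts with geodesic convexity, since unlike the Euclidean case the averaging operation does not commute cleanly with restriction to geodesics, and one must use Jacobi field estimates to see that the second-order convexity defect introduced by curvature is dominated, for $\lambda$ small, by the strict-convexity margin $\e$ built into $\ol u^{\,\e}$. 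Since the paper is content to cite Greene--Wu, the proof reduces to verifying strict convexity of $\ol u^{\,\e}$ and quoting the three referenced results.
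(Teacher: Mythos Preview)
Your proposal is correct and matches the paper's approach: the paper does not prove Proposition \ref{prop4.1} at all but simply attributes it to Greene--Wu \cite{greene-wu1976}, with a pointer to \cite{greene-wu1974} for the parallel-translation differentiation under the integral sign. Your write-up is more expansive than the paper's one-sentence citation, but the content is the same---verify that $\ol u^{\,\e}$ is strictly convex in the Greene--Wu sense and then invoke their Theorem~2 and Lemma~3(3).
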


Recall that, for any set $A\subset M$,
$
U_\theta(A)
$
denotes the tubular neighborhood of radius $\theta$ about $A$.
 A \emph{cutoff function} for $U_\theta(A)$ is a continuous  function $\eta\geq 0$ on $M$ which depends only on the distance $\widehat r(\,\cdot\,):=d_A(\,\cdot\,)$, is nondecreasing in terms of $\widehat r$,  and satisfies
\be\label{eq:eta-def}
\eta(x):=
\begin{cases}
0 \quad \text{if}\quad \widehat r(x)\leq \theta,\\
1 \quad \text{if}\quad \widehat r(x)\geq 2\theta.
\end{cases}
\ee
Since by Lemma \ref{lem:lipschitz} $\widehat r$ is Lipschitz, we may choose $\eta$ to be Lipschitz as well, and thus differentiable almost everywhere. At every differentiable point of $\eta$ we have
\begin{eqnarray*}
\left\langle \mathcal{T}^{\,u}\left(\frac{\nabla u}{|\nabla u|^n}\right),\nabla\eta\right\rangle
=\frac{\mathcal{T}^{\,u}_{ij}\eta_iu_j}{|\nabla u|^n}
=\frac{\mathcal{T}^{\,u}_{in}\eta_iu_n}{|\nabla u|^n}
=-\frac{\mathcal{T}^{\,u}_{in}\eta_i}{|\nabla u|^{n-1}}
=-\frac{\mathcal{T}^{\,u}_{kn}\eta_k}{|\nabla u|^{n-1}}-  \frac{\mathcal{T}^{\,u}_{nn}\eta_n}{|\nabla u|^{n-1}},
\end{eqnarray*}
where $k\leq n-1$.
Furthermore, by Lemma \ref{lem2.4},
$$
-\frac{\mathcal{T}^{\,u}_{kn}\eta_k}{|\nabla u|^{n-1}}
=\frac{u_{nk}\eta_k}{|\nabla u|} \prod_{\ell\neq k}\kappa_\ell=\frac{u_{nk}\eta_k}{|\nabla u|} \frac{GK}{\kappa_k},
\quad\quad\text{and}\quad\quad
-\frac{\mathcal{T}^{\,u}_{nn}\eta_n}{|\nabla u|^{n-1}}=-\eta_nGK.
$$
So we obtain
$$
\left\langle \mathcal{T}^{\,u}\left(\frac{\nabla u}{|\nabla u|^n}\right),\nabla\eta\right\rangle
= \frac{u_{nk}\eta_k}{|\nabla u|} \frac{GK}{\kappa_k}- \eta_n GK.
$$
Next recall that 
$
\int\textup{div}(\eta Y) d\mu
=\int(\left\langle Y,\nabla\eta\right\rangle+
\eta\,\textup{div}(Y))d\mu, 
$
for any vector field $Y$ on $M$. Thus
\begin{multline} \label{eq250}
\int\textup{div}\left(\eta\, \mathcal{T}^{\,u}\left(\frac{\nabla u}{|\nabla u|^n}\right)\right)d\mu=\\
\int \left(\frac{u_{nk}\eta_k}{|\nabla u|} \frac{GK}{\kappa_k}- \eta_n GK \right)d\mu+
\int\eta\,\textup{div}\left(\mathcal{T}^{\,u}\left(\frac{\nabla u}{|\nabla u|^n}\right)\right) d\mu.
\end{multline}
We set
$$
\mathcal{G}_\eta(\Gamma):=\int_\Gamma \eta\, GK\,d\sigma,\quad\quad\text{and}\quad\quad\mathcal{G}_\eta(\gamma):=\int_\gamma \eta\, GK\,d\sigma.
$$
The following result generalizes the comparison formula in Theorem  \ref{thm:comparison}. Note in particular that our new comparison formula may be applied to convex functions, where the principal curvatures of  level sets might vanish. 
So we will use the following conventions.
\be\label{eq:conventions}
\frac{GK}{\kappa_r}:=\prod_{i\neq r}\kappa_i,\quad\quad\text{and}\quad \quad \frac{GK}{\kappa_r\kappa_k}:=\prod_{i\neq r, k}\kappa_i,
\ee
Now the terms $GK/\kappa_r$ and $GK/(\kappa_r\kappa_k)$ below will always be well-defined.

\begin{theorem}[Comparison Formula, General Version]\label{thm:comparison2}
Let $u$, $\Gamma$, $\gamma$, $\Omega$, and $D$ be as in Theorem  \ref{thm:comparison}, except that $u$ is  $\C^{1,1}$  on $(\Omega\setminus  D)\setminus A$, for some (possibly empty) closed  set $A\subset \Omega\setminus  D$, and $u$ is either convex or else $\nabla^2e^u$ is nondegenerate almost everywhere on $(\Omega\setminus  D)\setminus A$. Then, for any $\theta>0$, and cutoff function $\eta$ for $U_\theta(A)$,
\begin{multline*}
\mathcal{G}_\eta(\Gamma)-\mathcal{G}_\eta(\gamma)=\\
\int_{\Omega\setminus  D}  \left(\eta_k \frac{GK}{\kappa_k}\frac{u_{nk}}{|\nabla u|}- \eta_n GK \right)d\mu+
\int_{\Omega\setminus  D}\eta\left(-R_{rnrn}\frac{GK}{\kappa_r}
+R_{rkrn}\frac{GK}{\kappa_r \kappa_k}\frac{u_{nk}}{|\nabla u|}\right) d\mu,
\end{multline*}
where all quantities are computed with respect to a principal frame of $u$,
and $k\leq n-1$.
\end{theorem}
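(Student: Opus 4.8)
\emph{Proof proposal.} The plan is to deduce the formula from Theorem~\ref{thm:comparison} together with the product rule \eqref{eq250}, by first treating a smooth intermediate case and then removing the extra regularity with the Greene--Wu smoothing of Proposition~\ref{prop4.1}. So first suppose, in addition to the hypotheses, that $u$ is $\C^{2,1}$ with $\nabla^2 e^u$ nondegenerate almost everywhere on an open neighborhood $V$ of $\textup{supp}(\eta)\cap(\cl(\Omega)\setminus D)$; this is a legitimate intermediate case because $\eta$ vanishes on $U_\theta(A)$, so $\textup{supp}(\eta)$ lies at distance $\geq\theta$ from $A$, hence inside the region where $u$ is $\C^{1,1}$. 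Starting from \eqref{eq250} (all integrals over $\Omega\setminus D$), its left--hand side is the integral of $\textup{div}\big(\eta\,\mathcal{T}^{\,u}(\nabla u/|\nabla u|^n)\big)$; since $\eta\,\mathcal{T}^{\,u}(\nabla u/|\nabla u|^n)$ is $\C^1$ on $V$ and vanishes on $U_\theta(A)$, the divergence theorem over $\Omega\setminus D$ applies exactly as in Lemma~\ref{lem2.2} and, via Lemma~\ref{lem:GK}, yields $\mathcal{G}_\eta(\Gamma)-\mathcal{G}_\eta(\gamma)$ (there is no boundary contribution near $A$, where $\eta\equiv0$). The first integral on the right--hand side of \eqref{eq250} is already the first integral in the statement, so it remains to identify $\int_{\Omega\setminus D}\eta\,\textup{div}\big(\mathcal{T}^{\,u}(\nabla u/|\nabla u|^n)\big)\,d\mu$ with the curvature integral. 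For this one repeats the computation in the proof of Theorem~\ref{thm:comparison}: set $w:=\phi(u):=(e^{hu}-1)/h$, so that $\nabla^2 w$ is nondegenerate almost everywhere; apply Lemmas~\ref{lem:divTu} and~\ref{lem2.3} to $w$, and expand $\textup{div}\big(\mathcal{T}^{\,w}(\nabla w/|\nabla w|^n)\big)$ in a principal frame of $u$ using Lemma~\ref{lem2.4}, letting $h\to\infty$. Since $GK$, the $\kappa_i$ and the ratios $u_{nk}/|\nabla u|$ are unchanged under $u\mapsto\phi(u)$, the integral $\int_{\Omega\setminus D}\eta\,\textup{div}\big(\mathcal{T}^{\,w}(\nabla w/|\nabla w|^n)\big)\,d\mu$ does not depend on $h$, while the integrand converges almost everywhere to $\eta\big(-R_{rnrn}\frac{GK}{\kappa_r}+R_{rkrn}\frac{GK}{\kappa_r\kappa_k}\frac{u_{nk}}{|\nabla u|}\big)$ with an $\mathcal{O}(1/h)$ error uniformly controlled on the compact set $\textup{supp}(\eta)\cap(\cl(\Omega)\setminus D)$; dominated convergence gives the identity.

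To reduce the general case to this one, apply Proposition~\ref{prop4.1} to $u$. In the convex case put $\ol u^{\,\e}:=u+\tfrac{\e}{2}\rho^2$, which is strictly convex, and $u_j:=\widehat u^{\,\e}_{\lambda_j}=\ol u^{\,\e}\circ_{\lambda_j}\phi$ with $\lambda_j\to0$: these are $\C^\infty$ with positive definite Hessian (hence $\nabla^2 e^{u_j}=e^{u_j}(\nabla^2 u_j+\nabla u_j\otimes\nabla u_j)$ is positive definite), converge to $\ol u^{\,\e}$ in $\C^1$, and have $\nabla^2 u_j$ uniformly bounded and converging to $\nabla^2\ol u^{\,\e}$ almost everywhere on a neighborhood of $\textup{supp}(\eta)\cap(\cl(\Omega)\setminus D)$; in the case that $\nabla^2 e^u$ is nondegenerate almost everywhere, one smooths $u$ directly and, if needed, adds a small perturbation (e.g.\ $\tfrac{\e_j}{2}\rho^2$, or a generic affine term in a chart, with parameters outside an exceptional null set of values) so that $u_j$ satisfies the intermediate hypotheses --- $\nabla u_j\neq0$ and $\nabla^2 e^{u_j}$ nondegenerate almost everywhere. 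Let $c>c'$ be the values with $\Gamma=u^{-1}(c)$, $\gamma=u^{-1}(c')$, and put $\Omega_j:=\{u_j<c\}$, $D_j:=\{u_j<c'\}$; since $c,c'$ are regular values and $u_j\to u$ in $\C^1$, for $j$ large the hypersurfaces $\Gamma_j:=\partial\Omega_j$ and $\gamma_j:=\partial D_j$ are uniformly $\C^{1,1}$ and converge in $\C^1$ to $\Gamma$ and $\gamma$. Apply the smooth case to $u_j$ over $\Omega_j\setminus D_j$ and let $j\to\infty$, then $\e\to0$ in the convex case. On the right--hand side, $GK$, the $\kappa_i$, the ratios $u_{nk}/|\nabla u|$ and the contractions $R_{rnrn}$, $R_{rkrn}$ (computed in the principal frame, which depends continuously on $\nabla u_j$ and $\nabla^2 u_j$) converge almost everywhere and are dominated by a fixed integrable function on $\textup{supp}(\eta)\cap(\cl(\Omega)\setminus D)$ --- here the conventions \eqref{eq:conventions} absorb vanishing principal curvatures, and in the convex case the strict convexity of $\ol u^{\,\e}$ controls $1/|\nabla u_j|$ near the (uniformly bounded) minimum --- so the curvature integrals pass to the limit by dominated convergence; and $\mathcal{G}_\eta(\Gamma_j)\to\mathcal{G}_\eta(\Gamma)$, $\mathcal{G}_\eta(\gamma_j)\to\mathcal{G}_\eta(\gamma)$ since the $\Gamma_j$, $\gamma_j$ converge in $\C^1$ with uniformly bounded second fundamental forms. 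This yields the asserted formula.

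The cutoff integration by parts and the $h\to\infty$ passage in the first step are routine once Theorem~\ref{thm:comparison} is available, so I expect the real difficulty to lie in the approximation step: exhibiting a \emph{fixed} integrable dominating function for the curvature integrands, uniformly in the smoothing parameter --- delicate precisely where $|\nabla u|$ or some $\kappa_i$ is small, as happens for convex $u$ near non-regular level sets or near minima --- and proving the convergence $\mathcal{G}_\eta(\Gamma_j)\to\mathcal{G}_\eta(\Gamma)$ of the boundary terms, which concerns the Gauss--Kronecker curvature on the measure-zero hypersurfaces $\Gamma_j$ and so requires more than weak $\C^1$ convergence of the $\Gamma_j$. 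In the non-convex case one must also check that the (perturbed) smoothing can be arranged to keep $\nabla u_j\neq0$ and $\nabla^2 e^{u_j}$ nondegenerate almost everywhere.
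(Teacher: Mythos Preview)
Your proposal is correct and follows essentially the same route as the paper: apply the Greene--Wu smoothing of Proposition~\ref{prop4.1}, plug the smoothed function into the cutoff identity~\eqref{eq250}, run the $h\to\infty$ argument from Theorem~\ref{thm:comparison}, and pass to the limit first in $\lambda$ and then in $\e$. The paper's own proof is in fact just a three-line sketch of exactly this; your version is more detailed and, notably, is honest about the genuine technical gaps (uniform domination of the curvature integrands and convergence of $\mathcal{G}_\eta$ on the boundary hypersurfaces) that the paper's proof simply does not address.
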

\begin{proof}
Let $\widehat u_{\lambda}^{\,\e}$ be as in Proposition \ref{prop4.1} with $X$ in that theorem set to $\cl(\Omega)\setminus D$. Furthermore,
let $\Gamma_{\lambda}^{\e}$ and $\gamma_{\lambda}^{\e}$ be regular level sets of $\widehat u_{\lambda}^{\,\e}$ close to $\Gamma$ and $\gamma$ respectively.
Replace $u$ by $\widehat u_{\lambda}^{\,\e}$ in \eqref{eq250} and follow  
virtually the same argument used in Theorem \ref{thm:comparison}. Finally, letting $\lambda$ and then $\e$ go to $0$ completes the argument.
\end{proof}

%%%%%%%%%%%%%%%%%%%%%%%%%%%%%%%%%%%%%%
\section{Applications of the Comparison Formula}\label{sec:applications}
%%%%%%%%%%%%%%%%%%%%%%%%%%%%%%%%%%%%%%

Here we will record  some consequences of the comparison formula developed in Theorem \ref{thm:comparison2}.
Let
\be\label{eq:sigma_r-def}
\sigma_r(x_1,\dots, x_k):=\sum_{i_1<\dots <i_r} x_{i_1}\dots x_{i_r},
\ee
denote the \emph{elementary symmetric functions}. Furthermore, set $\kappa:=(\kappa_1,\dots, \kappa_{n-1})$, where $\kappa_i$ are principal curvatures of  level sets $\{u=u(p)\}$ at a regular point $p$ of $u\colon M\to\R$ which is twice differentiable. Then the $r^{th}$ \emph{generalized mean curvature} of $\{u=u(p)\}$ is given by
$$
\sigma_r(\kappa):=\sigma_r(\kappa_1,\dots, \kappa_{n-1}).
$$
In particular note that $\sigma_{n-1}(\kappa)=GK$, and $\sigma_1(\kappa)=(n-1)H$, where $H$ is the (normalized first) mean curvature of $\{u=u(p)\}$. The integrals of $\sigma_r(\kappa)$, which are called \emph{quermassintegrals}, are central in the theory of mixed volumes \cites{schneider2014, trudinger1994}.
 We also need to record the following basic fact. Recall that $n\omega_n=\textup{vol}(\S^{n-1})$.

\begin{lemma} \label{lem:GKnomegan} 
Let $B_r$ be a geodesic ball of radius $r$ in a Riemannian manifold $M^n$. Then  
$$
|\mathcal{G}(\partial B_r)-n\omega_n| \leq Cr^2,
$$
for some constant $C$ which is independent of $r$.
\end{lemma}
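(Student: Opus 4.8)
The plan is to compare the geodesic ball $B_r$ with a Euclidean ball of the same radius via normal coordinates, estimating the discrepancy in the Gauss–Kronecker integrand to order $r^2$. Fix the center $x_0$ of $B_r$ and work in geodesic normal coordinates centered there, so that $B_r$ is a regular level set $\{\rho = r\}$ of the distance function $\rho = d(x_0,\cdot)$, which is smooth on $B_r\setminus\{x_0\}$ up to the (possibly empty) cut locus — for $r$ small enough there is no cut locus, but even for large $r$ the set $\partial B_r\cap \mathrm{cut}(x_0)$ has measure zero and contributes nothing to the integral, so I may ignore it. The unit normal to $\partial B_r$ is $\nabla\rho$, and the principal curvatures $\kappa_i$ of $\partial B_r$ are governed by the Riccati equation along radial geodesics. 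By standard Riemannian comparison theory (the Riccati comparison, as in \cite[p.\ 184]{karcher1989}), along each unit-speed radial geodesic the shape operator $\mathcal{S}(t)$ of the geodesic sphere of radius $t$ satisfies $\mathcal{S}(t) = \frac{1}{t}\mathrm{Id} + \mathcal{O}(t)$ as $t\to 0$, uniformly in the direction, with the $\mathcal{O}(t)$ term controlled by a bound on the sectional curvature $K_M$ on $B_r$; more precisely a Taylor expansion gives $\kappa_i(t) = \frac{1}{t} - \frac{t}{3}K_i + \mathcal{O}(t^2)$ for appropriate sectional curvatures $K_i$, so that each $\kappa_i = \frac1r + \mathcal{O}(r)$ on $\partial B_r$.

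Next I would expand the Gauss–Kronecker curvature: $GK = \prod_{i=1}^{n-1}\kappa_i = r^{-(n-1)}\bigl(1 + \mathcal{O}(r^2)\bigr)$, where the crucial point is that the first-order-in-$r$ corrections to the individual $\kappa_i$ enter $GK$ through $\sum_i (\kappa_i - \frac1r)\cdot \frac1r^{\,n-2}\cdot r = \mathcal{O}(r^2)$ relative to the leading term — there is no $\mathcal{O}(r)$ term because the correction to each $\kappa_i$ is $\mathcal{O}(r)$ and gets multiplied against $r^{-(n-2)}$, giving an absolute error $\mathcal{O}(r^{-(n-2)})$, i.e.\ $\mathcal{O}(r^2)$ relative to $r^{-(n-1)}$. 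Similarly the volume form: writing $d\sigma = J(r,\xi)\,d\xi$ over the unit sphere $\S^{n-1}\subset T_{x_0}M$, the Jacobian of the exponential map gives $J(r,\xi) = r^{n-1}\bigl(1 + \mathcal{O}(r^2)\bigr)$, again with no $\mathcal{O}(r)$ term since the first correction to the metric in normal coordinates is quadratic (the linear term vanishes because $\Gamma^k_{ij}(x_0)=0$), and the $\mathcal{O}(r^2)$ is uniform given a curvature bound on $B_r$. Multiplying, $GK\,d\sigma = \bigl(1 + \mathcal{O}(r^2)\bigr)\,d\xi$, and integrating over $\S^{n-1}$ yields $\mathcal{G}(\partial B_r) = \mathrm{vol}(\S^{n-1})\bigl(1 + \mathcal{O}(r^2)\bigr) = n\omega_n + \mathcal{O}(r^2)$, which is the claim.

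The main obstacle is making the error terms genuinely $\mathcal{O}(r^2)$ rather than $\mathcal{O}(r)$ and checking that the implicit constant $C$ depends only on a bound for $K_M$ (and $n$), not on $r$ itself. This hinges on two facts that must be invoked carefully: first, that in normal coordinates the metric expands as $g_{ij} = \delta_{ij} - \frac13 R_{ikjl}x^k x^l + \mathcal{O}(|x|^3)$ with \emph{no linear term}, so both the area element and the Christoffel-symbol corrections to the shape operator start at second order; and second, that the Riccati comparison for geodesic spheres is two-sided and quantitative under $|K_M|\le C_0$, so that the expansions above hold with constants depending only on $C_0$. I would cite \cite{karcher1989} (and the Riccati/tube machinery already referenced in the paper via \cite{gray2004,ge-tang2014}) for these expansions and simply assemble them. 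The only subtlety worth a sentence is the behavior at $\partial B_r\cap\mathrm{cut}(x_0)$ when $r$ is not small: there $GK$ may fail to be defined, but the cut locus has measure zero on the sphere and $GK$ is integrable (indeed $\partial B_r$ is $\C^{1,1}$ away from measure-zero sets and convexity-type bounds keep $GK \ge 0$ integrable), so the integral is unaffected.
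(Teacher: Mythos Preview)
Your proposal is correct and follows essentially the same approach as the paper: expand the principal curvatures of $\partial B_r$ as $\kappa_i = r^{-1} + \mathcal{O}(r)$ (no constant term), expand the area element as $r^{n-1}(1+\mathcal{O}(r^2))$, and multiply. The paper simply cites Chen--Vanhecke and Gray for these two expansions and then bounds $\mathcal{G}(\partial B_r)$ by $r^{-(n-1)}(1+Cr^2)\cdot\textup{vol}(\partial B_r)$, whereas you sketch the expansions yourself via the Riccati equation and the quadratic vanishing of the metric correction in normal coordinates; the substance is the same.
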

\begin{proof}
Since $\partial B_r$ is a geodesic sphere, a power series expansion of its second fundamental form in normal coordinates, see \cite[Thm. 3.1]{chen-vanhecke1981}, shows that  
$$
0\leq GK \leq \frac1{r^{n-1}}(1+Cr^2),
$$
where $GK$ denotes the Gauss-Kronecker curvature of $\partial B_r$. Furthermore another power series expansion \cite[Thm. 3.1]{gray1974} shows that  
$$
\left|\textup{vol}(\partial B_r)-n\omega_nr^{n-1}\right|\leq Cr^{n+1}.
$$
Using these inequalities we obtain
\begin{eqnarray*}
0\leq\mathcal{G}(\partial B_r)-n\omega_n &\leq& \frac1{r^{n-1}}(1+Cr^2)\textup{vol}(\partial B_r)-n\omega_n \\
&\leq& r^{1-n}(1+Cr^2)\cdot n\omega_n r^{n-1}(1+Cr^2)-n\omega_n\\
& \leq& n\omega_n(1+Cr^2-1) \\
&\leq& Cr^2 ,
\end{eqnarray*}
as desired.
\end{proof}

In the following corollaries we adopt the same notation as in Theorem \ref{thm:comparison2} and assume that $X=\emptyset$. The first corollary shows in particular that the total curvature inequality \eqref{eq:GK} holds in hyperbolic space:

\begin{corollary}[Nested hypersurfaces in space forms] \label{cor2.1} 
If $M$ has constant sectional curvature $K_0$, then
\be\label{eq:nested}
\mathcal{G}(\Gamma)-\mathcal{G}(\gamma)=-K_0\int_{\Omega\setminus  D} 
\sigma_{n-2}(\kappa)d\mu.
\ee
In particular, $\mathcal{G}(\Gamma)\geq \mathcal{G}(\gamma)$ if $\Gamma$, $\gamma$ are convex and $K_0\leq 0$. Furthermore, if $\Gamma$ is convex and $K_0\leq 0$, then 
\be\label{eq:nested2}
\mathcal{G}(\Gamma)\geq n\omega_n -K_0\int_{\Omega} 
\sigma_{n-2}(\kappa)d\mu\geq n\omega_n.
\ee
\end{corollary}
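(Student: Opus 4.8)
The plan is to apply the general comparison formula (Theorem~\ref{thm:comparison2}) to the signed distance function $u:=\widehat{d}_\Gamma$ with $A=\emptyset$ (so $\eta\equiv 1$ and $\eta_n=\eta_k=0$), exploiting the fact that in a space form of constant curvature $K_0$ the Riemann tensor has the simple form $R_{ijij}=K_0$ for $i\neq j$ and $R_{ijk\ell}=0$ when the index pairs are not matched. First I would observe that the distance function $u$ is $\C^{1,1}$ on $\Omega\setminus D$ up to the cut locus (Proposition~\ref{prop:C11}), is convex when $K_0\le 0$ and $\Omega$ is convex (Lemma~\ref{lem:u-convex}), and that $|\nabla u|=1$ (Lemma~\ref{lem:CS}), so $u_{nk}=0$ for $k\le n-1$ because $u_{nk}$ are entries of the Hessian in directions involving $\nabla u$, which vanish for the distance function. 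Consequently the second integrand $R_{rkrn}\,GK/(\kappa_r\kappa_k)\cdot u_{nk}/|\nabla u|$ drops out entirely, and in the first integrand $R_{rnrn}=K_0$ for each $r\le n-1$, so
$$
\sum_{r=1}^{n-1} R_{rnrn}\frac{GK}{\kappa_r}=K_0\sum_{r=1}^{n-1}\prod_{i\neq r}\kappa_i=K_0\,\sigma_{n-2}(\kappa),
$$
using the convention \eqref{eq:conventions} and the definition \eqref{eq:sigma_r-def}. This yields \eqref{eq:nested} directly.

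Next, for the sign assertion: if $\Gamma$ and $\gamma$ are convex, then the inner parallel hypersurfaces interpolating between them are convex as well (their distance function is convex by Lemma~\ref{lem:u-convex}), so all principal curvatures $\kappa_i\ge 0$, whence $\sigma_{n-2}(\kappa)\ge 0$; combined with $K_0\le 0$ the right-hand side of \eqref{eq:nested} is nonnegative, giving $\mathcal{G}(\Gamma)\ge\mathcal{G}(\gamma)$. For \eqref{eq:nested2} I would take $\gamma=\partial B_\rho$ to be a small geodesic sphere deep inside $\Omega$ and let $\rho\to 0$: by Lemma~\ref{lem:GKnomegan}, $\mathcal{G}(\partial B_\rho)\to n\omega_n$, and the domain $D=B_\rho$ shrinks to a point so that $\int_{\Omega\setminus D}\sigma_{n-2}(\kappa)\,d\mu\to\int_\Omega\sigma_{n-2}(\kappa)\,d\mu$ (the integrand is nonnegative and integrable since $u$ is $\C^{1,1}$ away from the cut locus, which has measure zero). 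Passing to the limit in \eqref{eq:nested} gives the first inequality in \eqref{eq:nested2}, and the second is immediate from $K_0\le 0$ and $\sigma_{n-2}(\kappa)\ge 0$.

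The main technical obstacle is verifying that Theorem~\ref{thm:comparison2} genuinely applies here despite the presence of $\textup{cut}(\Gamma)$: the hypotheses require $u$ to be $\C^{1,1}$ on $(\Omega\setminus D)\setminus A$ for a \emph{closed} exceptional set $A$, but $\textup{cut}(\Gamma)$ need not be avoidable by a single tube of radius $\theta$ around a small set in a way that makes $\mathcal{G}_\eta\to\mathcal{G}$. The clean resolution is to choose $\gamma$ first so that $D\subset\Omega$ contains a neighborhood of $\textup{cut}(\Gamma)$ is \emph{not} what we want — rather, one notes that for nested convex hypersurfaces the region $\Omega\setminus D$ foliated by inner parallels stays within $\textup{reach}(\Gamma)$, so $u$ is genuinely $\C^{1,1}$ there and $A=\emptyset$ suffices; the delicate point is then just the limit $\rho\to 0$ in \eqref{eq:nested2}, where one must ensure the convex inner parallels remain regular all the way down, which again follows because convexity is preserved along the parallel flow (Lemma~\ref{lem:u-convex}) and a small geodesic ball is always a regular sublevel set. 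I would also double-check the orientation conventions so that $\nabla u$ points outward along both $\Gamma$ and $\gamma=\partial B_\rho$ with respect to $\Omega$ and $D=B_\rho$, which is the case since $u=\widehat d_\Gamma$ increases away from $\Gamma$ into $M\setminus\Omega$ and, when restricted to $\Omega$ with the appropriate sign, its gradient points from $\gamma$ toward $\Gamma$.
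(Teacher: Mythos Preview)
Your specialization $u:=\widehat d_\Gamma$ is the source of a genuine gap. The corollary inherits the setup of Theorem~\ref{thm:comparison2}, so $\gamma$ must be a level set of $u$; but with $u=\widehat d_\Gamma$ the only level sets are inner parallels of $\Gamma$. Hence your argument proves neither the identity \eqref{eq:nested} for a general $u$, nor the inequality $\mathcal G(\Gamma)\ge\mathcal G(\gamma)$ for an arbitrary nested convex $\gamma$, nor \eqref{eq:nested2} (since a small geodesic sphere $\partial B_\rho$ is almost never a level set of $\widehat d_\Gamma$). A related error: Lemma~\ref{lem:u-convex} does \emph{not} say $\widehat d_\Gamma$ is convex on $\Omega$; that is the stronger notion of $d$-convexity discussed in Section~\ref{sec:d-convex}, which can fail already in the hyperbolic plane.

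The fix is simpler than your route and is what the paper does. You yourself note that in a space form $R_{ijk\ell}=K_0(\delta_{ik}\delta_{j\ell}-\delta_{i\ell}\delta_{jk})$; in particular $R_{rkrn}=0$ whenever $k\le n-1$, so the second integrand in Theorem~\ref{thm:comparison2} vanishes for \emph{every} admissible $u$, without any appeal to $u_{nk}=0$. This immediately gives \eqref{eq:nested} in full generality. For the monotonicity and for \eqref{eq:nested2} one then needs a convex $u$ having both $\Gamma$ and the chosen $\gamma$ (respectively a shrinking geodesic ball) as level sets; the paper obtains this from Borb\'ely's lemma \cite[Lem.~1]{borbely2002}, after which $\sigma_{n-2}(\kappa)\ge 0$ on all intermediate level sets and the limit $\rho\to 0$ via Lemma~\ref{lem:GKnomegan} goes through.
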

\begin{proof}
Since $M$ has constant sectional curvature $K_0$,  $R_{ijk\ell}=
K_0(\delta_{ik}\delta_{j\ell}-\delta_{i\ell}\delta_{jk})$. Thus \eqref{eq:nested} follows immediately from Theorem \ref{thm:comparison2}.
If $\Gamma$ and $\gamma$ are convex, then we may assume that $u$ is convex \cite[Lem. 1]{borbely2002}. In particular level set of $u$ are convex, and thus $\sigma_{n-2}(\kappa)\geq 0$. So $\mathcal{G}(\Gamma)\geq \mathcal{G}(\gamma)$. Finally, letting $\gamma$ be a sequence of geodesic balls with vanishing radius, we obtain \eqref{eq:nested2} by Lemma \ref{lem:GKnomegan}.
\end{proof}

The monotonicity property for total curvature of nested convex hypersurfaces in the hyperbolic space $\mathbf{H}^n$ had been observed earlier by Borbely \cite{borbely2002}.  In a general Cartan-Hadamard manifold, however, this property does not hold, as has been shown by Dekster \cite{dekster1981}. Thus the requirement that the curvature be constant in Corollary \ref{cor2.1} is essential.
Another important special case of Theorem \ref{thm:comparison2} occurs when $|\nabla u|$ is constant
on level sets of $u$, or $u_{kn}\equiv 0$ (for $k\leq n-1$), e.g., $u$ may be the distance function of $\Gamma$, in which case recall that we say $\gamma$ and $\Gamma$ are parallel. 

\begin{corollary}[Parallel hypersurfaces] \label{cor:parallel} 
Suppose that $u=\widehat{d}_\Gamma$. Then
\begin{equation} \label{eq2.200}
\mathcal{G}(\Gamma)-\mathcal{G}(\gamma)=-\int_{\Omega\setminus D}
R_{rnrn} \frac{GK}{\kappa_r}\,d\mu.
\end{equation} 
In particular, if  $\gamma$ is convex,  and $K_M\leq -a\leq 0$, then 
\be\label{eq:Gamma-geq-gamma}
\mathcal{G}(\Gamma)\geq \mathcal{G}(\gamma)+a\int_{\Omega\setminus D}
\sigma_{n-2}(\kappa)\,d\mu.
\ee
Finally if $\Gamma$ is a geodesic sphere and $K_M\leq 0$, then $\mathcal{G}(\Gamma)\geq n\omega_n$.
\end{corollary}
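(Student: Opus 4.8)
The plan is to derive all three assertions from the comparison formula (Theorem~\ref{thm:comparison2}), specialized to the signed distance function $u=\widehat{d}_\Gamma$. The crucial observation is that $|\nabla\widehat{d}_\Gamma|\equiv 1$ on $M\setminus\textup{cut}(\Gamma)$ by Lemma~\ref{lem:widehatdC1}, so in a principal frame of $u$ the mixed second derivatives $u_{nk}$ ($k\le n-1$) vanish: differentiating $|\nabla u|^2\equiv 1$ gives $u_{k\ell}u_\ell=0$, and since $\nabla u=-u_nE_n$ with $|u_n|=1$ this forces $u_{kn}=0$ for $k<n$. Consequently the entire second integral in Theorem~\ref{thm:comparison2} drops out, and one is left with \eqref{eq2.200}. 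Some care is needed because $\widehat{d}_\Gamma$ is only $\C^{1,1}$ (and only away from $\textup{cut}(\Gamma)$), so one applies Theorem~\ref{thm:comparison2} with the singular set $A$ taken to be $\textup{cut}(\Gamma)\cap(\Omega\setminus D)$ together with a cutoff function $\eta$ for a tubular neighborhood $U_\theta(A)$; the vanishing $u_{nk}=0$ and $\eta_n\,GK$-type boundary terms must then be handled as $\theta\to 0$. Since $\textup{cut}(\Gamma)$ has measure zero and $GK/\kappa_r$ is bounded on convex level sets, the cutoff terms go to zero and one recovers \eqref{eq2.200} in the limit.

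For \eqref{eq:Gamma-geq-gamma}, I would invoke the standard fact (as in the proof of Corollary~\ref{cor2.1}, via \cite{borbely2002}) that when $\gamma$ is convex we may take $u=\widehat{d}_\Gamma$ convex on $\Omega\setminus D$, so that all principal curvatures $\kappa_i\ge 0$ and hence $GK/\kappa_r=\prod_{i\ne r}\kappa_i\ge 0$. The sectional curvature hypothesis $K_M\le -a$ gives $R_{rnrn}\le -a$ for each $r$; summing \eqref{eq2.200} over the principal directions (equivalently, recognizing $-\sum_r R_{rnrn}\,GK/\kappa_r\ge a\sum_r\prod_{i\ne r}\kappa_i = a\,\sigma_{n-2}(\kappa)$) yields the inequality. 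The only subtlety is the sign bookkeeping: one must check that $R_{rnrn}\le -a$ follows from the sectional-curvature bound with the paper's curvature conventions (the plane spanned by $E_r,E_n$ has sectional curvature $R_{rnrn}$ since the frame is orthonormal), and that $\sigma_{n-2}(\kappa)=\sum_{r=1}^{n-1}\prod_{i\ne r}\kappa_i$ by definition \eqref{eq:sigma_r-def}.

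For the final claim, take $\Gamma$ a geodesic sphere $\partial B_R$ and let $\gamma=\partial B_\rho$ be a concentric geodesic sphere of small radius $\rho$; these are convex (being level sets of the distance function from the center, by Lemma~\ref{lem:u-convex}) and parallel to $\Gamma$, bounding $D=B_\rho\subset\Omega=B_R$. Applying \eqref{eq:Gamma-geq-gamma} with $a=0$ gives $\mathcal{G}(\Gamma)\ge\mathcal{G}(\gamma)=\mathcal{G}(\partial B_\rho)$, and letting $\rho\to 0$ together with Lemma~\ref{lem:GKnomegan} (which gives $\mathcal{G}(\partial B_\rho)\to n\omega_n$) yields $\mathcal{G}(\Gamma)\ge n\omega_n$.

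The main obstacle I anticipate is the rigorous justification of the cutoff/limiting argument needed to apply Theorem~\ref{thm:comparison2} to $u=\widehat{d}_\Gamma$: one must control the boundary-type terms $\int(\eta_k\,GK\,u_{nk}/(\kappa_k|\nabla u|) - \eta_n\,GK)\,d\mu$ as the cutoff parameter $\theta\to 0$. Here $u_{nk}=0$ kills the first term outright, but the $\eta_n\,GK$ term requires knowing that $GK$ is integrable near $\textup{cut}(\Gamma)$ and that $|\nabla\eta|$, while blowing up like $1/\theta$, is supported on a set of measure $O(\theta)$; convexity of the level sets (bounding $GK$ on $\Omega$) is what makes this work. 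Everything else is essentially algebraic manipulation and sign-checking.
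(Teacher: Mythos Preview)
Your overall strategy matches the paper's: specialize Theorem~\ref{thm:comparison2} to $u=\widehat d_\Gamma$, use $|\nabla u|\equiv 1$ to kill $u_{nk}$, then invoke convexity of the parallel level sets and Lemma~\ref{lem:GKnomegan}. The second and third parts are essentially identical to the paper's argument (the paper phrases the convexity of level sets via Lemma~\ref{lem:u-convex} applied to the outer parallel hypersurfaces of $\gamma$, rather than by asserting $u$ itself is convex, but the upshot $\kappa_i\ge 0$ is the same).

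Where you diverge is in the first part: you take $A=\textup{cut}(\Gamma)\cap(\Omega\setminus D)$ and propose a cutoff-and-limit argument. This is unnecessary, and in fact your proposed justification has a gap. The point you are missing is that $\textup{cut}(\Gamma)\cap(\Omega\setminus D)=\emptyset$. Since $\gamma$ is by hypothesis a \emph{regular} level set of $\widehat d_\Gamma$, every point of $\gamma$ has a unique footprint on $\Gamma$; hence so does every point on the normal geodesic segments joining $\Gamma$ to $\gamma$, and these segments fibrate $\cl(\Omega)\setminus D$. Thus $\Omega\setminus D\subset M\setminus\textup{cut}(\Gamma)$, and by Proposition~\ref{prop:C11} the function $u$ is $\C^{1,1}$ there. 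One then applies Theorem~\ref{thm:comparison2} with $A=\emptyset$ (so $\eta\equiv 1$, $\nabla\eta\equiv 0$, $\mathcal G_\eta=\mathcal G$), and \eqref{eq2.200} drops out immediately with no limiting procedure.

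This matters because your proposed limit relies on bounding $GK$ via convexity of the level sets, but \eqref{eq2.200} is stated without any convexity hypothesis; so your argument for the $\eta_n GK$ term would not go through in the generality claimed. Recognizing that the cut locus is already confined to $D$ removes the difficulty entirely.
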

\begin{proof}
Inequality \eqref{eq2.200} follows immediately from Theorem \ref{thm:comparison2} (with $X=\emptyset$). 
If $\gamma$ is convex, then all of its outer parallel hypersurfaces, which are level sets of $u$ fibrating $\Omega\setminus D$, are convex as well by Lemma \ref{lem:u-convex}.
Thus $\sigma_{n-2}(\kappa)\geq 0$ which yields \eqref{eq:Gamma-geq-gamma}. Finally, if $\Gamma$ is a geodesic sphere, then $\gamma$ is a geodesic sphere as well, and letting radius of $\gamma$ shrink to $0$ we obtain $\mathcal{G}(\Gamma)\geq n\omega_n$ via Lemma \ref{lem:GKnomegan}. 
\end{proof}

The monotonicity of total curvature for parallel hypersurfaces in Cartan-Hadamard manifolds had been observed earlier by Schroeder-Strake \cite{schroeder-strake}; see also Cao-Escobar \cite[Prop. 2.3]{cao-escobar} for a version in polyhedral spaces, and Note \ref{note:monotone} for an alternative argument. Finally we sharpen the last statement of the above corollary with regard to geodesic spheres. To this end we need the following observation:

\begin{lemma}\label{lem:radial-curvature}
Let $U$ be an open set in a Cartan-Hadamard manifold $M$ which is star-shaped with respect to a point $p\in U$. Suppose that the curvature of $U$ is constant with respect to all planes which are tangent to the geodesics emanating from $p$. Then the curvature of $U$ is constant.
\end{lemma}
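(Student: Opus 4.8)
The plan is to show that under the stated hypotheses the curvature of $U$ is constant with respect to \emph{every} tangent plane, not just those tangent to radial geodesics, by propagating the radial information along Jacobi fields. Fix $p$ and a unit-speed geodesic $\gamma$ emanating from $p$; let $V$ be a parallel unit vector field along $\gamma$ orthogonal to $\dot\gamma$. The hypothesis tells us that $K(\dot\gamma(t),V(t))=:k(t)$ is determined, and by rescaling/relabeling we may assume the constant is some fixed $k_0$, i.e.\ $k(t)\equiv k_0$ along every such ray through $p$. The key step is to analyze the Jacobi equation $J'' + R(J,\dot\gamma)\dot\gamma = 0$ along $\gamma$ with the initial conditions $J(0)=0$, $J'(0)=W\perp\dot\gamma$, which governs the metric in geodesic polar coordinates centered at $p$: indeed, the metric on $U$ is completely encoded by the family of solution operators $\mathcal{A}(t)$ sending $W\mapsto J_W(t)$.

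First I would write the metric in polar coordinates $g = dr^2 + g_r$, where $g_r$ is the induced metric on the geodesic sphere of radius $r$, and recall that $g_r$ is expressed through the Jacobi operator. The radial curvature hypothesis says precisely that for each $r$ the "radial" sectional curvatures $\langle R(\cdot,\partial_r)\partial_r,\cdot\rangle$ acting on $T_x\partial B_r$ equal $k_0$ times the identity (after normalizing), since every plane containing $\partial_r$ is of the prescribed form. Therefore along each radial geodesic the Jacobi equation decouples into $n-1$ scalar copies of $j'' + k_0 j = 0$ with $j(0)=0$, $j'(0)=1$, whose unique solution is $\mathrm{sn}_{k_0}(r)$ (i.e.\ $r$, $\sin(\sqrt{k_0}\,r)/\sqrt{k_0}$, or $\sinh(\sqrt{-k_0}\,r)/\sqrt{-k_0}$ according to the sign of $k_0$; since $M$ is Cartan-Hadamard, $k_0\le 0$ and we are in the last case). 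Consequently the solution operator is $\mathcal{A}(r) = \mathrm{sn}_{k_0}(r)\,\mathrm{Id}$ along every ray, which forces $g_r = \mathrm{sn}_{k_0}(r)^2\, g_{\mathbf{S}^{n-1}}$ on the whole star-shaped set $U$. Hence $U$ is isometric to a ball in the space form of curvature $k_0$, and in particular all its sectional curvatures equal $k_0$.

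The main obstacle — and the place requiring care — is justifying that the radial hypothesis genuinely diagonalizes the full Jacobi operator $R(\cdot,\partial_r)\partial_r$ and not merely its diagonal entries in some frame. One must argue that because $K(\partial_r, W)=k_0$ for \emph{all} unit $W\perp\partial_r$, the quadratic form $W\mapsto \langle R(W,\partial_r)\partial_r, W\rangle$ is a constant multiple of $|W|^2$, and since $R(\cdot,\partial_r)\partial_r$ is a symmetric operator on the orthogonal complement of $\partial_r$, a symmetric operator whose associated quadratic form is a scalar multiple of the identity \emph{is} that scalar multiple of the identity. This polarization step is the crux; once it is in hand, the ODE argument above is routine and one invokes standard comparison/rigidity for the Jacobi equation (as in the Riemannian comparison theory already cited in the excerpt, e.g.\ \cite{karcher1989}). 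I would also note that star-shapedness is exactly what is needed so that polar coordinates cover all of $U$ without encountering the cut locus of $p$, making the global conclusion legitimate rather than merely infinitesimal.
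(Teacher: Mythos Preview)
Your argument is correct and is essentially the same as the paper's: both rest on the fact that the radial-curvature hypothesis pins down the Jacobi operator $R(\cdot,\partial_r)\partial_r$ as $k_0\,\mathrm{Id}$ along every ray from $p$, which determines the metric in polar coordinates. The paper simply packages this as an invocation of Cartan's theorem (constructing $f=\exp_{\tilde p}\circ i\circ\exp_p^{-1}$ to the model space and citing the standard Jacobi field proof that $f$ is an isometry), whereas you unpack that Jacobi field computation explicitly and read off $g=dr^2+\mathrm{sn}_{k_0}(r)^2\,g_{\mathbf{S}^{n-1}}$; the polarization step you flag is exactly the content hidden in the parenthetical remark the paper makes about the Jacobi equation depending only on radial sectional curvatures.
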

\begin{proof}
Let $K_0$ be the values of the constant curvature, and $\tilde M$ be a complete simply connected manifold of constant curvature $K_0$ and of the same dimension as $M$. Let $\tilde p\in \tilde M$, and $i\colon T_p M\to T_p\tilde M$ be an isometry. Define $f\colon U\to \tilde M$ by $f(q):=\exp_{\tilde p}\circ i\circ\exp_p^{-1}(q)$. A standard Jacobi field argument shows that $f$ is an isometry, e.g., see the proof of Cartan's theorem on determination of metric from curvature \cite[p. 157]{docarmo} (the key point here is that Jacobi's equation depends only on the sectional curvature with respect to the planes which are tangent to the geodesic).
\end{proof}

\begin{corollary}[Geodesic spheres]
Let $\Gamma=\partial B_\rho$, $u=\widehat{d}_\Gamma$, and suppose that $K_M\leq -a\leq 0$. Then
\begin{equation} \label{eq:corcor:parallel}
\mathcal{G}(\partial B_\rho)\geq 
n\omega_n+a\int_{B_\rho}
\sigma_{n-2}(\kappa)\,d\mu
\geq  \mathcal{G}(\partial B^{a}_\rho),
\end{equation}
where $B^{a}_\rho$ is a geodesic ball of radius
$\rho$ in the hyperbolic space $\mathbf{H}^n(-a)$.
Equality  holds in either of the above inequalities only if $B_\rho$ is isometric to $B_\rho^a$.
\end{corollary}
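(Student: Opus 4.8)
The plan is to obtain the two inequalities in \eqref{eq:corcor:parallel} and then analyze the equality cases. The first inequality is immediate: it is exactly \eqref{eq:Gamma-geq-gamma} from Corollary \ref{cor:parallel} with $\gamma$ a sequence of geodesic spheres shrinking to the center of $B_\rho$, using Lemma \ref{lem:GKnomegan} to identify $\lim \mathcal{G}(\gamma) = n\omega_n$ (note that $\gamma$ is convex, being a geodesic sphere in a Cartan-Hadamard manifold by Lemma \ref{lem:u-convex}, and all the inner parallel spheres are convex as well). For the second inequality, I would apply the comparison formula in the model case: in $\mathbf{H}^n(-a)$, by \eqref{eq:nested} (or directly \eqref{eq:nested2}) with $K_0=-a$, one gets $\mathcal{G}(\partial B^a_\rho) = n\omega_n + a\int_{B^a_\rho}\sigma_{n-2}(\kappa^a)\,d\mu^a$, where $\kappa^a$ are the principal curvatures of the concentric spheres. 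So it suffices to show the pointwise-comparison-type estimate $\int_{B_\rho}\sigma_{n-2}(\kappa)\,d\mu \geq \int_{B^a_\rho}\sigma_{n-2}(\kappa^a)\,d\mu^a$. For this I would compare, radius by radius, the inner parallel sphere $\partial B_t$ in $M$ with $\partial B^a_t$ in $\mathbf{H}^n(-a)$: since $K_M \leq -a$, the Riccati comparison for the shape operator of geodesic spheres gives that each principal curvature of $\partial B_t$ is at least the corresponding one of $\partial B^a_t$ (which is $\sqrt{a}\coth(\sqrt{a}\,t)$ in all directions), hence $\sigma_{n-2}(\kappa) \geq \sigma_{n-2}(\kappa^a)$ pointwise on the sphere of radius $t$; and the volume comparison (Bishop) gives that the area element on $\partial B_t$ dominates that on $\partial B^a_t$. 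Integrating in $t$ via the coarea formula along $u = \widehat d_\Gamma$ (so $|\nabla u| \equiv 1$) yields the desired integral inequality.

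For the equality discussion: suppose equality holds in the second inequality of \eqref{eq:corcor:parallel}. Tracing back through the comparison argument, this forces, for a.e. $t \in (0,\rho)$, equality in $\sigma_{n-2}(\kappa) = \sigma_{n-2}(\kappa^a)$ pointwise and equality in the volume comparison on $\partial B_t$. The rigidity case of the Bishop volume comparison (equality on the whole ball $B_\rho$) already forces $B_\rho$ to be isometric to $B^a_\rho$; alternatively one extracts from the Riccati/Jacobi equality case that the sectional curvature of $M$ equals $-a$ on every plane tangent to a geodesic emanating from the center of $B_\rho$, and then Lemma \ref{lem:radial-curvature} upgrades this to $K_M \equiv -a$ throughout $B_\rho$, whence $B_\rho$ is isometric to $B^a_\rho$ by Cartan's theorem. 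If instead equality holds in the first inequality, then $a\int_{B_\rho}\sigma_{n-2}(\kappa)\,d\mu = \mathcal{G}(\partial B_\rho) - n\omega_n$; combined with the (already established) chain of inequalities this again forces equality throughout, so the same rigidity applies.

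The main obstacle I anticipate is making the pointwise principal-curvature comparison for the \emph{nested family} of geodesic spheres rigorous and then correctly extracting rigidity: one must ensure the Riccati comparison is applied with the right sign conventions (recall this paper's shape operator \eqref{eq:shape} differs in sign from some references), verify that the comparison of $\sigma_{n-2}$ survives because all the curvatures in question are nonnegative (guaranteed since all the inner parallel spheres are convex by Lemma \ref{lem:u-convex}, so each factor $\kappa_i \geq \kappa_i^a \geq 0$ and products are monotone), and handle the possible singularity of $\widehat d_\Gamma$ at the center by the same limiting argument used in Corollary \ref{cor:parallel}. The equality analysis is where care is most needed: one should invoke Lemma \ref{lem:radial-curvature} precisely as stated — the hypothesis there is constancy of curvature only on planes tangent to radial geodesics, which is exactly what the Jacobi-field equality case delivers — to conclude $K_M \equiv -a$ on the star-shaped set $B_\rho$ and hence the claimed isometry.
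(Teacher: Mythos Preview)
Your approach to both inequalities matches the paper's: the first via Corollary \ref{cor:parallel} with shrinking inner spheres and Lemma \ref{lem:GKnomegan}, and the second via the Riccati comparison for principal curvatures of geodesic spheres together with the area comparison, integrated radially. The paper carries out the second step explicitly, writing $\sigma_{n-2}(\kappa) \geq (n-1)(\sqrt{a}\coth\sqrt{a}r)^{n-2}$ and $A(r,\theta) \geq (\sinh(\sqrt{a}r)/\sqrt{a})^{n-1}$ and evaluating the resulting integral to $n\omega_n(\cosh\sqrt{a}\rho)^{n-1} = \mathcal{G}(\partial B^a_\rho)$, which is exactly your strategy. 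Your rigidity argument for the \emph{second} inequality (Riccati/Jacobi equality gives radial curvature $\equiv -a$, then Lemma \ref{lem:radial-curvature}) is also in line with the paper.

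There is, however, a genuine gap in your rigidity argument for the \emph{first} inequality. You claim that equality there, ``combined with the already established chain of inequalities, forces equality throughout,'' but this does not follow: equality in the first inequality says $\mathcal{G}(\partial B_\rho) = n\omega_n + a\int_{B_\rho}\sigma_{n-2}(\kappa)\,d\mu$, and there is no reason this middle term cannot still strictly exceed $\mathcal{G}(\partial B^a_\rho)$. Nothing collapses the second inequality. The correct route---and the one the paper takes---is to trace equality in the first inequality back through the identity \eqref{eq2.200}: since $-R_{rnrn} \geq a$ and each $GK/\kappa_r \geq 0$ on the convex inner spheres, equality in the integrated bound forces $R_{rnrn} \equiv -a$, i.e., the radial sectional curvatures are identically $-a$ on $B_\rho$, and then Lemma \ref{lem:radial-curvature} yields the isometry with $B^a_\rho$. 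This is the same mechanism you correctly invoke for the second inequality; you just need to apply it directly here rather than appealing to the chain.
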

\begin{proof}
Let $B_{r}$ denote the geodesic ball of radius $r<\rho$ with the same center as $B_\rho$. By \eqref{eq:Gamma-geq-gamma},
$$
\mathcal{G}(\partial B_\rho)    \geq  \mathcal{G}(\partial B_{r})+a\int_{B_\rho\setminus B_r}
\sigma_{n-2}(\kappa)\,d\mu.
$$
Letting $r\to 0$, we obtain  the first inequality in \eqref{eq:corcor:parallel} via Lemma \ref{lem:GKnomegan}. Next assume that equality holds in \eqref{eq:corcor:parallel}. Then $R_{rnrn}=-a$. So $B_\rho$ has constant curvature $-a$ by Lemma \ref{lem:radial-curvature}.
Next to establish the second inequality in \eqref{eq:corcor:parallel} note that, by basic Riemannian comparison theory \cite[p. 184]{karcher1989}, principal curvatures of $\partial B_{r}$ are
 bounded below by $\sqrt{a}\coth(\sqrt{a}r)$. Hence, on $\partial B_r$,
 $$
 \sigma_{n-2}(\kappa)\geq (n-1)(\sqrt{a}\coth \sqrt{a} r )^{n-2}.
 $$
 Let $A(r,\theta)d\theta$ denote the volume (surface area) element of $\partial B_r$, and $H(r,\theta)$ be  its (normalized) mean curvature function in geodesic spherical coordinates (generated by the exponential map based at the center of $B_r$).
By \cite[(1.5.4)]{karcher1989}, 
$$
\frac{d}{dr}A(r,\theta) =(n-1) H(r,\theta)A(r,\theta) \geq (n-1)\sqrt{a} \coth(\sqrt{a}r)\, A(r,\theta),
$$ 
which after an integration yields
$$
A(r,\theta)\geq \left(\frac{\sinh(\sqrt{a}r)}{\sqrt{a}}\right)^{n-1}.
$$
Thus,
\begin{eqnarray*}
\mathcal{G}(\partial B_\rho)
&\geq&
n\omega_n+ a\int_{0}^\rho \int_{\S^{n-1}} \sigma_{n-2}(\kappa) A(r,\theta)d\theta dr\\
&\geq& 
n\omega_n+a\int_{0}^\rho \int_{\S^{n-1}}(n-1)(\sqrt a \coth{\sqrt a r})^{n-2}\left(\frac{\sinh{\sqrt{a}r}}{\sqrt a}\right)^{n-1}d\theta dr\\
&\geq&n\omega_n+n\omega_{n}\int_{0}^\rho (n-1)\sqrt a(\cosh{\sqrt ar})^{n-2}\sinh{\sqrt a r}~dr\\
&=&n\omega_{n}(\cosh{\sqrt a \rho})^{n-1}\\
&=&\mathcal{G}(\partial B^{a}_\rho),
\end{eqnarray*}
as desired. If equality holds, then equality holds in the first inequality of \eqref{eq:corcor:parallel}, which again yields that  $B_\rho$ is isometric to $B_\rho^a$.
\end{proof}

%%%%%%%%%%%%%%%%%%%%%%%%%%%%%%%%%%%%%%%%%%%
\section{Curvature of the Convex Hull}\label{sec:convexhull}\label{sec:CH}
%%%%%%%%%%%%%%%%%%%%%%%%%%%%%%%%%%%%%%%%%%%

For any convex hypersurface $\Gamma$ in a Cartan-Hadamard manifold $M$ and $\e>0$, the outer parallel hypersurface $\Gamma^\e:=(\widehat d_\Gamma)^{-1}(\e)$ is $\C^{1,1}$, by Lemma \ref{lem:GH}, and therefore its total curvature $\mathcal{G}(\Gamma^\e)$ is well-defined by Rademacher's theorem. We set
\be\label{eq:G}
\mathcal{G}(\Gamma):=\lim_{\e\to 0}\mathcal{G}(\Gamma^\e).
\ee
Recall that $\epsilon\mapsto \mathcal{G}(\Gamma^\e)$ is a decreasing function by Corollary \ref{cor:parallel}. Thus, as 
$\mathcal{G}(\Gamma^\e)\geq 0$, it follows that $\mathcal{G}(\Gamma)$ is well-defined.
The convex hull of a set $X\subset M$, denoted by $\conv(X)$, is the intersection of all closed convex sets in $M$ which contain $X$. We set
$$
X_0:=\partial\conv(X).
$$
Note that if $\conv(X)$ has nonempty interior, then $X_0$ is a convex hypersurface.
In this section  we  show  that the total positive curvature of a closed embedded $\C^{1,1}$ hypersurface $\Gamma$ in a Cartan-Hadamard manifold cannot be smaller than that of $\Gamma_0$ (Corollary \ref{cor:Gamma0}), following the same general approach indicated in \cite{kleiner1992}.  

For any set $X\subset\R^n$ and $p\in X$, the \emph{tangent cone} $T_p X$ of $X$ at $p$ is  the limit of all secant rays which emanate from $p$ and pass through a sequence of points of $X\setminus\{p\}$ converging to $p$. For a set $X\subset M$ and $p\in X$, the tangent cone is defined as
$$
T_p X:= T_p\big(\exp_p^{-1}(X)\big)\subset T_p M\simeq\R^n.
$$
 We say that a tangent cone is \emph{proper} if it does not fill up the entire tangent space. A set $X\subset\R^n$ is a \emph{cone} provided that  there exists a point $p\in X$ such that  for every $x\in X$ and $\lambda\geq 0$, $\lambda (x-p)\in X$. Then $p$ will be called an \emph{apex} of $X$. The following observation is proved in \cite[Prop. 1.8]{cheeger-gromoll1972}.

\begin{lemma}[\cite{cheeger-gromoll1972}]\label{lem:CG}
For any convex set $X\subset M$, and $p\in \partial X$, $T_pX$ is a proper convex cone in $T_p M$, and $\exp_p^{-1}(X)\subset T_p X$.
\end{lemma}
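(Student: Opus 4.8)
The plan is to reduce the statement to the standard Euclidean fact that the tangent cone of a convex set is a convex cone, using that $\exp_p$ is a diffeomorphism and that its differential at $p$ is the identity. First I would set $C := \exp_p^{-1}(X) \subset T_pM$; since $M$ is Cartan–Hadamard, $\exp_p \colon T_pM \to M$ is a diffeomorphism (Cartan–Hadamard theorem), so $C$ is a closed set with $p$ (identified with $0 \in T_pM$) on its boundary. The key geometric input is that $C$ is \emph{convex} as a subset of the vector space $T_pM$: this follows because the radial geodesics from $p$ correspond under $\exp_p$ to straight rays in $T_pM$, and more generally one uses that in a Cartan–Hadamard manifold the geodesic joining any two points $q_1,q_2$ lies in $X$ (convexity of $X$), while $\exp_p^{-1}$ of that geodesic is a curve in $C$; combined with the fact that $d^2$ from $p$ is convex along all geodesics, one deduces that $C$ is star-shaped, and in fact a convexity comparison (the map $\exp_p$ does not decrease distances, being a Cartan–Hadamard exponential) shows the \emph{segment} in $T_pM$ between two points of $C$ stays in $C$. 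I would cite the convexity of the distance function (Lemma \ref{lem:u-convex} and the discussion preceding it) to make this precise.

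Once $C \subset T_pM$ is known to be a closed convex set with $0 \in \partial C$, the remaining assertions are purely about convex geometry in $\R^n$. By definition $T_pX = T_0C$, the Euclidean tangent cone of $C$ at the boundary point $0$. For a convex set $C$ with $0 \in \partial C$, the tangent cone $T_0 C$ equals the closure of $\bigcup_{\lambda > 0} \lambda C$, which is manifestly a convex cone with apex $0$; this is classical (e.g.\ it is the support cone of $C$ at $0$). Moreover $C \subset T_0 C$ because for any $x \in C$ and $\lambda \in [0,1]$ we have $\lambda x \in C$ by star-shapedness, so the ray through $x$ is a limit of secant rays and hence $x \in T_0C$; this gives the inclusion $\exp_p^{-1}(X) \subset T_pX$ claimed in the lemma. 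Finally, properness of $T_pX$: since $X$ is a proper convex subset of $M$ (it has nonempty boundary, as $p \in \partial X$), there is a supporting hypersurface — concretely, a point $q \notin X$ whose footprint geodesic gives a half-space of $T_pM$ disjoint from the interior direction — so $T_0C$ is contained in a closed half-space and cannot be all of $T_pM$. I would phrase this using a supporting ball or the convexity of $d_X$ on a neighborhood of $p$.

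The main obstacle is the first step: verifying that $C = \exp_p^{-1}(X)$ is convex as a subset of the vector space $T_pM$, not merely geodesically convex in the metric induced by $\exp_p$. The subtlety is that $\exp_p$ pulls back the metric to something with nonpositive curvature, so \emph{metric} segments of $C$ are radial-geodesic-convex, but Euclidean straight segments in $T_pM$ are a different family of curves. The resolution uses the Rauch/Cartan–Hadamard comparison: $\exp_p$ is distance non-decreasing, equivalently the pulled-back metric dominates the flat metric, and for such metrics geodesic convexity of $X$ forces ordinary convexity of $\exp_p^{-1}(X)$ — this is exactly the content that Cheeger–Gromoll establish, and I would either reproduce their short argument or cite \cite{cheeger-gromoll1972} directly, since the lemma is stated with that attribution. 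Everything after that is routine finite-dimensional convexity.
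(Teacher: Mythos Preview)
The paper does not prove this lemma; it simply cites \cite[Prop.~1.8]{cheeger-gromoll1972}. So there is no in-paper argument to compare against, and your proposal must be judged on its own merits.

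Your proof contains a genuine error at the step you yourself flagged as the main obstacle. The claim that $C=\exp_p^{-1}(X)$ is \emph{Euclidean}-convex in $T_pM$ is false in general, and the justification you offer (that $\exp_p$ is distance-nondecreasing, so the pulled-back metric dominates the flat one) does not imply it. Here is a concrete counterexample: let $M=\mathbf{H}^2$, let $\gamma$ be a geodesic at distance $R>0$ from $p$, and let $X$ be the closed half-space bounded by $\gamma$ that contains $p$. Using the right-angled hyperbolic triangle with legs $R$ and $s$ (arclength along $\gamma$) one finds that the point $\exp_p^{-1}(\gamma(s))$ has Euclidean $y$-coordinate $r\cos\alpha = r\,\cosh s\,\sinh R/\sinh r$, where $\cosh r=\cosh R\cosh s$. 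A short asymptotic computation shows $r\cos\alpha>R$ once $s$ is large. Hence the Euclidean midpoint of $\exp_p^{-1}(\gamma(s))$ and $\exp_p^{-1}(\gamma(-s))$ lies on the $y$-axis at height exceeding $R$, so its image under $\exp_p$ is on the far side of $\gamma$ from $p$ and therefore outside $X$. Thus $\exp_p^{-1}(X)$ is not convex in $T_pM$. Since your entire reduction to ``routine finite-dimensional convexity'' rests on this claim, the argument does not go through.

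What does survive from your outline is the star-shapedness of $C$ about the origin (radial geodesics from $p$ are straight rays in $T_pM$), which indeed gives the inclusion $\exp_p^{-1}(X)\subset T_pX$ once one knows $T_pX$ is a cone. The correct route to convexity of $T_pX$---the one Cheeger--Gromoll actually use---does \emph{not} pass through convexity of $C$. Instead one argues infinitesimally: given $u,v\in T_pX$, take $x_i,y_i\in X$ with $\exp_p^{-1}(x_i)\to 0$, $\exp_p^{-1}(y_i)\to 0$ along directions $u/|u|$, $v/|v|$; the geodesic segment $x_iy_i$ lies in $X$ by convexity of $X$, and because the metric agrees with the Euclidean metric to first order at $p$, these segments, after rescaling, converge to the straight segment from $u$ to $v$ in $T_pM$. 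This is a purely local argument and requires nothing about global comparison of $\exp_p$. Properness then follows from the existence of a supporting half-space as you indicated.
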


The last sentence in the next lemma is due to a theorem of Alexandrov \cite{alexandrov1939}, which states that semi-convex functions are twice differentiable almost everywhere \cite[Prop. 2.3.1]{cannarsa-sinestrari2004}.  Many different proofs of this result are available, e.g.  \cites{evans-gariepy2015,bangert1979,fu2011,harvey-lawson2013}; see \cite[p. 31]{schneider2014} for a survey. 

\begin{lemma}\label{lem:alexandrov}
Let $\Gamma$ be a convex hypersurface in a Riemannian manifold $M$. Then for each point $p$ of $\Gamma$ there exists a local coordinate chart $(U,\phi)$ of $M$ around $p$ such that $\phi(U\cap\Gamma)$ forms the graph of a semi-convex function $f\colon V\to\R$ for some open set $V\subset\R^{n-1}$. In particular $\Gamma$ is twice differentiable almost everywhere.
\end{lemma}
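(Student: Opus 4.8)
The plan is to reduce everything to the Euclidean graph statement by working in a single well-chosen local chart, and then invoke the (standard) fact that a convex hypersurface in $\R^n$ is locally a graph of a convex function. Fix $p\in\Gamma$. By Lemma \ref{lem:CG}, the tangent cone $T_p\Gamma$ of the convex set $\cl(\Omega)$ at $p$ is a proper convex cone in $T_pM$, so it is contained in a closed half-space $\{v\in T_pM : \langle v,\xi\rangle\le 0\}$ for some unit vector $\xi$ — geometrically, a supporting hyperplane to $\cl(\Omega)$ at $p$. I would use the normal (exponential) coordinate chart $\phi=\exp_p^{-1}$ on a small convex ball $U$ around $p$, and orient coordinates in $\R^n=T_pM$ so that $\xi$ points in the positive $x_n$-direction. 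The point of using normal coordinates is twofold: first, by Lemma \ref{lem:CG} the whole piece $\exp_p^{-1}(\cl(\Omega)\cap U)$ lies inside $T_p(\cl(\Omega))$, which already sits below the hyperplane $\{x_n=0\}$ near the origin; second, geodesics through $p$ are straight lines through the origin, which keeps the cone structure visible after passing to the chart.

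Next I would show $\phi(\Gamma\cap U')$ is a graph $x_n=f(x_1,\dots,x_{n-1})$ over a small ball $V\subset\R^{n-1}$, for some smaller neighborhood $U'\ni p$. Because $\cl(\Omega)\cap U$ has nonempty interior and $\phi(\cl(\Omega)\cap U')$ lies in $\{x_n\le 0\}$ touching it only near the origin, each vertical line $\{x'= \text{const}\}$ meets $\phi(\cl(\Omega)\cap U')$ in a (possibly degenerate) interval, and $f(x')$ is defined as the top endpoint. The content is that $\phi(\cl(\Omega)\cap U')$ has nonempty interior \emph{and} is contained between $x_n=f(x')$ and the bottom of the ball — this uses the supporting hyperplane at $p$ plus continuity to ensure the graphing direction stays transverse to $\Gamma$ on all of $U'$ (shrink $U'$ so the tangent cones, hence supporting hyperplanes, at nearby points stay close to $\{x_n=0\}$). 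One must also confirm $f$ is finite and that the graph is exactly $\Gamma\cap U'$, not a proper subset; this follows since $\Gamma$ is a topological hypersurface separating $U'$.

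Then I would argue $f$ is semi-convex. Here I cannot expect $f$ itself to be convex, because $\phi(\Gamma\cap U')$ need not be convex in $\R^n$ — the metric distortion of the chart can destroy convexity. But convexity is a second-order condition, and the chart is $\C^\infty$, so on the compact set $\cl(U')$ the distortion is controlled: writing the geodesic convexity of $\cl(\Omega)$ in the chart, one gets that for any two points of $\phi(\cl(\Omega)\cap U')$, the \emph{image of the connecting geodesic} lies in the set, and this image is a $\C^2$ curve whose deviation from the straight segment is $O(\text{length}^2)$ with a uniform constant. Equivalently, $f$ satisfies $f(\lambda x'+(1-\lambda)y')\le \lambda f(x')+(1-\lambda)f(y')+C\lambda(1-\lambda)|x'-y'|^2$ for a constant $C$ depending only on the chart on $\cl(U')$; that is precisely the statement that $f(x')+\tfrac{C}{2}|x'|^2$ is convex, i.e.\ $f$ is semi-convex. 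The last sentence of the lemma — that $\Gamma$ is twice differentiable a.e. — then follows from Alexandrov's theorem applied to $f$ in each such chart, together with a countable cover of $\Gamma$ by these charts (and the fact that a $\C^\infty$ diffeomorphism carries the set of twice-differentiability points of the graph onto the twice-differentiability points of $\Gamma$, since second differentiability is a chart-independent notion up to a null set).

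I expect the main obstacle to be the graphing step done \emph{uniformly over a neighborhood}: one must choose $U'$ small enough that, simultaneously at every point $q\in\Gamma\cap U'$, the supporting hyperplane of $\cl(\Omega)$ at $q$ (pulled back to the chart) is close enough to $\{x_n=0\}$ that the vertical direction is never tangent to $\Gamma$ on $U'$. This is where one uses that tangent cones vary upper-semicontinuously for convex sets and that $\phi$ is smooth; packaging it cleanly — rather than just asserting "shrink $U$" — is the delicate part. The semi-convexity estimate, by contrast, is a routine Taylor expansion of the exponential map once the graph representation is in hand.
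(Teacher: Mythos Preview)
Your proposal is correct and follows the paper's setup (normal coordinates $\phi=\exp_p^{-1}$, Lemma~\ref{lem:CG} for the supporting half-space, then a graph representation), but your semiconvexity argument is genuinely different from the paper's. The paper does not use the geodesic-deviation estimate at all: instead it observes that, since $\Gamma$ is convex, through each $q\in\Gamma\cap U$ there passes a geodesic sphere $S_q$ of some fixed radius $r>0$ lying \emph{outside} $\Omega$; the chart images of small caps of these spheres are $\C^2$ graphs $g_q$ supporting $f$ from one side, with $\nabla^2 g_q(x_q)$ varying continuously in $q$, hence uniformly bounded. That one-sided $\C^2$ support with bounded Hessian is the semiconvexity estimate, and it bypasses the need to first establish Lipschitz continuity of $f$. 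Your route---geodesic convexity of $\Omega$ plus the $O(\text{length}^2)$ deviation of $M$-geodesics from Euclidean segments in normal coordinates---is more direct in spirit, but the passage from ``the geodesic image lies in $\phi(\cl(\Omega))$ and is $O(d^2)$-close to the chord'' to the pointwise inequality for $f$ at the exact Euclidean convex combination does require a Lipschitz bound on $f$ (to move from $\gamma'(\lambda)$ to $(1-\lambda)x'+\lambda y'$); this is implicit in your uniform-graphing step, but you should say so rather than writing ``Equivalently.''

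One orientation slip: with your choice ($\xi$ along $+x_n$, so $\phi(\cl(\Omega))\subset\{x_n\le 0\}$ and $f$ is the \emph{top} of the set), the argument you sketch yields $f(\lambda x'+(1-\lambda)y')\ge \lambda f(x')+(1-\lambda)f(y')-C\lambda(1-\lambda)|x'-y'|^2$, i.e.\ semi\emph{concavity}. To match the lemma's wording, flip the sign of $x_n$ (put $\Omega$ above the graph); this is cosmetic and does not affect the Alexandrov conclusion. Your extra care on the graphing step (upper-semicontinuity of tangent cones to keep the vertical direction transverse on all of $U'$) is more thorough than the paper, which simply asserts the graph representation.
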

\begin{proof}
Let $U$ be a small normal neighborhood of $p$ in $M$, and set $\phi:=\exp_p^{-1}$. By Lemma \ref{lem:CG}, we may identify $T_p M$ with $\R^n$  such that $\phi(\Gamma\cap U)$ forms the graph of some function $f\colon V\subset\R^{n-1}\to\R$ with $f\geq 0$. We claim that $f$ is semiconvex. Indeed, since $\Gamma$ is convex, through each point $q\in\Gamma\cap U$ there passes a sphere $S_q$ of radius $r$, for some fixed $r>0$, which lies outside the domain $\Omega$ bounded by $\Gamma$. The images of small open neighborhoods of $q$  in $S_q$ under $f$ yield $\C^2$ functions $g_q\colon V_q\to\R$ which support the graph of $f$ from below in a neighborhood $V_q$ of $x_q:=f^{-1}(q)\in V$. Note that the Hessian of $g_q$ at $x_q$ depends continuously on $q$. So it follows that the second symmetric derivatives of $f$ are uniformly bounded below, i.e.,
$$
f(x+h)+f(x-h)-2f(x)\geq C|h|^2,
$$
for all $x$ in $V$, where  $C:=\sup_q|\nabla^2g_q(x_q)|$. Thus $f$ is semiconvex \cite[Prop. 1.1.3]{cannarsa-sinestrari2004}.
\end{proof}

Using Lemma \ref{lem:CG}, together with a local characterization of convex sets in Riemannian manifolds \cites{karcher1968,alexander1978}, we next establish:

\begin{lemma}\label{lem:segment}
Let $ X$ be a compact set in a Cartan-Hadamard manifold $M$, and $p\in X_0\setminus X$ be a twice differentiable point. Then the curvature of $X_0$ vanishes at $p$.
\end{lemma}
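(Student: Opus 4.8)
The plan is to exploit the fact that $p\in X_0\setminus X$, which forces a degeneracy of the boundary of the convex hull near $p$. Since $p\notin X$ and $X$ is compact, the footprint of $p$ on $X$ lies at positive distance, and the only way $p$ can belong to $X_0=\partial\conv(X)$ is that $p$ sits on a boundary ``facet'' contributed by the convex hull operation rather than by $X$ itself. More precisely, I would first show that through $p$ there passes a nontrivial geodesic segment contained in $X_0$. The classical Euclidean reason is that a boundary point of a convex hull which is not in the original set must lie in the interior of a segment (indeed a simplex) spanned by points of the set; I would reproduce this by a compactness/Carath\'eodory-type argument adapted to $M$, using Lemma \ref{lem:CG} to pass to the tangent cone $T_pX_0$, which is a proper convex cone, and observing that $\exp_p^{-1}(X_0)\subset T_pX_0$. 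The supporting hyperplane of $\conv(X)$ at $p$, pulled back by $\exp_p^{-1}$, supports the convex cone $T_pX_0$; since $p$ is a twice differentiable point of $X_0$, the surface $X_0$ has a well-defined second fundamental form there, and the fact that $p$ lies in the relative interior of a segment of $X_0$ (coming from the convexity of $X_0$ together with $p\notin X$) means one of the principal directions of $X_0$ at $p$ is a direction along which $X_0$ is infinitesimally flat, hence a principal curvature vanishes.

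The key step, then, is to locate the segment through $p$. Here is how I would argue: let $H$ be a supporting hypersurface (totally geodesic or just a supporting geodesic hyperplane given by a supporting half-space of the convex set $\conv(X)$) of $\conv(X)$ at $p$, and let $F:=H\cap\conv(X)$ be the corresponding face. Because $p\in F$ and $p\notin X$, while $X\subset\conv(X)$ and every boundary point of the convex hull which supports the same hyperplane $H$ satisfies that $H$ separates $X$ from the opposite side, one shows $F$ must contain points other than $p$: otherwise $\{p\}$ would be an exposed point equal to a limit of convex combinations of points of $X$ arbitrarily close to $p$, forcing $p\in X$ since $X$ is closed — contradiction. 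Then $F$, being the intersection of a convex set with a supporting hyperplane, is convex and of positive dimension, so it contains a geodesic segment $\gamma$ through $p$ lying in $X_0$. This uses the local characterization of convex sets in Riemannian manifolds (\cites{karcher1968,alexander1978}) to guarantee that geodesics between points of the convex set $\conv(X)$ stay in it, and that $F\subset X_0$.

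Finally, from the segment $\gamma\subset X_0$ through $p$ I extract the vanishing curvature. Work in the normal chart $\phi=\exp_p^{-1}$, so that by Lemma \ref{lem:alexandrov} $\phi(X_0\cap U)$ is the graph of a semiconvex function $f\colon V\subset\R^{n-1}\to\R$, twice differentiable at the point corresponding to $p$. The image $\phi(\gamma)$ is a geodesic segment of $M$ lying on this graph; along it, the graphing function restricted to the projected direction is affine to first order — more carefully, since $X_0$ lies on one side of its supporting hyperplane $H$ and contains the segment $\gamma\subset H$, the function $f$ achieves its supporting-hyperplane value along a whole segment through $x_p$, so the restriction of the Hessian $\nabla^2 f(x_p)$ to the tangent direction of that segment is zero (a nonnegative quadratic form vanishing at an interior direction along which $f$ is pinned to its tangent plane must be degenerate in that direction). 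Hence $\nabla^2 f(x_p)$ is degenerate, which means one principal curvature of $X_0$ at $p$ is zero; by \eqref{eq:detSp}, $GK(p)=\prod\kappa_i(p)=0$, i.e.\ the curvature of $X_0$ vanishes at $p$.

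I expect the main obstacle to be the second step: rigorously ruling out that $p$ is an extreme point of $\conv(X)$ that nonetheless fails to lie in $X$. In Euclidean space this is immediate from Carath\'eodory's theorem together with closedness and compactness of $X$, but in a Cartan-Hadamard manifold one must be careful that ``convex combination'' is replaced by iterated geodesic midpoints, that the relevant limit arguments survive (compactness of $X$ and properness of the tangent cone via Lemma \ref{lem:CG} should suffice), and that the face $F=H\cap\conv(X)$ really is contained in $\partial\conv(X)=X_0$ rather than poking into the interior — this is where the local convexity characterization of \cites{karcher1968,alexander1978} does the work. Everything after that is a short local computation in the normal chart.
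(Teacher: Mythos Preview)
Your approach has a genuine gap at precisely the step you flagged: showing that the face $F=H\cap\conv(X)$ has positive dimension, i.e., that $p$ cannot be an exposed point of $\conv(X)$. The argument you sketch---that an exposed point would be ``a limit of convex combinations of points of $X$ arbitrarily close to $p$, forcing $p\in X$''---is the Euclidean Carath\'eodory/Milman mechanism. In a Cartan-Hadamard manifold there is no Carath\'eodory theorem: $\conv(X)$ is defined as the intersection of all closed convex sets containing $X$, not as a set of iterated geodesic midpoints, and it is not known (and not obvious) that extreme points of $\conv(X)$ must lie in $X$. Neither Lemma~\ref{lem:CG} nor the local-to-global convexity results of \cites{karcher1968,alexander1978} supply this; those control the \emph{shape} of convex sets, not where their extreme points sit relative to the generating set. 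So your ``key step'' is not just a technicality to be cleaned up---it is the entire content of the lemma in the hard case.

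The paper does not attempt to produce a segment through $p$ in that case. It argues by contradiction on the curvature directly: assuming $GK(p)>0$, and working in normal coordinates at $p$, the condition $H\cap\exp_p^{-1}(\conv(X))=\{p\}$ together with the positive second fundamental form lets one enclose $\exp_p^{-1}(\conv(X))$ inside a round sphere $\arc S\subset T_pM$ passing through the origin. Its image $S=\exp_p(\arc S)$ has positive curvature near $p$ and contains $X$ strictly in its interior. Pushing $S$ inward by a small distance yields $S_\epsilon$ which still encloses $X$, misses $p$, and meets $\conv(X)$ only along a positively curved cap; the region cut out of $\conv(X)$ by $S_\epsilon$ is then locally convex, hence globally convex by Karcher's theorem. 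This is a closed convex set containing $X$ but not $p$, contradicting $p\in\conv(X)$. Thus both arguments share the easy half (a geodesic segment in $X_0$ ending at $p$ forces a principal curvature to vanish), but the paper's substance lies exactly in the construction that disposes of the case your proposal leaves open.
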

\begin{proof}
Let $\arc{\conv(X)}:=\exp_p^{-1}( \conv(X))$. By Lemma \ref{lem:CG}, $\arc{\conv(X)}\subset T_p\conv(X)$, and $T_p \conv(X)$ is a proper convex cone in $T_p M$. Thus there exists a hyperplane $H$ in $T_pM$ which passes through $p$ and with respect to which $\arc{\conv(X)}$ lies on one side. 
Next note that $H\cap \arc{\conv(X)}$ is star-shaped about $p$. Indeed if $q\in H\cap \arc{\conv(X)}$, then the line segment $pq$ in $H$ is mapped by  $\exp_p$ to a geodesic segment in $M$ which has to lie in $\conv(X)$, since $\conv(X)$ is convex. Consequently $pq$ lies in $\arc{\conv(X)}$ as desired. Now if $H\cap \arc{\conv(X)}$ contains more than one point, then there exists a geodesic segment of $M$ on $X_0$ with an end point at $p$, which forces the curvature at $p$ to vanish and we are done. So we may assume that $H\cap \arc{\conv(X)}=\{p\}$. Suppose towards a contradiction that the curvature of $X_0$ at $p$ is positive. Then there exists 
a sphere $\arc S$ in $T_p M$ which passes through $p$ and contains $\arc{\conv(X)}$ in the interior of the ball that it bounds. Let $S:=\exp_p(\arc S)$. Then $ X$ lies in the interior of the compact region bounded by $S$ in $M$. 
It is a basic fact that the second fundamental forms of $S$ and $\arc{S}$ coincide at $p$, since the covariant derivative depends only on the first derivatives of the metric. In particular $S$ has positive curvature on the closure of a neighborhood $U$ of $p$, since $\arc S$ has positive curvature at $p$. Let $S_\epsilon$ denote the inner parallel hypersurface of $S$ at distance $\epsilon$, and $U_\epsilon$ be the image of $U$ in $S_\epsilon$. Then $p$ will not be contained in $S_\epsilon$, but we may choose $\epsilon>0$ so small that $S_\epsilon$ still contains $X$, $U_\epsilon$ has positive curvature, and $S_\epsilon$ intersects $\conv (X)$ only at points of $U_\epsilon$. Let $Y$ be the intersection of the compact region bounded by $S_\epsilon$ with $\conv( X)$. Then interior of $Y$ is a locally convex set in $M$, as defined in \cite{alexander1978}. Consequently $Y$ is a convex set by a result of Karcher \cite{karcher1968}, see \cite[Prop. 1]{alexander1978}. 
So we have constructed a closed convex set in $M$ which contains $X$ but not $p$, which yields the desired contradiction, because $p\in\conv(X)$.
\end{proof}

Lemmas \ref{lem:segment} and \ref{lem:alexandrov} now indicate that the curvature of $X_0\setminus X$ vanishes almost everywhere; however, in the absence of $\C^{1,1}$ regularity for $X_0$, this information is of little use as far as proving Corollary \ref{cor:Gamma0} is concerned; see \cite{schneider2015} for a survey of curvature properties of convex hypersurfaces with low regularity, and also \cite{lytchak-petrunin} for a relevant recent result. 
We say that a geodesic segment $\alpha\colon [0,a]\to M$ is \emph{perpendicular} to a convex set $X$ provided that $\alpha(0)\in\partial X$ and $\langle\alpha'(0),x-\alpha(0)\rangle\leq 0$ for all $x\in T_{\alpha(0)} X$. We call $\alpha'(0)$ an \emph{outward normal} of $X$ at $\alpha(0)$. The following observation is well-known, see \cite[Lem. 3.2]{bishop-oniel1969}.

\begin{lemma}[\cite{bishop-oniel1969}]\label{lem:outward-geodesic}
Let $X$ be a convex set in a Cartan-Hadamard manifold $M$. Then geodesic segments which are perpendicular to $X$ at distinct points never intersect.
\end{lemma}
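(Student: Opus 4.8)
The plan is to pin down the base point of a perpendicular geodesic as the \emph{unique} nearest point of $X$, and then to derive a contradiction from uniqueness of nearest points in a Cartan-Hadamard manifold. Replacing $X$ by $\cl(X)$ if necessary, we may assume $X$ is closed. Throughout we use three standard features of a Cartan-Hadamard manifold $M$: $\exp_p\colon T_pM\to M$ is a diffeomorphism for every $p$; the function $q\mapsto\tfrac12 d(z,q)^2$ is smooth and \emph{strictly} convex on $M$ (a consequence of the Hessian comparison theorem, since $K_M\le 0$; note $d(z,\cdot)$ itself is only convex, being affine along radial geodesics), with gradient $\nabla\big(\tfrac12 d(z,\cdot)^2\big)(p)=-\exp_p^{-1}(z)$; and the restriction of a strictly convex function to a convex set $X$ is still strictly convex along the geodesics of $X$, since those geodesics lie in $X$.

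\emph{Step 1: the base point is the unique foot.} Let $\alpha\colon[0,a]\to M$ be perpendicular to $X$ at $p:=\alpha(0)$, fix $s\in(0,a]$, and put $z:=\alpha(s)$ and $f(x):=\tfrac12 d(z,x)^2$ for $x\in X$. Since $\alpha$ is the unit-speed geodesic with initial velocity $\alpha'(0)$, we have $\exp_p^{-1}(z)=s\,\alpha'(0)$, hence $\nabla f(p)=-s\,\alpha'(0)$. Given $x\in X$, Lemma \ref{lem:CG} gives $\exp_p^{-1}(x)\in T_pX$, so the geodesic $c(\tau):=\exp_p\big(\tau\exp_p^{-1}(x)\big)$, $\tau\in[0,1]$, runs from $p$ to $x$ inside $X$ by convexity, and $f\circ c$ is strictly convex whenever $x\ne p$. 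Therefore, using that convex functions lie above their tangent lines,
\[
f(x)=(f\circ c)(1)\;\ge\;(f\circ c)(0)+(f\circ c)'(0)=f(p)-s\big\langle\alpha'(0),\exp_p^{-1}(x)\big\rangle\;\ge\;f(p),
\]
where the last inequality is the perpendicularity of $\alpha$ at $p$ applied to $\exp_p^{-1}(x)\in T_pX$, and the chain of inequalities is strict when $x\ne p$. Thus $p$ is the unique minimizer of $f$ over $X$; equivalently, $p$ is the unique point of $X$ nearest $z$, and $d(z,X)=d(z,p)=s>0$. In particular $\alpha(s)\notin X$ for every $s>0$.

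\emph{Step 2: conclusion, and the main obstacle.} Suppose toward a contradiction that $\alpha\colon[0,a]\to M$ and $\beta\colon[0,b]\to M$ are perpendicular to $X$ at distinct points $p:=\alpha(0)\ne q:=\beta(0)$ and that $z:=\alpha(s_0)=\beta(t_0)$ for some $s_0\in[0,a]$, $t_0\in[0,b]$. If $z\in X$, then the last sentence of Step 1 forces $s_0=0$ and $t_0=0$, so $p=\alpha(0)=z=\beta(0)=q$, a contradiction. If $z\notin X$, then $s_0>0$ and $t_0>0$, and applying Step 1 to $\alpha$ and to $\beta$ shows that $p$ and $q$ are both \emph{the} point of $X$ nearest $z$, so again $p=q$, a contradiction. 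Hence the perpendicular geodesics cannot meet, proving the lemma. The only delicate point is Step 1: because the paper's definition of perpendicularity is phrased through the tangent cone $T_pX$ rather than through $X$ directly, Lemma \ref{lem:CG} (the inclusion $\exp_p^{-1}(X)\subset T_pX$) is exactly what is needed to couple the chord inequality for the convex function $f$ with the perpendicularity hypothesis; one also needs strict convexity of $\tfrac12 d(z,\cdot)^2$ — not merely of $d(z,\cdot)$ — to obtain uniqueness of the foot point.
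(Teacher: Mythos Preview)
Your proof is correct. The paper does not actually prove this lemma; it simply cites it as \cite[Lem.~3.2]{bishop-oniel1969}, so there is nothing to compare against directly. Your argument is essentially the standard one: the strict convexity of $\tfrac12 d(z,\cdot)^2$ on a Cartan-Hadamard manifold, combined with the inclusion $\exp_p^{-1}(X)\subset T_pX$ from Lemma~\ref{lem:CG}, shows that the base point of a perpendicular geodesic is the \emph{unique} nearest point of $X$ to any point along that geodesic, and uniqueness immediately rules out a common intersection point for perpendiculars based at distinct points.

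Two minor remarks. First, the reduction ``replacing $X$ by $\cl(X)$ if necessary'' is not actually needed: your Step~1 already establishes that $p$ \emph{is} a nearest point (not merely that any nearest point must equal $p$), so existence is automatic and closedness plays no role. If you do want to pass to the closure, you should check that perpendicularity at $p$ with respect to $X$ implies perpendicularity with respect to $\cl(X)$; this holds because $T_p\cl(X)=\cl(T_pX)$ for convex sets, but it is cleaner to omit the reduction. Second, your parenthetical about needing strict convexity of $\tfrac12 d(z,\cdot)^2$ rather than of $d(z,\cdot)$ is a nice observation and exactly the right point to flag.
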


For every point $p\in\Gamma$ and outward unit normal $\nu\in T_p M$ we set 
$$
p^\epsilon=p_\nu^\epsilon:=\exp_p(\e \nu),
$$
and let $\Gamma^\e$ denote the outer parallel hypersurface of $\Gamma$ at distance $\e$. Furthermore, let $\kappa_i(p^\e)$ denote the principal curvatures of $\Gamma_\e$ at $p^\e$, indexed in some way.
In the next lemma we use a $2$-jet approximation result from viscosity theory \cite{fleming-soner2006}.
 
\begin{lemma}\label{lem:riccati}
Let $\Gamma$ be a convex hypersurface in a Cartan-Hadamard manifold. Suppose that $p^\epsilon$ is a twice differentiable point of the outer parallel surface $\Gamma^\e$ for some $p\in\Gamma$, outward normal $\nu$ of $\Gamma$ at $p$, and $\epsilon\geq 0$. Then $p^\epsilon$ is a twice differentiable point of $\Gamma^\e$ for all $\epsilon>0$. The principal curvatures of $\Gamma^\e$ at $p^\e$ may be indexed so that the mappings $\e\mapsto\kappa_i(p^\e)$ are $\C^1$ on $(0,\infty)$. Furthermore, if $p$ is a twice differentiable point of $\Gamma$, then $\e\mapsto\kappa_i(p^\e)$ are $\C^1$ on $[0,\infty)$. 
 \end{lemma}
\begin{proof}
Let $I:=(0,\infty)$, or $[0,\infty)$ depending on whether or not $p$ is a twice differentiable point of $\Gamma$.
Suppose that $p^\e$ is a twice differentiable point of $\Gamma^\e$, for some fixed $\e\in I$. Then
we may construct via  normal coordinates and \cite[Lem. 4.1, p. 211]{fleming-soner2006}, a  pair of $\C^2$  hypersurfaces $S_\pm$  in $M$ which pass through $p^\e$, lie on either side of  $\Gamma^\e$,  and have the same shape operator as $\Gamma^\e$ at $p^\e$, 
\be\label{eq:pe}
\mathcal{S}_{S_+}(p^\e)=\mathcal{S}_{\Gamma^\e}(p^\e)=\mathcal{S}_{S_-}(p^\e).
\ee
Since $S_\pm$ are $\C^2$, their distance functions are $\C^2$ in an open neighborhood of $p^\e$, by Lemma \ref{lem:MM}. Let $S_\pm^\delta$ denote the parallel hypersurfaces of $S_\pm$ at oriented distance 
$\delta\geq -\epsilon$. Here if $\delta>0$ we let $S_\pm^\delta$ be the \emph{outer} parallel surfaces, i.e., those which lie on the sides of $S_\pm$ where the outward normal of $\Gamma^\e$ points. If on the other hand, $\delta<0$, we let $S_\pm^\delta$ be the \emph{inner} parallel surfaces of $S_\pm$. Since the distance functions of $S_\pm$ are $\C^2$ near $S_\pm$, it follows that $S_\pm^\delta$ are $\C^2$ hypersurfaces  for $\delta$ close to $0$. Furthermore, by Ricatti's equation \cite[Cor. 3.3]{gray2004}, the shape operators $\mathcal{S}_{S_\pm^\delta}$  are determined by the initial conditions $\mathcal{S}_{S_\pm}$. Thus \eqref{eq:pe} implies that
$$
\mathcal{S}_{S_{+}^\delta}(p^{\e+\delta})=\mathcal{S}_{S_{-}^\delta}(p^{\e+\delta}).
$$
This yields that $p^{\e+\delta}$ is a twice differentiable point of $\Gamma^{\e+\delta}$ for $\delta$ sufficiently small, since $S_{\pm}^\delta$ support $\Gamma^{\e+\delta}$ on either side of $p^{\e+\delta}$. Consequently, $\epsilon\mapsto \mathcal{S}_{\Gamma^\e}(p^\e)$ is $\C^1$. Now since the shape operator is self-adjoint, it follows from a result of Rellich \cite{rellich1969}, see \cite[Chap 2, Thm. 6.8]{kato1976}, that its eigenvalues may be indexed so that they are $\C^1$ as functions of $\e$. We conclude then that the set $A\subset I$ of distances $\epsilon$ for which the conclusions of the lemma hold are open.

It remains to show that $A$ is closed. To this end let $p^{\epsilon_i}$ be twice differentiable points of $\Gamma^{\epsilon_i}$ for a sequence $\epsilon_i\in A$ converging to $\epsilon\in I$. If $\epsilon=0$, then by the above argument $[\epsilon, \epsilon+\delta)\subset A$ for some $\delta>0$ and we are done. So we may suppose that $\epsilon>0$.
Then principal curvatures of $\Gamma^{\epsilon_i}$ are uniformly bounded above, since a ball of radius $\epsilon/2$ rolls freely inside 
$\Gamma^{\epsilon_i}$ (for $i$ sufficiently large). Of course these principal curvatures are uniformly bounded below as well, since $\Gamma^{\epsilon_i}$ are convex. Now let $(S_\pm)_i$ be a pair of $\C^2$ local support surfaces of $\Gamma^{\e_i}$ as we had described above. We may assume that principal curvatures of $(S_\pm)_i$ are uniformly bounded. Then there exits $\delta>0$ independent of $i$ such that the distance functions of each $(S_\pm)_i$ are $\C^2$ on a $\delta$-neighborhood of $p^{\epsilon_i}$. Choose $i$ so large that $|\epsilon_i-\epsilon|<\delta$. Then there exist parallel surfaces $S_{\pm}$ of $(S_\pm)_i$ which are $\C^2$ and locally support $\Gamma^\epsilon$ near $p^\epsilon$. So  $p^\epsilon$ is a $\C^2$ point of $\Gamma^\e$ as desired. Finally,  the regularity property of principal curvatures near $\epsilon$ follow as described earlier.
\end{proof}

The next observation is contained essentially in Kleiner's work \cite[p. 42--43]{kleiner1992}. Here we employ the above lemmas to give a more detailed treatment as follows:

\begin{proposition}[\cite{kleiner1992}] \label{prop:Gamma0} 
Let $X$ be a compact set in a Cartan-Hadamard manifold M. Suppose that $\conv(X)$ has nonempty interior, and there exists an open neighborhood $U$ of $X_0$ in $M$ such that $X\cap U$ is a $\C^{1,1}$ hypersurface. Then
$$
\mathcal{G}(X\cap  X_0)=  \mathcal{G}( X_0).
$$
\end{proposition}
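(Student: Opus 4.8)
The strategy is to compute the total curvature of the outer parallel hypersurfaces $X_0^{\e}:=(\widehat{d}_{X_0})^{-1}(\e)=\partial U_{\e}(\conv(X))$ and let $\e\to 0$. By Lemma \ref{lem:GH} each $X_0^{\e}$ is $\C^{1,1}$, and it is convex since $U_{\e}(\conv(X))$ is a sublevel set of the convex function $d_{\conv(X)}$ (Lemma \ref{lem:u-convex}); in particular $GK_{X_0^{\e}}\ge 0$, and $\mathcal{G}(X_0)=\lim_{\e\to 0}\mathcal{G}(X_0^{\e})$ is well defined by Corollary \ref{cor:parallel}. Because $\conv(X)$ is convex, each $q\in X_0^{\e}$ has a unique footprint $\Pi_{\e}(q)\in X_0$; when $p:=\Pi_{\e}(q)$ is a twice differentiable point of $X_0$ one has $q=\exp_p(\e\,\nu_p)=:E_{\e}(p)$ for the (then unique) outward normal $\nu_p$ of $\conv(X)$ at $p$, and by Lemma \ref{lem:outward-geodesic} the map $E_{\e}$ is injective on the twice differentiable locus of $X_0$, which has full measure by Lemma \ref{lem:alexandrov}. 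Now at any $p$ in $\Sigma:=X\cap X_0$ the outward normal of $\conv(X)$ agrees with that of the $\C^{1,1}$ hypersurface $X$ (since $X\subset\conv(X)$ and $p\in\partial\conv(X)$, so $X$ is tangent at $p$ to a supporting half-space); a tangent cone argument via Lemma \ref{lem:CG} further shows $X_0$ is $\C^{1}$ at every point of $\Sigma$, so that the singular set $\textup{sing}(X_0)$ is contained in $F:=X_0\setminus X$. The area formula then yields
$$
\mathcal{G}(X_0^{\e})=\int_{\Sigma}\big(GK_{X_0^{\e}}\circ E_{\e}\big)\,\textup{Jac}(E_{\e})\,d\sigma\;+\;\int_{\Pi_{\e}^{-1}(F)}GK_{X_0^{\e}}\,d\sigma .
$$
For the first integral set $h_p(\e):=GK_{X_0^{\e}}(E_{\e}(p))\,\textup{Jac}(E_{\e})_p$. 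By Lemma \ref{lem:riccati} the shape operator of $X_0^{\e}$ at $E_{\e}(p)$ solves Riccati's equation with initial value $\mathcal{S}_{X_0}(p)$, so $h_p$ is continuous on $[0,\infty)$ with $h_p(0)=GK_{X_0}(p)$; and since $K_M\le 0$, differentiating $h_p$ via Riccati's equation and the first variation of area shows $h_p$ is nondecreasing (the pointwise form of the monotonicity behind Corollary \ref{cor:parallel}). Hence $GK_{X_0}(p)=h_p(0)\le h_p(\e)\le h_p(\e_0)$ for $0\le\e\le\e_0$, and $\int_{\Sigma}h_p(\e_0)\,d\sigma(p)\le\mathcal{G}(X_0^{\e_0})<\infty$; so by dominated convergence the first integral tends to $\int_{\Sigma}GK_{X_0}\,d\sigma$, while already $\mathcal{G}(X_0^{\e})\ge\int_{\Sigma}h_p(\e)\,d\sigma\ge\int_{\Sigma}GK_{X_0}\,d\sigma$ for every $\e$ (using $GK_{X_0^{\e}}\ge0$). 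It remains to identify $GK_{X_0}=GK_X$ almost everywhere on $\Sigma$: at a.e.\ $p\in\Sigma$ both hypersurfaces are twice differentiable and tangent, with $0\le\mathcal{S}_{X_0}(p)\le\mathcal{S}_X(p)$ because $X$ supports $\conv(X)$ at $p$ from the outer side; writing both as graphs over $T_pX_0$ and using that a.e.\ such $p$ is a Lebesgue density point of $\Sigma$, a comparison of the second order Taylor expansions at $p$ forces $\mathcal{S}_{X_0}(p)=\mathcal{S}_X(p)$. Thus the first integral tends to $\int_{\Sigma}GK_X\,d\sigma=\mathcal{G}(X\cap X_0)$, and in particular $\mathcal{G}(X_0)\ge\mathcal{G}(X\cap X_0)$.

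For the second integral, Lemmas \ref{lem:segment} and \ref{lem:alexandrov} give $GK_{X_0}=0$ at a.e.\ point of $F$, so $h_p(\e)\to 0$ there and the contribution of the twice differentiable part of $F$ tends to $0$ by the same domination. The remaining contribution is over the null, non-twice-differentiable part of $X_0$, which lies in $F$; since every extreme point of $\conv(X)$ belongs to $X$ (compactness), hence is a $\C^{1}$ point of $X_0$, every such singular point lies in the relative interior of a face of $\conv(X)$ of dimension at least one, over which $X_0^{\e}$ is ruled by geodesics and so has a vanishing principal curvature, i.e.\ $GK_{X_0^{\e}}\equiv 0$ there. Hence the second integral tends to $0$, and combining with the previous paragraph $\mathcal{G}(X_0)=\lim_{\e\to 0}\mathcal{G}(X_0^{\e})=\mathcal{G}(X\cap X_0)$.

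The step I expect to be the main obstacle is this last one: ruling out concentration of curvature on the part of the hull boundary lying off $X$ once the parallel perturbation is applied. Beyond the almost everywhere vanishing of $GK_{X_0}$ on $F$ (Lemma \ref{lem:segment}), one needs that the lower-dimensional set $\textup{sing}(X_0)$ carries no curvature after perturbation, which rests on the fact that positive-dimensional faces of a convex set in a Cartan-Hadamard manifold are geodesically flat enough that their $\e$-tubular pieces in $\partial U_{\e}(\conv(X))$ stay ruled. A secondary subtlety is the density-point identification $GK_{X_0}=GK_X$ along $X\cap X_0$ (together with the attendant measure-theoretic bookkeeping for $E_{\e}$), which is needed precisely because $X_0$ and $X$ are only tangent — not equal — along their intersection.
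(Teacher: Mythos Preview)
Your overall strategy coincides with the paper's: pass to outer parallels $X_0^\e$, split the curvature integral according to whether the footprint lies in $\Sigma=X\cap X_0$ or in $F=X_0\setminus X$, and use the monotonicity of $h_p(\e)=GK(p^\e)\,\textup{Jac}(E_\e)_p$ for domination. Your treatment of the $\Sigma$-part and of the twice-differentiable locus of $F$ (via Lemmas~\ref{lem:segment} and~\ref{lem:riccati}) is correct and essentially identical to the paper's argument.

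The gap is exactly where you anticipated it, but your proposed resolution does not work. The claim that $GK_{X_0^\e}\equiv 0$ on $\Pi_\e^{-1}(\textup{sing}(X_0))$ because ``$X_0^\e$ is ruled by geodesics'' over positive-dimensional faces is false in non-flat Cartan--Hadamard manifolds. If $\ell\subset X_0$ is a geodesic segment of non-$\C^1$ points, then $\Pi_\e^{-1}(\ell)$ lies in the tube of radius $\e$ about $\ell$; in $\mathbf{H}^n$ such a tube has longitudinal principal curvature $\tanh\e$ and transverse curvatures $\coth\e$, hence $GK=\tanh\e\,(\coth\e)^{n-2}>0$. The equidistant curves along the tube are geodesics of the tube but not of $M$, so they are not asymptotic directions and do not force a principal curvature to vanish. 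There is also a secondary gap: ``singular'' in your decomposition means non-twice-differentiable, which is weaker than non-$\C^1$; a $\C^1$ but non-twice-differentiable point can perfectly well be extreme, so the chain ``extreme $\Rightarrow$ in $X\Rightarrow\C^1$'' does not cover all of $\textup{sing}(X_0)$.

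The paper sidesteps this entirely by never integrating over $X_0$. It fixes a reference level $\bar\e>0$ and pulls all integrals back to the $\C^{1,1}$ hypersurface $X_0^{\bar\e}$ via the projection $r^\e\colon X_0^{\bar\e}\to X_0^\e$, $\bar p_\nu\mapsto p_\nu^\e$. The same Riccati monotonicity you used then gives the uniform pointwise bound $GK(p_\nu^\e)J(p_\nu^\e)\le GK(\bar p_\nu)$, and the right-hand side is bounded because a ball of radius $\bar\e$ rolls freely inside $X_0^{\bar\e}$. Dominated convergence over $X_0^{\bar\e}$ then handles both pieces at once; the possibly positive-measure preimage of $\textup{sing}(X_0)$ is simply absorbed into the domination, and no face or ruling structure is ever invoked.

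A minor remark: the density-point step forcing $\mathcal{S}_{X_0}=\mathcal{S}_X$ a.e.\ on $\Sigma$ is unnecessary. The limit of your $\Sigma$-integral is already $\int_\Sigma GK_{X_0}\,d\sigma$, and that is how the paper reads $\mathcal{G}(X\cap X_0)$; the pointwise inequality $GK_X\ge GK_{X_0}$ on $\Sigma$ (which you also observe) is all that is used downstream in Corollary~\ref{cor:Gamma0}.
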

\begin{proof}
 For any set $A\subset X_0$, we define $A^\e$ as the collection of all points $p^\epsilon=p_\nu^\e=\exp_p(\e\nu)$ such that $p\in A$ and $\nu\in T_p M$ is an outward unit normal of $X_0$ at $p$.
By Lemma  \ref{lem:outward-geodesic}, 
$$
\mathcal{G}(X_0^\e)= 
\mathcal{G}\big((X_0\setminus X)^\e\big)+\mathcal{G}\big((X_0\cap X)^\e\big).
$$
As $\e\to 0$, $\mathcal{G}(X_0^\e)\to\mathcal{G}(X_0)$ by definition \eqref{eq:G}. So it suffices to show that 
$$
\mathcal{G}\big((X_0\setminus X\big)^\e)\to 0,\quad\quad\text{and}\quad\quad\mathcal{G}\big((X_0\cap X)^\e\big)\to\mathcal{G}(X_0\cap X).
$$
First we check that $\mathcal{G}((X_0\setminus X)^\e)\to 0$ (which corresponds to claim $(++)$  in \cite[p. 42]{kleiner1992}). To this end, following Kleiner \cite[p. 42]{kleiner1992}, we 
fix some $\ol\e>0$, set $\ol p:=p^{\ol \e}$, and   for all $\e\in[0,{\ol\e}]$ let 
$$
r^{\e}\colon X_0^{{\ol\e}}\to X_0^\e
$$ 
be the (nearest point) projection $\ol p\mapsto p^\e$ (which is a Lipschitz map).  In particular note that $p^\e=r^{\e}(\ol p)$. Set
$
J(\e):=\textup{Jac}_{\ol{p}}(r^{\e}).
$
Then, for all $\e\in[0,{\ol\e}]$, 
\be\label{eq:kleiner}
\mathcal{G}\big((X_0\setminus X)^\e\big)
=
\int_{\ol p\in(X_0\setminus X)^{\ol\e}} GK(\e)J(\e)d\sigma,
\ee
where $GK(\e):=GK_{X_0^\e}(p^\e)$. 
So to show that $\mathcal{G}((X_0\setminus X)^\e)\to 0$ it suffices to check, via the dominated convergence theorem, that for almost all $\ol p\in (X_0\setminus X)^{\ol\e}$, 
\begin{itemize}
\item[(I)]{$GK(\e)J(\e)\leq C$, for $0<\epsilon\leq\ol\epsilon$, and}
\item[(II)]{$GK(\e)J(\e)\to 0$, as $\epsilon\to 0$.}
\end{itemize}
The above claims correspond to items $1$ and $2$ in \cite[p. 42]{kleiner1992}.

To establish (I) note that, at every twice differentiable point $\ol p\in (X_0\setminus X)^{\ol\e}$,  the second fundamental form of $X_0^{\ol\e}$ is bounded above,  since $X_0^{\ol\e}$ is supported from below by balls of radius $\ol\e$ at each point. As discussed in \cite[p. 42--43]{kleiner1992}, (I) then follows via an argument using Jacobi's equation. For the convenience of the reader, we provide an alternative self-contained 
proof of (I) via Riccati's equation as follows.  By \cite[Thm. 3.11]{gray2004}, 
\be\label{eq:J-prime}
J^{\prime}(\e)=(n-1)H(\e)J(\e),
\ee
where $H(\e):=H_{X_0^{\e}}(p^\e)\geq 0$ is the mean curvature of $X_0^{\e}$ at $p^\e$ (recall that, as we pointed out in Section \ref{sec:d-convex}, the sign of our mean curvature is opposite to that in \cite{gray2004}).
Let $\kappa_i(\e):=\kappa_i(p^\e)$ be an indexing of the principal curvatures as in Lemma \ref{lem:riccati}.
By Riccati's equation for parallel hypersurfaces \cite[Cor. 3.5]{gray2004}, if $\kappa_i(\e)$ are distinct,  we have
\be\label{eq:kappa-i-prime}
\kappa_i'(\epsilon)=-\kappa_i^2(\epsilon)-R_{i n in}(\epsilon),
\ee
where $R_{i n in}(\e)$ denotes the sectional curvature of $M$ at $p^\e$, with respect to the plane generated by a principal direction of $X_0^\e$ and its normal vector. It follows that
\be\label{eq:GK-prime-e}
GK^{\prime}(\e)=-\left((n-1)H(\e)+\textup{Ric}(\e) \sum_{i=1}^{n-1} \frac{1}{\kappa_i(\e)}\right) GK(\e)\geq -(n-1)H(\e)GK(\e),
\ee
where $\textup{Ric}(\e)$ denotes the Ricci curvature of $M$ at $p^\e$ with respect to the direction of the geodesic which connects $p$ to $p^\e$. Since the inequality in \eqref{eq:GK-prime-e} holds when $\kappa_i(\e)$ are distinct, and $GK$, $H$ are $\C^1$, it follows that \eqref{eq:GK-prime-e} holds in general.
So we have 
$$
\big(GK(\e) J(\e)\big)'\geq -(n-1)H(\e)GK(\e)J(\e)+GK(\e)(n-1)H(\e)J(\e)=0.
$$
Note that $J(\ol\e)=1$, since $r^{\ol\e}$ is the identity map.
Hence, for $\e\leq\ol\e$,
\be\label{eq:GKeJe}
GK(\e) J(\e)\leq GK(\ol \e)J(\ol\e)=GK(\ol \e).
\ee 
But $GK(\ol \e)$ is uniformly bounded above, since as we had mentioned earlier, a ball  of radius $\ol\e$ rolls freely inside $X_0^{\ol\e}$.  So we obtain (I).

To see (II) let us first consider the special case where $p=r^{\ol \e}(p^{\ol\e})$ is a twice differentiable point of $X_0\setminus X$ (which is almost every point of $X_0\setminus X$ by Lemma \ref{lem:alexandrov}). Then, by Lemma \ref{lem:riccati}, $p^\epsilon$ is a twice differentiable point of $X_0^\epsilon$ for all $\e\in[0,{\ol\e}]$. Furthermore,
$
 J(\e)\leq 1,
$
since  in a Hadamard space projection into convex sets is nonexpansive  \cite[Cor. 2.5]{bridson-haefliger1999}.
So it suffices to show that
$
GK(\e)\to 0,
$
which is indeed the case by Lemmas \ref{lem:segment} and  \ref{lem:riccati}.
In the absence of a priori knowledge that $X_0\setminus X$ is $\C^{1,1}$, however, we do not know whether $p$ is a twice differentiable point of $X_0\setminus X$ for almost all $\ol p\in (X_0\setminus X)^{\ol\e}$. So a more general approach is needed to establish (II).

The argument for establishing (II) in general is as follows. Let $\ol p$ be a twice differentiable point of $(X_0\setminus X)^{\ol\e}$. Then $p^\e$ will be a twice differentiable point of $(X_0\setminus X)^{\e}$ for all $\e\in(0,\ol\e]$. Let $\kappa_{i}(\epsilon)$ denote the  the principal curvatures  of $(X_0\setminus X)^{\e}$ at $p^\epsilon$. As observed in \cite[p. 43]{kleiner1992}, to establish (II) it suffices to show that $\inf_i\kappa_{i}(\epsilon)\to 0$ as $\epsilon\to 0$. An alternative reasoning to that presented in \cite[p. 43]{kleiner1992} is as follows. Set $GK_i(\e):=GK(\e)/\kappa_i(\epsilon)$. Then using \eqref{eq:kappa-i-prime} and \eqref{eq:GK-prime-e} we obtain
$$
GK_i'(\e)\geq -\left((n-1)H(\e)-\kappa_i(\e)+\frac{R_{inin}(\e)}{\kappa_i(\e)}\right)GK_i(\e)\geq -(n-1)H(\e)GK_i(\e).
$$
This together with \eqref{eq:J-prime} yields that $(GK_i(\e)J(\e))'\geq 0$. So $GK_i(\e)J(\e)\leq GK_i(\ol\e)J(\ol\e)\leq GK_i(\ol\e)\leq C$, since as we had argued earlier, $J(\e)\leq 1$ and principal curvatures of $X_0^{\ol\e}$ are bounded above. So it follows that
$$
GK(\e)J(\e)=\kappa_{i}(\e)GK_i(\e)J(\e)\leq C\kappa_{i}(\e),
$$
for all $0\leq i\leq n-1$.
In particular, if $\inf_i\kappa_{i}(\e)$ vanishes, as $\epsilon\to 0$, then so does $GK(\e)J(\e)$ and we obtain (II) as claimed.

Now it remains to establish that $\inf_i\kappa_{i}(\epsilon)\to 0$ as $\epsilon\to 0$.
In \cite[p. 43]{kleiner1992} it is stated that  this holds because $p=r^{\ol\e}(\ol p)\in X\setminus X_0$; however, the reasoning is not  mentioned. Here we include a reasoning. Let $S$ be a positively curved surface with boundary which contains $p$ in its interior, is orthogonal to the outward normal $\nu$ of $X_0$ at $p$ with $\exp(\e\nu)=\ol p$, and its mean curvature vector is parallel to $-\nu$ (So $S$ curves towards $X_0$). We may construct $S$  by taking a portion of the image under the exponential map of a large sphere in $T_p M$ which passes through $p$ and is orthogonal to $\nu$. In particular note that principal curvatures of $S$ may be arbitrarily small.
Next note that $S$ must enter the interior of $\conv(X)$; otherwise, after replacing $S$ with a surface with smaller curvature, we can make sure that $\partial S$ is disjoint from $\conv(X)$. Then, as described in Lemma \ref{lem:segment}, by pushing $S$ a small distance towards $-\nu$, we may replace $\conv(X)$ with a smaller convex set containing $X$, which is not possible. Now let $S^\epsilon$ be the outward parallel surface of $S$. For $\epsilon$ small, $S^\epsilon$ will remain positively curved. Furthermore, since $S$ always has a point in the interior of $\conv(X)$, it follows that $S^\epsilon$ always has a point inside the convex set bounded by $X_0^\epsilon$.
Hence $\inf_i\kappa_{i}(\epsilon)$ cannot be larger than all the principal curvatures of $S^\epsilon$ at $p^\epsilon$. But principal curvatures of $S$ may be arbitrarily small. So the principal curvatures of $S^\epsilon$ will also be arbitrarily small, for small $\epsilon$. Hence $\inf_i\kappa_{i}(\epsilon)$ must vanish as claimed, which completes the proof of (II).

It remains to show then that $\mathcal{G}\big((X_0\cap X)^\e\big)\to\mathcal{G}(X_0\cap X)$.
To see this note that $GK(\e)J(\e)\to GK(0)J(0)$ by Lemma \ref{lem:riccati}. Then the dominated convergence theorem, as we argued above, completes the proof.
\end{proof}

Finally we arrive at the main result of this section. The \emph{total positive curvature} of a closed $\C^{1,1}$ embedded hypersurface $\Gamma$ is defined as
$$
\mathcal{G}_+(\Gamma):=\int_{\Gamma_+}GK\,d\sigma,
$$
where $\Gamma_+\subset\Gamma$ is the region where $GK\geq 0$.
The study of total positive curvature goes back to Alexandrov \cite{alexandrov:tight} and Nirenberg \cite{nirenberg:tight}, and its relation to isoperimetric problems  has been well-known \cites{cgr,cgr:relative}. The  minimizers for this quantity, which are called \emph{tight hypersurfaces}, have been extensively studied since Chern-Lashof \cites{chern-lashof:tight1,chern-lashof:tight2}; see \cite{cecil-chern:book} for a survey

\begin{corollary}\label{cor:Gamma0}
Let $\Gamma$ be a closed $\C^{1,1}$ hypersurface embedded in a Cartan-Hadamard manifold. Then 
$$
\mathcal{G}_+(\Gamma)\geq \mathcal{G}(\Gamma_0).
$$
\end{corollary}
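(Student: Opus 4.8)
The plan is to derive this from Proposition \ref{prop:Gamma0} applied to $X=\Gamma$, after checking that the part of $\Gamma$ lying on $\Gamma_0=\partial\conv(\Gamma)$ contributes only nonnegative Gauss--Kronecker curvature.

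First I would verify the hypotheses of Proposition \ref{prop:Gamma0} for $X=\Gamma$. The set $\Gamma$ is compact, being the boundary of a domain $\Omega$ with compact closure. Next I would show that $\cl(\Omega)\subseteq\conv(\Gamma)$: given $q\in\Omega$ and any geodesic line $L$ through $q$, the connected component of $q$ in $L\cap\Omega$ is a bounded open subsegment of $L$ whose two endpoints lie on $\partial\Omega=\Gamma$ (here one uses that $\Omega$ is open and has compact closure); since $M$ is Cartan--Hadamard, this subsegment is the unique minimizing geodesic between its endpoints and hence lies in the convex set $\conv(\Gamma)$, so that $q\in\conv(\Gamma)$. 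In particular $\conv(\Gamma)$ has nonempty interior, so $\Gamma_0$ is a convex hypersurface and $\mathcal{G}(\Gamma_0)$ is well-defined by \eqref{eq:G}. Finally, since $\Gamma$ is $\C^{1,1}$ we may take $U=M$ in Proposition \ref{prop:Gamma0}, and that proposition then yields
$$
\mathcal{G}(\Gamma\cap\Gamma_0)=\mathcal{G}(\Gamma_0).
$$

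The key step, and the one requiring genuine care, is to show that $\Gamma\cap\Gamma_0\subseteq\Gamma_+$ up to a set of $d\sigma$-measure zero. Since $\Gamma$ is $\C^{1,1}$, it is twice differentiable $d\sigma$-almost everywhere, so it suffices to prove that $GK(p)\geq 0$ at each twice differentiable point $p\in\Gamma\cap\Gamma_0$. At such a point, because $\Gamma$ is $\C^1$, the tangent cone $T_p\Omega\subset T_pM$ is a closed half-space bounded by the hyperplane $T_p\Gamma$, with outward normal $\nu(p)$ pointing out of it. By Lemma \ref{lem:CG}, $T_p\conv(\Gamma)$ is a proper convex cone; because $\cl(\Omega)\subseteq\conv(\Gamma)$ it contains $T_p\Omega$, and a proper convex cone containing a half-space must coincide with that half-space, so $T_p\conv(\Gamma)=T_p\Omega$. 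Applying Lemma \ref{lem:CG} once more,
$$
\exp_p^{-1}(\Gamma)\;\subseteq\;\exp_p^{-1}\big(\conv(\Gamma)\big)\;\subseteq\;T_p\conv(\Gamma)\;=\;T_p\Omega,
$$
so in normal coordinates at $p$ the hypersurface $\Gamma$ lies entirely on the $\Omega$-side of its tangent hyperplane $T_p\Gamma$. This forces the shape operator $\mathcal{S}_p$ of $\Gamma$ at $p$ with respect to $\nu$, as defined in \eqref{eq:shape}, to be positive semidefinite, whence $GK(p)=\det\mathcal{S}_p\geq 0$ and $p\in\Gamma_+$.

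Combining these facts, and using that $GK\geq 0$ on $\Gamma_+$ while $\Gamma\cap\Gamma_0\subseteq\Gamma_+$ off a null set,
$$
\mathcal{G}_+(\Gamma)=\int_{\Gamma_+}GK\,d\sigma\;\geq\;\int_{\Gamma\cap\Gamma_0}GK\,d\sigma=\mathcal{G}(\Gamma\cap\Gamma_0)=\mathcal{G}(\Gamma_0),
$$
which is the assertion. Apart from the curvature-sign argument of the previous paragraph, everything is bookkeeping, since the technical content of the section has already been packaged into Proposition \ref{prop:Gamma0}.
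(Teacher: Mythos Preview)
Your proof is correct and follows essentially the same route as the paper: apply Proposition \ref{prop:Gamma0} with $X=\Gamma$ to get $\mathcal{G}(\Gamma\cap\Gamma_0)=\mathcal{G}(\Gamma_0)$, then observe that $GK_\Gamma\geq 0$ on $\Gamma\cap\Gamma_0$ so that this set contributes to $\mathcal{G}_+(\Gamma)$. Your version is more detailed than the paper's---you explicitly verify that $\conv(\Gamma)$ has interior and give a careful tangent-cone argument for the nonnegativity of $GK$, whereas the paper simply says $\Gamma$ is ``supported by $\Gamma_0$ from above'' and records $GK_\Gamma(p)\geq GK_{\Gamma_0}(p)\geq 0$---but the logical structure is the same.
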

\begin{proof}
Note that $\mathcal{G}_+(\Gamma)\geq \mathcal{G}_+(\Gamma\cap\Gamma_0)$. Furthermore, since $\Gamma$ is supported by $\Gamma_0$ from above,  $GK_\Gamma(p)\geq GK_{\Gamma_0}(p)\geq0$ for all  twice differentiable points $p\in\Gamma\cap\Gamma_0$. 
Hence $\mathcal{G}_+(\Gamma\cap\Gamma_0)=\mathcal{G}(\Gamma\cap\Gamma_0)$. Finally, $\mathcal{G}(\Gamma\cap\Gamma_0)=\mathcal{G}(\Gamma_0)$ by Proposition \ref{prop:Gamma0}, which completes the proof.
\end{proof}

\begin{note}
Proposition \ref{prop:Gamma0} would follow immediately from Lemma \ref{lem:segment}, if we could show that $X_0$ is $\C^{1,1}$,
whenever $X$ is $\C^{1,1}$ near $X_0$. Here we show that the latter holds for closed $\C^{1,1}$ hypersurfaces $\Gamma$ bounding a domain $\Omega$ in $\R^n$.
Indeed, when $\Gamma$ is $\C^{1,1}$, there exists $\epsilon>0$ such that the inner parallel hypersurface $\Gamma^{-\e}$, obtained by moving a distance $\e$ along inward normals is embedded, by Lemma \ref{lem:GH}. Let $D\subset\Omega$ be the domain bounded by $\Gamma^{-\e}$.  We claim that
\be\label{eq:convDe}
\big(\conv(D)\big)^\epsilon=\conv(D^\epsilon),
\ee
where $(\cdot)^\e$ denotes the outer parallel hypersurface.
This shows that a ball (of radius $\e$) rolls freely inside $\conv(D^\epsilon)$. Thus $(D^\epsilon)_0$ is $\C^{1,1}$ by Lemma \ref{lem:GH} which completes the proof, since $D^\e=\Omega$. So
$(D^\epsilon)_0=\Omega_0=\Gamma_0$.
To prove \eqref{eq:convDe} note that, since $D\subset\conv(D)$, we have
$D^\epsilon\subset(\conv(D))^\epsilon$, which in turn yields
$$
\conv(D^\epsilon)\subset\conv\big((\conv(D))^\epsilon\big)=\big(\conv(D)\big)^\epsilon,
$$
since outer parallel hypersurfaces of a convex hypersurface are convex in any Cartan-Hadamard manifold (Lemma \ref{lem:u-convex}).
To establish the reverse inclusion, suppose that $p\not\in \conv(D^\epsilon)$. Then there exists a convex set $Y$ which contains $D^\epsilon$ but not $p$. 
Consequently the inner parallel hypersurface $Y^{-\e}$ contains $D$ and is disjoint from $B^\e(p)$, the ball of radius $\e$ centered at $p$. But $Y^{-\e}$ is convex, since the signed distance function is convex inside convex sets in nonnegatively curved manifolds \cite[Lem. 3.3 p. 211]{sakai1996}. So $\conv(D)$ is disjoint from $B^\e(p)$, which in turn yields that  $p\not\in (\conv(D))^\e$. Thus we have established that 
$$
\big(\conv(D)\big)^\e\subset \conv(D^\epsilon)
$$ 
as desired.
\end{note}

\begin{note}\label{note:monotone}
The method of part (I) in the proof of Proposition \ref{prop:Gamma0}, specifically relation \eqref{eq:GKeJe}, can be used to give an alternative proof of  monotonicty of total curvature for parallel hypersurfaces in Cartan-Hadamard manifolds, established in Corollary \ref{cor:parallel}.
\end{note}

%%%%%%%%%%%%%%%%%%%%%%%%%%%%%%%%%%%%%%
\section{The Isoperimetric Inequality}\label{sec:CHC}
%%%%%%%%%%%%%%%%%%%%%%%%%%%%%%%%%%%%%%

In this section we establish the link between the two problems cited in the introduction:

 \begin{theorem}\label{thm:GK-CH}
Suppose that the total curvature inequality \eqref{eq:GK} holds in a Cartan-Hadamard manifold $M$. Then the isoperimetric inequality \eqref{eq:II} holds in $M$ as well with equality only for Euclidean balls.
 \end{theorem}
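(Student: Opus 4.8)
The plan is to deduce \eqref{eq:II} from \eqref{eq:GK} by a parallel‑hypersurface sweepout, converting the total curvature bound at each distance into a differential inequality for the area of the sweep, in the spirit of Kleiner's dimension‑three argument.

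The first point is that, although \eqref{eq:GK} is stated only for convex hypersurfaces, Corollary \ref{cor:Gamma0} upgrades it to the statement
\[
\mathcal{G}_+(\Sigma)\;\ge\;\mathcal{G}(\Sigma_0)\;\ge\;\textup{vol}(\S^{n-1})=n\omega_n
\]
for \emph{every} closed embedded $\C^{1,1}$ hypersurface $\Sigma\subset M$: apply \eqref{eq:GK} to the convex hypersurface $\Sigma_0=\partial\conv(\Sigma)$ and then invoke Corollary \ref{cor:Gamma0}. Removing the convexity restriction is exactly what makes the sweepout below work, since the parallels that occur are not convex. By a standard approximation one may also assume that $\Gamma=\partial\Omega$ is $\C^\infty$, with negligible change in $\textup{per}(\Omega)$ and $\textup{vol}(\Omega)$.

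Now for the core. For $T\ge0$ set $\Omega^T:=\{x:d_\Omega(x)\le T\}$, $\Gamma^T:=\partial\Omega^T$, $V(T):=\textup{vol}(\Omega^T)$, $A(T):=\textup{per}(\Omega^T)$, so that $V(0)=\textup{vol}(\Omega)$, $A(0)=\textup{per}(\Omega)$, and $V'=A$ a.e.\ by the coarea formula. Using the outward normal exponential map $\Phi(p,t)=\exp_p(t\,\nu(p))$ and the Jacobi/Riccati evolution of the shape operator of the parallels — the computations behind Proposition \ref{prop:d-convex} and Lemma \ref{lem:riccati} — one has $\textup{Jac}\,\Phi(p,T)=\prod_{i=1}^{n-1}a_i(p,T)$ a.e., where $a_i''+K_i a_i=0$, $a_i(0)=1$, $a_i'(0)=\kappa_i(p)$, with $\kappa_i(p)$ the principal curvatures of $\Gamma$ at $p$ and $K_i\le0$ sectional curvatures of $M$; hence each $a_i(p,\cdot)$ is convex. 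Since the $i$‑th principal curvature of $\Gamma^T$ at $\Phi(p,T)$ equals $a_i'(p,T)/a_i(p,T)$, one obtains $A(T)=\int_\Gamma\prod_i a_i(p,T)\,d\sigma$ and $\int_\Gamma\prod_i a_i'(p,T)\,d\sigma=\mathcal{G}(\Gamma^T)\ge n\omega_n$; where $\Gamma^T$ fails to be $\C^{1,1}$ one replaces $\Omega^T$ by its convex hull, whose boundary is convex hence $\C^{1,1}$, with no smaller volume and, by Corollary \ref{cor:Gamma0}, no larger total positive curvature. Feeding the curvature bound $\mathcal{G}(\Gamma^T)\ge n\omega_n$ into the evolution of $A$ — and comparing $a_i$ with its Euclidean counterpart $1+\kappa_i T$, which is licit because $K_i\le0$ makes $a_i$ convex — yields a differential inequality for $A(T)$: the analogue, for general $n$, of the relation $A''(T)\ge 2\,\textup{vol}(\S^{n-1})$ that Gauss–Bonnet furnishes when $n=3$. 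Together with $V'=A$ this compares the sweepout of $\Omega$ with that of the Euclidean ball of the same volume.

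The final step is to integrate this system and read off $\textup{per}(\Omega)^n\ge n^n\omega_n\,\textup{vol}(\Omega)^{n-1}$, i.e.\ \eqref{eq:II}; in the model $\Omega=\mathbf{B}^n\subset\R^n$ one has $V(T)=\omega_n(r+T)^n$, $A(T)=n\omega_n(r+T)^{n-1}$, $\mathcal{G}(\Gamma^T)\equiv n\omega_n$, all inequalities above are equalities, and \eqref{eq:II} is precisely the identity forced in that case, so the comparison is sharp. I expect \textbf{the main obstacle} to be extracting the isoperimetric inequality from the differential inequality in a way that uses only the one global input $\mathcal{G}(\Gamma^T)\ge n\omega_n$ (plus $V'=A$), since the intermediate mean‑curvature integrals along the sweep are not otherwise controlled; closely related technical difficulties are the loss of $\C^{1,1}$ regularity of distant parallels (handled as above via convex hulls and Corollary \ref{cor:Gamma0}) and the behaviour of the sweepout as $\Omega^T$ degenerates. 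Finally, tracing equality: it forces $\mathcal{G}(\Gamma^T)=n\omega_n$ and $K_i\equiv 0$ along all normal geodesics, and then the Riccati comparison with vanishing curvature, together with a Cartan‑type rigidity argument as in Lemma \ref{lem:radial-curvature}, forces $\Omega$ to be isometric to a round ball in $\R^n$.
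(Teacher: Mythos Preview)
Your outline has a genuine gap, and it is exactly the one you flag as ``the main obstacle.'' Knowing only $V'=A$ and $\mathcal{G}_+(\Gamma^T)\ge n\omega_n$ is \emph{not} enough to force \eqref{eq:II} when $n\ge 4$. In dimension $3$ Kleiner's trick works because $(a_1a_2)''=a_1''a_2+2a_1'a_2'+a_1a_2''\ge 2a_1'a_2'$, so $A''(T)\ge 2\mathcal{G}(\Gamma^T)$; the Gauss--Kronecker term captures the entire leading part. For $n\ge 4$ the expansion of $\big(\prod_i a_i\big)^{(k)}$ contains, for every $k\le n-1$, cross terms built out of intermediate symmetric functions of the $a_i'$ (equivalently, of the mean curvatures $\sigma_r(\kappa)$ of $\Gamma^T$), and you have no control on those from the single input $\sigma_{n-1}=GK$. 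The ``differential inequality for $A(T)$'' you announce is therefore not derived, and without it the sweepout comparison does not close. The Euclidean comparison $a_i\ge 1+\kappa_iT$ does not help here: it gives inequalities in the wrong direction for bounding $A$ from below in terms of the total curvature alone.

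The paper avoids this obstruction by abandoning the direct sweepout and passing to the \emph{isoperimetric profile}. One fixes a large ball $B\subset M$, takes a perimeter--minimizer $\Omega\subset B$ of prescribed volume (Lemma~\ref{lem:exist}), and uses that the free part of $\Gamma=\partial\Omega$ has \emph{constant} mean curvature $H_0$. On $\Gamma\cap\Gamma_0$ (where $\Gamma$ is locally convex) one has $GK\le H^{n-1}$ by AM--GM, so combining Proposition~\ref{prop:Gamma0} with \eqref{eq:GK} gives
\[
n\omega_n\;\le\;\mathcal{G}(\Gamma_0)\;=\;\int_{\Gamma\cap\Gamma_0}GK\,d\sigma\;\le\;H_0^{\,n-1}\,\textup{per}(\Omega).
\]
This single pointwise inequality replaces the missing control on the intermediate $\sigma_r$'s: constancy of $H$ on a minimizer is what lets one trade $GK$ for $H^{n-1}$. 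Since $\mathcal{I}_B'(v)=(n-1)H_0(v)$, the displayed bound says $\mathcal{I}_B'\ge\mathcal{I}_{\R^n}'$ a.e., and integrating yields $\mathcal{I}_B\ge\mathcal{I}_{\R^n}$, hence \eqref{eq:II}. The equality analysis then runs through Riccati and Lemma~\ref{lem:radial-curvature}, much as you sketched. So the missing idea in your proposal is precisely this detour through minimizers and the AM--GM step $GK\le H^{n-1}$; a direct sweepout of an arbitrary $\Omega$ does not seem to yield the inequality for $n\ge 4$.
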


The proof employs the results of the last section on convex hulls, and proceeds via the well-known isoperimetric profile argument \cites{ritore2010,ros2005} along the same general lines indicated by Kleiner \cite{kleiner1992}. The \emph{isoperimetric profile} \cites{berger2003,bbg1985} of any open subset $U$ of a Riemannian manifold $M$ is the function $\mathcal{I}_U\colon [0, \textup{vol}(U))\to\R$ given by
$$
\mathcal{I}_{U}(v):=\inf\big\{\text{per}(\Omega)\mid \Omega\subset U,\, \textup{vol}(\Omega)=v, \,\textup{diam}(\Omega)<\infty \big\},
$$
where $\emph{diam}$ is the diameter, $\emph{vol}$ denotes the Lebesgue measure, and $\emph{per}$ stands for perimeter; see \cite{giusti1984, chavel2001} for the general definition of perimeter (when $\partial\Omega$ is piecewise $\C^1$, for instance, $\textup{per}(\Omega)$ is just the $(n-1)$-dimensional Hausdorff measure of $\partial\Omega$).
Proving the isoperimetric inequality \eqref{eq:II}  is equivalent to showing that 
$$
\mathcal{I}_{M}\geq\mathcal{I}_{\R^n},
$$
 for any Cartan-Hadamard manifold $M$. To this end it suffices to show that $\mathcal{I}_B\geq \mathcal{I}_{\R^n}$ for a family of (open) geodesic balls $B\subset M$ whose radii grows arbitrarily large and  eventually covers any bounded set $\Omega\subset M$. So we fix a geodesic ball $B$ in $M$ and consider its \emph{isoperimetric regions}, i.e., sets $\Omega\subset B$ which have the least perimeter for a given volume, or satisfy $\textup{per}(\Omega)=\mathcal{I}_B(\textup{vol}(\Omega))$. The existence of these regions are well-known, and they have the following regularity properties:

\begin{lemma}[\cite{gmt1981, Stredulinsky-Ziemer1997}]\label{lem:exist}
For any $v\in (0, \textup{vol}(B))$ there exists an isoperimetric region $\Omega \subset B$ with $\textup{vol}(\Omega)=v$. Let $\Gamma:=\partial\Omega$, $H$ be the normalized mean curvature of $\Gamma$ (wherever it is defined),  and $\Gamma_0:=\partial\conv(\Gamma)$. Then
\begin{enumerate}[(i)]
\item$\Gamma\cap B$ is $\C^\infty$  except for a closed set $\textup{sing}(\Gamma)$ of Hausdorff dimension at most $n-8$. Furthermore,
$H\equiv H_0=H_0(v)$  a constant  on $\Gamma\cap B\setminus \textup{sing}(\Gamma)$.

\item $\Gamma$ is $\C^{1,1}$ within an open neighborhood $U$ of $\partial B$ in $M$. Furthermore,
$ H\leq H_0$ almost everywhere on $U\cap\Gamma$.

\item $d\big(\textup{sing}(\Gamma), \Gamma_0\big)\geq \e_0>0$.
\end{enumerate}
In particular $\Gamma$ is $\C^{1,1}$ within an open neighborhood of $\Gamma_0$ in $M$.
\end{lemma}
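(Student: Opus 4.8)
The statement is a compendium of facts from the regularity theory of perimeter‑minimizers under a volume constraint, so the plan is to assemble it from that theory, supplying for part (iii) an extra barrier argument that uses the Cartan–Hadamard structure. Throughout I would use that the geodesic ball $B$ is convex, being a sublevel set of the convex function $d_p$ (Lemma \ref{lem:u-convex}), and that $\partial B$ is a smooth hypersurface since the injectivity radius of $M$ is infinite. For existence, take a minimizing sequence $\Omega_k\subset B$ with $\textup{vol}(\Omega_k)=v$ and $\textup{per}(\Omega_k)\to\mathcal I_B(v)$; the $\Omega_k$ have uniformly bounded perimeter and lie in the precompact set $B$, so the compactness theorem for sets of finite perimeter \cite{giusti1984} gives a subsequence with $\chi_{\Omega_k}\to\chi_\Omega$ in $L^1$, whence $\Omega\subset\cl(B)$, $\textup{vol}(\Omega)=v$, and $\textup{per}(\Omega)\le\liminf\textup{per}(\Omega_k)$ by lower semicontinuity, so $\Omega$ is an isoperimetric region.

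For the interior statement (i) one observes that $\Omega$ is a $\Lambda$‑minimizer of perimeter inside $B$: comparing with a competitor supported in a small ball and restoring the lost volume by a fixed exterior modification bounds the perimeter deficit by $\Lambda$ times the volume of the symmetric difference. Hence the interior regularity theory of De Giorgi–Almgren–Federer applies in the form of \cite{gmt1981}: $\Gamma\cap B$ is $\C^{1,\alpha}$ off a closed set $\textup{sing}(\Gamma)$ of Hausdorff dimension at most $n-8$ (Federer dimension reduction, as there is no singular minimizing cone below dimension $8$), and the first variation subject to the volume constraint forces the generalized mean curvature on $\Gamma\cap B\setminus\textup{sing}(\Gamma)$ to be a constant $H_0=H_0(v)$, after which elliptic bootstrapping of the constant‑mean‑curvature equation gives $\C^\infty$. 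For (ii), near $\partial B$ the set $\Omega$ solves the obstacle problem of minimizing perimeter among subsets of the convex set $B$ with smooth boundary, so the regularity theory for this constrained problem (Stredulinsky–Ziemer \cite{Stredulinsky-Ziemer1997}) gives that $\Gamma$ is $\C^{1,1}$ on an open neighborhood $U$ of $\partial B$; the second fundamental form may jump across the free boundary where $\Gamma$ detaches from $\partial B$, which is why one cannot do better than $\C^{1,1}$. The inequality $H\le H_0$ on $U\cap\Gamma$ follows from a one‑sided first variation: near $\partial B$ only inward normal perturbations of $\Gamma$ keep $\Omega$ inside $B$, and coupling such a perturbation with a volume‑compensating outward perturbation in the interior region (where the mean curvature is $H_0$) gives, by minimality, $\int(H_0-H)\,|\phi|\,d\sigma\ge 0$ for all $\phi\ge 0$, hence $H\le H_0$ almost everywhere.

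The core of the argument, and the step I expect to be the main obstacle, is (iii): showing that $\textup{sing}(\Gamma)$ stays a definite distance from $\Gamma_0$. The idea is that every point of $\Gamma$ lying on $\Gamma_0=\partial\conv(\Gamma)$ is a regular point. Let $p\in\Gamma\cap\Gamma_0$. If $p\in U$ then $\Gamma$ is already $\C^{1,1}$ near $p$ by (ii), so $p\notin\textup{sing}(\Gamma)$. Otherwise $p$ is an interior point; by Lemma \ref{lem:CG} the tangent cone $T_p\conv(\Gamma)$ is a proper convex cone, so some hyperplane $H_p\subset T_pM$ through the origin has $\exp_p^{-1}(\conv(\Gamma))$ on one side of it, and therefore $\Gamma$ lies, near $p$, on one side of the smooth hypersurface $\exp_p(H_p)$ and touches it at $p$. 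Blowing up $\Gamma$ at $p$, the tangent cone $C$ is an area‑minimizing cone in $T_pM\cong\R^n$ (the constant‑mean‑curvature term scales away) with vertex on the hyperplane $H_p$ and contained in one of the closed half‑spaces bounded by $H_p$; by the strong maximum principle for stationary integral varifolds it must agree with $H_p$ near the origin, hence $C=H_p$ since $C$ is a cone. Thus $\Gamma$ has density one at $p$ and is regular there by Allard's theorem, so $\textup{sing}(\Gamma)\cap\Gamma_0=\varnothing$; since $\textup{sing}(\Gamma)$ is closed and $\Gamma_0$ is compact, $\epsilon_0:=d(\textup{sing}(\Gamma),\Gamma_0)>0$.

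Finally, the \emph{in particular} assertion follows by combining (i), (ii) and (iii): the $\epsilon_0$‑tubular neighborhood of $\Gamma_0$ meets no interior singular point of $\Gamma$, so on the portion of that neighborhood lying at positive distance from $\partial B$ the surface $\Gamma$ is $\C^\infty$ by (i), and on the portion meeting the neighborhood $U$ of $\partial B$ it is $\C^{1,1}$ by (ii); intersecting these open sets yields an open neighborhood of $\Gamma_0$ on which $\Gamma$ is a $\C^{1,1}$ hypersurface. The points I would be most careful about are the explicit $\Lambda$‑minimality constant and the small‑scale density estimates underpinning the regularity theory, and, for (iii), checking that the blow‑up limit is genuinely minimizing and that the one‑sided maximum principle indeed applies to the possibly singular cone $C$.
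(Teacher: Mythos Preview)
Your proposal is correct and follows essentially the same route as the paper. In particular, for part (iii) the paper argues, as you do, that at any $p\in\Gamma\cap\Gamma_0\cap B$ the tangent cone $T_p\Gamma$ lies in the proper convex cone $T_p\conv(\Gamma)$ and hence in a half-space, which forces $T_p\Gamma$ to be a hyperplane (the paper cites \cite[Cor.~37.6]{simon1983} rather than invoking the strong maximum principle explicitly), and then concludes $\C^\infty$ regularity near $p$ via Allard-type results \cite[Thm.~5.4.6]{federer:book}, \cite[Prop.~3.5]{morgan2003}; your concerns about the blow-up being genuinely minimizing are standard and handled by the cited references.
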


\begin{proof}
Part (i)  follows from Gonzalez, Massari, and Tamanini \cite{gmt1983}, and 
(ii) follows from Stredulinsky and Ziemer \cite[Thm 3.6]{Stredulinsky-Ziemer1997}, who studied the identical variational problem in $\R^n$. Indeed the $\C^{1,1}$ regularity  near $\partial B$ is based on the classical obstacle problem for graphs which extends in a straightforward way to Riemannian manifolds; see also Morgan \cite{morgan2003}.
To see (iii) note that by (i), $\text{sing}(\Gamma)$ is closed, and by (ii), $\text{sing}(\Gamma)$ lies in $B$. So it suffices to check that points $p\in \Gamma\cap \Gamma_0 \cap B$ are not singular. This is the case since $T_p\Gamma\subset T_p\conv(\Gamma)$ which is a convex subset of $T_p M$. Therefore $T_p \Gamma$ is contained in a half-space of $T_p M$ generated by any support hyperplane of $T_p\conv(\Gamma)$ at $p$. This forces $T_p \Gamma$ to be a hyperplane  \cite[Cor. 37.6]{simon1983}. Consequently $\Gamma$ will be $\C^\infty$ in a neighborhood of $p$ \cite[Thm. 5.4.6]{federer:book}, \cite[Prop. 3.5]{morgan2003}.
\end{proof}

Now let $\Omega\subset B$ be an isoperimetric region with volume $v$, as provided by Lemma \ref{lem:exist}. By Proposition  \ref{prop:Gamma0}, $\mathcal{G}(\Gamma_0) = \mathcal{G}(\Gamma\cap\Gamma_0)$. By the total curvature inequality \eqref{eq:GK}  and definition \eqref{eq:G}, $\mathcal{G}(\Gamma_0)\geq n\omega_n$. Thus we have
\be\label{eq:n-omega-n}
n\omega_n \leq \mathcal{G}(\Gamma_0) = \mathcal{G}(\Gamma\cap\Gamma_0)=\int_{\Gamma \cap \Gamma_0}GKd\sigma,
\ee
where $GK$ denotes the Gauss-Kronecker curvature of $\Gamma$.
Note that $GK\geq 0$ on $\Gamma \cap \Gamma_0$, since at these points $\Gamma$ is locally convex. So the arithmetic versus geometric means inequality yields that 
$
GK\leq H^{n-1}
$
 on $\Gamma\cap\Gamma_0$. Thus, by \eqref{eq:n-omega-n},
\begin{eqnarray} \label{proof.20}\notag
n\omega_n
&\leq& \int_{\Gamma\cap\Gamma_0} GK d\sigma\\ \notag
&\leq& \int_{\Gamma\cap \Gamma_0}H^{n-1}d\sigma \\ 
&=&\int_{\Gamma\cap \partial B}H^{n-1}d\sigma +\int_{\Gamma\cap\Gamma_0\cap B}H_0^{n-1}d\sigma\\ \notag
 &\leq&\int_{\Gamma\cap \partial B}H_0^{n-1}d\sigma +\int_{\Gamma\cap B}H_0^{n-1}d\sigma\\  \notag
&=&  H_0^{n-1}\,\text{per}(\Omega).
\end{eqnarray}
Hence it follows that
\be \label{mc.compare}
H_0\big(\textup{vol}(\Omega)\big) \;\geq\;\left(\frac{n\omega_n}{\textup{per}(\Omega)}\right)^\frac{1}{n-1}=\;\ol H_0\big(\text{per}(\Omega)\big),
\ee
where $\ol H_0(a)$ is the mean curvature of a ball of perimeter $a$ in $\R^n$.  It is well-known that $\mathcal{I}_B$ is continuous  and  increasing \cite{ritore2017}, and thus is differentiable almost everywhere. Furthermore, 
$
\mathcal{I}_B'(v)=(n-1)H_0(v)
$ 
at all differentiable points $v\in(0,\textup{vol}(B))$ \cite[Lem. 5]{hsiang1992}. Then it follows from \eqref{mc.compare}, e.g., see \cite[p. 189]{choe-ritore2007},  that $\mathcal{I}'_B\geq \mathcal{I}'_{\R^n}$ almost everywhere on $[0,\textup{vol}(B))$. Hence
\be\label{eq:IBIBe}
\mathcal{I}_B(v)\geq \mathcal{I}_{\R^n}(v),
\ee
for all $v\in[0, \textup{vol}(B))$ as desired. So we have established the isoperimetric inequality \eqref{eq:GK} for Cartan-Hadamard manifolds. It remains then to show that equality holds in  \eqref{eq:GK} only for Euclidean balls. To this end we first record that:

\begin{lemma} \label{prop:equal-case} 
Suppose that equality in \eqref{eq:II} holds for a bounded set $\Omega\subset M$.  Then $\Gamma$ is strictly convex, $\C^\infty$,  and  has  constant mean curvature $H_0$.  Furthermore, the principal curvatures of $\Gamma$ are all equal to $H_0$ .
\end{lemma}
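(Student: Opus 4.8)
The plan is to run the equality case through the chain of inequalities \eqref{eq:n-omega-n}--\eqref{mc.compare} that proves the isoperimetric inequality, and then read off the geometry of $\Gamma$. First I would fix a geodesic ball $B\subset M$ large enough that $\cl(\Omega)$, and hence $\conv(\Gamma)$, lies in its interior; geodesic balls in a Cartan--Hadamard manifold are convex by Lemma \ref{lem:u-convex}, so both $\Gamma=\partial\Omega$ and $\Gamma_0=\partial\conv(\Gamma)$ sit well inside $B$ (in particular $\Gamma\cap\partial B=\emptyset$). Since $\textup{per}(\Omega)=\mathcal{I}_{\R^n}(\vol(\Omega))\le\mathcal{I}_M(\vol(\Omega))\le\mathcal{I}_B(\vol(\Omega))\le\textup{per}(\Omega)$, the set $\Omega$ is an isoperimetric region in $B$, so Lemma \ref{lem:exist} applies: $\Gamma$ is $\C^\infty$ off a closed set $\textup{sing}(\Gamma)$ of Hausdorff dimension at most $n-8$, with $H\equiv H_0$ there, $\Gamma$ is $\C^{1,1}$ near $\Gamma_0$, and $d(\textup{sing}(\Gamma),\Gamma_0)\ge\e_0>0$.

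Next I would show that equality in \eqref{eq:II} forces $H_0^{\,n-1}\textup{per}(\Omega)=n\omega_n$, hence every inequality in \eqref{proof.20} to be an equality. This is a rigidity statement for the comparison used to derive \eqref{eq:IBIBe}: passing to the inverse of the Lipschitz, almost everywhere differentiable profile $\mathcal{I}_B$ (cf.\ \cite{choe-ritore2007}) one gets $\mathcal{I}_B\ge\mathcal{I}_{\R^n}$ with $\mathcal{I}_B(0)=\mathcal{I}_{\R^n}(0)=0$, and the hypothesis $\mathcal{I}_B(\vol(\Omega))=\textup{per}(\Omega)=\mathcal{I}_{\R^n}(\vol(\Omega))$ then propagates to $\mathcal{I}_B\equiv\mathcal{I}_{\R^n}$ on $[0,\vol(\Omega)]$; using $\mathcal{I}_B'=(n-1)H_0$ \cite{hsiang1992}, this gives equality in \eqref{mc.compare} at $v=\vol(\Omega)$, i.e.\ $H_0^{\,n-1}\textup{per}(\Omega)=n\omega_n$. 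Reading off the two ends of \eqref{proof.20} together with Proposition \ref{prop:Gamma0} then yields $\mathcal{G}(\Gamma_0)=\mathcal{G}(\Gamma\cap\Gamma_0)=n\omega_n$ and $\int_{\Gamma\cap\Gamma_0}GK\,d\sigma=\int_{\Gamma\cap\Gamma_0}H^{n-1}\,d\sigma$.

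Now I would extract the geometry. On $\Gamma\cap\Gamma_0$ one has $GK\ge 0$ (there $\Gamma$ is supported from outside by the convex hypersurface $\Gamma_0$) and $H\equiv H_0$ off $\textup{sing}(\Gamma)$, so the equality case of the arithmetic--geometric mean inequality forces $\kappa_1=\dots=\kappa_{n-1}=H_0$, hence $GK=H_0^{\,n-1}$, almost everywhere on $\Gamma\cap\Gamma_0$. Integrating, $n\omega_n=\mathcal{G}(\Gamma\cap\Gamma_0)=H_0^{\,n-1}\textup{area}(\Gamma\cap\Gamma_0)$, so $\textup{area}(\Gamma\cap\Gamma_0)=n\omega_n/H_0^{\,n-1}=\textup{per}(\Omega)=\textup{area}(\Gamma)$; thus the open subset $\Gamma\setminus\Gamma_0$ of $\Gamma$ has zero area and is empty, i.e.\ $\Gamma\subset\Gamma_0$. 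Since $\cl(\Omega)\subseteq\conv(\Gamma)$ (a bounded domain lies in the convex hull of its boundary) and $\Omega$ is connected, $\Gamma\subset\Gamma_0$ forces $\Omega=\inte\conv(\Gamma)$ and $\Gamma=\Gamma_0=\partial\conv(\Gamma)$, so $\Gamma$ is convex. Combined with $\textup{sing}(\Gamma)\subseteq\Gamma$ and $d(\textup{sing}(\Gamma),\Gamma_0)\ge\e_0>0$, this gives $\textup{sing}(\Gamma)=\emptyset$; hence $\Gamma$ is $\C^\infty$ and $H\equiv H_0$ on all of $\Gamma$ by Lemma \ref{lem:exist}(i). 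By continuity $GK\equiv H_0^{\,n-1}$, so $\kappa_1=\dots=\kappa_{n-1}=H_0$ at every point of $\Gamma$; and since the second fundamental form of $\Gamma$ equals $H_0$ times the identity with $H_0>0$, $\Gamma$ contains no geodesic segment of $M$ and is therefore strictly convex.

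I expect the main obstacle to be the second step: pinning down the precise identity $H_0^{\,n-1}\textup{per}(\Omega)=n\omega_n$ at the single volume $\vol(\Omega)$. This requires a genuine rigidity analysis of the profile comparison---one-sided differentiability of $\mathcal{I}_B$, monotonicity of $\mathcal{I}_B-\mathcal{I}_{\R^n}$, and propagation of a single contact point to the whole interval $[0,\vol(\Omega)]$---after which the arithmetic--geometric mean rigidity, the area identity forcing $\Gamma=\Gamma_0$, and the vanishing of $\textup{sing}(\Gamma)$ are all routine consequences of the results already assembled.
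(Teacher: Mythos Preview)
Your argument is correct and follows essentially the same route as the paper: trace equality back through the chain \eqref{eq:n-omega-n}--\eqref{proof.20}--\eqref{mc.compare}--\eqref{eq:IBIBe}, deduce $\Gamma=\Gamma_0$, then use Lemma \ref{lem:exist}(iii) to kill the singular set and read off umbilicity from the AM--GM equality case.

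Two minor differences worth noting. First, you choose $B$ large enough that $\cl(\Omega)\subset B$, so $\Gamma\cap\partial B=\emptyset$; this is a genuine simplification over the paper, which keeps track of $\Gamma\cap\partial B$ and then has to invoke elliptic regularity for $\C^{1,1}$ CMC graphs near the obstacle (the ``$\mathcal{H}^{n-1}(\Gamma\cap\partial B)=0$'' step and the appeal to \cite{maggi2012}). Second, you obtain $\Gamma=\Gamma_0$ via the area identity $H_0^{\,n-1}\,\textup{area}(\Gamma\cap\Gamma_0)=n\omega_n=H_0^{\,n-1}\,\textup{per}(\Omega)$, whereas the paper reads it off directly from equality between the third and fourth lines of \eqref{proof.20}; both amount to the same measure-zero-open-set argument. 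Your identification of the profile-rigidity step (forcing equality in \eqref{mc.compare} at $v=\vol(\Omega)$) as the main technical point is accurate---the paper also asserts this step without details.
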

\begin{proof}
If equality holds in \eqref{eq:II}, then we have equality in \eqref{eq:IBIBe} for some ball $B\subset M$ large enough to contain $\Omega$, and $v=\textup{vol}(\Omega)$. This in turn forces equality to hold successively in  \eqref{mc.compare}, and  \eqref{proof.20}.
Now equality between the third and fourth lines in \eqref{proof.20} yields that
\be\label{eq:Hn-1Gamma-cap-B}
\mathcal{H}^{n-1}(\Gamma\cap \partial B)=0,
\ee
\be\label{eq:Gamma=Gamma0}
\Gamma=\Gamma_0.
\ee
Then equality between the second and third lines in \eqref{proof.20} yields that
\be\label{eq:H=GK}
H^{n-1}= GK\equiv H_0^{n-1},
\ee
on $(\Gamma\cap B)\setminus \textup{sing}(\Gamma)$.
By \eqref{eq:Gamma=Gamma0}, $\Gamma$ is convex. Thus
as in the proof of part (iii) of Lemma \ref{lem:exist}, for every point $p\in \Gamma\cap B$, $T_p \Gamma$ is a hyperplane. So $\Gamma\cap B$ is  $\C^{\infty}$. On the other hand by part (ii) of Lemma \ref{lem:exist}, near $\partial B$, $\Gamma$ is locally a $\C^{1,1}$ graph and thus every point of $\Gamma$ has a H\"{o}lder continuous unit normal. Furthermore, $\Gamma$ has $H^{n-1}$ almost everywhere constant mean curvature $H_0$, by \eqref{eq:Hn-1Gamma-cap-B}. It follows  that $\Gamma $ is $\C^{\infty}$  in a neighborhood of $\partial B$; see \cite[Thm. 27.4]{maggi2012} for details of this well-known argument.
Finally \eqref{eq:H=GK} implies that all principal curvatures are equal to $H_0$ at all points of $\Gamma$.
\end{proof}

We also need the following basic fact:

\begin{lemma}\label{lem:C11converge}
Let $\Gamma_i$ be a sequence of $\C^{2}$ convex hypersurfaces in $M$ which converge to a convex hypersurface $\Gamma$ with respect to the Hausdorff distance. Suppose that the principal curvatures of $\Gamma_i$ are bounded above by a uniform constant. Then $\Gamma$ is $\C^{1,1}$.
\end{lemma}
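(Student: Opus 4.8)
The plan is to reduce everything to the reach characterization of $\C^{1,1}$ regularity in Lemma \ref{lem:GH}. A convex hypersurface is the boundary of a compact convex body with interior points, hence homeomorphic to a sphere and so a closed embedded topological hypersurface; therefore, by Lemma \ref{lem:GH}, it suffices to prove that $\textup{reach}(\Gamma)>0$, i.e., that geodesic balls of some fixed radius $r_0>0$ roll freely on each side of $\Gamma$. The argument then has two parts: (i) a uniform lower bound $\textup{reach}(\Gamma_i)\geq r_0$; (ii) passing this bound to the Hausdorff limit.

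For part (i), observe that for $i$ large all the $\Gamma_i$, together with $\Gamma$, lie in a fixed compact set $K$, on a neighborhood of which the sectional curvature of $M$ satisfies $K_M\geq -C$ for some $C\geq 0$; let $\kappa$ be a uniform upper bound for the principal curvatures of the $\Gamma_i$, and let $\Omega_i$ be the convex domain bounded by $\Gamma_i$. On the outer side a geodesic ball of \emph{any} radius $r$ rolls freely: if $p\in\Gamma_i$ has outward unit normal $\nu$, then (recalling $M$ is Cartan--Hadamard in this section) the geodesic ray $t\mapsto\exp_p(t\nu)$ is perpendicular to the convex set $\Omega_i$, so since nearest-point projection onto a convex set is single-valued and nonexpansive (\cite[Cor.\ 2.5]{bridson-haefliger1999}; cf.\ Lemma \ref{lem:outward-geodesic}) the point $\exp_p(r\nu)$ has $p$ as its unique footprint on $\Omega_i$, whence $\overline B(\exp_p(r\nu),r)\subset M\setminus\Omega_i$ meets $\cl(\Omega_i)$ only at $p$. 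On the inner side, since $\Gamma_i$ is $\C^2$, convex, and has principal curvatures $\leq\kappa$, Blaschke's rolling theorem \cite{blaschke:Kreis,brooks-strantzen1989}, applied in the region where $K_M\geq -C$, produces a radius $r_0=r_0(\kappa,C)>0$ of a geodesic ball that rolls freely inside $\cl(\Omega_i)$. Hence $\textup{reach}(\Gamma_i)\geq r_0$ for all large $i$.

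For part (ii), fix $p\in\Gamma$, pick $p_i\in\Gamma_i$ with $p_i\to p$, and for each $i$ take geodesic balls $\overline B(c_i,r_0)\subset\cl(\Omega_i)$ and $\overline B(c_i',r_0)\subset M\setminus\Omega_i$ whose boundary spheres pass through $p_i$. Since $d(c_i,p_i)=d(c_i',p_i)=r_0$, the centers stay in a compact set, so along a subsequence $c_i\to c$ and $c_i'\to c'$ with $d(c,p)=d(c',p)=r_0$. Using that Hausdorff convergence of the boundaries of convex bodies forces Hausdorff convergence of the bodies themselves, one gets $\cl(\Omega_i)\to\cl(\Omega)$ and $\cl(M\setminus\Omega_i)\to\cl(M\setminus\Omega)$, and therefore $\overline B(c,r_0)\subset\cl(\Omega)$ and $\overline B(c',r_0)\subset M\setminus\Omega$, with both boundary spheres through $p$. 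Thus geodesic balls of radius $r_0$ roll freely on each side of $\Gamma$, so $\textup{reach}(\Gamma)\geq r_0>0$, and $\Gamma$ is $\C^{1,1}$ by Lemma \ref{lem:GH}.

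I expect the main difficulty to lie in part (i): extracting a \emph{uniform} inner rolling radius from the sole hypothesis that the principal curvatures are bounded above, which forces one to combine that bound with a (locally available) lower bound on the ambient sectional curvature and to invoke Blaschke's rolling theorem in the Riemannian setting, while also checking — as above, via convexity and nonexpansiveness of the projection — that the outer side imposes no constraint on the reach. By contrast, part (ii) is routine once one knows that Hausdorff convergence of convex bodies and of their boundaries are equivalent.
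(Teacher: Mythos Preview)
Your argument is correct and follows essentially the same strategy as the paper: reduce to the reach criterion of Lemma \ref{lem:GH}, obtain a uniform inner rolling ball via Blaschke's theorem, and pass to the Hausdorff limit. The one difference worth noting is that the paper first pulls everything back to $T_pM\simeq\R^n$ via $\exp_p^{-1}$ and then invokes the \emph{Euclidean} Blaschke rolling theorem \cite{howard1999}, whereas you work directly in $M$ and appeal to a Riemannian version. The paper's route has the advantage that only the classical Euclidean statement is needed (the uniform curvature bound survives under $\exp_p^{-1}$ on compacta since $\exp_p$ is a global $\C^\infty$ diffeomorphism), while your route is more transparent about the two sides: you make explicit, via nonexpansiveness of the nearest-point projection onto convex sets, that the outer side never obstructs the reach in a Cartan--Hadamard manifold, and you spell out the compactness/limit step that the paper leaves implicit. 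Either execution is fine; just be aware that the references you cite for Blaschke rolling (\cite{blaschke:Kreis,brooks-strantzen1989}) are Euclidean, so if you stay in $M$ you should either cite a Riemannian version or supply the short comparison argument (a geodesic sphere of small radius $r_0=r_0(\kappa,C)$ has all principal curvatures $\geq\kappa$, hence supports $\Gamma_i$ from inside at each point).
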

\begin{proof}
Let $p$ a point of $M$, and set $\arc\Gamma:=\exp_p^{-1}(\Gamma)$, $\arc\Gamma_i:=\exp_p^{-1}(\Gamma_i)$. Then  $\arc\Gamma_i$ will still be $\C^{2}$, and their principal curvatures are uniformly bounded above. It follows then from Blaschke's rolling theorem \cite{howard1999} that a ball of radius $\epsilon$ rolls freely inside $\arc\Gamma_i$. Thus a ball of radius $\epsilon$ rolls freely inside $\arc\Gamma$, or $\textup{reach}(\arc\Gamma)>0$. Hence $\arc\Gamma$ is $\C^{1,1}$ by Lemma \ref{lem:GH}, which in turn yields that so is $\Gamma$.
\end{proof}

Now suppose that  equality holds in \eqref{eq:II} for some region $\Omega$ in a Cartan-Hadamard manifold $M$. Then equality holds successively in \eqref{mc.compare},   \eqref{proof.20}, and \eqref{eq:n-omega-n}. So we have $\mathcal{G}(\Gamma_0)=n\omega_n$. But we know from Lemma \ref{prop:equal-case} that $\Gamma$ is convex, or $\Gamma_0=\Gamma$. So 
\be\label{eq:GGammanomegan}
\mathcal{G}(\Gamma)=n\omega_n.
\ee 
Let $\lambda_1:=\textup{reach}(\Gamma)$, as defined in Section \ref{sec:distance}. 
Furthermore note that, by Lemma \ref{prop:equal-case}, $\Gamma$ is $\C^{\infty}$. Thus $\lambda_1>0$ by Lemma \ref{lem:GH}. Set $u:=\widehat{d}_\Gamma$. Then $\Gamma_\lambda:=u^{-1}(-\lambda)$ will be a $\C^{\infty}$ hypersurface for $\lambda\in[0,\lambda_1)$ by Lemma \ref{lem:MM}. 
 For any point $p$ of $\Gamma$,  let $p_\lambda$ be the point obtained by moving $p$ the distance of $\lambda$ along the inward geodesic orthogonal to $\Gamma$ at $p$, and set 
 $R_{\ell n \ell n}(\lambda):=R_{\ell n \ell n}(p_\lambda)$. We claim that
 \be\label{eq:Rlambda}
 R_{\ell n \ell n}(\lambda)\equiv0,
\ee
for $\lambda\in[0,\lambda_1]$.
 To see this note that for $\lambda$ sufficiently small $\Gamma_\lambda$ is positively curved by continuity. Let $\ol\lambda$ be the supremum of $x<\lambda_1$ such that $\Gamma_\lambda$ is positively curved on $[0,x)$. By  assumption \eqref{eq:GK}, $\mathcal{G}(\Gamma_{\lambda})\geq n\omega_n$.
 Thus, by \eqref{eq:GGammanomegan} and Corollary \ref{cor:parallel},
 $$
0\geq n\omega_n- \lim_{\lambda\to\ol\lambda}\mathcal{G}(\Gamma_{\lambda})=\mathcal{G}(\Gamma)- \lim_{\lambda\to\ol\lambda}\mathcal{G}(\Gamma_{\lambda})=-\int_{\Omega\setminus D_{\ol\lambda}}
R_{\ell n\ell n} \frac{GK}{\kappa_\ell}\,d\mu\geq 0,
 $$
 where $D_{\ol\lambda}$ is the limit of the regions bounded by $\Gamma_\lambda$ as $\lambda\to\ol\lambda$.
 So $R_{\ell n\ell n}(\lambda)\equiv 0$ for $\lambda<\ol\lambda$.
By Riccati's equation  \cite[Cor. 3.3]{gray2004}, it follows that
$$
S'(\lambda)=S^2(\lambda),
$$
for $\lambda<\ol\lambda$,
where $S(\lambda)$ is the shape operator of $\Gamma_\lambda$ at $p_\lambda$.
By Lemma \ref{prop:equal-case}, $S(0)=H_0 I$, where $I$ is the identity transformation. Thus, solving the differential equation above yields that $S(\lambda)=H_\lambda I$ where
$$
H_\lambda:=\frac{H_0}{1-\lambda H_0},
$$
for $\lambda<\ol\lambda$. Now suppose that $\ol\lambda<\lambda_1$. Then $\Gamma_{\ol\lambda}$ will be a $\C^2$ hypersurface, and therefore, by continuity, it will have constant principal curvature $H_{\ol\lambda}:=\lim_{\lambda\to\ol\lambda} H_\lambda$. Since $\Gamma_{\ol\lambda}$ is a closed hypersurface, $H_{\ol\lambda}>0$. So $\Gamma_{\ol\lambda}$
has positive curvature, which is not possible if $\ol\lambda<\lambda_1$. Thus we conclude that $\ol\lambda=\lambda_1$, which establishes \eqref{eq:Rlambda} as claimed. 
 
 Next note that if the principal curvatures of $\Gamma_\lambda$ remain uniformly bounded above, for $\lambda<\lambda_1$, then $\Gamma_{\lambda_1}$ is a $\C^{1,1}$ hypersurface by Lemma \ref{lem:C11converge}, which is not possible, since $\lambda_1=\textup{reach}(\Gamma)$. So some principal curvature of $\Gamma_\lambda$ must blow up, as $\lambda\to\lambda_1$. But $\Gamma_\lambda$ has constant principal curvatures. Thus all principal curvatures of $\Gamma_\lambda$ blow up. By Gauss's equation, then all sectional curvatures of $\Gamma_\lambda$ blow up. Consequently, by Bonnet-Myers theorem, diameter of $\Gamma_\lambda$ converges to zero. In other words, $\Gamma_\lambda$ collapses to a point, say $x_0$, as $\lambda\to\lambda_1$. 
So $\Gamma=\partial B_{\lambda_1}$ or $\Omega=B_{\lambda_1}$, a geodesic ball of radius $\lambda_1$ centered at $x_0$. Furthermore, the condition $R_{\ell n \ell n}=0$ now means that  along each geodesic segment which connects $x_0$ to $\partial B_{\lambda_1}$, the sectional curvatures of $M$ with respect to the planes tangent to that geodesic vanish. Thus by Lemma \ref{lem:radial-curvature} all sectional curvatures of $B_{\lambda_1}$ vanish. So $\Omega$ is a Euclidean ball as claimed.

%%%%%%%%%%%%%%%%%%%%%%%%%%%%%%%%%%%%%%%%%%
\appendix
 \section{Smoothing the Distance Function}\label{sec:inf-convolution}
 %%%%%%%%%%%%%%%%%%%%%%%%%%%%%%%%%%%%%%%%%%
 
In this section we discuss how to smooth the (signed) distance function $\widehat{d}_\Gamma$ of a hypersurface $\Gamma$ in a Riemannian manifold $M$ via inf-convolution. We also derive some basic estimates for the derivatives of the smoothing via the associated proximal maps. For $t>0$, the
\emph{inf-convolution} (or more precisely  \emph{Moreau envelope} or \emph{Moreau-Yosida regularization}) of a function $u\colon M\to\R$ is given by
\be \label{eq:infcov-def}
\tilde u^{\,t}(x):=\inf_y\left\{ u(y)+\frac{d^2(x,y)}{2t}\right\}.
\ee
It is well-known that $\tilde u^{\,t}$ is the unique viscosity solution of the Hamilton-Jacobi equation $f_t+(1/2)|\nabla f|^2=0$ for functions $f\colon \R\times M\to\R$ satisfying the initial condition $f(0,x)=u(x)$. Furthermore, when $M=\R^n$, $\tilde u^{\,t}$ is characterized by the fact that its epigraph is the Minkowski sum of the epigraphs of $u$ and $|\cdot|^2/(2t)$ \cite[Thm. 1.6.17]{schneider2014}. 
The following properties are well-known,
\be\label{eq:semigroup}
\tilde{(\tilde u^{\,t})}^s=\tilde u^{\,t+s},\quad\quad\;\;\text{and}\quad\quad\;\; \tilde{\lambda u}^{\,t}=\lambda \tilde u^{\lambda t},
\ee
e.g., see \cite[Prop. 12.22]{bauschke-combettes2017}. A simple but highly illustrative example of inf-convolution occurs when it is applied to $\rho(x):=d(x_0,x)$, the distance from a single point $x_0\in M$. Then  
\be\label{eq:tilde-rho}
\tilde\rho^{\,t}(x)=
\left \{ 
\begin{array}{ll}
\rho^2(x)/(2t), &\,\, \text{if $\rho(x) \leq t$},\\
\rho(x)-t/2, & \,\,\text{if $\rho(x)>t$} ,
\end{array}
\right.
\ee
which is known as the \emph{Huber function}; see Figure \ref{fig:huber} which shows the graph of $\tilde\rho^{\,t}$ when $M=\R$ and $x_0=0$.
\begin{figure}[h]
\centering
\begin{overpic}[height=1.25in]{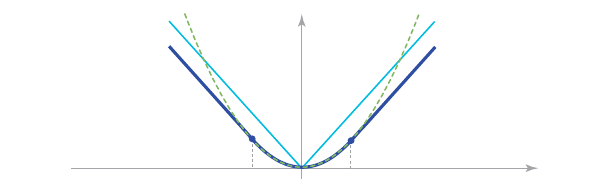}
\put( 58.25, 0.25){\Small $t$}
\put( 39, 0.25){\Small $-t$}
\put( 73, 27.5){\Small $\rho$}
\put( 73, 22.5){\Small $\tilde\rho^{\,t}$}
\put( 65.5, 28){\Small $\frac{\rho^{2}}{2t}$}
\put( 49.25, -0.5){\Small $x_0$}
\put( 88, 0.5){\Small $x$}
\end{overpic}
\caption{}\label{fig:huber}
\end{figure}
 Note that $\tilde\rho^{\,t}$ is $\C^{1,1}$ and convex, $\inf (\tilde\rho^{\,t})=\inf(\rho)$,  $|\nabla \tilde\rho^{\,t}|\leq 1$ everywhere, $|\nabla \tilde\rho^{\,t}|=1$ when $\rho>t$, and $|\nabla^2\tilde\rho^{\,t}|\leq C/t$. Remarkably enough, all these properties are shared by the inf-convolution of $\widehat{d}_\Gamma$ when $\Gamma$ is $d$-convex, as we demonstrate below.

Some of the following observations are well-known or easy to establish in $\R^n$ or even Hilbert spaces \cite{bauschke-combettes2017,cannarsa-sinestrari2004}.  In the absence of a linear structure, however, finer methods are required to examine the inf-convolution on Riemannian manifolds, especially with regard to its differential properties \cites{fathi2003, bernard2007,azagra-ferrera2006,azagra-ferrera2015,bacak2014}. First let us record that, by \cite[Cor. 4.5]{azagra-ferrera2006}:

\begin{lemma}[\cites{azagra-ferrera2006}]\label{lem:infconv0}
Let $u$ be a convex function on a Cartan-Hadamard manifold. Then for all $t>0$ the following properties hold:
\begin{enumerate}[(i)]
\item{$\tilde u^{\,t}$ is $\C^1$ and convex.}
\item{$t\mapsto\tilde u^{\,t}(x)$
is nonincreasing, and 
$
\lim_{t\goto 0} \tilde u^{\,t}(x)=u(x).
$}
\item{$\textup{inf}(\tilde u^{\,t})=\textup{inf}(u)$, and minimum points of $\tilde{u}^{\,t}$ coincide with those of $u$.}
\end{enumerate}
\end{lemma}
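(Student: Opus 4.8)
This lemma is quoted from Azagra-Ferrera \cite[Cor. 4.5]{azagra-ferrera2006}, and the plan is to indicate how each part follows from convex analysis adapted to nonpositive curvature. Items (ii) and (iii) are essentially order-theoretic. Taking $y=x$ in \eqref{eq:infcov-def} gives $\tilde u^{\,t}(x)\le u(x)$, while $u(y)+d^2(x,y)/(2t)\ge\inf(u)$ for every $y$ gives $\tilde u^{\,t}(x)\ge\inf(u)$; combining these yields $\inf(\tilde u^{\,t})=\inf(u)$, shows that every minimizer of $u$ is a minimizer of $\tilde u^{\,t}$, and, reading the equalities backwards, shows that a minimizer $x_0$ of $\tilde u^{\,t}$ forces $u(y_t(x_0))=\inf(u)$ and $d(x_0,y_t(x_0))=0$, hence $x_0$ minimizes $u$. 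Monotonicity of $t\mapsto\tilde u^{\,t}(x)$ is immediate since $1/(2t)$ is decreasing. For $\tilde u^{\,t}(x)\to u(x)$, I would first record that a finite convex function is locally Lipschitz and that convexity propagates the bound $u(y)\ge u(x)-L\,d(x,y)$ to all $y\in M$, where $L$ is the local Lipschitz constant of $u$ at $x$ (this also shows $\tilde u^{\,t}(x)$ is finite); feeding this into $u(y_t)+d^2(x,y_t)/(2t)=\tilde u^{\,t}(x)\le u(x)$ gives $d(x,y_t)\le 2tL$, and then $u(x)\ge\tilde u^{\,t}(x)\ge u(y_t)\ge u(x)-2tL^{2}\to u(x)$.

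The substance is in (i). For convexity, the plan is to note that $M\times M$ is again a Cartan-Hadamard manifold, that $(x,y)\mapsto u(y)$ is convex there (geodesics in a product project to geodesics in each factor), and that $(x,y)\mapsto d^2(x,y)$ is jointly convex, since $d$ is convex on $M\times M$ \cite[Prop. 2.2]{bridson-haefliger1999} and $s\mapsto s^{2}$ is convex and nondecreasing on $[0,\infty)$. Hence $F(x,y):=u(y)+d^2(x,y)/(2t)$ is jointly convex, and the partial infimum $x\mapsto\inf_y F(x,y)$ of a jointly convex function is convex along geodesics (pick near-minimizers over the two endpoints, interpolate along a geodesic in $M\times M$, and apply joint convexity). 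Thus $\tilde u^{\,t}$ is convex.

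For the $\C^1$ regularity, the key geometric input is that $y\mapsto d^2(x,y)/(2t)$ is strongly convex on a Cartan-Hadamard manifold: its Hessian in $y$ is at least $\tfrac1t$ times the metric, by the standard Hessian comparison for $d^2$ under nonpositive curvature. So $F(x,\cdot)$ is strongly convex and attains its infimum at a unique proximal point $y_t(x)$, and a uniqueness-plus-compactness argument shows $x\mapsto y_t(x)$ is continuous. The first-variation computation, using $\nabla_x\big(\tfrac12 d^2(x,y)\big)=-\exp_x^{-1}(y)$, then identifies $-\tfrac1t\exp_x^{-1}(y_t(x))$ as an element of the subdifferential of $\tilde u^{\,t}$ at $x$, while testing \eqref{eq:infcov-def} at $y=y_t(x)$ from nearby base points supplies the matching upper estimate; hence $\tilde u^{\,t}$ is differentiable everywhere with $\nabla\tilde u^{\,t}(x)=-\tfrac1t\exp_x^{-1}(y_t(x))$. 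Since a convex function whose subdifferential is single-valued on an open set is automatically $\C^1$ there (by upper semicontinuity of the subdifferential), this gives (i). I expect the main obstacle to be making the first-variation step rigorous, because a priori $x\mapsto y_t(x)$ is only continuous and one cannot naively differentiate under the infimum; the clean way around this is the two-sided subdifferential estimate just sketched, or, failing that, a direct appeal to \cite[Cor. 4.5]{azagra-ferrera2006}.
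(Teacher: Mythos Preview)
The paper does not prove this lemma at all: it is stated with attribution to Azagra--Ferrera \cite[Cor.~4.5]{azagra-ferrera2006} (with a pointer to \cite{bacak2014} for part~(i)) and then simply used. Your proposal therefore already goes beyond what the paper does, and your sketch of (ii), (iii), and the convexity half of (i) is correct and standard.

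One small clarification on the $\C^1$ argument. The barrier $z\mapsto u(y_t(x))+d^2(z,y_t(x))/(2t)$ touches $\tilde u^{\,t}$ from \emph{above} at $x$, so its gradient $-\tfrac1t\exp_x^{-1}(y_t(x))$ is a \emph{super}gradient of $\tilde u^{\,t}$, not an element of the subdifferential. The subdifferential side comes for free from the convexity of $\tilde u^{\,t}$ that you have already established. Existence of a supergradient together with a nonempty subdifferential forces differentiability with the common value, and continuity of $x\mapsto y_t(x)$ then yields $\C^1$. Your ``first-variation'' step and your ``matching upper estimate'' are really the same upper-barrier observation; the missing lower side is supplied by convexity, not by a separate computation. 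This is exactly the mechanism the paper itself invokes later in Lemma~\ref{lem:prox}(ii), following \cite{azagra-ferrera2006}.
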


See also \cite[Ex. 2.8]{bacak2014} for part (i) above.  Next let us rewrite \eqref{eq:infcov-def} as
$$
\tilde u^{\,t}(x)=\inf_y F(y),  \quad\quad\quad F(y)=F(x,y):=u(y) +\frac{d^2(x,y)}{2t}.
$$
Since $d^2(x,y)$ is strongly convex and $u$ is convex, $F(y)$ is strongly convex and thus its infimum is achieved at a unique point 
$$
x^*:=\textup{prox}^u_{t}(x),
$$
which is called the \emph{proximal point} \cite{bauschke-combettes2017} or \emph{resolvent} \cite{bacak2014} of $\tilde u^{\,t}$ at $x$. In other words,
 $$
 \tilde u^{\,t}(x)=F\big(x^*). 
 $$ 
  
The next estimate had been observed earlier \cite[Prop. 2.1]{azagra-ferrera2015} for $2tL$.
\begin{lemma} \label{lem:infconv2} 
Let $u$ be an  $L$-Lipschitz function on a Riemannian manifold. Then 
$$
d(x, x^*)\leq tL.
$$
\end{lemma}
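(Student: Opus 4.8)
The plan is to exploit the defining minimality property of the proximal point $x^* = \operatorname{prox}^u_t(x)$, namely that
$$
u(x^*) + \frac{d^2(x,x^*)}{2t} \;\leq\; u(y) + \frac{d^2(x,y)}{2t}
$$
for every $y\in M$. The natural competitor is $y = x$ itself, since $d(x,x)=0$. Substituting $y=x$ yields
$$
u(x^*) + \frac{d^2(x,x^*)}{2t} \;\leq\; u(x),
$$
so that $\dfrac{d^2(x,x^*)}{2t} \leq u(x) - u(x^*)$.

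Now I would invoke the $L$-Lipschitz hypothesis on $u$: by Lemma~\ref{lem:lipschitz}-type reasoning (here directly from the definition of $L$-Lipschitz), $u(x) - u(x^*) \leq L\, d(x,x^*)$. Combining the two inequalities gives
$$
\frac{d^2(x,x^*)}{2t} \;\leq\; L\, d(x,x^*),
$$
and if $d(x,x^*) > 0$ we may divide through by $d(x,x^*)$ to obtain $d(x,x^*) \leq 2tL$. (If $d(x,x^*)=0$ the bound is trivial.) This already gives the weaker estimate $d(x,x^*)\le 2tL$ quoted in \cite{azagra-ferrera2015}; the improvement to the sharp constant $tL$ claimed in the lemma requires one more observation.

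To get the factor $tL$ rather than $2tL$, the idea is to not throw away the term $u(x)-u(x^*)$ so crudely, but instead to also test minimality from the other side, or to use a more symmetric comparison. One clean way: let $\gamma$ be the unit-speed minimizing geodesic from $x$ to $x^*$, of length $\ell := d(x,x^*)$, and compare $x^*$ against the nearby competitor $y_s := \gamma(s)$ for $s\in[0,\ell]$. Minimality at $x^*=\gamma(\ell)$ gives $u(\gamma(\ell)) + \frac{(\ell-s)^2}{2t}\cdot(\text{correction}) \ge$ ... more efficiently, one observes that $s\mapsto u(\gamma(s)) + \frac{s^2}{2t}$ — wait, the relevant function is $s\mapsto u(\gamma(\ell-s)) + \frac{s^2}{2t}$, whose derivative at $s=\ell$ must be $\le 0$ (since the minimum over all $y$ is attained interior to the segment or at $\gamma(\ell)$, and moving toward $x$ cannot decrease $F$). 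Using that $u\circ\gamma$ is $L$-Lipschitz in its parameter, the derivative bound yields $\frac{\ell}{t} \le L$, i.e. $d(x,x^*)\le tL$. I would write this out carefully using the one-sided (Dini) derivative, since $u$ need only be Lipschitz, not differentiable.

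\medskip

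The main obstacle I anticipate is purely a regularity bookkeeping issue: $u$ is merely Lipschitz, so $u\circ\gamma$ is Lipschitz but not $C^1$, and the function $F\circ\gamma$ need not be differentiable at the minimizing parameter. The fix is to argue with one-sided difference quotients: for $0<h<\ell$,
$$
u(\gamma(\ell)) + \frac{\ell^2}{2t}\ \text{(schematically)} \;\le\; u(\gamma(\ell-h)) + \frac{(\ell-h)^2}{2t},
$$
which after rearranging and using $u(\gamma(\ell-h)) - u(\gamma(\ell)) \le Lh$ gives $\frac{\ell^2 - (\ell-h)^2}{2t} \le Lh$, hence $\frac{2\ell h - h^2}{2t}\le Lh$, and letting $h\to 0^+$ yields $\frac{\ell}{t}\le L$. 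The only thing to be careful about is that $\gamma$ really is a minimizing geodesic and that $d(x,\gamma(s)) = s$ along it, which holds by definition of a minimizing geodesic on a Riemannian manifold; everything else is elementary and requires no curvature assumption.
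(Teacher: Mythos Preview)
Your argument is correct and is essentially the same as the paper's: both test the minimality of $F$ at $x^*$ against a competitor on the geodesic segment $xx^*$ slightly closer to $x$, and use the $L$-Lipschitz bound to force $d(x,x^*)/t\le L$. The paper packages this as a proof by contradiction (assume $d(x,x^*)\ge(1+\e)tL$, pick $x'$ on the geodesic with $d(x^*,x')=\e tL$, and show $F(x')<F(x^*)$), whereas your final paragraph does the equivalent limiting difference-quotient computation directly; your version is arguably cleaner, though the exposition in the middle (``wait\ldots'', ``schematically'') should be tidied up before submission.
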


\begin{proof} 
Suppose, towards a contradiction, that $d(x^*, x)>tL$. Then there exists an $\epsilon>0$ such that
$$
d(x^*, x)\geq (1+\e)tL.
$$
 Choose a point $x'$ on the geodesic segment between $x$ and $x^*$ with 
 $$
 d(x, x')=d(x,x^*)-\e tL.
 $$
  Since $\epsilon$ may be chosen arbitrarily small, we may assume that $x'$ is arbitrarily close to $x^*$. Thus by the local $L$-Lipschitz assumption, $u(x')-u(x^*)\leq Ld(x^*,x')$. Consequently,
\begin{eqnarray*}
 F(x')-F(x^*)
 &=& u(x')-u(x^*)+\frac{d^2(x, x')-d^2(x, x^*)}{2t}\\
&\leq& Ld(x^*,x')+ \frac{ (d(x,x')-d(x, x^*))(d(x,x')+d(x, x^*))}{2t}\\
&\leq& Ld(x^*,x')-d(x^*,x') \,\, \frac{d(x,x')+d(x,x^*)}{2t}\\
&=&d(x^*,x') \left(L- \frac{2d(x,x^*)-d(x^*,x')}{2t}\right)\\
&\leq& d(x^*,x')\left(L- \frac{2(1+\e)tL-d(x^*,x')}{2t}\right)\\
&=&d(x^*, x')\left(\frac{d(x^*,x')-2\e tL}{2t}\right)\;\;=\;\;-\frac12 \e^2 tL.
\end{eqnarray*}
So $F(x')<F(x^*)$ which contradicts the minimality of $x^*$, and completes the proof.
\end{proof}

Part (i) below, which shows that the proximal map is \emph{nonexpansive}, is well-known \cites{bacak2014}, and part (ii) follows from \cite[Prop. 3.7]{azagra-ferrera2006}. Recall that
$
d_x(\,\cdot\,):=d(x,\,\cdot\,).
$
 
 \begin{lemma}[\cites{bacak2014,azagra-ferrera2006}]\label{lem:prox} 
 Let $u$ be a convex function  on a Cartan-Hadamard manifold. Then
 \begin{enumerate}[(i)]
\item{$d(x^*_1, x^*_2) \leq d(x_1,x_2),$}
 \item{If $x^*$ is a regular point of $u$, then
 $$
 \nabla u(x^*)=-\frac{d(x,x^*)}t \nabla d_x(x^*),\quad\quad\text{and} \quad\quad
 \nabla \tilde u^{\,t}(x)=\frac{d(x,x^*)}t \nabla d_{x^*}(x).
$$ }
  \item{$\nabla u (x^*)$ and $\nabla \tilde u^{\,t} (x)$ are tangent  to the geodesic connecting  $x^*$ to $x$, and 
  $$
  |\nabla u (x^*)|=|\nabla \tilde u^{\,t} (x)|.
  $$}
  \item{If $u$ is $L$-Lipschitz, then so is $\tilde u^{\,t}$.}
 \end{enumerate}
 \end{lemma}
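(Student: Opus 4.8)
The plan is to take (i) and (ii) from the Hadamard-space literature already cited and then read off (iii) and (iv) as elementary consequences of (ii) together with the bounds in Lemmas~\ref{lem:infconv2} and~\ref{lem:infconv0}.

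For (i): the assignment $x\mapsto x^*$ is the resolvent $\textup{prox}^u_t$ of the convex function $u$, and its nonexpansiveness is a standard fact in Hadamard spaces \cite{bacak2014}. If one prefers a self-contained argument, I would proceed as follows. Since $\frac1{2t}d^2(x_i,\,\cdot\,)$ is $(1/t)$-strongly convex along geodesics of $M$ (a standard $\textup{CAT}(0)$ inequality) and $u$ is convex, the function $F_i(y):=u(y)+\frac1{2t}d^2(x_i,y)$ is $(1/t)$-strongly convex, so its unique minimizer $x_i^*$ satisfies $F_i(y)\ge F_i(x_i^*)+\frac1{2t}d^2(x_i^*,y)$ for all $y$. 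Putting $y=x_2^*$ in the estimate for $F_1$ and $y=x_1^*$ in the estimate for $F_2$ and adding, the $u$-terms cancel and one is left with a four-point inequality among the squared distances of $x_1,x_2,x_1^*,x_2^*$; invoking the $\textup{CAT}(0)$ comparison inequality of \cite{bridson-haefliger1999} then reduces this to $d(x_1^*,x_2^*)\le d(x_1,x_2)$.

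For (ii): assume $x^*$ is a regular point of $u$. Since $\nabla u(x^*)\ne 0$ we must have $x^*\ne x$ (otherwise $F(\,\cdot\,):=u(\,\cdot\,)+\frac1{2t}d^2(x,\,\cdot\,)$ would have a critical point at $x$ with $\nabla_y d^2(x,y)|_{y=x}=0$, forcing $\nabla u(x)=0$), so $d^2(x,\,\cdot\,)$ is smooth near $x^*$. The first-order condition $\nabla F(x^*)=0$ combined with $\nabla_y d^2(x,y)|_{y=x^*}=2d(x,x^*)\nabla d_x(x^*)$ gives $\nabla u(x^*)=-\frac{d(x,x^*)}{t}\nabla d_x(x^*)$. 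For the second identity I would use that $\tilde u^{\,t}(\,\cdot\,)\le F(\,\cdot\,,x^*):=u(x^*)+\frac1{2t}d^2(\,\cdot\,,x^*)$ on $M$ with equality at $x$; thus $F(\,\cdot\,,x^*)-\tilde u^{\,t}$ attains its minimum $0$ at $x$, and since both functions are differentiable at $x$ (the former because $x\ne x^*$, the latter by Lemma~\ref{lem:infconv0}(i)) their gradients at $x$ agree, yielding $\nabla\tilde u^{\,t}(x)=\nabla_x F(x,x^*)=\frac{d(x,x^*)}{t}\nabla d_{x^*}(x)$; this is also \cite[Prop.~3.7]{azagra-ferrera2006}.

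Parts (iii) and (iv) then follow quickly. Since $\nabla d_x(x^*)$ and $\nabla d_{x^*}(x)$ are the unit tangents, at $x^*$ and at $x$ respectively, of the (unique) minimizing geodesic joining $x$ and $x^*$, formula (ii) shows both $\nabla u(x^*)$ and $\nabla\tilde u^{\,t}(x)$ are tangent to that geodesic; and because $|\nabla d_x|\equiv 1\equiv|\nabla d_{x^*}|$ off the respective base points (Lemma~\ref{lem:CS}), both have norm $d(x,x^*)/t$, giving $|\nabla u(x^*)|=|\nabla\tilde u^{\,t}(x)|$. For (iv), note the derivation of $\nabla\tilde u^{\,t}(x)$ just given used only $x\ne x^*$, not regularity of $u$ at $x^*$; hence $|\nabla\tilde u^{\,t}(x)|=d(x,x^*)/t$ wherever $x\ne x^*$ and $\nabla\tilde u^{\,t}(x)=0$ where $x=x^*$. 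If $u$ is $L$-Lipschitz then $d(x,x^*)\le tL$ by Lemma~\ref{lem:infconv2}, so $|\nabla\tilde u^{\,t}|\le L$ everywhere, and integrating along minimizing geodesics (valid since $\tilde u^{\,t}$ is $\C^1$ by Lemma~\ref{lem:infconv0}(i)) shows $\tilde u^{\,t}$ is $L$-Lipschitz; alternatively, one avoids (i)--(ii) here by substituting the minimizer $x_1^*$ for $x_1$ into the infimum defining $\tilde u^{\,t}(x_2)$ and using the triangle inequality plus $d(x_1,x_1^*)\le tL$ to get $|\tilde u^{\,t}(x_1)-\tilde u^{\,t}(x_2)|\le L\,d(x_1,x_2)+d(x_1,x_2)^2/(2t)$, then subdividing the geodesic from $x_1$ to $x_2$ into $N$ equal arcs and letting $N\to\infty$. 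I expect the one genuinely nontrivial point to be (i): transporting the Hilbert-space proof of nonexpansiveness of the resolvent to the Riemannian setting requires the $\textup{CAT}(0)$ four-point comparison in place of the parallelogram law, which is exactly why we lean on \cite{bacak2014} (or, in the self-contained route, on \cite{bridson-haefliger1999}); everything after that is bookkeeping with the gradient formula.
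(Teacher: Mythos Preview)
Your proof is correct and follows essentially the same line as the paper: (i) is cited from \cite{bacak2014}, (ii) is obtained from the first-order condition $\nabla F(x^*)=0$ together with the touching argument $\tilde u^{\,t}(\cdot)\le u(x^*)+\tfrac1{2t}d^2(\cdot,x^*)$ with equality at $x$ (this is exactly the paper's proof, which also credits \cite[Prop.~3.7]{azagra-ferrera2006}), and (iii) follows from (ii) via Lemma~\ref{lem:CS}. The one noteworthy variation is your argument for (iv): the paper deduces $|\nabla\tilde u^{\,t}(x)|=|\nabla u(x^*)|\le L$ from (iii), which tacitly needs $u$ to be differentiable at $x^*$ for a.e.\ $x$; your route through $|\nabla\tilde u^{\,t}(x)|=d(x,x^*)/t\le L$ via Lemma~\ref{lem:infconv2} avoids that regularity issue and is a bit cleaner.
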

\begin{proof} 
For part (i)  see  \cite[Thm. 2.2.22]{bacak2014}. For part (ii) note that by definition $F(y)\geq F(x^*)$. Furthermore,   $x^*$ is a regular point of $F$, since by assumption $x^*$ is a regular point of $u$. Consequently, 
 $$
0= \nabla F(x^*)= \nabla_y F(x,y)\big|_{y=x^*}=\nabla u(x^*)+\frac{d(x,x^*)}t \nabla d_x(x^*),
 $$
which yields the first equality in (ii). 
Next we prove the second inequality in (ii)  following \cite[Prop. 3.7]{azagra-ferrera2006}. 
To this end note that 
\begin{eqnarray*}
\tilde u^{\,t}(z)&=& \inf_y F(z,y)\leq F(z,x^*)= u(x^*)+\frac{d^2(z,x^*)}{2t},\\
\tilde u^{\,t}(x)&=&  \inf_y F(x,y)=F(x,x^*)=u(x^*)+ \frac{d^2(x,x^*)}{2t}.
\end{eqnarray*}
So it follows that
$$
\tilde u^{\,t}(z)-\frac{d(z,x^*)^2}{2t}\leq u(x^*)=\tilde u^{\,t}(x)-\frac{d^2(x,x^*)}{2t}.
$$
Hence $g(\cdot):=\tilde u^{\,t}(\cdot)-d(\cdot,x^*)^2/(2t)$ achieves its maximum at $x$. Further note that $g$ is $\C^1$ since
$\tilde u^{\,t} $ is $\C^1$  by Lemma \ref{lem:infconv0}. Thus 
$$
0=\nabla g(x)=  \nabla \tilde u^{\,t}(x)-\frac{d(x,x^*)}t \nabla d_{x^*}(x),
$$
which yields the second equality in (ii). Next, to establish (iii), let $\alpha\colon[0, s_0]\to M$ be the  geodesic with $\alpha(0)=x^*$ and $\alpha (s_0)=x$. Then, by Lemma \ref{lem:CS}, 
$$
\nabla d_x(x^*)=-\alpha'(0),\quad\quad\text{and}\quad\quad\nabla d_{x^*}(x)=\alpha'(s_0).
$$
So $\nabla u(x^*)$ and $\nabla \tilde u^t(x)$ are tangent to $\alpha$ and 
$$
|\nabla u(x^*)|=\left|\frac{d(x,x^*)}{t}\right|=|\nabla \tilde u^t(x)|
$$ 
as desired. Finally, to establish (iv), note that if $u$ is $L$-Lipschitz, then $|\nabla u|\leq L$ almost everywhere. Thus by part (iii), $|\nabla\tilde u^t(x)|=|\nabla u(x^*)|\leq L$ for almost every $x\in M$. So $\tilde u^t$ is $L$-Lipschitz.
\end{proof}

Recall that we say  a function $u\colon M\to\R$ is \emph{locally $\C^{1,1}$} provided that it is $\C^{1,1}$ in some choice of local coordinates around each point.  There are other notions of $\C^{1,1}$ regularity  \cites{fathi2003,azagra-ferrera2015} devised in order to control  the Lipschitz constant; however, all these definitions yield the same class of locally $\C^{1,1}$ functions; see \cite{azagra-ferrera2015}.
The $\C^{1,1}$ regularity of functions is closely related to the more robust notion of semiconcavity which is defined as follows. We say that $u$ is \emph{$C$-semiconcave}  (or is uniformly semiconcave with a constant $C$) on a set $\Omega\subset M$ provided that there exists a constant $C>0$ such that for every $x_0\in \Omega$, the function
\be\label{eq:f}
x\;\mapsto\; u(x)-Cd^2(x,x_0)
\ee
 is concave on $\Omega$. Furthermore, we say $u$ is \emph{$C$-semiconvex}, if $-u$ is semiconcave.

\begin{lemma}[\cites{cannarsa-sinestrari2004,azagra-ferrera2015}]\label{lem:convex-concave}
If a function $u$ on a Riemannian manifold is  both $C/2$-semiconvex and $C/2$-semiconcave on some bounded domain $\Omega$, then  it is locally $\C^{1,1}$ on $\Omega$. Furthermore $|\nabla^2 u|\leq C$ almost everywhere on $\Omega$.
\end{lemma}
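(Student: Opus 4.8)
The plan is to reduce the statement to a two-sided first-order Taylor estimate for $u$ at each point, and then read off the conclusions. The argument is local, so I would fix $x\in\Omega$ and a small geodesically convex neighborhood $W\subset\Omega$ of $x$ on which $\exp_x^{-1}$ is a diffeomorphism and $d^2(\,\cdot\,,x)$ coincides with $|\exp_x^{-1}(\,\cdot\,)|^2$. By the standard support-function characterizations of semiconcavity (see \cite{cannarsa-sinestrari2004} and \cite{azagra-ferrera2015}), $C/2$-semiconcavity of $u$ yields, at $x$, a covector $\xi\in T_xM$ so that $\psi_x(y):=u(x)+\langle\xi,\exp_x^{-1}(y)\rangle+\tfrac{C}{2}\,d^2(y,x)$ is a $\C^\infty$ "paraboloid" lying above $u$ near $x$ with $\psi_x(x)=u(x)$; symmetrically, $C/2$-semiconvexity of $u$ gives $\eta\in T_xM$ and a lower support $\phi_x(y):=u(x)+\langle\eta,\exp_x^{-1}(y)\rangle-\tfrac{C}{2}\,d^2(y,x)$. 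Since $\nabla d^2(\,\cdot\,,x)$ vanishes at $x$, these have gradients $\xi$ and $\eta$ at $x$, and from $\psi_x\ge\phi_x$ with equality at $x$ one gets $\langle\xi-\eta,v\rangle+C|v|^2\ge0$ for all small $v=\exp_x^{-1}(y)$; choosing $v$ in the direction $-(\xi-\eta)$ forces $\xi=\eta$. Hence $u$ is differentiable at $x$ with $\nabla u(x)=\xi$, and the sandwich $\phi_x\le u\le\psi_x$ becomes the uniform bound $|u(y)-u(x)-\langle\nabla u(x),\exp_x^{-1}(y)\rangle|\le\tfrac{C}{2}\,d^2(x,y)$ for $y$ near $x$.

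Next I would transfer this estimate to a fixed local chart around $x$, where it turns into the classical quantitative second-order Taylor bound $|v(y)-v(x)-Dv(x)\!\cdot\!(y-x)|\le C'|y-x|^2$; it is a standard fact that such a bound is equivalent to $Dv$ being locally Lipschitz, so $u$ is locally $\C^{1,1}$ on $\Omega$. Equivalently, one can simply invoke the known fact that a function which is simultaneously semiconvex and semiconcave is locally $\C^{1,1}$, as recorded in \cite{cannarsa-sinestrari2004} and \cite{azagra-ferrera2015}.

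For the sharp bound $|\nabla^2u|\le C$ I would argue pointwise along geodesics rather than through the chart, so as not to pick up any dimensional or metric-distortion constant. Being locally $\C^{1,1}$, $u$ is twice differentiable at almost every $x\in\Omega$; fix such an $x$ and a unit vector $w\in T_xM$, and let $\gamma$ be the geodesic with $\gamma(0)=x$, $\gamma'(0)=w$. For $|s|$ small, $d^2(\gamma(s),x)=s^2$, so $C/2$-semiconvexity of $u$ with base point $x$ makes $s\mapsto u(\gamma(s))+\tfrac{C}{2}s^2$ convex near $0$; since $\gamma$ is a geodesic, $(u\circ\gamma)''(0)=\textup{Hess}_u(w,w)$, whence $\textup{Hess}_u(w,w)\ge-C$. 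The symmetric argument using $C/2$-semiconcavity gives $\textup{Hess}_u(w,w)\le C$. As $w$ was an arbitrary unit vector, $|\nabla^2u(x)|\le C$, and this holds almost everywhere on $\Omega$.

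I expect the only genuinely delicate points to be in the first step: citing correctly that semiconcavity provides a smooth upper-support paraboloid with the stated quadratic coefficient (and symmetrically a lower one for semiconvexity), and verifying that the two support paraboloids must agree to first order — i.e. that $u$ has no corner at $x$. Both are handled cleanly in normal coordinates at $x$, where $d^2(\,\cdot\,,x)$ is exactly $|\exp_x^{-1}(\,\cdot\,)|^2$ with vanishing first derivative at $x$, so that comparing the two supports reduces to the one-dimensional inequality above along the direction $\xi-\eta$. The passage from the uniform Taylor estimate to $\C^{1,1}$ regularity, and the a.e. twice-differentiability, are routine.
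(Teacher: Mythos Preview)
Your proposal is correct and follows essentially the same route the paper sketches: reduce the $\C^{1,1}$ regularity to the Euclidean result (the paper does this by noting semiconcavity is preserved under local charts and citing \cite[Cor.~3.3.8]{cannarsa-sinestrari2004}; you spell out the support-paraboloid sandwich explicitly), and then obtain the sharp bound $|\nabla^2 u|\le C$ by differentiating along geodesics at a.e.\ points of twice-differentiability. Your treatment is simply a more detailed version of the paper's one-paragraph outline.
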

The above fact is well-known in $\R^n$, see \cite[Cor. 3.3.8]{cannarsa-sinestrari2004} (note that the constant $C$ in the book of Cannarsa and Sinestrari \cite{cannarsa-sinestrari2004}  corresponds  to $2C$ in this work due to a factor of $1/2$ in their definition of semiconcavity.) The Riemannian analogue follows from the Euclidean case via local coordinates to obtain the $\C^{1,1}$ regularity (since semiconcavity is preserved under $\C^2$ diffeomorphisms),  and then differentiating along geodesics to estimate the Hessian, see the proof of \cite[Cor. 3.3.8]{cannarsa-sinestrari2004}, and using Rademacher's theorem. The above lemma has also been established in
 \cite[Thm 1.5]{azagra-ferrera2015}. The next observation, with a nonexplicit estimate for $C$, has been known \cite[Prop. 7.1(2)]{azagra-ferrera2015}. Here we provide another argument via Lemma \ref{lem:infconv2}.

\begin{proposition}\label{prop:C/t}
Suppose that $u$ is a convex function on a bounded domain $\Omega$ in a Riemannian manifold. Then for all $0<t\leq t_0$, $\tilde u^{\,t}$ is  $C/(2t)$-semiconcave on $\Omega$
for
\be\label{eq:C}
C\geq\sqrt{-K_0}\,3t_0L\, \coth\left(\sqrt{-K_0}\, 3t_0L\right), 
\ee
where $K_0$ is the lower bound for the curvature of $B_{t_0L}(\Omega)$, and $L$ is the Lipschitz constant of $u$ on $\Omega$.
 In particular, $\tilde u^{\,t}$ is locally $\C^{1,1}$, and
\be\label{eq:C/t}
|\nabla^2 \tilde u^{\,t}|\leq \frac{C}{t}
\ee
almost everywhere  on $\Omega$.
\end{proposition}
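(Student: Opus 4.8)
The plan is to produce, at each point of $\Omega$, an explicit upper support for $\tilde u^{\,t}$ built from the resolvent, with Hessian controlled by $C/t$, and then to conclude via Lemma \ref{lem:convex-concave}. First I would fix $t\in(0,t_0]$ and a point $x\in\Omega$, and let $x^*:=\textup{prox}^u_{t}(x)$ be the unique minimizer of $F(y)=u(y)+d^2(x,y)/(2t)$, so that $\tilde u^{\,t}(x)=u(x^*)+d^2(x,x^*)/(2t)$. Setting $\psi(z):=u(x^*)+d^2(z,x^*)/(2t)$, the definition \eqref{eq:infcov-def} of the inf-convolution gives immediately $\psi\geq\tilde u^{\,t}$ everywhere, with equality $\psi(x)=\tilde u^{\,t}(x)$; thus $\psi$ touches $\tilde u^{\,t}$ from above at $x$.

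Next I would estimate the Hessian of $\psi$ near $x$. By Lemma \ref{lem:infconv2}, applied with the Lipschitz constant $L$ of $u$, we have $d(x,x^*)\leq tL\leq t_0L$; hence $x^*$ lies within $t_0L$ of $\Omega$, and by the triangle inequality every point of a fixed-radius neighborhood of $x$ lies at distance at most $3t_0L$ from $x^*$ and inside a region on which the curvature is $\geq K_0$. The Hessian comparison theorem (the Riccati/Jacobi estimates already invoked in the paper) then yields, in the support sense — hence validly even across the cut locus of $x^*$ — that
$$
\nabla^2\!\left(\tfrac12 d^2(\cdot,x^*)\right)\;\leq\;\sqrt{-K_0}\,\rho\,\coth\!\big(\sqrt{-K_0}\,\rho\big)\,I,
$$
where $\rho:=d(\cdot,x^*)$ and the right-hand side is read as $I$ when $K_0=0$. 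Since $s\mapsto s\coth s$ is nondecreasing and $\rho\leq 3t_0L$ on the neighborhood in question, this gives $\nabla^2\psi\leq (C/t)\,I$ there, with $C$ exactly as in \eqref{eq:C}. Therefore $\tilde u^{\,t}$ is touched from above at every point of $\Omega$ by a function that is $C/(2t)$-semiconcave on a uniform neighborhood, which is precisely what it means for $\tilde u^{\,t}$ to be $C/(2t)$-semiconcave on $\Omega$.

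To finish, note that $u$ convex forces $\tilde u^{\,t}$ convex by Lemma \ref{lem:infconv0}(i), hence in particular $C/(2t)$-semiconvex; combining this with the semiconcavity just obtained and applying Lemma \ref{lem:convex-concave} (whose constant is then $C/t$) shows that $\tilde u^{\,t}$ is locally $\C^{1,1}$ on $\Omega$ with $|\nabla^2\tilde u^{\,t}|\leq C/t$ almost everywhere, i.e.\ \eqref{eq:C/t}. The hard part is the second step: one must set up the Hessian comparison bound in the barrier (support) sense so that $\psi$ legitimately serves as a uniformly semiconcave upper support even where $d^2(\cdot,x^*)$ is not twice differentiable, and must carry out the triangle-inequality bookkeeping that makes the resulting constant uniform in both $x\in\Omega$ and $t\in(0,t_0]$ — this is where the factor $3t_0L$ in \eqref{eq:C} comes from. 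The reduction from pointwise upper supports to semiconcavity on $\Omega$, and the exact matching of constants with Lemma \ref{lem:convex-concave}, are routine but should be written out with care.
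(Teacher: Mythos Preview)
Your proposal is correct and follows essentially the same strategy as the paper: localize the infimum via Lemma \ref{lem:infconv2}, bound $\nabla^2\!\big(\tfrac12 d^2(\cdot,x^*)\big)$ by Hessian comparison on a ball of radius $3t_0L$, combine with the convexity of $\tilde u^{\,t}$ from Lemma \ref{lem:infconv0}, and finish with Lemma \ref{lem:convex-concave}. The only presentational difference is that the paper verifies the definition \eqref{eq:f} of semiconcavity directly---writing $\tilde u^{\,t}(x)-\tfrac{C}{2t}d^2(x,x_0)$ as a local infimum over $y$ of $u(y)-\tfrac{1}{2t}\big(Cd^2(x,x_0)-d^2(x,y)\big)$ and checking each such function is concave---whereas you phrase the same computation via the single upper support $\psi(z)=u(x^*)+d^2(z,x^*)/(2t)$ at each point; the two are equivalent and the constants match.
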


\begin{proof}
Since by Lemma \ref{lem:infconv0}, $\tilde u^{\,t}$ is convex,  it is $C/(2t)$-semiconvex. Thus as soon as we show that $\tilde u^{\,t}$ is $C/(2t)$-semiconcave, $\tilde u^{\,t}$ will be $\C^{1,1}$ and \eqref{eq:C/t} will hold by Lemma \ref{lem:convex-concave}, which will finish the proof. 
To establish the semiconcavity of $\tilde u^{\,t}$ note that, by Lemma \ref{lem:infconv2},
$$
\tilde u^{\,t}(x)=\inf_{y\in B_{tL}(x)}\left(u(y)+\frac1{2t}d^2(x,y)\right).
$$
Let $C$ be as in \eqref{eq:C} and, according to \eqref{eq:f}, set
$$
f(x):=\tilde u^{\,t}(x)-\frac{C}{2t}d^2(x,x_0)=\inf_{y\in B_{tL}(x)}\left(u(y)- \frac1{2t}\Big(Cd^2(x,x_0)-d^2(x,y)\Big)\right).
$$
We have  to show that $f$ is concave on $\Omega$. 
To this end it suffices to show that $f$ is locally concave on $\Omega$, since a locally concave function is concave.
Indeed suppose that $f$ is locally concave on $\Omega$ and let $\alpha\colon[a,b]\to\Omega$ be a  geodesic. Then $-f\circ\alpha$ is locally convex. Thus, since $-f\circ\alpha$ is $\C^1$, $-(f\circ\alpha)'$ is nondecreasing, which yields that 
$-f\circ\alpha$ is convex \cite[Thm. 1.5.10]{schneider2014}.
Now, to establish that $f$ is locally concave on $\Omega$, set 
$$
r:=t_0L.
$$
We claim that $f$ is concave on $B_r(p)$, for all $p\in\Omega$. To see this first note that if $x\in B_{r}(p)$ then $B_{tL}(x)\subset B_{r}(x)\subset B_{2r}(p)$. So, for $x\in B_{r}(p)$,
\be\label{eq:C/2t}
f(x)=\inf_{y\in B_{2r}(p)}\left(u(y)- \frac1{2t}\Big(Cd^2(x,x_0)-d^2(x,y)\Big)\right).
 \ee
Since the infimum of a family of concave functions is concave, it suffices to check that the functions on the right hand side of \eqref{eq:C/2t} are concave on $B_{2r}(p)$ for each $y$. So we need to show  that 
$$
g(x):=\frac{1}{2}\big(Cd^2(x,x_0)- d^2(x,y)\big)
$$
is convex on $B_{2r}(p)$ for each $y$. To this end note that the  eigenvalues of $\nabla^2d^2_{x_0}(x)/2$ are bounded below by $1$ \cite[Thm. 6.6.1]{jost2017}. Furthermore, since $x\in B_{r}(p)$, and $y\in B_{2r}(p)$, we have $x\in B_{3r}(y)$.
Thus the eigenvalues of $\nabla ^2 d^2_y(x)/2$  are bounded above by
$$
\lambda:=\sqrt{-K_0}\,3r \coth\left(\sqrt{-K_0}\, 3r\right),
$$
by \cite[Thm. 6.6.1]{jost2017}.
So the eigenvalues of $\nabla^2g$ on $B_{2r}(p)$ are bounded below by $C-\lambda$. Hence $g$ is convex on $B_{2r}(p)$ if $C\geq \lambda$, which is indeed the case by \eqref{eq:C}. So $f$ is concave on $B_{2r}(p)$ which completes the proof.
\end{proof}

\begin{proposition}\label{prop:infconv} 
Let $\Gamma$ be a closed hypersurface in a Cartan-Hadamard manifold $M$ and set $u:=\widehat{d}_\Gamma$. Then 
\begin{enumerate}[(i)]
\item{$\tilde u^{\,t}=u-t/2$ on $M \setminus U_t(\textup{cut}(\Gamma))$.}
\item{$|\nabla \tilde u^{\,t}|\equiv1$ on $M\setminus U_t(\textup{cut}(\Gamma))$}.
\item{$|\nabla \tilde u^{\,t}|\leq1$ on $M$ if $\Gamma$ is $d$-convex.}
\end{enumerate}
\end{proposition}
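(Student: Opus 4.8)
The plan is to analyze the infimum defining $\tilde u^{\,t}$ and locate where the proximal point $x^*=\textup{prox}^u_t(x)$ lands. For part (i), fix $x\in M\setminus U_t(\textup{cut}(\Gamma))$, so $d_{\textup{cut}(\Gamma)}(x)>t$. By Lemma \ref{lem:infconv2}, since $u=\widehat{d}_\Gamma$ is $1$-Lipschitz (Lemma \ref{lem:lipschitz}), we have $d(x,x^*)\leq t$, hence $x^*$ also lies outside $\textup{cut}(\Gamma)$. First I would show that along the minimizing geodesic from $x$ to its unique footprint $x^\circ\in\Gamma$ — which exists and depends smoothly on the base point on $M\setminus\textup{cut}(\Gamma)$ by Lemma \ref{lem:widehatdC1} and Proposition \ref{prop:C11} — the signed distance function decreases at unit rate, i.e. $u$ restricted to that geodesic is affine with slope $1$. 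Then the one-variable problem $\inf_s\{u(\alpha(s))+ s^2/(2t)\}$ (where $\alpha$ is that geodesic parametrized by arclength away from $x$ toward $\Gamma$) reduces, exactly as in the Huber-function computation \eqref{eq:tilde-rho}, to minimizing $u(x)-s+s^2/(2t)$, attained at $s=t$ with value $u(x)-t/2$; one must also check no competitor $y$ off this geodesic does better, which follows because moving off the geodesic only increases $d(x,y)$ while $u(y)\geq u(x)-d(x,y)$ still holds. This gives $\tilde u^{\,t}(x)\leq u(x)-t/2$, and the reverse inequality $\tilde u^{\,t}(x)\geq u(x)-t/2$ comes from the $1$-Lipschitz bound $u(y)\geq u(x)-d(x,y)$ together with $\inf_r\{-r+r^2/(2t)\}=-t/2$. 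For part (ii), once $\tilde u^{\,t}=u-t/2$ on the open set $M\setminus U_t(\textup{cut}(\Gamma))$, and $u$ is $\C^1$ there with $|\nabla u|=1$ by Lemma \ref{lem:widehatdC1}, we immediately get $|\nabla\tilde u^{\,t}|=|\nabla u|\equiv 1$ on that set.

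For part (iii), I would invoke the convexity hypothesis: if $\Gamma$ is $d$-convex, then $u=\widehat{d}_\Gamma$ is convex on $\Omega$; combined with $u=d_\Gamma$ being convex on $M\setminus\Omega$ by Lemma \ref{lem:u-convex} (applied to the convex set $M\setminus\Omega$? — more carefully, one uses that $u$ is already globally $1$-Lipschitz, which is what actually matters here). In fact the cleanest route is: $u$ is $1$-Lipschitz on all of $M$ by Lemma \ref{lem:lipschitz}, hence by Lemma \ref{lem:prox}(iv) $\tilde u^{\,t}$ is $1$-Lipschitz on all of $M$, which is exactly $|\nabla\tilde u^{\,t}|\leq 1$ everywhere. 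Here the role of $d$-convexity is to guarantee, via Lemma \ref{lem:infconv0}, that $\tilde u^{\,t}$ is $\C^1$ so that the gradient is genuinely defined pointwise and the Lipschitz bound translates to a pointwise gradient bound — without convexity $\tilde u^{\,t}$ need only be differentiable almost everywhere, though even then $|\nabla\tilde u^{\,t}|\leq 1$ holds a.e.

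I expect the main obstacle to be part (i), specifically the verification that the infimum is genuinely attained along the single minimizing geodesic to the footprint and that no nearby competitor beats it — this requires knowing that on $M\setminus U_t(\textup{cut}(\Gamma))$ the geometry is that of a smooth tubular fibration (Proposition \ref{prop:C11}, Lemma \ref{lem:MM}) so that the only way to decrease $u$ by amount $s$ is to travel distance at least $s$, with equality exactly along the normal geodesic. The estimate $d(x,x^*)\le t$ from Lemma \ref{lem:infconv2} is essential to confine the analysis to the region where $u$ is smooth; once that localization is in place the computation is the Huber-function model verbatim. Parts (ii) and (iii) are then short corollaries, with (iii) resting on the $\C^1$ regularity from Lemma \ref{lem:infconv0}, which is where $d$-convexity enters.
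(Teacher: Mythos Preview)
Your plan is essentially sound and in places more economical than the paper's argument, but there is one concrete slip and one genuine methodological difference worth flagging.

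For part (i), your lower bound $\tilde u^{\,t}(x)\geq u(x)-t/2$ via the $1$-Lipschitz estimate $u(y)\geq u(x)-d(x,y)$ and $\inf_{r\geq 0}\{-r+r^2/(2t)\}=-t/2$ is clean and correct. The slip is in the upper bound: you describe the competitor geodesic as the one ``from $x$ to its unique footprint $x^\circ\in\Gamma$'' along which $u$ decreases. That is only right when $x\notin\Omega$; for $x\in\Omega$ (the case that actually matters, since $\textup{cut}(\Gamma)\subset\Omega$) the signed distance \emph{increases} toward $\Gamma$. The correct ray is the integral curve of $-\nabla u$ through $x$, which for $x\in\Omega$ points \emph{away} from the footprint, deeper into $\Omega$. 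Once you correct the direction, your argument goes through: the segment of length $t$ stays in $M\setminus\textup{cut}(\Gamma)$ because $d(x,\textup{cut}(\Gamma))\geq t$, and along it $u$ drops at unit rate (integral curves of $\nabla u$ are geodesics since $|\nabla u|\equiv 1$). The paper circumvents this directional bookkeeping by instead locating the minimum $x_0$ of $u$ on $\partial B_t(x)$ via a level-set argument and then using the first-order condition $\nabla F(x^*)=0$ to force $d(x,x^*)=t$; your upper/lower bound split is arguably more transparent once the direction is fixed.

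For part (iii) you take a genuinely different route from the paper. You invoke Lemma~\ref{lem:prox}(iv) (preservation of the Lipschitz constant) to get $|\nabla\tilde u^{\,t}|\leq 1$ directly, using $d$-convexity only to ensure $\tilde u^{\,t}$ is $\C^1$ so the bound is pointwise. The paper instead argues geometrically: it uses that $\tilde u^{\,t}$ is convex (Lemma~\ref{lem:infconv0}), runs a geodesic from $x$ in the direction $\nabla\tilde u^{\,t}(x)$, and compares $|\nabla\tilde u^{\,t}(x)|=(\tilde u^{\,t}\circ\alpha)'(0)$ with the derivative at a far-away point where part (ii) gives $|\nabla\tilde u^{\,t}|=1$; monotonicity of the derivative of a convex function does the rest. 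Both are valid; yours is shorter but leans on the black box Lemma~\ref{lem:prox}(iv), while the paper's argument is self-contained and makes explicit how (iii) is forced by (ii) together with convexity. Note that either way one is tacitly using global convexity of $u$ on $M$, not just on $\Omega$ --- this comes from $d$-convexity on $\Omega$ together with convexity of $d_{\cl(\Omega)}$ on the exterior (Lemma~\ref{lem:u-convex}).
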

\begin{proof}
Let $x\in M\setminus \cl(U_t(\textup{cut}(\Gamma)))$. Then $x^*$ is a regular point of $u$ by Lemma \ref{lem:infconv2}. So
$$
0=\nabla F(x^*)=\nabla u(x^*) +\frac{d(x,x^*)}{t} \nabla d_x(x^*).
$$
Since $|\nabla u(x^*)|=|\nabla d_x(x^*)|=1$, it follows that 
\be\label{eq:d-x-x*}
d(x,x^*)=t.
\ee
Furthermore, we obtain $\nabla u(x^*) =-\nabla d_x(x^*)$. But $\nabla d_x(x^*)$ is tangent to the geodesic which passes through $x^*$ and $x$, while $\nabla u(x^*)$ is tangent to the flow line of $\nabla u$ through $x^*$, which is also a geodesics. So $x^*$ lies on the geodesic $\alpha$, given by $\alpha(0)=x$ and $\alpha'(0)=\nabla u(x)$. Note that
$
u(\alpha(t))=u(x)+t.
$
Furthermore, by  \eqref{eq:d-x-x*}, either $x^*=\alpha(t)$ or $x^*=\alpha(-t)$. If $x^*=\alpha(-t)$, then 
$$
u(x^*)=u(\alpha(-t))=u(x)-t.
$$
Hence
$$
\tilde u^t(x)=F(x^*)=u(x^*)+\frac{t^2}{2t}=u(x)-\frac{t}{2},
$$
as desired. If on the other hand $x^*=\alpha(t)$, then a similar computation yields that $\tilde u^t(x)=u(x)+t/2>u(x)$, which is not possible.
So we have established part (i) of the proposition. To see part (ii) note that $|\nabla u|\equiv1$ on 
$M\setminus U_t(\textup{cut}(\Gamma))$. Thus by (i) $|\nabla \tilde u^t|\equiv|\nabla u|\equiv1$ on $M\setminus U_t(\textup{cut}(\Gamma))$. Finally, part (iii) follows immediately from part (iv) of Lemma \ref{lem:prox}.
\end{proof}

%%%%%%%%%%%%%%%%%%%%%%%%%%%%%%%%%%%%%%%%
\section{Cut Locus of Convex Hypersurfaces}\label{sec:projection}
%%%%%%%%%%%%%%%%%%%%%%%%%%%%%%%%%%%%%%%%
Recall that a hypersurface  is $d$-convex if its distance function is convex, as we discussed in Section \ref{sec:d-convex}. Here 
we will study the cut locus of $d$-convex hypersurfaces and establish the following result:

\begin{theorem}\label{thm:projection}
Let $\Gamma$ be a $d$-convex hypersurface in a Cartan-Hadamard manifold $M$, and let $\Omega$ be the convex domain bounded by $\Gamma$.  Then
for any point $x\in\Omega$ and any of its footprints $x^\circ$  in $\textup{cut}(\Gamma)$, 
$$
d_\Gamma(x^\circ) \geq d_\Gamma(x).
$$
\end{theorem}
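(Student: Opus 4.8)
The plan is to combine the convexity of $u:=\widehat{d}_\Gamma$ on $\Omega$ with the fact that $\textup{cut}(\Gamma)$ sits at distance exactly $\ell:=d_{\textup{cut}(\Gamma)}(x)$ from $x$. If $x\in\textup{cut}(\Gamma)$ there is nothing to prove, so assume $x\in\Omega\setminus\textup{cut}(\Gamma)$ and let $\alpha\colon[0,\ell]\to M$ be the minimizing geodesic from $x$ to $x^\circ$ (unique, as $M$ is Cartan--Hadamard). First I would settle the setup. One has $x^\circ\in\Omega$: the complement of the convex set $\cl(\Omega)$ contains no point of $\textup{cut}(\Gamma)$, since nearest-point projection onto a convex set in a Cartan--Hadamard manifold is everywhere single-valued (cf.\ Lemma~\ref{lem:u-convex} and \cite[Cor.~2.5]{bridson-haefliger1999}), while a footprint $x^\circ\in\Gamma$ is excluded by a first-variation argument, because near such an $x^\circ$ the set $\textup{medial}(\Gamma)\subset\Omega$ accumulates from a direction forming an obtuse angle with $\alpha'(\ell)$, producing cut points strictly closer to $x$. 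Moreover $\alpha([0,\ell])\subset\Omega$ by convexity of $\Omega$, and $\alpha([0,\ell))\subset\Omega\setminus\textup{cut}(\Gamma)$ because $d(x,\alpha(s))=s<\ell$; on that set $u=-d_\Gamma$ is $C^1$ with $|\nabla u|\equiv1$ by Lemma~\ref{lem:CS}.

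Now set $g:=u\circ\alpha$. Since $\Gamma$ is $d$-convex, $u$ is convex on $\Omega$, so $g$ is convex on $[0,\ell]$, of class $C^1$ on $[0,\ell)$, with $g'(s)=\langle\nabla u(\alpha(s)),\alpha'(s)\rangle$ and $|g'|\le1$. Hence $g'$ is nondecreasing and $m:=\lim_{s\to\ell^-}g'(s)$ exists in $[-1,1]$. If $m\le0$ then $g$ is nonincreasing, so $u(x^\circ)=g(\ell)\le g(0)=u(x)$, i.e.\ $d_\Gamma(x^\circ)\ge d_\Gamma(x)$, which is the claim. Thus the whole content is to exclude $m>0$.

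For this I would identify $m=g'_-(\ell)$, by the subdifferential chain rule for the convex function $g$ along the geodesic $\alpha$, with $\min\{\langle v,\xi\rangle:v\in\partial u(x^\circ)\}$, where $\xi:=\alpha'(\ell)$; and, since $-u=d_\Gamma$ coincides near $x^\circ$ with the minimum of the functions $d(\cdot,p)$ (each smooth near $x^\circ$, as $M$ has no conjugate points) over the footprints $p$ of $x^\circ$, one has $\partial u(x^\circ)=\conv\{w_p\}$ with $w_p$ the unit vector at $x^\circ$ pointing toward $p$. So $m>0$ would force every footprint direction to make an acute angle with $\xi$. On the other hand, since $x^\circ$ achieves $d_{\textup{cut}(\Gamma)}(x)$, the first variation of $q\mapsto d(x,q)$ gives $\langle\xi,v\rangle\ge0$ for every $v$ in the tangent cone of $\textup{cut}(\Gamma)$ at $x^\circ$. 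The contradiction is cleanest when $x^\circ$ has exactly two footprints $p_1,p_2$: then this tangent cone contains the whole hyperplane $\{v:\langle v,w_{p_1}-w_{p_2}\rangle=0\}$ --- near $x^\circ$ the bisector $\{d(\cdot,p_1)=d(\cdot,p_2)\}$ is a smooth hypersurface contained in $\textup{medial}(\Gamma)$ --- so applying the inequality to $\pm v$ forces $\xi\in\operatorname{span}\{w_{p_1}-w_{p_2}\}$, whence $\langle w_{p_1},\xi\rangle=-\langle w_{p_2},\xi\rangle$ and $m=\min_i\langle w_{p_i},\xi\rangle\le0$, a contradiction. For $x^\circ$ with more footprints, or a focal point of $\Gamma$, I would argue similarly via the stratified structure of $\textup{cut}(\Gamma)$ at $x^\circ$: its tangent cone contains the linear ``spine'' $\{v:\langle v,w_{p_i}-w_{p_j}\rangle=0\ \forall i,j\}$, which again forces $\xi\in\operatorname{span}\{w_{p_i}-w_{p_j}\}$, while the remaining ``leaf'' directions rule out $m>0$ --- or else reduce to the two-footprint case by approximating $x^\circ$ within $\textup{medial}(\Gamma)$ by two-footprint points and passing to the limit, using continuity of $d_\Gamma$ and of $d(x,\cdot)$.

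The main obstacle is exactly this last step: pinning down the local geometry of $\textup{cut}(\Gamma)$ near its non-generic strata for a hypersurface that is only $d$-convex (hence possibly not $C^{1,1}$), and making the tangent-cone/approximation argument rigorous. The remaining ingredients --- convexity of $u$, the first-variation identities, and the absence of conjugate points in $M$ --- are routine.
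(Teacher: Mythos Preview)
Your overall strategy---study $g=u\circ\alpha$, use convexity to reduce to excluding $m:=g'_-(\ell)>0$, and then combine the first-variation inequality $\langle\xi,v\rangle\geq0$ on $T_{x^\circ}\textup{cut}(\Gamma)$ with the subdifferential description $\partial u(x^\circ)=\conv\{w_p\}$---is sound in spirit and parallels the paper's setup. But the step you flag as ``the main obstacle'' is a genuine gap, not a technicality, and your two-footprint argument also contains an error. The bisector $\{d(\cdot,p_1)=d(\cdot,p_2)\}$ is \emph{not} contained in $\textup{medial}(\Gamma)$: as one moves away from $x^\circ$ along the medial axis, the two footprints slide along $\Gamma$ rather than staying fixed at $p_1,p_2$. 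What is true (for smooth $\Gamma$, by the implicit function theorem applied to $d_\Gamma^1-d_\Gamma^2$) is that $T_{x^\circ}\textup{medial}(\Gamma)$ equals the hyperplane $\{w_{p_1}-w_{p_2}\}^\perp$, so your conclusion survives in that case; but for a merely $d$-convex $\Gamma$ (possibly not $\C^{1,1}$), for focal points, or for points with a continuum of footprints, neither the stratified description of $\textup{cut}(\Gamma)$ nor the approximation by generic two-footprint points is available without substantial further work. In short, the argument is incomplete exactly where you say it is, and that missing piece is the heart of the matter.

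The paper's proof avoids this difficulty by shifting the tangent-cone analysis from $\textup{cut}(\Gamma)$ to the \emph{level set} $\Gamma_{x^\circ}:=\{d_\Gamma=d_\Gamma(x^\circ)\}$. Assuming $d_\Gamma(x)>d_\Gamma(x^\circ)$, one has $x\in\Omega_{x^\circ}$ and, by first variation, $\langle v,y\rangle\leq0$ for all $y\in T_{x^\circ}\textup{cut}(\Gamma)$ (here $v=-\xi$). Two reduction lemmas do the work: $\textup{cut}(\Gamma_{x^\circ})\subset\textup{cut}(\Gamma)$ (Lemma~\ref{lem:SGamma_x}), and $\textup{cut}(T_{x^\circ}\Gamma_{x^\circ})\subset T_{x^\circ}\textup{cut}(\Gamma_{x^\circ})$ (Lemma~\ref{lem:GammaR^n}). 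This turns the problem into one about a \emph{convex cone} $T_{x^\circ}\Gamma_{x^\circ}\subset T_{x^\circ}M\simeq\R^n$, where the cut locus is computable via the spherical section (Lemma~\ref{lem:tildeSC}). A purely Euclidean lemma (Lemma~\ref{lem:cones}) then forces $T_{x^\circ}\Gamma_{x^\circ}$ to be a hyperplane, and a separate argument (Lemma~\ref{lem:orthogonal-ray}) shows that $T_{x^\circ}\textup{cut}(\Gamma_{x^\circ})$ contains the orthogonal ray, so $v$ must lie in $T_{x^\circ}\Gamma_{x^\circ}$, contradicting $x\in\Omega_{x^\circ}$. The point is that the paper never needs the fine stratification of $\textup{cut}(\Gamma)$; it only needs enough directions in $T_{x^\circ}\textup{cut}(\Gamma)$ to force $v$ tangent to the level set, and it manufactures those directions from the cut locus of a convex cone in $\R^n$, where the analysis is robust and requires no smoothness of $\Gamma$.
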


Throughout this section we will assume that $M$ is a Cartan-Hadamard manifold. In particular the exponential map $\exp_p\colon T_p M\to\R^n$ will be a global diffeomorphism.
The proof of the above theorem is based on the notion of tangent cones, which we defined in Section \ref{sec:CH}. Another approach to proving this result is discussed in a recent work of Kapovitch and Lytchak \cite{kapovitch-lytchak}. 
We start by recording that  for a given a set $X\subset\R^n$ and $p\in X$, $T_p X$ is the limit of dilations of $X$ based at $p$   \cite[Sec. 2]{ghomi-howard2014}. More precisely, if we identify $p$ with the origin $o$ of $\R^n$, and
for $\lambda\geq 1$ set
 $
\lambda X:=\{\lambda x\mid x\in X\},
 $
then $T_o X$ is the \emph{outer limit} \cite{rockafellar-wets1998} of the sets $\lambda X$:
\be\label{eq:outer}
 T_o X=\limsup_{\lambda\to\infty} \lambda X.
\ee
This means that for every $x\in T_o X\setminus\{o\}$ there exists a sequence of numbers $\lambda_i\to\infty$ such that $\lambda_iX$ eventually intersects any open neighborhood of $x$. Equivalently, we may record that:

\begin{lemma}[\cite{ghomi-howard2014}]\label{lem:ToX}
Let $X\subset\R^n$ and $o\in X$. Then $x\in T_o X\setminus\{o\}$ if there exists a sequence of points $x_i\in X\setminus\{o\}$ such that  $x_i\to o$ and $x_i/| x_i|\to x/|x|$. 
\end{lemma}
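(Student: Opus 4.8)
The plan is to reduce the statement directly to the outer-limit description \eqref{eq:outer} of the tangent cone, which was established in \cite{ghomi-howard2014}. Identifying $o$ with the origin of $\R^n$, it suffices, by \eqref{eq:outer}, to exhibit a sequence of scaling factors $\lambda_i\to\infty$ and points $z_i\in X$ with $\lambda_i z_i\to x$: indeed, for such a sequence and any open neighborhood $U$ of $x$, the point $\lambda_i z_i$ lies in $\lambda_i X\cap U$ for all large $i$, so $x$ belongs to $\limsup_{\lambda\to\infty}\lambda X=T_oX$. Thus the entire task is to manufacture such scaling factors from the given sequence $x_i$.

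To do this, note first that $x\neq o$, so $|x|>0$, and since each $x_i\neq o$ the ratio
$$
\lambda_i:=\frac{|x|}{|x_i|}
$$
is well defined and positive. Because $x_i\to o$ we have $|x_i|\to 0$, hence $\lambda_i\to\infty$. Moreover
$$
\lambda_i x_i=|x|\,\frac{x_i}{|x_i|}\;\longrightarrow\;|x|\,\frac{x}{|x|}=x,
$$
using the hypothesis $x_i/|x_i|\to x/|x|$. Taking $z_i:=x_i\in X$ in the criterion above, we conclude $x\in T_o X$, and since $x\neq o$ this gives $x\in T_o X\setminus\{o\}$, as claimed.

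There is essentially no obstacle here: all the content lies in the definition \eqref{eq:outer}, and the only observation needed is that the natural choice of dilation factors $\lambda_i=|x|/|x_i|$ simultaneously tends to infinity and carries $x_i$ to $x$. If one preferred to argue instead from the ``limit of secant rays'' description of $T_pX$, the same computation shows that the secant ray from $o$ through $x_i$, namely $\{t\,x_i/|x_i| : t\ge 0\}$, converges to the ray $\{t\,x/|x| : t\ge 0\}$, which passes through $x$.
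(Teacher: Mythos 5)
Your proof is correct and takes the same route the paper intends: the paper states Lemma \ref{lem:ToX} without proof, presenting it as an immediate restatement of the outer-limit characterization \eqref{eq:outer} (and, ultimately, of the secant-ray definition of $T_pX$ given at the start of Section \ref{sec:CH}). You have simply supplied the one-line computation—choosing $\lambda_i=|x|/|x_i|$ and checking $\lambda_i x_i\to x$—that the paper leaves implicit, and your closing remark about secant rays matches the paper's original definition as well.
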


The last lemma yields:

  \begin{lemma}\label{lem:GammaR^n}
Let $\Gamma\subset \R^n$ be a closed hypersurface, and $o\in\textup{cut}(\Gamma)\cap \Gamma$. Suppose that $T_o\Gamma$ bounds a convex cone containing $\Gamma$. Then
$$
\textup{cut}(T_{o}\Gamma)\subset T_{o}\textup{cut}(\Gamma).
$$
\end{lemma}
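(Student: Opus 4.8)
The plan is to reduce everything to the characterization of tangent cones via outer limits (Lemma~\ref{lem:ToX}), applied to $\textup{cut}(\Gamma)$ in place of $\Gamma$. Let $q\in\textup{cut}(T_o\Gamma)\setminus\{o\}$; we must produce $q$ as an element of $T_o\textup{cut}(\Gamma)$, i.e., by Lemma~\ref{lem:ToX} we must find points $y_i\in\textup{cut}(\Gamma)\setminus\{o\}$ with $y_i\to o$ and $y_i/|y_i|\to q/|q|$. The natural candidates are obtained by rescaling: set $q_\lambda:=q/\lambda$ for $\lambda\to\infty$, and look for cut points of $\Gamma$ near $q_\lambda$. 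The key geometric fact to exploit is that, since $T_o\Gamma$ bounds a convex cone $K$ containing $\Gamma$, the cut locus of $T_o\Gamma$ (which, for a convex cone, is exactly the ``ridge'' of points having at least two nearest points on $\partial K$, together with its closure) must be the limit of the corresponding medial structure of $\Gamma$ as we zoom in at $o$.

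The first step I would carry out is to understand $\textup{cut}(T_o\Gamma)$ concretely: since $T_o\Gamma=\partial K$ for a convex cone $K$, a point $x\in\overline K$ lies in $\textup{medial}(T_o\Gamma)$ iff it has two distinct footprints on $\partial K$, and $\textup{cut}(T_o\Gamma)$ is the closure of this set (using \eqref{eq:medial}). So it suffices to treat $q\in\textup{medial}(T_o\Gamma)$, with two distinct nearest points $a,b\in\partial K$; the general case follows by taking closures at the end. The second step is a quantitative ``zooming'' argument: fix such $q$ with footprints $a\neq b$ on $\partial K$, at distance $\rho:=d(q,\partial K)>0$. For large $\lambda$, the dilated set $\lambda\Gamma$ converges (in the Hausdorff sense on compact sets, by \eqref{eq:outer}) to $\partial K=T_o\Gamma$. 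Because $a$ and $b$ are \emph{distinct} nearest points of $q$ on $\partial K$, for $\lambda$ large there exist points $a_\lambda,b_\lambda\in\lambda\Gamma$ near $a,b$ respectively that realize local near-minima of the distance to $q$; a compactness/continuity argument shows $q$ has (at least) two distinct footprints on $\lambda\Gamma$, hence $q\in\textup{medial}(\lambda\Gamma)$, i.e., $q/\lambda\in\textup{medial}(\Gamma)\subset\textup{cut}(\Gamma)$. Setting $y_\lambda:=q/\lambda$ gives $y_\lambda\to o$ and $y_\lambda/|y_\lambda|=q/|q|$ exactly, so Lemma~\ref{lem:ToX} yields $q\in T_o\textup{cut}(\Gamma)$. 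Finally, take the closure: an arbitrary $q\in\textup{cut}(T_o\Gamma)=\cl(\textup{medial}(T_o\Gamma))$ is a limit of $q^{(k)}\in\textup{medial}(T_o\Gamma)$, each of which lies in $T_o\textup{cut}(\Gamma)$; since $T_o\textup{cut}(\Gamma)$ is a closed cone, $q\in T_o\textup{cut}(\Gamma)$.

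The main obstacle I anticipate is the second step: justifying that $q$ really does acquire two distinct footprints on $\lambda\Gamma$ for all large $\lambda$. The subtlety is that $a_\lambda$ and $b_\lambda$ are only \emph{local} minimizers of $d(q,\cdot)$ on $\lambda\Gamma$ a priori, and one must rule out that some third region of $\lambda\Gamma$ sneaks closer to $q$, or that $a_\lambda$ and $b_\lambda$ merge. Here the convexity is essential: because $\Gamma\subset K$ and $T_o\Gamma=\partial K$, the inner parallel structure is controlled — any point of $\lambda\Gamma$ that is close to $q$ must be close to $\partial K$, hence (by the Hausdorff convergence and the fact that $a,b$ are the \emph{only} nearest points of $q$ on $\partial K$) close to $a$ or to $b$. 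Combined with a lower semicontinuity argument for the distance-to-$\lambda\Gamma$ function and the fact that $d(q,\partial K)=\rho$ is attained exactly twice, this pins down two separate footprints. I would also need to invoke that $o\in\textup{cut}(\Gamma)\cap\Gamma$, which guarantees $o\in\textup{cut}(T_o\Gamma)$ as well and makes the statement non-vacuous, though this is not used in the core argument. A cleaner alternative would be to argue directly with the medial axis of the dilations and pass to the limit using that the multivalued footprint map is upper semicontinuous, but the convexity-based argument above seems the most transparent.
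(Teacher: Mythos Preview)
Your overall strategy---reduce to $\textup{medial}(T_o\Gamma)$ via \eqref{eq:medial}, then use dilations and Lemma~\ref{lem:ToX}---matches the paper's, but the second step as you have written it fails. The claim that $q$ itself lies in $\textup{medial}(\lambda\Gamma)$ for all large $\lambda$ is false in general, and convexity does not rescue it. Consider $\Gamma\subset\R^2$ with a corner at $o$ whose two edges are asymmetric: say one edge lies exactly on the positive $x$-axis while the other is a circular arc of large radius $R$ tangent to the positive $y$-axis at $o$, curving into the first quadrant. Then $T_o\Gamma$ is the boundary of the first quadrant, its medial axis is the diagonal, and $q=(1,1)$ has footprints $(1,0)$ and $(0,1)$ at distance $1$. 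After dilation, the bottom edge of $\lambda\Gamma$ is still on the $x$-axis (distance $1$ from $q$), but the left edge is approximately $x=y^2/(2\lambda R)$, so the point $(1/(2\lambda R),1)$ on $\lambda\Gamma$ has distance $1-1/(2\lambda R)<1$ from $q$. Hence $q$ has a \emph{unique} footprint on $\lambda\Gamma$ for every $\lambda$, and $q\notin\textup{medial}(\lambda\Gamma)$. Your proposed justification (``any point of $\lambda\Gamma$ close to $q$ must be close to $a$ or to $b$'') is correct but does not imply equidistance; the two local minima of $d(q,\cdot)$ on $\lambda\Gamma$ need not have the same value.

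The paper avoids this by never claiming that $x$ (your $q$) is itself a medial point of $\lambda_i\Gamma$. Instead, it takes the maximal sphere $S$ centered at $x$ inside the cone, shrinks it slightly to fit inside $\lambda_i\Omega$, re-expands it to the largest sphere $S_i'$ centered at $x$ in $\lambda_i\Omega$ (which touches $\lambda_i\Gamma$ at some point $y$), and then passes to the \emph{maximal} sphere $S_i''$ through $y$ in $\lambda_i\Omega$. The center $c_i$ of $S_i''$ lies in $\textup{skeleton}(\lambda_i\Omega)\subset\textup{cut}(\lambda_i\Gamma)$ by Lemma~\ref{lem:skeleton}, and one argues $c_i\to x$ from the maximality of both $S_i''$ (in $\lambda_i\Omega$) and $S$ (in the cone) together with $\lambda_i\Gamma\to T_o\Gamma$. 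Thus $x_i:=c_i/\lambda_i\in\textup{cut}(\Gamma)$ satisfies $x_i\to o$ and $x_i/|x_i|\to x/|x|$, which is exactly what Lemma~\ref{lem:ToX} requires. Your ``cleaner alternative'' in the last sentence is in fact pointing toward this: the needed input is not upper semicontinuity of footprints but rather a construction of nearby skeleton points, and the maximal-sphere step is what supplies it.
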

  \begin{proof}
By  \eqref{eq:medial} $\textup{cut}(T_{o}\Gamma)=\cl(\text{medial}(T_o\Gamma))$. So it suffices to show that $\text{medial}(T_o\Gamma)\subset T_{o}\textup{cut}(\Gamma)$, since $\textup{cut}(T_{o}\Gamma)$ is closed by definition.  Let $x\in \text{medial}(T_o\Gamma)$. Then there exists a sphere $S$ centered at $x$ which is contained in (the cone bounded by) $T_{o}\Gamma$, and touches $T_{o}\Gamma$ at multiple points.  Suppose that $S$ has radius $r$. Then, by \eqref{eq:outer}, for each natural number $i$ we may choose a number $\lambda_i$ so large that the sphere $S_i$ of radius $r-(1/i)$ centered at $x$ is contained in ${\lambda_i}\Gamma$. Let $S_i'$ be the largest sphere contained in ${\lambda_i}\Gamma$ centered at $x$ which contains $S_i$. Then $S_i'$ must intersect ${\lambda_i}\Gamma$ at some point $y$. Let $S_i''$ be the largest sphere contained in $\lambda_i\Gamma$ which passes through $y$.  Then the center $c_i$ of $S_i''$ lies in $\textup{skeleton}(\lambda_i\Omega)$, and therefore belongs to
$\textup{cut}({\lambda_i}\Gamma)$, by Lemma \ref{lem:skeleton}. Now note that $ \textup{cut}({\lambda_i}\Gamma)=\lambda_i\textup{cut}(\Gamma)$. 
So  
$$
x_i:=\frac{c_i}{\lambda_i}\in \frac{\textup{cut}({\lambda_i}\Gamma)}{\lambda_i}\in\textup{cut}(\Gamma).
$$
  Furthermore, note that $c_i\to x$, since $S_i''$ and $S_i$ have a point in common, $S_i''$ is a maximal sphere in $\lambda_i\Gamma$, $S_i$ is a maximal sphere in $T_o\Gamma$, and $\lambda_i\Gamma\to T_o\Gamma$ according to \eqref{eq:outer}. Thus $x_i\to o$, and $x_i/|x_i|\to x/|x|$. So $x\in T_{o}\textup{cut}(\Gamma)$ by Lemma \ref{lem:ToX}, which completes the proof.
 \end{proof}

For any set $X\subset\R^n$ we define $\cone(X)$ as the set of all rays which emanate from the origin $o$ of $\R^n$ and pass through a point of $X$. Furthermore we set 
$$
\arc{X}:= X\cap\S^{n-1}.
$$

\begin{lemma}\label{lem:tildeSC}
Let $X$ be the boundary of a proper convex cone  $C$ with interior points in $\R^n$ and apex at $o$. Suppose that $X$ is not a hyperplane.
 Then
$$
\arc{\textup{cut}(X)}=\textup{cut}(\arc{X}),
$$
where $\textup{cut}(\arc{X})$ denotes the portion of the cut locus of $\arc X$ as a hypersurface in $\S^{n-1}$, which is contained in $C$.
\end{lemma}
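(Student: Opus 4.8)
The plan is to exploit the Euclidean cone structure and reduce the identity to an elementary distance computation on $\S^{n-1}$. Write points of $\R^n\setminus\{o\}$ in polar coordinates $(r,u)$ with $r>0$ and $u\in\S^{n-1}$, so that $|(r,u)-(s,v)|=\sqrt{r^2+s^2-2rs\cos\theta}$, where $\theta=d_{\S^{n-1}}(u,v)\in[0,\pi]$ is the angle between $u$ and $v$. Since $C$ is a cone, $X\setminus\{o\}=\{(s,v):s>0,\ v\in\arc X\}$ and $d_X$ is positively homogeneous of degree one; hence the footprint map of $X$ commutes with dilations based at $o$, so $\textup{medial}(X)$ and $\textup{cut}(X)$ are cones with apex $o$. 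By \eqref{eq:medial} it is therefore enough to identify $\textup{medial}(X)\cap\S^{n-1}$ with the portion of $\textup{medial}(\arc X)$ lying in $C$; the same identity for the closures, and so for the cut loci, will then follow since radial projection is a homeomorphism of $\R^n\setminus\{o\}$ onto $(0,\infty)\times\S^{n-1}$ which carries cones to cones.

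First I would compute $d_X$ along $C$. For $u\in\arc C$ put $\delta(u):=d_{\S^{n-1}}(u,\arc X)$. The hypothesis that $X$ is not a hyperplane says that $C$ is a proper convex cone with interior points which is not a half-space; a short argument then shows that $C$ contains no closed half-space, hence $\arc C$ contains no closed hemisphere, and therefore $\delta(u)<\pi/2$ for every $u\in\arc C$ (else the closed ball of radius $\pi/2$ about $u$ would be a hemisphere inside $\arc C$). Minimizing $\sqrt{r^2+s^2-2rs\cos\theta}$ over $s>0$ for fixed $\theta\le\pi/2$ yields the value $r\sin\theta$ at $s=r\cos\theta$, while for $\theta>\pi/2$ the infimum over $s\ge 0$ is $r$. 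Since $\delta(u)<\pi/2$, the competing value $r$ exceeds $r\sin\delta(u)$, so for $p=(r,u)$ with $u\in\arc C$ we get
\[
d_X(p)=r\sin\delta(u),
\]
with set of footprints on $X$ exactly $\{(r\cos\delta(u))\,v:\ v\in\arc X,\ d_{\S^{n-1}}(u,v)=\delta(u)\}$; in particular the apex is never a footprint of such $p$. As $v\mapsto(r\cos\delta(u))v$ is injective, $p$ has more than one footprint on $X$ precisely when $u$ has more than one nearest point on $\arc X$; and since $\delta(u)<\pi$, the minimizing geodesic in $\S^{n-1}$ from $u$ to $\arc X$ is unique, so nearest points and footprints agree on the sphere. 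Consequently $p=(r,u)\in\textup{medial}(X)$ if and only if $u\in\textup{medial}(\arc X)$, for all $r>0$ and $u\in\arc C$.

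It remains to pass to closures. By the previous step, $\textup{medial}(X)$ within the cone over $\arc C$ is precisely the cone over $\textup{medial}(\arc X)\cap\arc C$; applying \eqref{eq:medial} and the cone/closure compatibility noted in the first paragraph gives $\arc{\textup{cut}(X)}=\textup{cut}(\arc X)$ on the part lying in $C$, which is the assertion. The one delicate point is the behaviour on the boundary: points of $X$ (respectively of $\arc X$) can belong to $\textup{cut}(X)$ (respectively to $\textup{cut}(\arc X)$), for instance when $X$ has an edge, and one must check that the two descriptions still match there. This follows because such points are limits of interior medial points and the correspondence of the second paragraph is continuous under radial projection; I expect this closure and boundary bookkeeping, rather than the distance computation, to be the only genuinely delicate part of the argument.
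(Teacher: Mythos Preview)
Your argument is correct and reaches the same conclusion as the paper, but by a genuinely different route. The paper's proof is purely geometric: given $x\in\arc{\textup{cut}(X)}$, it takes a Euclidean sphere $S\subset C$ centered at $x$ touching $X$ at multiple points (or a limit of such), observes that the rays from $o$ tangent to $S$ cut out a geodesic ball $\arc{\cone(S)}$ in $\S^{n-1}$ centered at $x$, and checks that this spherical ball lies in $\arc C$ and touches $\arc X$ at the radial images of the contact points of $S$; this gives one inclusion, and the reverse is declared ``similar.'' You instead compute $d_X(ru)=r\sin\delta(u)$ explicitly in polar coordinates and read off a bijection between Euclidean footprints on $X$ and spherical nearest points on $\arc X$. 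Your observation that $\delta(u)<\pi/2$ (because a proper convex cone that is not a half-space contains no half-space) is exactly what makes the minimization land at $s=r\cos\delta(u)>0$ and keeps the apex from competing; this is the analytic counterpart of the paper's implicit use of the fact that $o$ lies outside $S$. Your approach yields more explicit information (the exact distance and footprint set), while the paper's sphere-to-spherical-ball correspondence is shorter and more visibly symmetric in the two directions. Both treatments leave the closure step at essentially the same level of detail: the paper's ``the reverse inequality may be established similarly'' is no more complete than your final paragraph, and in both cases the only possible discrepancy is at points of $\arc X$ itself, which does not affect the use of the lemma in the sequel.
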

\begin{proof}
Let $x\in \arc{\textup{cut}(X)}$. Then, since $X$ is not a hyperplane,  there exists a sphere $S$ centered at $x$ which is contained inside the cone bounded by $X$  and touches $X$ at multiple points, or else $x$ is a limit of the centers of such spheres, by \eqref{eq:medial}. Consequently, $\arc{\cone(S)}$ forms a sphere in $\S^{n-1}$, centered at $x$, which is contained inside $\arc X$ and touches $\arc X$ at multiple points, or is the limit of such spheres respectively. Thus $x$ belongs to $\textup{cut}(\arc X)$, which yields that
$
\arc{\textup{cut}( X)}\subset\textup{cut}(\arc X).
$
The reverse inequality may be established similarly.
\end{proof}

Using the last lemma, we next show:

\begin{lemma}\label{lem:cones}
Let $X$ be as in Lemma \ref{lem:tildeSC}. Suppose that $X$ is not a hyperplane. Then for every point $x\in X$, there exists a point $s\in \textup{cut}(X)$ such that 
$$
\langle s, x\rangle >0.
$$
\end{lemma}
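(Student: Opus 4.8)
The plan is to pass to the unit sphere, where $C$ becomes a convex body, and to produce the required point of $\textup{cut}(X)$ by running an inward normal geodesic into that body from a nearby well‑behaved boundary point. First, some reductions. We may assume $x\neq o$; since $X=\partial C$ and $\arc C:=C\cap\S^{n-1}$ are cones, replacing $x$ by $x/|x|$ affects neither the hypotheses nor the sign of $\langle s,x\rangle$, so we may assume $x\in\arc X:=X\cap\S^{n-1}$. By Lemma~\ref{lem:tildeSC} (applicable because $X$ is not a hyperplane), $\textup{cut}(X)\cap\S^{n-1}=\arc{\textup{cut}(X)}$ coincides with the portion of the cut locus of $\arc X$, regarded as a hypersurface in $\S^{n-1}$, that lies in $C$; hence it suffices to find $\bar s\in\textup{cut}(\arc X)\cap\arc C$ with $\langle\bar s,x\rangle>0$, equivalently with spherical distance $d_{\S^{n-1}}(\bar s,x)<\pi/2$, and then set $s:=\bar s$.

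The geometric heart of the argument is that $d_{\arc X}(q)<\pi/2$ for every $q\in\arc C$ --- equivalently, the inradius $\rho_0$ of $\arc C$ satisfies $\rho_0<\pi/2$. Indeed, if some $q\in\arc C$ had $d_{\arc X}(q)\ge\pi/2$, then the open hemisphere $B(q,\pi/2)$ would be disjoint from $\partial\arc C=\arc X$ and, being connected and containing $q\in\arc C$, would lie in $\arc C$; since $\arc C=C\cap\S^{n-1}$ with $C$ a cone, this would force $C$ to contain an open, hence a closed, half‑space, so $C$ would be a half‑space and $X$ a hyperplane, contrary to hypothesis. This is the only place the hypothesis that $X$ is not a hyperplane enters.

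Next, let $p$ be a point at which $\arc X$ is twice differentiable; such points are dense in $\arc X$ by Lemma~\ref{lem:alexandrov}. Let $\nu$ be the inward unit normal of $\arc C$ at $p$, let $\gamma(t):=\exp_p(t\nu)$ be the inward normal geodesic, and let $\tau:=\sup\{t\ge 0: d_{\arc X}(\gamma(s))=s\text{ for all }s\in[0,t]\}$. A second‑order expansion of $\arc X$ at $p$ shows $\tau>0$, and $\gamma(s)\in\arc C$ for $0\le s<\tau$ (the geodesic cannot leave $\arc C$ before time $\tau$, since that would make it meet $\arc X=\partial\arc C$ at some time $s_1\in(0,\tau)$, forcing $d_{\arc X}(\gamma(s_1))=0\ne s_1$); hence by continuity $\gamma(\tau)\in\arc C$ with $d_{\arc X}(\gamma(\tau))=\tau$, so $\tau=d_{\arc X}(\gamma(\tau))\le\rho_0<\pi/2$. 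Moreover $\gamma(\tau)\in\textup{cut}(\arc X)$: otherwise $\gamma(\tau)$ would have a unique footprint throughout some open neighborhood (disjoint from $\arc X$, as $\tau>0$), so by Lemma~\ref{lem:CS} the function $d_{\arc X}$ would be of class $\C^1$ there, with unit gradient and with $\gamma$ as a gradient trajectory, and the identity $d_{\arc X}(\gamma(s))=s$ would persist for $s$ slightly larger than $\tau$, contradicting the choice of $\tau$.

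To finish, given $x\in\arc X$ choose such a twice‑differentiable point $p$ with $d_{\S^{n-1}}(p,x)<\pi/2-\rho_0$ --- possible since these points are dense in $\arc X$ and $\rho_0<\pi/2$ --- and set $\bar s:=\gamma(\tau)$ as above. Then $\bar s\in\textup{cut}(\arc X)\cap\arc C$ and $d_{\S^{n-1}}(\bar s,x)\le d_{\S^{n-1}}(\bar s,p)+d_{\S^{n-1}}(p,x)=\tau+d_{\S^{n-1}}(p,x)<\rho_0+(\pi/2-\rho_0)=\pi/2$, so $\langle\bar s,x\rangle>0$, and $s:=\bar s$ is as desired. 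I expect the main obstacle to be the careful verification that the cut point $\gamma(\tau)$ along the normal geodesic genuinely lies in $\textup{cut}(\arc X)$, together with the legitimacy of the "approach from a nearby regular point'' step; working at a nearby twice‑differentiable point rather than at the possibly singular point $x$ itself is precisely what lets me trade exactness for the uniform bound $\tau\le\rho_0<\pi/2$ supplied by the second paragraph.
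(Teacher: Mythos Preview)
Your argument is correct and takes a genuinely different route from the paper's. Both proofs begin identically, reducing via Lemma~\ref{lem:tildeSC} to the spherical statement that some $s\in\textup{cut}(\arc X)$ lies within spherical distance $\pi/2$ of $x$. After that they diverge.

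The paper argues by contradiction: it takes $s$ to be the \emph{footprint} of $x$ on $\textup{cut}(\arc X)$, supposes $d_{\S^{n-1}}(s,x)\ge\pi/2$, and derives a contradiction through a fairly intricate largest-inscribed-sphere argument in the region of $\arc C$ cut off by the great sphere through $s$ perpendicular to $xs$. Your approach is instead constructive: you first isolate the single consequence of ``not a hyperplane'' that is actually needed, namely that the inradius $\rho_0$ of $\arc C$ is strictly less than $\pi/2$; then, rather than working at $x$ itself, you pass to a nearby twice-differentiable boundary point $p$ and follow the inward normal geodesic to its cut point $\gamma(\tau)$, with $\tau\le\rho_0$; the triangle inequality finishes the job. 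The uniform inradius bound is what lets you trade the exact footprint for an approximation, and this makes the proof shorter and more transparent. The paper's argument, by contrast, yields the slightly sharper (though unneeded) conclusion that the \emph{nearest} cut point to $x$ already lies within $\pi/2$. Your justification that $\gamma(\tau)\in\textup{cut}(\arc X)$ is the one step that deserves a bit more care in writing, but the reasoning you give (unique footprint $\Rightarrow$ $d_{\arc X}$ is $\C^1$ nearby $\Rightarrow$ the identity $d_{\arc X}(\gamma(s))=s$ would extend past $\tau$) is sound.
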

\begin{proof}
We may replace $x$ by $x/|x|$. Then, by Lemma \ref{lem:tildeSC}, it is enough to show that $\langle s,x\rangle > 0$ for some $s\in \textup{cut}(\arc X)$, or equivalently that $\delta_{\S^{n-1}}(s,x)< \pi/2$, where $\delta_{\S^{n-1}}$ denotes the distance in $\S^{n-1}$. To this end let $s$ be a footprint of $x$ on $\textup{cut}(\arc X)$. Suppose towards a contradiction that $\delta_{\S^{n-1}}(s,x)\geq \pi/2$. Consider the great sphere $G$ in $\S^{n-1}$ which passes through $s$ and is orthogonal to the geodesic segment $xs$; see Figure \ref{fig:sphere}. Let $G^+$ be the hemisphere bounded by $G$ which contains $x$. Then the interior of $G^+$ is disjoint from $\textup{cut}(\arc X)$, since $\delta_{\S^{n-1}}(x,G)\leq \delta_{\S^{n-1}}(x,s)=\delta_{\S^{n-1}}(x,\textup{cut}(\arc X))$.
Next note that the intersection of the convex cone bounded by $X$ with $\S^{n-1}$ is a convex set in $\S^{n-1}$. Thus $G$ divides this  convex set  into two subregions. Consider the region, say $R$, which contains $x$, or lies in $G^+$, and let $S$ be a sphere of largest radius in $R$. Then $S$ must touch the boundary of $R$ at least twice. Since $S$ cannot touch $G$ more than once, it follows that $S$ must touch $\arc X$, because the boundary of $R$ consists of a part of $G$ and a part of $\arc X$. First suppose that  $S$ touches $\arc X$ multiple times. Then the center of $S$ belongs to $\textup{cut}(\arc X)$. But this is impossible, since $S\subset R\subset G^+$. We may suppose then that $S$ touches $\arc X$ only once, say at a point $y$. 

\begin{figure}[h]
\centering
\begin{overpic}[height=1.5in]{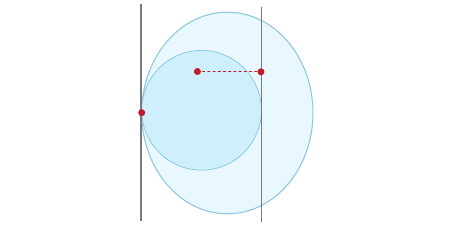}
\put(54,-1.5){\Small $G$}
\put(26,-1){\Small $G'$}
\put(27.5,24){\Small $y$}
\put(40,33){\Small $x$}
\put(59.5,33){\Small $s$}
\put(36,18){\Small $S$}
\put(68,10){\Small $\arc X$}
\end{overpic}
\caption{}\label{fig:sphere}
\end{figure}
Now we claim that the diameter of $S$ is $\geq \pi/2$. Indeed let $G'$ be the great sphere which passes through $y$ and is tangent to $S$. Then $G'$ supports $\arc X$, and $R$ is contained entirely between $G$ and $G'$. The maximum length of a geodesic segment orthogonal to both $G$ and $G'$ is then equal to the diameter of $S$, since the points where $S$ touches $G$ and $G'$ must be antipodal points of $S$. In particular the length of the diameter of $S$ must be greater than $\delta_{\S^{n-1}}(x,s)$ as desired.

Finally let $S'$ be the largest sphere contained in $\arc X$ which passes through $y$. Then the center, say $z$, of $S'$ belongs to $\textup{cut}(\arc X)$ by Lemma \ref{lem:skeleton}. But the diameter of $S'$ is  $< \pi$, since $X$ is not a hyperplane by assumption. So, 
since the diameter of $S$ is $\geq\pi/2$, it follows that $z$ is contained in the interior of $S$ and therefore in the interior $R$. Hence we reach the desired contradiction since, as we had noted earlier, $R$ does not contain points of $\textup{cut}(\arc X)$ in its interior.
\end{proof}

For $x\in\Omega$, set 
 $$
 \Omega_x:=\big\{y\in\Omega\mid d_\Gamma(y)> d_\Gamma(x)\big\},\quad\quad\text{and}\quad\quad \Gamma_x:=\partial\Omega_x.
 $$

\begin{lemma}\label{lem:SGamma_x}
$
\textup{cut}(\Gamma_{x})\subset \textup{cut}(\Gamma).
$
\end{lemma}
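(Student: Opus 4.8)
The plan is to reduce the claim to the inclusion $\textup{medial}(\Gamma_x)\subseteq\textup{medial}(\Gamma)$ between medial axes and then take closures via \eqref{eq:medial}. First I would set $c:=d_\Gamma(x)>0$ and note that, since $\Gamma$ is $d$-convex, $\widehat d_\Gamma$ is convex on $\Omega$, so $\Omega_x=\{y\in\Omega:\widehat d_\Gamma(y)<-c\}$ is an open (hence convex) sublevel set of a convex function; thus $\Gamma_x$ is a convex hypersurface bounding the convex domain $\Omega_x$, and its cut locus and medial axis are defined with $\textup{cut}(\Gamma_x)=\cl(\textup{medial}(\Gamma_x))$. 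Since $\textup{cut}(\Gamma)$ is closed, it then suffices to prove $\textup{medial}(\Gamma_x)\subseteq\textup{medial}(\Gamma)$.

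Next I would locate $\textup{medial}(\Gamma_x)$. Because $M$ is Cartan-Hadamard and $\Omega_x$ is convex, nearest-point projection onto the closed convex set $\cl(\Omega_x)$ is single-valued, so every point of $M\setminus\Omega_x$ has a unique footprint on $\Gamma_x$; hence $\textup{medial}(\Gamma_x)\subseteq\Omega_x$, i.e. every $q\in\textup{medial}(\Gamma_x)$ satisfies $d_\Gamma(q)>c$. Fix such a $q$ with two distinct footprints $a,b\in\Gamma_x$. Comparing via $a$ gives $d_\Gamma(q)\le d(q,a)+d_\Gamma(a)=d_{\Gamma_x}(q)+c$, and since $d_\Gamma$ is linear with slope one along a minimizing geodesic from $q$ to $\Gamma$, that geodesic crosses $\Gamma_x=\{d_\Gamma=c\}$, yielding $d_\Gamma(q)\ge d_{\Gamma_x}(q)+c$; hence $d_\Gamma(q)=d_{\Gamma_x}(q)+c$. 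Now if $a^\circ$ is any nearest point of $a$ on $\Gamma$, then $d(q,a^\circ)\le d(q,a)+d(a,a^\circ)=d_{\Gamma_x}(q)+c=d_\Gamma(q)$, so $a^\circ$ is also a nearest point of $q$ on $\Gamma$; the same holds for $b$. Consequently, if $a$ or $b$ has two nearest points on $\Gamma$, or if their (unique) footprints $a^\circ\neq b^\circ$, then $q$ has at least two nearest points on $\Gamma$ and we are done.

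The only delicate case — and the main, if minor, obstacle — is when $a$ and $b$ each have a unique footprint on $\Gamma$ and $a^\circ=b^\circ=:\gamma_0$. Here $d(q,\gamma_0)=d(q,a)+d(a,\gamma_0)$ shows that the concatenation of the minimizing geodesics $q\to a$ and $a\to\gamma_0$ is length-minimizing, hence a geodesic; in a Cartan-Hadamard manifold this is the unique geodesic from $q$ to $\gamma_0$, and $a$ is its point at arclength $d(q,a)=d_{\Gamma_x}(q)$ from $q$. Applying the same argument to $b$ and using injectivity of geodesics (as $\exp_q$ is a diffeomorphism) forces $a=b$, contradicting $a\neq b$. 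Thus in every case $q\in\textup{medial}(\Gamma)$, so $\textup{medial}(\Gamma_x)\subseteq\textup{medial}(\Gamma)$, and taking closures with \eqref{eq:medial} gives $\textup{cut}(\Gamma_x)\subseteq\textup{cut}(\Gamma)$.
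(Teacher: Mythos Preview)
Your proof is correct and follows essentially the same approach as the paper: both reduce to $\textup{medial}(\Gamma_x)\subseteq\textup{cut}(\Gamma)$ via \eqref{eq:medial} and exploit the identity $d_\Gamma = d_{\Gamma_x} + c$ on $\Omega_x$ (the paper phrases this as enlarging the contact sphere of $\Gamma_x$ by $c$ to obtain a sphere in $\cl(\Omega)$ touching $\Gamma$ in multiple points). Your treatment is in fact more detailed, in particular the handling of the degenerate case $a^\circ=b^\circ$, which the paper leaves implicit.
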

\begin{proof}
As we discussed in the proof of Lemma \ref{lem:GammaR^n}, 
it suffices to show that $\text{medial}(\Gamma_x)\subset \textup{cut}(\Gamma)$ by \eqref{eq:medial} .
Let $y\in \text{medial}(\Gamma_x)$. Then there exists a sphere $S\subset\cl(\Omega_x)$ centered at $y$ which intersects $\Gamma_x$ in multiple points. Let $S'$ be the sphere centered at $y$ with radius equal to the radius of $S$ plus $d(x,\Gamma)$. Then $S'\subset\cl(\Omega)$ and it intersects $\Gamma$ in multiple points. So, again by  \eqref{eq:medial}, $y\in \textup{cut}(\Gamma)$ as desired. 
\end{proof}

 We need to record  one more observation, before proving Theorem \ref{thm:projection}. An example of the phenomenon stated in the following lemma occurs when $\Gamma$ is the inner parallel curve of a (noncircular) ellipse in $\R^2$ which passes through the foci of the ellipse, and $p$ is one of the foci. 
 
 \begin{lemma}\label{lem:orthogonal-ray}
 Let $\Gamma$ be a $d$-convex hypersurface in a Cartan-Hadamard manifold $M$, and $p\in\Gamma\cap\textup{cut}(\Gamma)$. Suppose that $T_p\Gamma$ is a hyperplane. Then $T_p \textup{cut}(\Gamma)$ contains a ray which is orthogonal to $T_p\Gamma$.
 \end{lemma}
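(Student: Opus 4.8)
The plan is to work in the tangent space $T_pM \cong \R^n$, using the exponential chart $\exp_p^{-1}$, and to reduce the statement to the behavior of the tangent cone of $\textup{cut}(\Gamma)$ at $p$. Write $\arc\Gamma := \exp_p^{-1}(\Gamma)$ and identify $p$ with the origin $o$. Since $\Gamma$ is $d$-convex, $\Omega$ is convex, so by Lemma \ref{lem:CG} the tangent cone $T_o\arc\Gamma$ bounds a proper convex cone $C$ (the tangent cone of $\exp_p^{-1}(\Omega)$) with $o$ as apex, and $\exp_p^{-1}(\Omega)\subset C$. By hypothesis $T_p\Gamma$ is a hyperplane; I first dispose of the degenerate case where $T_o\arc\Gamma$ is actually that hyperplane — but if $T_o\arc\Gamma$ were a hyperplane then $o=p$ would have a supporting half-space on each side locally, forcing $\Gamma$ to be smooth near $p$, hence $p\notin\textup{cut}(\Gamma)$, a contradiction. (If this reasoning is too quick, one falls back on the observation that $p \in \textup{cut}(\Gamma)$ means $p$ is a limit of centers of maximal inscribed spheres, which produces nontrivial structure in $T_o\textup{cut}(\Gamma)$.) So I may assume $X := T_o\arc\Gamma$ is a convex cone boundary that is \emph{not} a hyperplane, and the machinery of Lemmas \ref{lem:tildeSC}, \ref{lem:cones}, and \ref{lem:GammaR^n} applies.

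Next I would invoke Lemma \ref{lem:GammaR^n} (legitimately, since $T_o\arc\Gamma$ bounds a convex cone containing $\arc\Gamma$ near $o$) to get
$$
\textup{cut}(T_o\arc\Gamma)\;\subset\; T_o\textup{cut}(\arc\Gamma)\;=\;T_p\textup{cut}(\Gamma),
$$
where the last identity is because $\exp_p^{-1}$ commutes with taking cut loci and tangent cones in the natural way. Thus it suffices to produce a ray in $\textup{cut}(X)$ — where $X=T_o\arc\Gamma$ — orthogonal to the hyperplane $T_p\Gamma$. Since $X$ is not a hyperplane, Lemma \ref{lem:cones} gives, for each $x\in X$, a point $s\in\textup{cut}(X)$ with $\langle s,x\rangle>0$; but I need the sharper conclusion that some cut-locus ray is \emph{exactly} orthogonal to $T_p\Gamma$, i.e. points into the ``deepest'' direction of the cone $C$. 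The key geometric fact to extract is that $\textup{cut}(X)$, being scale-invariant (a cone itself, by $\textup{cut}(\lambda X)=\lambda\,\textup{cut}(X)$) and contained in the interior of $C$, must contain the axis of symmetry relative to the supporting hyperplane: concretely, let $\nu$ be the inward normal to $T_p\Gamma$ at $o$; the claim is that the ray $\R_{\ge 0}\nu$ lies in $\textup{cut}(X)$. To see this, pass to the spherical picture via Lemma \ref{lem:tildeSC}: $\arc{\textup{cut}(X)}=\textup{cut}(\arc X)$ inside $C$, where $\arc X=X\cap\S^{n-1}$ is a hypersurface in the open hemisphere $H^+$ determined by $\nu$ (since $T_p\Gamma$ is the tangent hyperplane, $\arc X$ lies in the closed hemisphere and touches its boundary great sphere tangentially). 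The point $\nu\in\S^{n-1}$ is the center of that hemisphere; because $\arc X$ is a convex hypersurface in $H^+$ tangent to $\partial H^+$ all around, the distance function from $\arc X$ attains its interior maximum, and the maximum set lies in $\textup{cut}(\arc X)$ — and by the supporting-sphere argument used in the proof of Lemma \ref{lem:cones} (the maximal inscribed sphere centered near $\nu$ touches $\arc X$ at antipodally-opposite points), $\nu$ itself is a limit of such centers, hence $\nu\in\textup{cut}(\arc X)$. Pulling back, $\R_{\ge 0}\nu\subset\textup{cut}(X)\subset T_p\textup{cut}(\Gamma)$, and this ray is orthogonal to $T_p\Gamma$ by construction.

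The step I expect to be the main obstacle is the precise argument that $\nu$ (the center of the supporting hemisphere) actually belongs to $\textup{cut}(\arc X)$ rather than merely being a boundary point or a point where the distance-to-$\arc X$ function is large but smooth. The subtlety is that $\arc X$ is only a convex (hence twice-differentiable a.e., by Lemma \ref{lem:alexandrov}) hypersurface, not smooth, and its tangency to $\partial H^+$ could in principle be ``one-sided'' in a way that keeps $\nu$ out of the medial axis; one must rule this out using convexity of $C$ and the fact that $X=T_o\arc\Gamma$ genuinely spans a cone (not contained in a hyperplane). I would handle this by the same maximal-sphere construction as in Lemma \ref{lem:cones}: take the largest geodesic ball in $\S^{n-1}$ contained in the convex region bounded by $\arc X$; its center $c$ lies in $\textup{cut}(\arc X)$ by Lemma \ref{lem:skeleton}, and since $\arc X$ is tangent to $\partial H^+$ from inside all the way around, the ball must be inscribed symmetrically, forcing $c=\nu$ (or, if the ball is not unique, forcing $\textup{cut}(\arc X)$ to contain a segment through $\nu$, which a fortiori gives the orthogonal ray after passing to the cone). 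This closes the argument.
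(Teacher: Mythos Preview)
Your approach has a fundamental gap at the very first step. You attempt to ``dispose of the degenerate case where $T_o\arc\Gamma$ is actually that hyperplane'' by claiming this would force $\Gamma$ to be smooth near $p$ and hence $p\notin\textup{cut}(\Gamma)$. But the hypothesis of the lemma is precisely that $T_p\Gamma$ \emph{is} a hyperplane --- this is the only case under consideration, not a degenerate one to be eliminated. And a convex hypersurface can perfectly well have a flat tangent cone at a point where it fails to be $\C^{1,1}$; the example the paper mentions just before the lemma (the inner parallel of an ellipse through a focus) is exactly of this type. So your reduction to ``$X=T_o\arc\Gamma$ is not a hyperplane'' discards the only case there is. Worse, once $X$ is a hyperplane, $\textup{cut}(X)=\emptyset$, so the inclusion $\textup{cut}(X)\subset T_p\textup{cut}(\Gamma)$ from Lemma~\ref{lem:GammaR^n} carries no information whatsoever, and Lemmas~\ref{lem:tildeSC} and~\ref{lem:cones} (which explicitly exclude the hyperplane case) cannot be invoked. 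There is also a secondary issue: your claimed identity $T_o\textup{cut}(\arc\Gamma)=T_p\textup{cut}(\Gamma)$ presumes that $\exp_p^{-1}$ intertwines the Euclidean cut locus on $T_pM$ with the Riemannian cut locus on $M$, which is not justified in a curved ambient space.

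The paper's proof is entirely different and does not route through the cut locus of the tangent cone. It works directly in $M$ and exhibits points of $\textup{cut}(\Gamma)$ approaching $p$ along the inward normal geodesic $\alpha$. Two cases are distinguished. If some sphere in $\cl(\Omega)$ touches $\Gamma$ only at $p$, one shows via the $\C^{1,1}$ regularity criterion of Lemma~\ref{lem:GH} that the entire initial segment of $\alpha$ lies in $\textup{cut}(\Gamma)$: were some $\alpha(t)$ regular, a rolling-ball argument would make $\Gamma$ $\C^{1,1}$ near $p$, contradicting $p\in\textup{cut}(\Gamma)$. Otherwise, for each small $\e>0$ one takes the sphere of radius $\e$ in $\cl(\Omega)$ whose center $c_\e$ is closest to $p$; this sphere is then forced to touch $\Gamma$ at multiple points, so $c_\e\in\textup{cut}(\Gamma)$, and one argues (using that $T_p\Gamma$ is a hyperplane, so the footprints of $c_\e$ collapse directionally) that the direction of $pc_\e$ converges to $\alpha'(0)$. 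Either way $\alpha'(0)\in T_p\textup{cut}(\Gamma)$.
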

  \begin{proof}
 Let  $\alpha(t)$, $t\geq 0$, be the geodesic ray, with $\alpha(0)=p$, such that $\alpha'(0)$ is orthogonal to $T_p \Gamma$ and points towards $\Omega$. We have to show that $\alpha'(0)\in T_p\textup{cut}(\Gamma)$. To this end we divide the argument into two cases as follows.
 
 First suppose that there exists a sphere in $\cl(\Omega)$ which touches $\Gamma$ only at $p$. Then the center of that sphere coincides with $\alpha(t_0)$ for some $t_0>0$. We claim that then $\alpha(t)\in \textup{cut}(\Gamma)$ for all $t\leq t_0$. To see this note that $\alpha(t)$ has a unique 
 footprint on $\Gamma$, namely $p$, for all $t\leq t_0$. For $0<t\leq t_0$, let
$\Gamma^{t}:=(\widehat{d}_\Gamma)^{-1}(-t)$ be the inner parallel hypersurface of $\Gamma$ at distance $t$. Suppose, towards a contradiction, that $\alpha(t)\not\in \textup{cut}(\Gamma)$. Then, by Lemma \ref{lem:CS},  $\widehat{d}_\Gamma$ is $\C^1$ near $\alpha(t)$, which in turn yields that $\Gamma^t$ is $\C^1$ in a neighborhood $U^t$ of $\alpha(t)$. Furthermore, 
$\Gamma^t$ is convex by the $d$-convexity assumption on $\Gamma$. So, by Lemma \ref{lem:outward-geodesic}, the outward geodesic rays which are perpendicular to $U^t$ never intersect, and thus yield a homeomorphism between $U^t$ and a neighborhood $U$ of $p$ in $\Gamma$. Furthermore,  since $\widehat{d}_\Gamma$ is $\C^1$ near $U^t$, each point of $U^t$ has a unique footprint on $\Gamma$  by Lemma \ref{lem:CS}. Thus there exists a sphere centered at each point of $U^t$ which lies in $\cl(\Omega)$ and passes through a point of $U$. Furthermore each point of $U$ is covered by such a sphere. So it follows that a ball rolls freely on the convex side of $U$, and therefore $U$ is $\C^{1,1}$, by the same argument we gave in the proof of Lemma \ref{lem:GH}. But, again by Lemma \ref{lem:GH}, if $U$ is $\C^{1,1}$, then $\widehat{d}_\Gamma$ is $\C^1$ near $U$, which is not possible since $p\in U$ and $p\in\textup{cut}(\Gamma)$. Thus we arrive at the desired contradiction. So we conclude that $\alpha(t)\in \textup{cut}(\Gamma)$ as claimed, for $0<t\leq t_0$, which in turn yields that $\alpha'(0)\in T_p\textup{cut}(\Gamma)$ as desired.

So we may assume that there exists no sphere in $\cl(\Omega)$ which touches $\Gamma$ only at $p$. Now for small $\epsilon>0$ let $S_\e $ be a sphere of radius $\e$ in $\cl(\Omega)$ whose center $c_\e $ is as close to $p$ as possible, among all spheres of radius $\e$ in $\cl(\Omega)$. Then $S_\e $ must intersect $\Gamma$ in multiple points, since $\Gamma$ is convex and $S_\e $ cannot intersect $\Gamma$ only at $p$. Thus $c_\e \in\textup{cut}(\Gamma)$.  Let $v$ be the initial velocity of the geodesic $c_\e p$, and 
$\theta(\e)$ be the supremum of the angles between $v$ and the initial velocities of the geodesics connecting $c_\e $ to each of its footprints on $\Gamma$. We claim that $\theta(\e)\to 0$, as $\e\to 0$. To see this let $(T_{c_\e}M)^1$ denote the unit sphere in $T_{c_\e }M$, centered at $c_\e$.  Furthermore, let $X\subset(T_{c_\e}M)^1$ denote the convex hull spanned by the initial velocities of the geodesics connecting $c(\e)$ to its footprints. Then $v$ must lie in  $X$, for otherwise $S_\e$ may be pulled closer to $p$. Indeed if $v\not\in X$, then $v$ is disjoint from a closed hemisphere of $(T_{c_\e}M)^1$ containing $X$.  Let $w$ be the center of the opposite hemisphere. Then $\langle v, w\rangle>0$. Thus perturbing $c(\e)$ in the direction of $w$ will  bring $S_\e$ closer to $p$ without leaving $\cl(\Omega)$, which is not possible. So $v\in X$ as claimed. Now note that  the footprints of $c_\e $ converge to $p$, since $c_\e $ converges to $p$. Furthermore, since $T_p\Gamma$ is a hyperplane, it follows that the angle between every pair of geodesics which connect $c_\e$ to its footprints vanishes. Thus $X$ collapses to a single point, which can only be $v$. Hence $\theta(\e)\to 0$ as claimed. Consequently $c_\e p$ becomes arbitrarily close to meeting $\Gamma$ orthogonally, or more precisely, the angle  between $\alpha'(0)$ and  the initial velocity vector of $pc_\e$  vanishes as $\e\to 0$. Hence,  since $c_\e \in\textup{cut}(\Gamma)$, it follows once again that $\alpha'(0)\in T_p\textup{cut}(\Gamma)$ which completes the proof.
\end{proof}

\begin{figure}[h]
\centering
\begin{overpic}[height=1.5in]{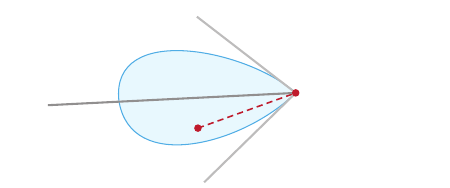}
\put(67.5,20.5){\Small $x^\circ$}
\put(40.5,12.5){\Small $x$}
\put(27,30){\Small $\Gamma_{x^\circ}$}
\put(50,3){\Small $T_{x^\circ}\Gamma_{x^\circ}$}
\put(4,20){\Small $T_{x^\circ}\textup{cut}(\Gamma_{x^\circ})$}
\put(35,24){\Small$\Omega_{x^\circ}$}
\end{overpic}
\caption{}\label{fig:egg}
\end{figure}

Finally we are ready to prove the main result of this section:

\begin{proof}[Proof of Theorem \ref{thm:projection}]
 Suppose, towards a contradiction, that $d(x,\Gamma)>d(x^\circ,\Gamma)$ for some point $x\in\Omega$. Then 
\be\label{eq:intOmegax}
x\in \Omega_{x^\circ},
\ee
see Figure \ref{fig:egg}. 
Since $x^\circ$ is a footprint of $x$ on $\Gamma$,  $\textup{cut}(\Gamma)$ lies outside a sphere of radius $d(x^\circ,x)$ centered at $x$.  So if we let $v$ be the initial velocity of the geodesic $x^\circ x$, then
$
\langle y, v\rangle\leq 0,
$
for all $y\in T_{x^\circ}\textup{cut}(\Gamma)$, where we identify $T_{x^\circ}\textup{cut}(\Gamma)$ with $\R^n$ and $x^\circ$ with the origin of $\R^n$.
By Lemma \ref{lem:SGamma_x},  
$
T_{x^\circ}\textup{cut}(\Gamma_{x^\circ})\subset T_{x^\circ}\textup{cut}(\Gamma).
$
 Thus 
 $
\langle y, v\rangle\leq 0,
$
for all $y\in T_{x^\circ}\textup{cut}(\Gamma_{x^\circ})$. Furthermore, by Lemma \ref{lem:GammaR^n},
$
\textup{cut}(T_{x^\circ}\Gamma_{x^\circ})\subset T_{x^\circ}\textup{cut}(\Gamma_{x^\circ}).
$
So 
\be\label{eq:s-tildex}
\langle s, v\rangle\leq 0, \quad\text{for all}\quad s\in \textup{cut}(T_{x^\circ}\Gamma_{x^\circ}).
\ee
Furthermore, since $\Gamma$ is $d$-convex, $T_{x^\circ}\Gamma_{x^\circ}$ bounds a convex cone by Lemma \ref{lem:CG}. Thus, since $T_{x^\circ}\Gamma_{x^\circ}$   contains $v$,
it must be a hyperplane, by Lemma \ref{lem:cones}. Consequently, by Lemma \ref{lem:orthogonal-ray}, $T_{x^\circ}\textup{cut}(\Gamma_{x^\circ})$ contains a ray which is orthogonal to $T_{x^\circ}\Gamma_{x^\circ}$. 
By \eqref{eq:s-tildex}, $v$ must be orthogonal to that ray. So $v\in T_{x^\circ}\Gamma_{x^\circ}$, which in turn yields that $x\in\Gamma_{x^\circ}$. The latter is impossible by \eqref{eq:intOmegax}. Hence we arrive at the desired contradiction.
\end{proof}

Having established Theorem \ref{thm:projection}, we record the following consequence of it. Set
$$
\widehat r(\,\cdot\,):=d\big(\,\cdot\,, \textup{cut}(\Gamma)\big).
$$
Recall that, by Lemma \ref{lem:lipschitz}, $\widehat r$ is Lipschitz and thus is differentiable almost everywhere.
\begin{corollary}\label{cor:projection}
Let $\Gamma$ be a $d$-convex hypersurface in a Cartan-Hadamard manifold $M$, and set $u:=\widehat{d}_\Gamma$. 
Suppose that $\widehat{r}$ is differentiable at a point $x\in M\setminus\textup{cut}(\Gamma)$.
Then
$$
\big\langle \nabla u(x),\nabla\widehat{r}(x)\big\rangle\geq 0.
$$
In particular (since $\widehat{r}$ is Lipschitz), the above inequality holds for almost every $x\in M\setminus\textup{cut}(\Gamma)$.
\end{corollary}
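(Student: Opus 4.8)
The plan is to reduce the inequality to a one-variable convexity statement along the minimizing geodesic from $x$ to its nearest point of $\textup{cut}(\Gamma)$. Fix a point $x\in M\setminus\textup{cut}(\Gamma)$ at which $\widehat r$ is differentiable. Since $\textup{cut}(\Gamma)$ is closed, and in fact compact as it lies in $\cl(\Omega)$, the point $x$ admits a footprint $x^*\in\textup{cut}(\Gamma)$, and differentiability of $\widehat r$ at $x$ forces this footprint to be unique by Lemma \ref{lem:CS}(i). Let $\gamma\colon[0,\ell]\to M$, $\ell:=\widehat r(x)$, be the unit-speed minimizing geodesic with $\gamma(0)=x^*$ and $\gamma(\ell)=x$; by Lemma \ref{lem:CS}(ii), $\nabla\widehat r(x)=\gamma'(\ell)$. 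Because $x\notin\textup{cut}(\Gamma)$, the function $u=\widehat{d}_\Gamma$ is $\C^{1,1}$ near $x$ by Proposition \ref{prop:C11}, so $v:=u\circ\gamma$ is $\C^1$ near $t=\ell$ and $v'(\ell)=\langle\nabla u(x),\gamma'(\ell)\rangle=\langle\nabla u(x),\nabla\widehat r(x)\rangle$. Thus it suffices to show $v'(\ell)\geq 0$.

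I would then isolate the elementary fact that a convex function $v$ on $[0,\ell]$ that is differentiable at $\ell$ satisfies $v'(\ell)\geq (v(\ell)-v(0))/\ell$, since the chord from $(0,v(0))$ to $(\ell,v(\ell))$ lies above the graph of $v$. Hence it is enough to prove that $v$ is convex on $[0,\ell]$ and that $v(\ell)\geq v(0)$.

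For convexity and the inequality $v(\ell)\geq v(0)$, I would first record that $\textup{cut}(\Gamma)\subseteq\cl(\Omega)$: every point of the open set $M\setminus\cl(\Omega)$ has a unique footprint on $\Gamma$, because two distinct footprints would give two perpendicular geodesics to $\cl(\Omega)$ meeting at that point, contradicting Lemma \ref{lem:outward-geodesic} (uniqueness of the minimizing geodesic being automatic in a Cartan--Hadamard manifold); hence $M\setminus\cl(\Omega)\subseteq\textup{reg}(\widehat{d}_\Gamma)$, so $x^*\in\cl(\Omega)$. If $x\in\cl(\Omega)$, then the whole segment $\gamma$ lies in $\cl(\Omega)$ by convexity, $u$ is convex there by $d$-convexity of $\Gamma$ (extending to the closure by continuity), so $v$ is convex; and $v(\ell)\geq v(0)$ amounts, via $u=-d_\Gamma$ on $\Omega$, to $d_\Gamma(x^*)\geq d_\Gamma(x)$, which is precisely Theorem \ref{thm:projection} when $x\in\Omega$, and is immediate when $x\in\Gamma$ since then $v(\ell)=\widehat{d}_\Gamma(x)=0$ while $v(0)=\widehat{d}_\Gamma(x^*)\leq 0$. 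If instead $x\notin\cl(\Omega)$, then $u=d_{\cl(\Omega)}$ on a neighborhood of $x$, so $\nabla u(x)=\nabla d_{\cl(\Omega)}(x)$ and it suffices to run the same argument with $w:=d_{\cl(\Omega)}$ in place of $u$: $w$ is convex on all of $M$ by Lemma \ref{lem:u-convex}, $w\circ\gamma$ is convex and nonnegative on $[0,\ell]$, and $w(\gamma(0))=w(x^*)=0$ since $x^*\in\cl(\Omega)$, whence $(w\circ\gamma)(\ell)\geq(w\circ\gamma)(0)$ and we conclude as before. The final sentence of the corollary then follows from Rademacher's theorem, since $\widehat r$ is $1$-Lipschitz by Lemma \ref{lem:lipschitz}.

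The main obstacle is entirely contained in the step $v(\ell)\geq v(0)$ for interior $x$, i.e. the assertion that the nearest point of $\textup{cut}(\Gamma)$ to $x$ is no closer to $\Gamma$ than $x$ itself; but this is exactly Theorem \ref{thm:projection}, which has already been established (and which, incidentally, also rules out the degenerate possibility that an interior $x$ has a cut-locus footprint on $\Gamma$). After that, the remaining work — identifying $\nabla\widehat r$ via Lemma \ref{lem:CS}, checking convexity of $u$ or $d_{\cl(\Omega)}$ along $\gamma$, and the one-variable convexity estimate — is routine; the only mild subtlety is the case $x\notin\cl(\Omega)$, where $u$ need not be convex and one passes to $d_{\cl(\Omega)}$.
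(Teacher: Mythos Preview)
Your argument is correct and follows the same skeleton as the paper's proof: identify the unique footprint $x^\circ$ on $\textup{cut}(\Gamma)$ via Lemma~\ref{lem:CS}, read off $\nabla\widehat r(x)$ as the unit tangent to the geodesic $xx^\circ$, and then combine Theorem~\ref{thm:projection} with the behaviour of $u$ along that geodesic. The paper simply asserts that $u\circ\alpha$ is nonincreasing along the geodesic from $x$ to $x^\circ$ and differentiates at $0$; you instead run the secant inequality for a convex function together with the endpoint comparison $u(x^\circ)\le u(x)$ furnished by Theorem~\ref{thm:projection}. This is really the same mechanism made explicit: the paper's monotonicity claim is justified precisely by applying your convexity-plus-endpoint argument at every intermediate point $\alpha(s)$ (each of which still has $x^\circ$ as a footprint), so you have filled in a step the paper leaves to the reader. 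You also treat the case $x\notin\cl(\Omega)$ by passing to $d_{\cl(\Omega)}$, which the paper does not address separately; this is a genuine (if minor) improvement in completeness, since the corollary is stated for all $x\in M\setminus\textup{cut}(\Gamma)$ while Theorem~\ref{thm:projection} is only formulated for $x\in\Omega$.
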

\begin{proof}
Since $\widehat r$ is differentiable at $x$, $x$ has a unique footprint $x^\circ$ on $\textup{cut}(\Gamma)$, by Lemma \ref{lem:CS}(i).
Let $\alpha$ be a  geodesic connecting $x$ to $x^\circ$. Then, by Lemma \ref{lem:CS}(ii), $\alpha'(0)=-\nabla\widehat{r}(x)$. Furthermore, by Theorem \ref{thm:projection}, $u\circ\alpha=-\widehat{d}_\Gamma\circ\alpha$ is nonincreasing. Finally, recall that by Proposition \ref{prop:C11}, $u$ is $\C^1$ on $M\setminus\textup{cut}(\Gamma)$, and therefore $u\circ\alpha$ is $\C^1$ as well. Thus
$$
0\geq (u\circ\alpha)'(0)=\big\langle \nabla u(\alpha(0)), \alpha'(0)\big\rangle=\big\langle \nabla u(x), -\nabla\widehat{r}(x)\big\rangle,
$$
as desired.
\end{proof}

\addtocontents{toc}{\protect\setcounter{tocdepth}{0}}%for hiding from table of contents
\section*{Acknowledgments}

M.G. would like to thank Andrzej \'{S}wi\k{e}ch  for helpful conversations on various aspects of this work. Thanks also to Daniel Azagra, Igor Belegradek, Albert Fathi, Robert Greene, Ralph Howard, Sergei Ivanov, Bruce Kleiner, Alexander Lytchak, Daniil Mamaev, Anya Nordskova, Anton Petrunin, Manuel Ritore, Rolf Schneider, and Yao Yao for useful communications. Finally we thank the anonymous referee for comments which led to  improvements in this work.

\addtocontents{toc}{\protect\setcounter{tocdepth}{1}}%to resume listing of the sections  
\bibliography{references}

\end{document}